\title[Properness of the moduli space of stable surfaces]
{On the properness of the moduli space of stable surfaces over $\mathbb{Z}[1/30]$} 
\author{Emelie Arvidsson, Fabio Bernasconi and Zsolt Patakfalvi} 
\subjclass[2020]{14E30, 14J17, 14H15, 14G17}
\keywords{Singularities, vanishing theorems, moduli of stable varieties, positive characteristic.}
\address{University of Utah Mathematics Department JWB 311,155 S 1400 E, Salt Lake City, UT 84112}
\email{arvidsson@math.utah.edu}
\address{Departement Mathematik und Informatik, Universit{\"a}t Basel, Spiegelgasse 1, CH-4051 Basel, Switzerland.} 
\email{fabio.bernasconi@unibas.ch}
\address{EPFL SB MATH CAG
	MA C3 615 (B\^atiment MA)
	Station 8
	CH-1015 Lausanne}
\email{zsolt.patakfalvi@epfl.ch}
\DeclareMathOperator{\cent}{centre}
\DeclareMathOperator{\cod}{codim}
\newcommand{\MO}{\mathcal{O}}
\newcommand{\Q}{\mathbb{Q}}
\newcommand{\Z}{\mathbb{Z}}
\begin{document}

\begin{abstract}
    We show the properness of the moduli stack of stable surfaces over $\mathbb{Z}[1/30]$, assuming the locally-stable reduction conjecture for stable surfaces.
	This relies on a local Kawamata--Viehweg vanishing theorem for 3-dimensional log canonical singularities at closed point of characteristic $p \neq 2, 3$ and $5$ which are not log canonical centres.
\end{abstract}
	
\maketitle

\tableofcontents
	
\section{Introduction}
	
    In \cite[Theorem 5.2]{DM69}, Deligne and Mumford proved that the moduli stack of stable curves $\overline{\mathcal{M}}_g$ of given genus $g \geq 2$ is a \emph{proper} Deligne-Mumford (DM) stack over $\mathbb{Z}$.
    By introducing stable curves (\emph{i.e.}, curves with at worst nodal singularities and ample canonical class) into the moduli problem, they were able to construct a natural compactification of the moduli of smooth curves of genus $g$, which led to interesting applications, such as the proof of irreducibility of $\mathcal{M}_g$ \cite{DM69}, and the proof of general semi-stable reduction for curves in \cite{de_Jong_Families_of_curves_and_alterations}.

    The natural higher dimensional generalisation of smooth curves of genus at least 2 are smooth canonically polarised varieties.
	Hence, it is natural to look for a compactification of the moduli space of these. A possible approach has been proposed by Koll\'ar and Shepherd-Barron in \cite{KSB88} using the Minimal Model Program (MMP for short).
    According to Koll\'ar and Shepherd-Barron, the correct generalisation of stable curves to arbitrary dimensions are \emph{stable varieties}, projective varieties with semi-log canonical singularities and ample canonical class.
    We refer to the book \cite{k-moduli} for a comprehensive treatment of the construction of the moduli space of stable varieties in characteristic 0.
	
	The case of positive and mixed characteristic presents further difficulties. To mention a few: the MMP is still largely conjectural in dimension $>3$, the invariance of plurigenera (even asymptotic) is known to fail \cite{Bri20, Kol22}, the singularities of the MMP are cohomologically more complicated \cite{CT19, Ber19} and other problems arise due to presence of inseparable morphisms \cite[Section 8.8]{k-moduli}.
	However the MMP for 3-folds in positive characteristic $p \geq 5$ and mixed characteristic $(0, p>5)$ has now been established \cite{HX15, CTX15, Bir16, BW17, DW22, HW22, TY23, 7authors} and, following the strategy in characteristic 0, many of the steps needed for the construction of the moduli space $\overline{\mathcal{M}}_{2,v}$ of stable surfaces have been proven in \cite{HK19, Pat17, 7authors,Pos21b, Pos21}. 
	Nowadays, we know that $\overline{\mathcal{M}}_{2,v}$ exists as a separated Artin stack with finite diagonal over $\mathbb{Z}[1/30]$ by \cite[Corollary 10.2]{7authors}, but whether it is proper remains still an open question.
	Our main result is the following, where locally-stable reduction means a weakening of semi-stable reduction, see \autoref{def-loc-stable} for the precise definition. 
	
	\begin{theorem}\label{t-main}
	Assume the existence of locally-stable reduction  for surfaces. 
	Then the moduli stack $\overline{\mathcal{M}}_{2,v}$ of stable surfaces of volume $v$ is proper over $\mathbb{Z}[1/30]$.
	\end{theorem}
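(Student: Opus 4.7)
The plan is to verify the existence part of the valuative criterion of properness. Since by \cite[Corollary 10.2]{7authors} the moduli stack $\overline{\mathcal{M}}_{2,v}$ is already known to be a separated Artin stack with finite diagonal and of finite type over $\mathbb{Z}[1/30]$, uniqueness of limits is automatic, and the content of the theorem is the existence of limits. Thus, fixing a DVR $R$ over $\mathbb{Z}[1/30]$ with fraction field $K$ and residue field $k$ (necessarily of characteristic $\neq 2,3,5$), and given a stable family $X_K \to \Spec K$ of surfaces of volume $v$, the goal is to produce, after possibly a finite base change of $R$, a stable family $\mcX \to \Spec R$ whose generic fiber is $X_K$.

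I would first invoke the assumed locally-stable reduction for surfaces: after replacing $R$ by a finite extension, one obtains a flat projective family $f \colon X \to \Spec R$ extending $X_K$ such that $(X,X_s)$ is log canonical and $K_X$ is $\Q$-Cartier; in particular the special fiber $X_s$ is reduced with semi-log canonical singularities. The next step is to pass from this locally-stable model to the stable model by running the relative $K_X$-MMP over $\Spec R$; the existence of the relative canonical model $X \dashrightarrow X^c \to \Spec R$ is available by the recent three-dimensional MMP in mixed characteristic with residue characteristic $\geq 5$ cited in the introduction (\cite{HX15,BW17,DW22,HW22,TY23,7authors}). The generic fiber of $X^c$ is canonically the stable model of $X_K$, which equals $X_K$ itself, so the only remaining task is to show that the central fiber $X^c_s$ is again a stable surface of volume $v$.

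The hard part, and the place where the main technical input of the paper enters, is showing that the formation of $X^c$ commutes with the base change $\Spec k \hookrightarrow \Spec R$, equivalently that the reflexive pluricanonical sheaves $\omega_{X/R}^{[m]}$ commute with restriction to $X_s$ for $m$ sufficiently divisible. This amounts to a vanishing of $R^1 f_* \omega_{X/R}^{[m]}(-X_s)$ in a neighborhood of each closed point of $X_s$. Away from log canonical centres of $(X,X_s)$, this is exactly supplied by the local Kawamata--Viehweg vanishing theorem for $3$-dimensional log canonical singularities at closed points of characteristic $p \neq 2,3,5$ that are not log canonical centres, advertised in the abstract. Along the lc centres themselves one expects to be able to conclude by an inductive/adjunction argument on dimension, reducing to the known surface-over-a-curve case.

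Granted this compatibility with base change, $X^c_s$ agrees with the canonical model of the slc pair $X_s$, which by definition is a stable surface, and its volume equals the volume of $X_K$ by flatness of $\omega_{X^c/R}^{[m]}$ for the relevant $m$. The main obstacle I anticipate is therefore entirely the vanishing statement in positive/mixed characteristic: KV vanishing is known to fail for lc $3$-fold singularities in general, so the point is precisely to exploit that the closed point one is localising at is \emph{not} an lc centre, together with the arithmetic hypothesis $p \geq 7$, to rescue enough vanishing to make base change work.
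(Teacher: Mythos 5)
Your high-level architecture coincides with the paper's: \autoref{cor: properness_moduli} is proved by quoting Posva's reduction \cite[Theorem 6.0.5]{Pos21}, whose content is exactly the chain you describe (valuative criterion, (LSR) to get a locally stable model, relative canonical model via the mixed-characteristic MMP, and then the statement that the special fibre of the resulting stable family is again a stable surface, i.e.\ the condition \textbf{(S$_2$)} on stable limits, supplied here by \autoref{t-S2-property}). Whether one phrases that last condition as $(S_2)$-ness of $X^c_s$ or as base change for the reflexive pluricanonical sheaves is a matter of packaging; both reduce to a depth statement for the $3$-fold total space along the special fibre.

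The genuine gap is in the technical input you invoke for that step. You propose to conclude via ``local Kawamata--Viehweg vanishing for $3$-dimensional lc singularities at closed points of characteristic $p\neq 2,3,5$ that are not lc centres,'' exploiting precisely that the point is not an lc centre together with $p\geq 7$. No such theorem holds: \autoref{t-counterexample} of this paper constructs, for \emph{every} prime $p$, a $3$-dimensional lc singularity $x\in X$ with $x$ not a minimal lc centre (indeed not an lc centre at all, only contained in a $1$-dimensional one) and $\depth\mathcal{O}_{X,x}=2$, so the direct analogue of \autoref{t-dpt-C} fails even for large $p$ and your proposed rescue cannot work. What actually saves the argument is the extra structure of a locally stable family: the special fibre $X_s$ is a \emph{Cartier} divisor occurring with coefficient one in the lc pair $(X,X_s+\Delta)$. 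The paper's \autoref{t-s3-under-dlt-modification} (built on \autoref{t-char-fail-local-KVV-p} and Raynaud's theory of wild fibres) shows that a failure of $(S_3)$ at $x$ forces the exceptional divisor $E\to C$ over the $1$-dimensional minimal lc centre to have a wild, hence non-reduced, fibre over $x$; but then $(g^*X_s)|_E$ would acquire coefficients strictly greater than $1$, contradicting slc-ness of the adjoint pair on $E$. This Cartier-boundary mechanism, not the absence of lc centres, is the crux, and your sketch (``along the lc centres themselves one expects an inductive/adjunction argument'') does not supply it.
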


	The main technical result needed to prove \autoref{t-main} concerns the depth of 3-dimensional log canonical singularities which we briefly explain.
    In \cite[Theorem 6.0.5]{Pos21}, Posva reduced the valutative criterion of properness for the stack $\overline{\mathcal{M}}_{2,v}$ to two conjectures on families of stable surfaces over a DVR: roughly speaking,
    the existence of semi-stable reduction and the $(S_2)$-condition of the central fibre of a locally stable family of surfaces. 
    To prove the $(S_2)$-condition of the central fibre, it is thus natural to study the $(S_3)$-condition at a closed point $x$ of a log canonical 3-fold singularity.
	
	Let us first explain what the tools used to prove the $(S_2)$-condition in characteristic 0 are, as we will mimic this approach.
	In \cite[Theorem 7.20]{kk-singbook}, a local version of the Kawamata--Viehweg vanishing theorem concerning the depth of divisorial sheaves on divisorially log terminal (dlt) and log canonical pairs in characteristic 0 is presented (similar results were obtained previously by Alexeev and Hacon \cite{Ale08, AH12}).
	
	\begin{theorem} [Local Kawamata--Viehweg vanishing for log canonical pairs]\label{t-dpt-C} 
	Let $(X,\Delta)$ be a log canonical pair over a field of characteristic 0.
	Let $D$ be a $\Z$-divisor such that $D \sim_{\Q} \Delta'$ where $0 \leq \Delta' \leq \Delta$.
	If $x$ is a point which is not the generic point of an lc centre, then $$\depth_{x} \mathcal{O}_X(-D) \geq \min \left\{3, \cod_X x \right\}.$$ 	
	\end{theorem}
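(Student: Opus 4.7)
The plan is to realise $\mathcal{O}_X(-D)$ as a direct image from a log resolution and then combine Kawamata--Viehweg vanishing with Koll\'ar's package of torsion-freeness and injectivity theorems. First, take a log resolution $f \colon Y \to X$ of $(X, \Delta + \Delta')$ such that $f^{-1}(x) \cup \Ex(f) \cup f^{-1}_{*}\Delta \cup f^{-1}_{*}\Delta'$ is snc. Writing the discrepancy formula $K_Y + \Delta_Y = f^{*}(K_X + \Delta) + E$ with $\lfloor \Delta_Y \rfloor$ reduced along the preimages of lc centres and $E$ effective exceptional, and perturbing by a small multiple of a relatively anti-ample exceptional divisor, one arranges an integral divisor $D_Y$ on $Y$ so that $-D_Y - K_Y$ is a klt, $f$-nef and $f$-big $\Q$-divisor. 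Relative Kawamata--Viehweg vanishing then gives $R^{i}f_{*}\mathcal{O}_Y(-D_Y) = 0$ for $i > 0$, and the correction arranges $f_{*}\mathcal{O}_Y(-D_Y) = \mathcal{O}_X(-D)$, so by the Leray spectral sequence
\[
H^{i}_{x}\bigl(\mathcal{O}_X(-D)\bigr) \cong H^{i}_{f^{-1}(x)}\bigl(Y, \mathcal{O}_Y(-D_Y)\bigr).
\]

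Second, apply Koll\'ar's torsion-freeness and injectivity theorems to $-D_Y$. This constrains the associated primes of $\mathcal{O}_X(-D)$ to lie among the generic points of the lc centres of $(X, \Delta)$; in particular, since $x$ is not such a generic point, $\depth_x \mathcal{O}_X(-D) \geq 1$. For $\cod_X x = 2$ the bound $\depth_x \mathcal{O}_X(-D) \geq 2$ is then automatic from reflexivity of $\mathcal{O}_X(-D)$ on the normal variety $X$. For $\cod_X x \geq 3$, let $N$ be the reduced union of the lc centres of $(X, \Delta)$ meeting $x$ and consider
\[
0 \to \mathcal{I}_{N} \cdot \mathcal{O}_X(-D) \to \mathcal{O}_X(-D) \to \mathcal{O}_X(-D)|_N \to 0.
\]
Koll\'ar's restriction/adjunction machinery makes $\mathcal{O}_X(-D)|_N$ an $S_2$-sheaf on $N$ (its restriction to each stratum being torsion-free), whereas a second application of relative Kawamata--Viehweg to the klt pair obtained away from $N$ makes the kernel Cohen--Macaulay at $x$. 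A depth chase on this short exact sequence, using that $x$ is a closed point away from the generic points of $N$, bootstraps the bound from $2$ to $3$.

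The main obstacle is the joint bookkeeping on $\Q$-linear equivalence, integrality and exceptional perturbation which must arrange simultaneously that (i) the Kawamata--Viehweg hypotheses on $Y$ hold, (ii) the equality $f_{*}\mathcal{O}_Y(-D_Y) = \mathcal{O}_X(-D)$ holds on the nose, and (iii) the reduced boundary against which Koll\'ar's torsion-freeness is applied is exactly the preimage of the lc stratification of $(X, \Delta)$. A secondary subtlety is the depth chase in the second step, which in characteristic zero is underpinned by Koll\'ar's theorem that the non-klt locus of an lc pair is Du Bois and by Bertini-type statements for lc pairs; these are precisely the ingredients which admit no unconditional analogue in positive characteristic and which the present paper must replace by new local Kawamata--Viehweg-type statements outside the residue characteristics $2, 3, 5$.
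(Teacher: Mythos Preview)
The paper does not prove this theorem. It is quoted in the introduction as \cite[Theorem~7.20]{kk-singbook}, a known characteristic-zero result of Koll\'ar (with antecedents by Alexeev and Alexeev--Hacon), and serves purely as motivation for the positive- and mixed-characteristic results that follow. There is therefore no in-paper proof to compare your proposal against.

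Regarding your sketch on its own merits: the overall shape (pass to a birational model, apply relative vanishing, run a depth chase) is the right one, but the first step as written does not go through. If $(X,\Delta)$ is strictly log canonical you cannot, by a small anti-ample exceptional perturbation on a log resolution, make the relevant boundary klt while preserving $f_*\mathcal{O}_Y(-D_Y)=\mathcal{O}_X(-D)$: the coefficient-$1$ exceptional divisors lying over the lc centres do not disappear, and if you lower their coefficients you change the pushforward. The argument in \cite{kk-singbook} does not use a log resolution directly but passes through a $\mathbb{Q}$-factorial dlt modification $\pi\colon(Y,\Delta_Y)\to(X,\Delta)$, exploits that divisorial sheaves on dlt pairs are Cohen--Macaulay in characteristic $0$, and then compares local cohomology on $X$ and on $Y$. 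Your second step is also too loose to count as a proof: neither the assertion that $\mathcal{O}_X(-D)|_N$ is $S_2$ nor that $\mathcal{I}_N\cdot\mathcal{O}_X(-D)$ is Cohen--Macaulay at $x$ follows from the ingredients you name without substantial further work, and the actual argument handles the lc stratification via adjunction on the dlt model rather than by a single short exact sequence on $X$.
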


    This local vanishing is one of the crucial ingredients for the properness of the moduli functor as shown in \cite[Definition-Theorem 2.3]{k-moduli}, where \autoref{t-dpt-C}
	is used to prove the $(S_2)$-condition on the central fibre of a locally stable family.
	For this reason it is natural to consider whether \autoref{t-dpt-C} remains true in positive and in mixed characteristic.
	The examples of klt not CM 3-fold singularities (see \cite{CT19,Ber21, ABL20}) show that \autoref{t-dpt-C} is false in equicharacteristic $p \leq 5$.
	On the contrary, in \cite{ABL20} the first two authors showed together with Lacini that 3-fold klt singularities are Cohen--Macaulay in characteristic $p>5$ and this was later extended by the second author and Koll\'{a}r in \cite[Theorem 17]{BK20} to a local Kawamata--Viehweg vanishing on 3-dimensional excellent dlt singularities whose residue field is perfect of characteristic $p>5$ (analogue to \cite[Theorem 7.31]{kk-singbook}). 
    Moreover, in \cite[Theorem 3.8]{PS14} the third author and Schwede prove a local Kawamata--Viehweg vanishing for sharply $F$-pure singularities.
	From all these results, it would be natural to expect an analog of \autoref{t-dpt-C} for 3-dimensional log canonical singularities to hold, at least in large characteristic. Unfortunately, we show that this is not the case.
    
    \begin{theorem}[See \autoref{s-counterexample}]\label{t-counterexample}
	    For every prime $p>0$, there exist a 3-dimensional log canonical singularity $x \in X$ such that
	    \begin{enumerate}
	        \item the residue field of the closed point $x$ is perfect of characteristic $p$;
	        \item  $x$ is not a minimal log canonical centre;
	        \item $\depth (\mathcal{O}_{X,x})=2$.
	    \end{enumerate}
    \end{theorem}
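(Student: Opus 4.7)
The strategy is a concrete affine cone construction, uniform in the characteristic $p$, that exhibits a pair $(X, \Delta_X)$ whose failure of cohomological vanishing at the vertex comes from the \emph{disconnectedness} of the boundary, a topological obstruction available in every characteristic.

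First I would set up the base pair. Fix a perfect field $k$ of characteristic $p$, and let $Y := \Bl_{p_1, p_2} \mathbb{P}^2_k$ be the blow-up of $\mathbb{P}^2_k$ at two distinct $k$-rational points, with exceptional divisors $E_1, E_2$; set $\Delta := E_1 + E_2$ and $L := -(K_Y + \Delta) = 3\ell - 2 E_1 - 2 E_2$, where $\ell$ denotes the total transform of a line. A routine Nakai--Moishezon check shows $L$ is ample, so $(Y, \Delta)$ is a plt log Fano pair with $K_Y + \Delta \sim -L$ and, crucially, $\Delta$ has two \emph{disjoint} connected components.

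Next I would form the cone and verify the lc-centre claim. After replacing $L$ with a sufficiently high multiple to ensure projective normality, set $X := \Spec \bigoplus_{n \ge 0} H^0(Y, L^n)$ with vertex $x$, and let $\Delta_X$ be the cone over $\Delta$, i.e., the sum of the affine cones $C(E_1), C(E_2) \subset X$. Let $\pi : \widetilde X \to X$ be the blow-up of the vertex; its exceptional divisor $E \cong Y$ satisfies $E|_E = -L$, and the standard adjunction $(a+1) E|_E = K_Y + \Delta \sim -L$ yields discrepancy $a > -1$. Hence $(X, \Delta_X)$ is log canonical, even klt at $x$, while $(\widetilde X, \widetilde \Delta_X)$ is snc with the two disjoint strict transforms of $C(E_1), C(E_2)$. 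Pushing down, the only lc centres of $(X, \Delta_X)$ through $x$ are the two-dimensional cones $C(E_1)$ and $C(E_2)$, so $\{x\}$ is not a minimal lc centre.

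For the depth I would use that $X \setminus \{x\}$ is a $\mathbb{G}_m$-torsor over $Y$, giving $H^i_x(\mathcal{O}_X) \cong \bigoplus_{n \in \mathbb{Z}} H^{i-1}(Y, L^n)$ for $i \ge 2$. By Serre duality on the smooth surface $Y$, $H^1(Y, L^{-1})^\ast \cong H^1(Y, K_Y + L) = H^1(Y, -\Delta)$. The short exact sequence $0 \to \mathcal{O}_Y(-\Delta) \to \mathcal{O}_Y \to \mathcal{O}_\Delta \to 0$, combined with $H^1(Y, \mathcal{O}_Y) = 0$ (rationality of $Y$), identifies $H^1(Y, -\Delta)$ with $\coker(k \to k \oplus k) \cong k$; the nonvanishing is forced purely by $\Delta$ being disconnected, a characteristic-independent fact. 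Hence $H^2_x(\mathcal{O}_X) \ne 0$ and $\depth \mathcal{O}_{X, x} \le 2$, while normality of $X$ gives the reverse inequality.

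The main obstacle will be keeping the $H^1$-nonvanishing intact after the rescaling $L \leadsto L^m$ forced by projective normality; a parallel Serre duality and Riemann--Roch count on $D_m := K_Y + mL = 3(m-1)\ell - (2m-1)(E_1 + E_2)$ shows that its negative-curve fixed component keeps $h^0(D_m)$ small while $\chi(D_m) < 0$, forcing $H^1(Y, D_m) \ne 0$ for every $m \ge 1$. The Nakai--Moishezon check, the discrepancy formula, Serre duality on a smooth surface, and the snake argument are all characteristic-independent, producing the required uniform counterexample for every prime $p$.
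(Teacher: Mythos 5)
There is a fatal gap at the very first step: the divisor $L=-(K_Y+\Delta)=3\ell-2E_1-2E_2$ on $Y=\Bl_{p_1,p_2}\mathbb{P}^2_k$ is not ample — in fact not even nef. The strict transform $\Gamma$ of the line through $p_1$ and $p_2$ has class $\ell-E_1-E_2$, so $L\cdot\Gamma=3-2-2=-1<0$, and your ``routine Nakai--Moishezon check'' fails. This is not a repairable computational slip: by the Shokurov--Koll\'ar connectedness principle (which for excellent surfaces holds in every characteristic, as it only requires relative Kawamata--Viehweg vanishing for birational morphisms of surfaces), any pair $(Y,\Delta)$ with $-(K_Y+\Delta)$ nef and big has \emph{connected} non-klt locus, so no log Fano surface pair can carry a disconnected reduced boundary $E_1\sqcup E_2$; the whole strategy of sourcing the non-vanishing of $H^1$ from disconnectedness of $\Delta$ is therefore blocked in principle. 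Even if one granted the construction, the two lc centres $C(E_1)$ and $C(E_2)$ of the cone both contain the vertex and meet exactly there, so (for $p\neq 2,3,5$, by \autoref{l-inters-lcc}) the closed point $x$ would itself be a log canonical centre, violating conclusion (b) of \autoref{t-counterexample}. A decisive sanity check: your mechanism is entirely characteristic-independent, so if it produced an lc $3$-fold of depth $2$ at a point that is not an lc centre, it would equally contradict \autoref{t-dpt-C} in characteristic $0$. Any correct proof must use a phenomenon that genuinely fails only in positive or mixed characteristic.

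For comparison, the paper's construction does exactly that: it takes the \emph{relative} cone $X=C_a(S,f,L)$ over a minimal elliptic fibration $f\colon S\to C$ over a DVR admitting a wild fibre in the sense of Raynaud, i.e.\ with non-trivial torsion $\mathcal{T}$ in $R^1f_*\mathcal{O}_S$ (\autoref{e-wild}). There the relative vertex $V_C\simeq C$ is the unique lc centre and is $1$-dimensional (\autoref{p-minimal-lc}), so the closed point $x\in V_C$ is not an lc centre, and a Leray spectral sequence argument identifies $H^2_x(X,\mathcal{O}_X)\simeq H^0_x(C,\mathcal{T})\neq 0$ (\autoref{p-second-loc-coh}), giving $\depth\mathcal{O}_{X,x}=2$. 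The wild fibre is the irreducibly positive-characteristic input that your argument is missing.
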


Nevertheless, we are able to obtain a weaker local Kawamata--Viehweg vanishing statement, which is sufficient to deduce the properness of the moduli space $\overline{\mathcal{M}}_{2,v}$. See \autoref{sec:basic_notation} for the notion of pair used in the article.

\begin{theorem}\label{t-s3-intro}
	Let $C \subset (X,\Delta)$ be a 1-dimensional minimal log canonical centre of a 3-dimensional log canonical pair $(X,X_0+\Delta)$ and let $x \in C$ be a closed point with perfect residue field of characteristic $p \neq 2, 3$ and $5$. 
	If $X_0$ is Cartier and $x \in X_0$, then $\mathcal{O}_{X,x}$ is $(S_3)$ and $X_0$ is $(S_2)$ at $x$.
\end{theorem}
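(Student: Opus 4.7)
My plan is to reduce the two conclusions to a single local cohomology vanishing using the Cartier structure of $X_0$, then establish it via a $\mathbb{Q}$-factorial dlt modification combined with the 3-dimensional dlt Kawamata--Viehweg vanishing of \cite{BK20}.

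Let $t \in \mathfrak{m}_x \subset \mathcal{O}_{X,x}$ be a local equation of $X_0$. The exact sequence $0 \to \mathcal{O}_X \xrightarrow{\cdot t} \mathcal{O}_X \to \mathcal{O}_{X_0} \to 0$, together with the vanishings $H^i_x(\mathcal{O}_X) = 0$ for $i \leq 1$ (from the normality of $X$), yields
\[
0 \longrightarrow H^1_x(\mathcal{O}_{X_0}) \longrightarrow H^2_x(\mathcal{O}_X) \xrightarrow{\cdot t} H^2_x(\mathcal{O}_X).
\]
The module $H^2_x(\mathcal{O}_X)$ is Artinian over $\mathcal{O}_{X,x}$, so if it is nonzero then it has a nonzero socle, which is annihilated by every element of $\mathfrak{m}_x$, in particular by $t$. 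Hence $\mathcal{O}_{X,x}$ is $S_3$ if and only if $\mathcal{O}_{X_0,x}$ is $S_2$, and the two conclusions of the theorem are equivalent. Therefore it suffices to prove $H^2_x(\mathcal{O}_X) = 0$.

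Take a $\mathbb{Q}$-factorial dlt modification $f: (Y, \Delta_Y + Y_0) \to (X, X_0 + \Delta)$, which exists in residue characteristic $p \neq 2,3,5$ by the 3-fold MMP. By \cite{BK20}, $Y$ is Cohen--Macaulay of dimension $3$ and $R^i f_* \mathcal{O}_Y = 0$ for $i > 0$ on a neighborhood of $x$; hence the Leray spectral sequence yields $H^2_x(X, \mathcal{O}_X) \cong H^2_{f^{-1}(x)}(Y, \mathcal{O}_Y)$. The minimality of $C$ as an lc centre through $x$ implies that $x$ is not itself a $0$-dimensional lc centre, so no exceptional divisor of $f$ is contracted to $x$ and $\dim f^{-1}(x) \leq 1$. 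Consequently $\codim_Y f^{-1}(x) \geq 2$, and the Cohen--Macaulayness of $Y$ gives $\mathcal{H}^i_{f^{-1}(x)}(\mathcal{O}_Y) = 0$ for $i < 2$. The problem thus reduces to showing $H^0(f^{-1}(x), \mathcal{H}^2_{f^{-1}(x)}(\mathcal{O}_Y)) = 0$.

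To kill this global section, I exploit the fact that, since $C$ is $1$-dimensional, there is a $1$-dimensional stratum $Z$ of $\lfloor \Delta_Y + Y_0 \rfloor$ mapping birationally onto $C$. Applying the 3-dimensional dlt Kawamata--Viehweg vanishing from \cite{BK20} to the divisor $-\lfloor \Delta_Y + Y_0 \rfloor$ on $Y$ (which is $f$-numerically trivial modulo the klt fractional part of $\Delta_Y$), combined with the Kawamata--Viehweg vanishing on the normal dlt surface $Y_0$ (valid in any characteristic), I assemble the required vanishing via the adjunction structure along $Z$. The hardest step will be this final assembly: the key geometric features that distinguish our situation from the counterexample in \autoref{t-counterexample} are that the Cartier divisor $X_0$ provides a regular element cutting out a nice surface slice, and the $1$-dimensionality of $C$ forces the exceptional locus of $f$ above $x$ to have no divisorial components, so that the combined 3-fold and surface-level dlt vanishings become strong enough to conclude.
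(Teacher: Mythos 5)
Your opening reduction is correct and matches the paper: from $0 \to \mathcal{O}_X \xrightarrow{\cdot t} \mathcal{O}_X \to \mathcal{O}_{X_0} \to 0$ and the normality of $X$ one gets $H^1_x(\mathcal{O}_{X_0}) = \ker\bigl(t \colon H^2_x(\mathcal{O}_X) \to H^2_x(\mathcal{O}_X)\bigr)$, and the socle argument shows the two conclusions are equivalent to $H^2_x(\mathcal{O}_X)=0$. The core of your argument, however, has two genuine gaps.

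First, the claim that $R^i f_* \mathcal{O}_Y = 0$ for $i>0$ on a dlt modification $f\colon (Y,\Delta_Y+Y_0)\to (X,X_0+\Delta)$, attributed to \cite{BK20}, is false. The Grauert--Riemenschneider vanishing of \cite{BK20} (\autoref{t-gr}) applies to divisors $D \sim_{f,\mathbb{R}} K_Y+\Delta'$ with $\lfloor \Ex(\Delta')\rfloor = 0$; writing $0 \sim_f K_Y + \Delta_Y$ forces the reduced exceptional divisor $E$ into $\Delta'$ with coefficient $1$, so the hypothesis fails. What one actually gets (this is the paper's \autoref{5spectral} and \autoref{l-vanishing-h2-F-naive}) is $R^1 f_*\mathcal{O}_Y \cong R^1 f_*\mathcal{O}_E$ and an isomorphism $H^2_x(X,\mathcal{O}_X)\cong H^0_x(X,R^1 f_*\mathcal{O}_Y)$, while the term $H^2_{f^{-1}(x)}(Y,\mathcal{O}_Y)$ that you propose to compute \emph{vanishes}. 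In other words, you have identified $H^2_x(\mathcal{O}_X)$ with the wrong term of the five-term exact sequence: the entire obstruction sits in the torsion of $R^1 f_*\mathcal{O}_E$, which you have assumed away. That this torsion is genuinely nonzero for $3$-dimensional lc singularities with a $1$-dimensional minimal lc centre is exactly the content of the paper's counterexample (\autoref{t-counterexample}); so no argument that discards $R^1f_*\mathcal{O}_Y$ and relies only on Kawamata--Viehweg-type vanishings on $Y$ and $Y_0$ can succeed, because such an argument would prove the (false) statement without the Cartier hypothesis.

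Second, the "final assembly" is left as a placeholder, and the mechanism you gesture at (the Cartier divisor as a regular element cutting a nice slice) is not how the hypothesis enters. The paper's proof converts the nonvanishing of $H^2_x(\mathcal{O}_X)$ into the statement that the fibre $E_x$ of $E\to C$ is a \emph{wild fibre} in the sense of Raynaud, then invokes \autoref{p-raynaud} (cohomological flatness criteria from \cite{Ray70}) to conclude that every irreducible component of a wild fibre is non-reduced. Since $X_0$ is Cartier, passes through $x$, and does not contain $C$, the restriction $(g^*X_0)|_E$ is supported on $E_x$ and would then have coefficients strictly greater than $1$, contradicting the slc adjunction $(E,\Diff_E(g^*X_0+g_*^{-1}\Delta))$. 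This arithmetic input — reducedness of fibres via Raynaud's theory — is the essential new idea and has no counterpart in your proposal. (A further, more minor, point: the paper cannot use an arbitrary $\mathbb{Q}$-factorial dlt modification here but constructs a specific \'etale-dlt modification in \autoref{dltlc-minimal-modification}, because the exceptional divisor over the generic point of $C$ may be a nodal cubic, which is only \'etale-snc.)
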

 
To prove \autoref{t-s3-intro}, we find a clear geometric reason for the failure of the $(S_3)$-condition at a closed point $x$ of a log canonical 3-dimensional singularity $X$ which is not a log canonical centre. 
	
	\begin{theorem} \label{t-char-fail-local-KVV-p}
		Let $(X,\Delta)$ be a 3-dimensional log canonical pair on the spectrum of a local ring, such that the residue field of the closed point $x$ is perfect of characteristic $p \neq 2,3, 5$. 
		Let $C \subset X$ be a 1-dimensional  minimal log canonical centre for $(X,\Delta)$.
		Then there exists a proper birational modification $f \colon Z \to X$ such that
		\begin{enumerate}
			\item $Z$ is Cohen-Macaulay,
			\item the exceptional divisor $E$ is $(S_2)$ and for each point $c \in C$, $E$ is normal at the generic points of the fibre $E_c$;
			\item $H^2_x(X, \MO_X) \simeq H^0_x(C, R^1f_*\mathcal{O}_E)$.
		\end{enumerate}
		In particular, if $\MO_{X,x}$ is not $(S_3)$, then $E \to C$ has a wild fibre over $x$.
	\end{theorem}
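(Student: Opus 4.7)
The plan is to construct $f \colon Z \to X$ as a $\mathbb{Q}$-factorial dlt modification of $(X,\Delta)$ that extracts a unique exceptional prime divisor $E$ with centre $C$, arranged so that $-E$ is $f$-ample. The existence of such a modification follows from the three-dimensional MMP over bases of residue characteristic $\neq 2,3,5$ developed in the works cited in the introduction: one extracts precisely the lc places whose centre is the minimal lc centre $C$, obtaining a dlt pair $(Z, E+\Delta_Z)$ with $\Delta_Z := f_*^{-1}\Delta$, and a suitable relative MMP then produces the $f$-ampleness of $-E$.

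Claim (1) follows from the Cohen--Macaulayness theorem for three-dimensional dlt singularities with perfect residue field of characteristic $> 5$, applied to $(Z, E+\Delta_Z)$. For (2), since $E$ has coefficient one in the dlt pair $(Z, E+\Delta_Z)$, the dlt adjunction results available in this characteristic range yield that $E$ is $(S_2)$ and $(R_1)$. For any $c\in C$ the generic points of the fibre $E_c$ are codimension-one points of $E$, so normality at these points is automatic from $(R_1)$.

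For (3), combine a relative vanishing theorem with a spectral sequence argument. Considering the klt pair $(Z, \Delta_Z + (1-\epsilon)E)$ for small rational $\epsilon>0$, the relation
\[
K_Z + \Delta_Z + (1-\epsilon)E \sim_{f, \mathbb{Q}} -\epsilon E
\]
together with the $f$-ampleness of $-E$ and the relative Kawamata--Viehweg vanishing for threefold klt pairs in residue characteristic $>5$ yields $R^i f_* \mathcal{O}_Z = 0$ for $i>0$. Applying $Rf_*$ to $0 \to \mathcal{O}_Z(-E) \to \mathcal{O}_Z \to \mathcal{O}_E \to 0$ and using $f_*\mathcal{O}_E = \mathcal{O}_C$ (from geometric connectedness of the fibres of the dlt blowup over the minimal lc centre), one obtains $f_*\mathcal{O}_Z(-E) = \mathcal{I}_C$, $R^1 f_* \mathcal{O}_Z(-E) = 0$ and $R^2 f_* \mathcal{O}_Z(-E) \simeq R^1 f_* \mathcal{O}_E$. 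The identification in (3) is then extracted by comparing local cohomology at $x$ of the ideal sequence $0\to\mathcal{I}_C\to\mathcal{O}_X\to\mathcal{O}_C\to 0$ with the Leray spectral sequence $H^p_x(X, R^q f_* F) \Rightarrow H^{p+q}_{E_x}(Z, F)$ applied to $F\in\{\mathcal{O}_Z,\mathcal{O}_Z(-E),\mathcal{O}_E\}$: a diagram chase using the vanishing $H^i_x(C,\mathcal{O}_C)=0$ for $i\neq 1$ and the dimension vanishing $R^q f_*\mathcal{O}_E = 0$ for $q\geq 2$ produces $H^2_x(X,\mathcal{O}_X) \simeq H^0_x(C, R^1 f_* \mathcal{O}_E)$. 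The final assertion is immediate: if $\mathcal{O}_{X,x}$ fails to be $(S_3)$, the left-hand side is nonzero, so $R^1 f_* \mathcal{O}_E$ has a section supported at $x$, witnessing a wild fibre of $E\to C$ over $x$.

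The main obstacle is the cohomological identification in (3). Although the MMP-theoretic inputs (existence of the dlt modification, Cohen--Macaulayness, $(S_2)$ of dlt divisors, and relative Kawamata--Viehweg vanishing) are available from the recent progress in characteristic $p > 5$ MMP, the precise isomorphism requires careful bookkeeping across several exact sequences and spectral sequences to ensure that extraneous contributions (in particular those coming from $H^1_x(\mathcal{O}_C)$) cancel correctly.
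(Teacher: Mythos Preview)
Your argument contains a genuine error in the vanishing step. You claim that Kawamata--Viehweg vanishing for the klt pair $\big(Z,\Delta_Z+(1-\epsilon)E\big)$, together with the relation $K_Z+\Delta_Z+(1-\epsilon)E\sim_{f,\mathbb{Q}}-\epsilon E$ and $f$-ampleness of $-E$, yields $R^if_*\mathcal{O}_Z=0$ for $i>0$. But Kawamata--Viehweg gives vanishing for $R^if_*\mathcal{O}_Z(D)$ when $D-(K_Z+B)$ is $f$-nef and $f$-big for a klt boundary $B$; for $D=0$ this requires $-(K_Z+\Delta_Z+(1-\epsilon)E)\sim_{f,\mathbb{Q}}\epsilon E$ to be $f$-nef and $f$-big, whereas $\epsilon E$ is $f$-\emph{anti}-ample. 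In fact $R^1f_*\mathcal{O}_Z=0$ cannot hold in general: combined with the long exact sequence for $0\to\mathcal{O}_Z(-E)\to\mathcal{O}_Z\to\mathcal{O}_E\to 0$ and the dimension vanishing $R^2f_*\mathcal{O}_Z(-E)=0$ (fibres of $f$ are at most curves), it would force $R^1f_*\mathcal{O}_E=0$, hence $H^2_x(X,\mathcal{O}_X)=0$ always---contradicting both the content of the theorem and the counterexamples in the paper.

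The paper proceeds differently. One proves a Grauert--Riemenschneider type vanishing $R^if_*\mathcal{O}_Z(-E)=0$ (using $-E\sim_{\mathbb{Q},X}K_Z+f_*^{-1}\Delta$, which \emph{is} of the right shape), giving $R^1f_*\mathcal{O}_Z\simeq R^1f_*\mathcal{O}_E$ rather than zero. Separately, one shows $H^2_{f^{-1}(x)}(Z,\mathcal{O}_Z)=0$ by local duality and the vanishing $R^1f_*\omega_Z=0$. The five-term sequence for $R\Gamma_x\circ Rf_*$ then gives $H^2_x(X,\mathcal{O}_X)\simeq H^0_x(X,R^1f_*\mathcal{O}_Z)\simeq H^0_x(C,R^1f_*\mathcal{O}_E)$. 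A secondary point: the construction of $Z$ is more delicate than you indicate. The paper cannot in general arrange a $\mathbb{Q}$-factorial \emph{dlt} modification with the required nefness of $-E$; it works instead with an \'etale-dlt modification (Proposition~3.5), and much of Section~2 is devoted to the surface theory needed to control this weaker notion.
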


    We now give an overview of the article.
    In \autoref{s-prelims}, we collect the various technical results on surfaces and 3-folds that we need for our proofs.
    In \autoref{s-depth-3fold}, using the MMP for 3-folds and the Kawamata--Viehweg vanishing for log canonical surfaces admitting a morphism to a curve, we show \autoref{t-char-fail-local-KVV-p}.
    In \autoref{torsion}, we review the theory of wild fibers of a fibred surface $f\colon E\to C$, developed by Raynaud in \cite{Ray70}, which we apply in combination with \autoref{t-char-fail-local-KVV-p} to conclude \autoref{t-s3-intro} in \autoref{S3-cond-lc}.
    In \autoref{ss-properness}, we combine the previous results to show \autoref{t-main} and we also present an application to the asymptotic invariance of plurigenera for minimal models of log canonical surfaces of log general type.
    In \autoref{s-counterexample}, we show the counterexample \autoref{t-counterexample} by applying a relative cone construction to an elliptic surface fibration with a wild fibre.

   \begin{remark}
	While completing this work, the first author has found an alternative construction \cite[Theorem 2]{Arv23} that can replace the role played by \autoref{t-char-fail-local-KVV-p} in this work. 
	This construction simplifies some of the technical arguments in this article. 
	Indeed, the main technical difficulty in the present approach is that in \autoref{t-char-fail-local-KVV-p}, the modification $Z$ together with the crepant bounday is not in general dlt, but only \emph{\'etale dlt}. In \cite[Theorem 2]{Arv23} the first author proves an analogous statement using a possibly non-$\mathbb{Q}$-factorial dlt modification. 
	We believe that the various vanishing statements discussed here may be of independent interest.
   \end{remark}

\textbf{Acknowledgments.}
    The authors thank J. Baudin, S.~Filipazzi, C.D.~Hacon, J.~Koll\'{a}r and Q.~Posva for interesting discussions and useful comments on the topic of this article. 
    We are grateful to the reviewer for carefully reading our article and for suggesting multiple improvements. 
    EA is supported by SNF \#P500PT 203083, FB is partially supported by the NSF under grant number DMS-1801851 and the grant \#200021/192035 from the Swiss National Science Foundation, ZsP is supported by the grant \#200021/192035 from the Swiss National Science Foundation and the ERC starting grant \#804334.
\section{Preliminaries}\label{s-prelims}
	
\subsection{Basic notation}
\label{sec:basic_notation}

\begin{notation}
\label{notation:basic}
\emph{Throughout this article we  work over a fixed base ring $R$ and $X$, $Y$ and $Z$  always denote quasi-projective schemes of pure dimension $n$ over $R$, unless otherwise stated.} 

The base ring $R$ is always be assumed to be Noetherian, excellent,  of finite Krull dimension, and admitting a dualising complex $\omega_{R}^{\bullet}$. Furthermore, $R$ will always be assumed to be of pure dimension $d$. Here, and in general in the present article, dimension means  the absolute dimension, not the relative dimension over $R$.
\end{notation}

We normalise $\omega_{R}^{\bullet}$ as in \cite[Section 2.1]{7authors}: $H^{-i}(\omega_R^{\bullet}) =0$ if $i>d$ and with  $H^{-d}(\omega_R^{\bullet}) \neq 0$. The first non-zero cohomology sheaf $\omega_R \coloneqq H^{-d}(\omega_R^{\bullet})$ is the dualising sheaf of $R$. 
For the upper-shriek functor, we follow the convention of  \cite[\href{https://stacks.math.columbia.edu/tag/0A9Y}{Tag 0A9Y}]{stacks-project}.
By \cite[\href{https://stacks.math.columbia.edu/tag/0AA3}{Tag 0AA3}]{stacks-project}, the complex $\omega_X^{\bullet} \coloneqq \pi^{!} \omega_R^{\bullet}$ is a dualising complex for $X$, where $\pi \colon X \to \Spec(R)$ is the structure morphism.
We then define the dualising sheaf  $\omega_X$ of $X$ to be the first non-zero cohomology sheaf of the complex $\omega_X^\bullet$.

We say that $X$ is a curve (resp.~ a surface, a 3-fold) if it is a connected reduced scheme of dimension 1 (resp.~ 2, 3). We say a proper morphism $f \colon X \to Y$ is a \emph{contraction} if $f_*\mathcal{O}_X=\mathcal{O}_Y$.

Given a closed subscheme $Z$ of $X$, we denote by $\Gamma_{Z,X}$ the functor of global sections with support on $Z$. The induced right derived functor is denoted by $R\Gamma_{Z,X}$ and its $i$-th cohomology by $R^i \Gamma_{Z,X}$ (or $H^i_Z(X, -)$). These groups are called the $i$-th local cohomology groups with support on $Z$.

A Weil $\bQ$-divisor $D$ on a connected reduced scheme $X$ is a formal sum of codimension $1$ integral subschemes with rational coefficients. 
As we will work with non-normal schemes, we recall the definition of the more restrictive class of Mumford divisors following \cite{k-moduli}.
    \begin{definition}
		A Weil $\bQ$-divisor $B$ on $X$ is called a \emph{Mumford $\bQ$-divisor} if $X$ is regular at all
		generic points of $\Supp B$.
	\end{definition}
Equivalently, $B$ is Mumford if $\Supp B$ does not contain any irreducible component of codimension 1 of the divisorial part of the conductor $D \subset X$.
We say a Mumford $\mathbb{Q}$-divisor $D$ is $\mathbb{Q}$-Cartier if there exists $n>0$ such that $nD$ is a Cartier divisor. 
We refer to \cite[Section 2.5]{7authors} for the various notions of positivity (as ample, nef, big) for $\mathbb{Q}$-Cartier $\mathbb{Q}$-divisors. 
When $R=k$ is a field and $X$ is integral, then for a nef $\mathbb{Q}$-Cartier $\mathbb{Q}$-divisor $L$ the \emph{numerical dimension} of $L$ is $\nu(L):=\text{max} \left\{n \geq 0 \mid L^{n} \neq 0 \right\}$.
\begin{definition}
We say $(X, \Delta)$ is a \emph{couple} if 
\begin{enumerate}
    \item $X$ is a reduced, pure-dimensional, $(S_2)$ and $(G_1)$ (where the latter means that  $\omega_X$ is locally free at codimension 1 points of $X$) scheme,
    \item $\Delta$ is an effective Mumford $\mathbb{Q}$-divisor.
\end{enumerate} 
\end{definition}
An open set $U \subset X$ is \emph{big} if $\codim_{X}(X \setminus U) \geq 2$.
If $X$ is $(S_2)$ and $(G_1)$, then any reflexive sheaf is determined on big open sets  and a Mumford divisor $D$ defines a reflexive sheaf $\mathcal{O}_X(D)$ \cite{Hartshorne_Generalized_divisors_on_Gorenstein_schemes}. 
By \cite[Paragraph 5.6]{kk-singbook}, in this case the Mumford class group coincide with the group of isomorphism classes of reflexive sheaves of rank 1 that are locally free in codimension $1$. As a consequence,  if $X$ is pure-dimensional, $(S_2)$ and $(G_1)$, then there exists a Mumford divisor $K_X$ such that $\mathcal{O}_X(K_X) \simeq \omega_X$ (note $\omega_X$ is reflexive by \cite[\href{https://stacks.math.columbia.edu/tag/0AWN}{Tag 0AWN}]{stacks-project}). 
\begin{definition}
We say a couple $(X, \Delta)$ is a \emph{pair} if  $K_X+\Delta$ is a Mumford $\mathbb{Q}$-Cartier divisor. 
\end{definition}
If $(X, \Delta)$ is a pair and $X$ is normal, for every proper birational morphism of normal schemes $\pi \colon Y \to X$, we can write 
$$K_Y+\pi_*^{-1}\Delta=\pi^{*}(K_X+\Delta)+\sum_i a(E_i, X, \Delta) E_i,$$
where $E_i$ run through the $\pi$-exceptional divisors and $a(E_i, X, \Delta) \in \mathbb{Q}$ are called the \emph{discrepancies} of $E_i$ with respect to $(X,\Delta)$. We define $\Delta_Y:=\pi_*^{-1}\Delta -\sum_i a(E_i, X, \Delta) E_i$ to be the \emph{crepant pull-back} of $\Delta$ on $Y$.
We say that $(X, \Delta)$ is a \emph{klt} (resp.~ log canonical or \emph{lc}) pair if $X$ is normal and for every proper birational maps of normal schemes $\pi \colon Y \to X$ then $\lfloor{\Delta_Y \rfloor} \leq 0$ (resp.~ the coefficients of $\lfloor{\Delta_Y \rfloor}$ are $\leq 1$). 

We say a pair $(X,D)$ of pure dimension $n$ is \emph{snc} (=simple normal crossings) if for every closed point $x \in X$, $X$ is regular at $x$ and there exists local coordinates $t_1, \dots, t_n$ such that $\Supp(D) \subset (t_1 \cdots t_n=0)$. 
Note that being snc is a local property in the Zariski topology (but not in the \'etale topology). 
We denote by $\nsnc(X, \Delta)$ the non-snc locus of $(X, \Delta)$.
We say $(X, \Delta)$ is \emph{dlt} if it is log canonical and for every exceptional divisor $E$ such that $\cent_X(E) \subset \nsnc(X, \Delta)$ we have $a(E_i, X, \Delta)>-1.$ 

\begin{definition}
Let $(X, \Delta)$ be a pair. The \emph{\'etale-snc} locus $\etsnc(X, \Delta)$ is the locus where $(X, \Delta)$ is snc in the \'etale topology. This is a Zariski open set of $X$. 

We say that a pair $(X,\Delta)$ is \emph{\'etale-dlt}  if for every exceptional divisor $E$ over $X$ such that $\cent_X E \subseteq X \setminus \etsnc(X, \Delta)$ we have $a(E; X, \Delta) > -1$. 
\end{definition}

\subsection{Semi-log canonical singularities}

If $X$ is reduced, then we can consider its normalisation morphism $\pi \colon \overline{X} \to X$ (see \cite[\href{https://stacks.math.columbia.edu/tag/035N}{Tag 035N}]{stacks-project}). The \emph{conductor} ideal of $\pi$ is the largest ideal $\sI$ of $\sO_X$ which is also an ideal of $\sO_{\oX}$. It can be also defined explicitly in multiple ways:
\begin{multline*}
\sI=\bigset{s \in \sO_X}{s \cdot \pi_* \sO_{\overline{X}} \subseteq \sO_X \textrm{ as subsheaves of the field of total fractions}} 
\\ \im \Big(\bigset{\phi \in \Hom_{\pi_* \sO_{\oX}}(\pi_* \sO_{\oX}, \pi_* \sO_{\oX})}{\im \phi \subseteq \sO_X} \to \sO_X \Big) 
\\ = \im \big( \mathcal{H}\text{om}_{\mathcal{O}_X}(\pi_{*}\mathcal{O}_{\overline{X}},\mathcal{O}_X) \to \sO_X \big)
\end{multline*}
The \emph{conductor subscheme} $D$ of $X$ (resp.~ $\overline{D}$ of $\oX$) is the subscheme defined by $\mathcal{I}$ in $X$ (resp.~ in $\overline{X}$).

We recall the definition of the singularities of the MMP for non-normal varieties, following \cite{kk-singbook}.
We start by explaining what a node is.

\begin{definition}[{\cite[1.41]{kk-singbook}}]\label{def: node}
We say that a scheme $S$ has a \emph{node} at a codimension 1 point $s \in S$ if $\mathcal{O}_{S,s} \simeq A/(f)$, where $(A, \mathfrak{m})$ is a regular local ring of dimension 2, $f \in \mathfrak{m}^2$ and $f$ is not a square in $\mathfrak{m}^2\setminus \mathfrak{m}^3$. Sometimes we equivalently say that $S$ is nodal at $s \in S$.
\end{definition}

\begin{remark}
    Let $(A, \mathfrak{m})$ be a regular local ring of dimension 2 and let $f \in \mathfrak{m}^2$ such that $\Spec(A/(f))$ is a node.  
    It is easy to see that the effective divisor given by $(f=0)$ has multiplicity 2 at the closed point of $\Spec(A)$ and $f$ is not irreducible if and only if the pair $(W, D):=(\Spec(A), (f=0))$ is snc. 
    Examples where the pair is not snc are given in \cite[Examples, page 1]{kk-singbook}.
    If $\text{char}(k(s)) \neq 2$, then it is clear that there exists an \'etale neighborhood $V$ of $W$ for which $(V,D_V)$ is snc.
\end{remark}

\begin{definition}
The scheme $X$ (quasi-projective over $R$ as assumed in \autoref{notation:basic}) is said to be \emph{demi-normal} if it is pure-dimensional, it satisfies Serre's condition $(S_2)$ and its codimension 1 points are either regular or nodal. 
\end{definition}

If $X$ is demi-normal, then $D$ and $\oD$ are reduced closed subschemes of pure codimension $1$ (see \cite[Line 14 of page 189]{kk-singbook}.
We use the following definition of semi-log canonical pairs  in the present article.

\begin{definition}
\label{def:slc}
We say $(X, \Delta)$ is a \emph{semi-log canonical pair} (or \emph{slc}) if 
\begin{enumerate}
    \item $X$ is demi-normal and $(X, \Delta)$ is a pair;
    \item the normalised pair $\big(\overline{X}, \overline{D}+\overline{\Delta}\big)$ is log canonical, where $\overline{D}$ is the conductor subscheme.
\end{enumerate}
\end{definition}

Note that in \autoref{def:slc}, $\big(\overline{X}, \overline{D}+\overline{\Delta}\big)$ is automatically a pair as it is crepant to $(X, \Delta)$. 
As in this article we are interested in understanding the locus of strictly log canonical  singularities, we recall the terminology on log canonical places and centres.

\begin{definition}
	 Let $(X,\Delta)$ be a pair. We denote by $\nklt(X,\Delta)$ the \emph{non-klt locus} of $(X,\Delta)$, which is the closed subset of $X$ consisting of points $x$ of $X$ for which $(X,\Delta)$ is not klt near $x$. 
	 
	 Let $(X,\Delta)$ be a log canonical pair. We say that an irreducible exceptional divisor $E$ for proper birational modification $f \colon Y\to X$ is an \emph{log canonical place} if $a(E,X,\Delta)=-1$.
	 A closed subset $Z \subset X$ is a \emph{log canonical centre} if there exists a log canonical place $E$ such that $\cent_X(E)=Z$.
\end{definition}

We recall the construction of double covers of demi-normal varieties explained in \cite[5.23]{kk-singbook}. This allows to reduce many questions to slc pairs whose irreducible components are regular in codimension 1.

\begin{proposition}[{\cite[5.23]{kk-singbook}}] \label{l-2cover-slc}
Let $(X, \Delta)$ be a slc pair such that $\frac{1}{2} \in \mathcal{O}_X$. Then there exists a finite morphism $\pi \colon \widetilde{X} \to X$ of degree 2 such that
\begin{enumerate}
    \item $\widetilde{X}$ is $(S_2)$, 
    \item $\pi$ is \'etale in codimension 1,
    \item the irreducible components of $\widetilde{X}$ are $(R_1)$ (\emph{i.e.} regular in codimension 1);
    \item the normalisation of $\widetilde{X}$ is a disjoint union of two copies of the normalisation of $X$,
    \item if $K_{\widetilde{X}} + \Delta_{\widetilde{X}} = \pi^*(K_X+\Delta)$, the pair $(\widetilde{X}, \Delta_{\widetilde{X}})$ is slc.
\end{enumerate}
\end{proposition}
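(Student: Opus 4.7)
The plan is to construct $\wt{X}$ by an explicit scheme-theoretic gluing along the conductor of the normalisation. Let $\pi_0 \colon \oX \to X$ be the normalisation and let $D \subset X$, $\oD \subset \oX$ be the conductor subschemes, which are reduced and of pure codimension $1$ because $X$ is demi-normal. Since $X$ is nodal in codimension $1$ and $\tfrac{1}{2} \in \mathcal{O}_X$, the restriction $\oD \to D$ is étale of degree $2$ at the generic points of $D$, and the associated generic involution extends uniquely to a global involution $\tau \colon \oD \to \oD$ over $D$, because $\oD$ is $(S_1)$ (it is a reduced pure-codimension-$1$ subscheme of an $(S_2)$ scheme).

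With $\tau$ in hand, I would build $\wt{X}$ as the scheme-theoretic pushout
\[
\wt{X} = \oX_1 \sqcup_{\oD} \oX_2,
\]
where $\oX_1, \oX_2$ denote two copies of $\oX$, the inclusion into $\oX_1$ is the standard one, and the inclusion into $\oX_2$ is twisted by $\tau$. This pushout exists as a scheme because $\pi_0$ is finite, so both inclusions are closed embeddings of schemes finite over $X$ (Ferrand-type gluing, developed in detail in \cite[Chapter 5]{kk-singbook}). The universal property, combined with the identity $\pi_0 = \pi_0 \circ \tau$ on $\oD$, produces the finite degree-$2$ morphism $\pi \colon \wt{X} \to X$, and item (4) on the normalisation being $\oX_1 \sqcup \oX_2$ is immediate from the construction.

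I would then verify (1), (2), (3). For (1), the $(S_2)$ condition on $\wt{X}$ comes from the fact that pushouts of $(S_2)$ schemes along a reduced pure-codimension-$1$ subscheme preserve $(S_2)$, a consequence of the exact sequence relating the dualising sheaves of the pieces. For (2), at a generic point of $\oD$ the local étale model of $X$ is $\Spec k[x,y]/(xy)$; a direct computation shows that the local model of $\wt{X}$ there is two trivially glued copies of this nodal picture, and $\pi$ is the obvious étale double cover. The assumption $\tfrac{1}{2} \in \mathcal{O}_X$ is essential in this step, because in characteristic $2$ the double cover would be inseparable. For (3), the irreducible components of $\wt{X}$ come from those of $\oX_1$ and $\oX_2$ glued along $\oD$, and since $\oX$ is normal and any codimension-$1$ node created by the gluing lies where two distinct components of $\wt{X}$ meet, each individual component is still $(R_1)$.

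For the slc statement (5), since $\pi$ is étale in codimension $1$ by (2) we have $K_{\wt{X}} = \pi^\ast K_X$ at the level of Mumford divisors, so $\Delta_{\wt{X}} = \pi^\ast \Delta$ is an effective Mumford divisor. The normalised pair of $(\wt{X}, \Delta_{\wt{X}})$ is then two disjoint copies of the normalised pair $(\oX, \oD + \overline{\Delta})$ of $(X, \Delta)$, which is log canonical by hypothesis. The main obstacle I anticipate is the local verification of (1) and (2) at the generic points of $\oD$: both require delicate algebraic bookkeeping for the pushout and both genuinely rely on $\tfrac{1}{2} \in \mathcal{O}_X$.
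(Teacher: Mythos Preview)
The paper does not give its own proof of this proposition; it simply cites \cite[5.23]{kk-singbook}. Your outline is essentially the construction carried out in that reference: take two copies of the normalisation $\oX$ and glue them along the conductor $\oD$, twisting one inclusion by the Galois involution $\tau$ of $\oD \to D$. Your verification of items (1)--(5) follows the same logic as Koll\'ar's.

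One point deserves more care. You assert that the generic involution on $\oD$ extends to a global involution $\tau \colon \oD \to \oD$ ``because $\oD$ is $(S_1)$''. Being reduced and $(S_1)$ is not by itself enough to extend a birational automorphism; in general that only guarantees an extension to the normalisation $\oD^n$. In Koll\'ar's treatment the involution is produced on $\oD^n$ (see \cite[5.2--5.3]{kk-singbook}), and one either glues along $\oD^n$ or argues separately that $\tau$ descends to $\oD$ using the explicit degree-$2$ algebra structure and the splitting provided by $\tfrac{1}{2} \in \sO_X$. This does not affect the outcome, since the two constructions agree in codimension $1$ and the $(S_2)$ property pins $\wt{X}$ down, but you should state precisely which scheme you are gluing along and why $\tau$ lives there.
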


	We need also a non-normal version of dlt-ness.
	We refer to \cite[Definition 1.10]{kk-singbook} for the definition of semi-snc pair.

\begin{definition}
An slc pair $(X,\Delta)$ is \emph{semi-dlt} if $a(E,X,\Delta)>-1$ for every exceptional divisor $E$ such that the generic point of $\cent_X E$ is contained in the locus where $(X,\Delta)$ is not semi-snc, where semi-snc is defined in \cite[Def 1.10]{kk-singbook}.
\end{definition}

As for dlt, the notion of semi-dlt is not local in the \'etale topology.

\subsection{Log canonical surface singularities}

	In this section we collect some results on 2-dimensional excellent surface singularities, relying on the classification scheme of \cite[Section 3.3]{kk-singbook}.
	
	\begin{notation}
	Besides the assumptions on our base ring $R$ stated in \autoref{notation:basic}, in the present section we suppose $R$ is integrally closed, local and of dimension 2 with maximal ideal $\mathfrak{m}$ and residue field $k:=R/\mathfrak{m}$. 
	Additionally we set $X= \Spec R$, and we set $\Delta$ to be a $\bQ$-divisor on $X$ for which $(X, \Delta)=(\Spec(R), \Delta)$ is a pair. 
	We denote by $x \in X$ the closed point of $X$. 
	\end{notation}
	
\begin{definition}\label{def-mnm-log}
	Let $\pi \colon Y \to X$ be a projective birational morphism of normal surfaces. We say $\pi$ is a \emph{log minimal resolution} of $(X, \Delta)$ if
	\begin{enumerate}
		\item $Y$ is regular and $\pi_*^{-1} \lfloor{ \Delta \rfloor}$ is regular (as a closed subscheme);
		\item $K_Y+\pi_*^{-1} \Delta$ is $\pi$-nef;
		\item $\mult_y \pi_*^{-1} \Delta \leq 1$ for every $y \in Y$;
		\item the support of $\Ex(\pi) + \pi_*^{-1}\Delta$ has a node at every intersection point of  $\Ex(\pi)$ and $ \pi_*^{-1} \lfloor \Delta \rfloor$.
	\end{enumerate}
\end{definition}

\begin{remark} \label{rem-construction-log-mininal-resolution}
    The existence of a log minimal resolution for surfaces is proven in \cite[Theorem 2.25.a]{kk-singbook}. 
    The construction goes as follows: if $f \colon W \to X$ is a projective log resolution of $(X, \Delta)$ such that $f_*^{-1}\Delta$ is regular, then $Y$ is obtained as the output of a $(K_W+f_*^{-1}\Delta)$-MMP over $X$.
\end{remark}
	
	We need a slightly modified version of the above in the case of dlt surfaces.
	\begin{lemma} \label{l-nice-resolution-dlt}
	  Assume that $(X, \Delta)$ is a dlt surface pair. Then there exists a projective birational morphism $\pi \colon Y \to X$ such that
	    \begin{enumerate}
	        \item $Y$ is regular and $K_Y+\pi_*^{-1} \Delta$ is $\pi$-nef;
	        \item if $K_Y+\Gamma \sim_{\mathbb{Q}} \pi^*(K_X+\Delta)$, then $\lfloor \Gamma \rfloor = \pi_*^{-1} \lfloor \Delta \rfloor$;
	        \item the support of $\Ex(\pi) + \pi_*^{-1}\Delta$ has a node at every intersection point of  $\Ex(\pi)$ and $ \pi_*^{-1} \lfloor \Delta \rfloor$.
	    \end{enumerate}   
	 \end{lemma}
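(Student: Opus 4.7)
The plan is to let $\pi \colon Y \to X$ be a projective log minimal resolution of $(X,\Delta)$ in the sense of Definition \autoref{def-mnm-log}. Its existence is provided by Remark \autoref{rem-construction-log-mininal-resolution}: one starts with a projective log resolution $f \colon W \to X$ for which $f_*^{-1}\Delta$ is regular and runs a $(K_W + f_*^{-1}\Delta)$-MMP over $X$.

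With this choice, conclusions (1) and (3) of the lemma are essentially built in: the regularity of $Y$ and the $\pi$-nefness of $K_Y + \pi_*^{-1}\Delta$ come from conditions (1) and (2) of \autoref{def-mnm-log}, while the nodal intersection statement (3) is precisely condition (4) of \autoref{def-mnm-log}.

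The content of the lemma is therefore to upgrade the log minimal resolution using the dlt hypothesis in order to obtain (2). I would write
\[
\Gamma = \pi_*^{-1}\Delta + \sum_i (-a_i)\, E_i,
\]
where the $E_i$ are the $\pi$-exceptional prime divisors and $a_i := a(E_i; X, \Delta)$. Because $X = \Spec R$ is local of dimension $2$ and $\pi$ is birational, the center of every $E_i$ on $X$ is the closed point $x$. If $(X,\Delta)$ is already snc at $x$, one can take $\pi = \id$ and (2) is immediate; otherwise $x \in \nsnc(X,\Delta)$, and then the dlt assumption forces $a_i > -1$ for every $i$. Hence the exceptional coefficients $-a_i$ of $\Gamma$ are all strictly less than $1$, which yields
\[
\lfloor \Gamma \rfloor = \lfloor \pi_*^{-1}\Delta \rfloor = \pi_*^{-1}\lfloor \Delta \rfloor,
\]
the second equality following from the fact that strict transforms of distinct prime Mumford divisors remain distinct and preserve coefficients.

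I do not foresee a real obstacle: the MMP construction of the log minimal resolution is available in dimension $2$ (for excellent surfaces) via \autoref{rem-construction-log-mininal-resolution}, and the only extra input is the elementary discrepancy bookkeeping made possible by the dlt hypothesis. The slight subtlety is to make sure that one is allowed to invoke dlt for all exceptional divisors over $x$ and not only for some preferred model, but this is exactly why the center is checked against $\nsnc(X,\Delta)$ rather than against a specific resolution.
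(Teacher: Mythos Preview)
Your argument has a genuine inconsistency. You declare at the outset that $\pi$ is the log minimal resolution of Definition~\ref{def-mnm-log}, but then in the snc case you switch to $\pi=\id$. These are not the same morphism: the log minimal resolution demands that $\pi_*^{-1}\lfloor\Delta\rfloor$ be \emph{regular} as a subscheme, so when $(X,\Delta)$ is snc at $x$ with two components of $\lfloor\Delta\rfloor$ meeting there, the construction of Remark~\ref{rem-construction-log-mininal-resolution} must blow up $x$. A direct computation shows that this blow-up extracts an exceptional curve $E$ with $a(E,X,\Delta)=-1$, so $\lfloor\Gamma\rfloor=\pi_*^{-1}\lfloor\Delta\rfloor+E\neq\pi_*^{-1}\lfloor\Delta\rfloor$, and condition~(2) fails. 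Thus the single object you fixed at the start does not do the job, and your case split silently replaces it with a different morphism.

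The paper avoids this by starting instead from a \emph{thrifty} log resolution $f\colon W\to X$, i.e.\ one that is an isomorphism over the nodes of $\lfloor\Delta\rfloor$, and then running the $(K_W+f_*^{-1}\Delta)$-MMP over $X$. Because $f$ already extracts no log canonical places (thriftiness plus dlt), neither does the resulting $\pi$, which gives~(2) directly without a case split; regularity of $Y$ follows since this is also a $K_W$-MMP, and~(3) is checked as in the proof of \cite[Theorem~2.25.a]{kk-singbook}. Your approach can be repaired either by making the case split explicit from the start (identity in the snc case, log minimal resolution otherwise) or, more cleanly, by adopting the thrifty resolution as the paper does.
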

	 
	 \begin{proof}
	   Let $f \colon W \to X $ be a thrifty log resolution of $(X,\Delta)$ (\cite[Lemma 2.79]{kk-singbook}). As $X$ is a surface,  $f$ being thrifty means that it is an isomorphism at the nodes of  $\lfloor \Delta \rfloor$. This we can achieve by running our resolution algorithm by excluding the nodes of  $\lfloor \Delta \rfloor$. 
	   
	   Run then a $(K_W + f_*^{-1}\Delta)$-MMP over $X$ ending with $\pi \colon Y \to X$ such that $K_Y+\pi_*^{-1}\Delta$ is $\pi$-nef.
	   As this is also a $K_W$-MMP we deduce $Y$ is regular by \cite[Theorem 2.29]{kk-singbook}. 
	   As $f$ does not extract log canonical places, (b) is immediate.
	   To finally verify (c) we argue as in the proof of \cite[Theorem 2.25.a]{kk-singbook}.
	 \end{proof}

Our next goal is a precise understanding of the exceptional divisor of a log minimal resolution of a log canonical  singularity.
We start by recalling \cite[Theorem 2.31]{kk-singbook} on the reduced boundary of 2-dimensional log canonical  singularities.

	\begin{theorem}\label{nodes} Assume that $(X,\Delta= E + D)$ is log canonical, where $E=\sum_i E_i$ has only coefficients 1. Then either:
		\begin{enumerate}
			\item $E$ is regular at $x$, or
			\item $E$ has a node at $x$, no components of the support of $D$ contain $x$ and every exceptional divisor of a minimal log resolution has discrepancy $-1$.
		\end{enumerate} 
	\end{theorem}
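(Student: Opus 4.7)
The plan is to compute on a log minimal resolution $\pi\colon Y\to X$ of $(X,\Delta)$, whose existence is guaranteed by \cite[Theorem 2.25.a]{kk-singbook}. Write the crepant pullback
\[
K_Y+\pi_*^{-1}\Delta+\sum_i c_i F_i=\pi^*(K_X+\Delta),
\]
where the $F_i$ are the exceptional prime divisors and $c_i\in[0,1]$ by log canonicity. If $\pi$ is an isomorphism over a Zariski neighbourhood of $x$, then the regularity of $\pi_*^{-1}E$, which is part of the definition of a log minimal resolution, forces $E$ to be regular at $x$, placing us in case~(1). So we may henceforth assume that $T:=\pi^{-1}(x)\cap\Ex(\pi)$ is a nonempty, connected configuration of exceptional curves.

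For each component $F_j\subset T$, intersecting the displayed identity with $F_j$ and substituting $K_Y\cdot F_j=2g(F_j)-2-F_j^2$ yields
\[
2g(F_j)-2+(c_j-1)F_j^2+\pi_*^{-1}\Delta\cdot F_j+\sum_{i\neq j}c_iF_i\cdot F_j=0.
\]
All three terms after $2g(F_j)-2$ are nonnegative: $(c_j-1)F_j^2\geq 0$ because $c_j\leq 1$ and $F_j^2<0$, while $\pi_*^{-1}\Delta\cdot F_j\geq 0$ and $c_iF_i\cdot F_j\geq 0$ for $i\neq j$. The $\pi$-nefness of $K_Y+\pi_*^{-1}\Delta$, another defining property of a log minimal resolution, translates into $\sum_i c_i F_i\cdot F_j\leq 0$, so $\sum c_iF_i$ is anti-nef on the negative-definite configuration $T$. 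Since $\pi_*^{-1}E$, a boundary component of coefficient $1$, meets $T$ by assumption, a standard maximality argument for anti-nef divisors on negative-definite trees propagates $c_i=1$ throughout the connected component of $T$ meeting $\pi_*^{-1}E$; by connectedness of $T$ this yields $c_i=1$ for every $F_i\subset T$, which already gives the discrepancy claim in conclusion~(2).

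With $c_i=1$ on $T$, the adjunction identity above applied to an $F_j\subset T$ meeting $\pi_*^{-1}E$ reduces to
\[
2g(F_j)-2+\pi_*^{-1}\Delta\cdot F_j+\sum_{i\neq j}F_i\cdot F_j=0.
\]
Because $E$ is not regular at $x$, its regular strict transform $\pi_*^{-1}E$ must meet $T$ in at least two distinct points; tracking these contributions along $T$ exactly saturates the identity, leaving no room for a component of $\pi_*^{-1}D$ of positive coefficient to meet $T$. Hence no component of $D$ passes through $x$. Finally, since $\pi_*^{-1}E$ is regular and the log minimal resolution hypothesis $\mult_y\pi_*^{-1}\Delta\leq 1$ rules out tangential meetings, contracting $T$ produces two transverse smooth branches of $E$ at $x$, i.e.\ $E$ has a node at $x$. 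The main obstacle in carrying out this outline rigorously is the propagation step establishing $c_i=1$ throughout $T$, which relies on the delicate interplay between the anti-nefness of $\sum c_iF_i$, the negative-definiteness of the intersection matrix of $T$, and the log minimal resolution conditions on $\pi_*^{-1}\Delta$.
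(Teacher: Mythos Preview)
The paper does not prove this statement; it merely recalls \cite[Theorem 2.31]{kk-singbook}. So your proposal should be judged on its own merits, and there are genuine gaps.

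First, your opening case split is misaligned with the statement. You distinguish ``$\pi$ is an isomorphism near $x$'' from ``$T\neq\emptyset$'', but the dichotomy you must establish is ``$E$ regular at $x$'' versus ``$E$ singular at $x$''. These are not the same: take $X$ with an $A_1$ point at $x$ and $E$ a single regular curve through it, so that the log minimal resolution has one $(-2)$-curve $F$ meeting $\pi_*^{-1}E$ once. Here $T\neq\emptyset$, $\pi_*^{-1}E$ meets $T$, yet the crepant coefficient is $c=\tfrac12<1$. This already falsifies your propagation claim in the generality in which you state it: anti-nefness of $\sum c_iF_i$ together with ``$\pi_*^{-1}E$ meets $T$'' does \emph{not} force $c_i=1$. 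Whatever ``standard maximality argument'' you have in mind must use that $E$ is singular at $x$, i.e.\ that $\pi_*^{-1}E$ meets $T$ with total multiplicity at least $2$; and even then the argument needs to be written out, since anti-nefness on a negative-definite lattice by itself only yields effectivity, not that the coefficients attain the upper bound $1$. You explicitly flag this step as ``the main obstacle'', and indeed it is where the proof is missing.

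Secondly, the adjunction identity $K_Y\cdot F_j=2g(F_j)-2-F_j^2$ is written for curves over an algebraically closed field. The statement in the paper is for excellent local surfaces, where $H^0(F_j,\mathcal{O}_{F_j})$ need not be the residue field and the exceptional components need not be regular; the correct replacement $(K_Y+F_j)\cdot F_j=\deg\omega_{F_j}$ requires more bookkeeping (compare the case analysis in \autoref{p-lmn-etale-dlt}). Finally, your concluding sentence that ``contracting $T$ produces two transverse smooth branches'' jumps from ``at least two'' intersection points to exactly two and to a node in the precise sense of \autoref{def: node}; this needs the saturation of the adjunction identities you allude to, which in turn depends on the unproved propagation step.
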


We will need the following observation on conics.

	\begin{lemma}\label{l-nodal}
		Let $k$ be a separably closed field and let $C$ be a $k$-projective integral Gorenstein curve.
		Suppose that $C$ is singular and $\characteristic k \neq 2$. 
		Then $\deg_k \omega_{C/k} \geq 0$.
	\end{lemma}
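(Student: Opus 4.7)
My plan is to reduce the inequality, via the normalization, to a purely local statement at a singularity and then analyse the Gorenstein condition using a parity consequence of $\characteristic k \neq 2$.

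Let $\pi \colon \tilde C \to C$ denote the normalization, set $k_2 := H^0(\tilde C, \mathcal{O}_{\tilde C})$, $g_2 := \dim_{k_2} H^1(\tilde C, \mathcal{O}_{\tilde C})$, and $\delta := \dim_k H^0\bigl(C, \pi_*\mathcal{O}_{\tilde C}/\mathcal{O}_C\bigr)$. Because $k$ is separably closed, $k_2/k$ is purely inseparable, and because $C$ is Gorenstein, Serre duality gives $\deg_k \omega_{C/k} = -2\chi_k(\mathcal{O}_C)$. Combining this with the exact sequence $0 \to \mathcal{O}_C \to \pi_*\mathcal{O}_{\tilde C} \to \pi_*\mathcal{O}_{\tilde C}/\mathcal{O}_C \to 0$ and the identity $\chi_k(\mathcal{O}_{\tilde C}) = [k_2:k](1-g_2)$ yields
$$\deg_k \omega_{C/k} \;=\; 2\delta - 2[k_2:k](1-g_2).$$
If $g_2 \geq 1$ the conclusion is immediate, so the task reduces to proving $\delta \geq [k_2:k]$ under $g_2 = 0$.

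Note that $k_2$ is itself separably closed, being purely inseparable over the separably closed $k$. Assuming $g_2 = 0$, I would first identify $\tilde C \cong \mathbb{P}^1_{k_2}$: Riemann--Roch gives $h^0(\tilde C, \omega_{\tilde C}^{-1}) = 3$, so there is an effective divisor of $k_2$-degree $2$ on $\tilde C$. Over $k_2$ separably closed, every closed point of $\tilde C$ has $k_2$-degree equal to $1$ or to a power of $\characteristic k$, which in characteristic $\neq 2$ is odd and hence $\geq 3$; thus a degree-$2$ divisor must be the sum of two $k_2$-rational points. Taking one such point $P$, the degree-one line bundle $\mathcal{O}(P)$ identifies $\tilde C$ with $\mathbb{P}^1_{k_2}$.

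For the inequality $\delta \geq [k_2:k]$ I would work locally at any singular point $p \in C$. Let $\tilde p_1, \dots, \tilde p_r$ be the preimages of $p$ in $\tilde C$, with residue fields $L_i \supseteq k_2$, and write the conductor ideal as $\mathfrak{c} = \prod_i \mathfrak{m}_{\tilde p_i}^{n_i}$. Setting $A = \mathcal{O}_{C,p}$ and $\tilde A$ its normalisation, the Gorenstein identity $\dim_k(\tilde A/\mathfrak{c}) = 2\dim_k(\tilde A/A)$ reads $\sum_i n_i [L_i:k] = 2\delta_p$. If $r \geq 2$ then each $n_i \geq 1$ (otherwise the idempotent with a $1$ in the $j$-th branch would lie in $\mathfrak{c} \subseteq A$, contradicting locality of $A$), so $\sum n_i \geq 2$. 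If $r = 1$ then $[L_1:k]$ is a power of $\characteristic k \neq 2$, hence odd, so the equation $n_1[L_1:k] = 2\delta_p$ forces $n_1$ to be even, and in particular $n_1 \geq 2$. In either case $2\delta_p \geq 2[k_2:k]$, so $\delta \geq \delta_p \geq [k_2:k]$, which finishes the proof.

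The decisive use of $\characteristic k \neq 2$ is the parity argument in the unibranch case, and this is where I expect the main subtlety to lie. In characteristic $2$ the argument breaks down: one can realise Gorenstein unibranch singularities with conductor exponent $n_1 = 1$ by pinching a $k_2$-rational point to an index-$2$ purely inseparable subfield of $k_2$, producing genuine counterexamples to $\delta \geq [k_2:k]$ and hence to $\deg_k \omega_C \geq 0$.
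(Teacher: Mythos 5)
Your proof is correct, but it takes a genuinely different route from the paper's. The paper argues by contradiction: assuming $\deg_k\omega_{C/k}<0$, it invokes \cite[Lemma 10.6]{kk-singbook} to embed $C$ as a conic in $\mathbb{P}^2$, base changes to $\overline{k}$ (irreducibility survives because $k$ is separably closed), and then uses the classification of irreducible conics over an algebraically closed field: the double line is excluded in characteristic $\neq 2$, and a regular conic would force $C$ itself to be regular by faithfully flat descent. You instead compute $\deg_k\omega_{C/k}=2\delta-2[k_2:k](1-g_2)$ via Riemann--Roch and the normalization sequence, reduce to showing $\delta\geq[k_2:k]$ when $g_2=0$, and get that from the Gorenstein conductor identity $\sum_i n_i[L_i:k]=2\delta_p$ together with the observation that residue degrees over a separably closed field of characteristic $\neq 2$ are odd, forcing $n_1\geq 2$ in the unibranch case. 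Your version is more self-contained (it avoids the conic embedding and classification, needing only the standard length formula $\ell(\tilde A/\mathfrak{c})=2\ell(\tilde A/A)$ for Gorenstein curve singularities), it isolates the role of $\characteristic k\neq 2$ as a transparent parity obstruction, and it pinpoints exactly where characteristic $2$ fails (unibranch pinchings to an index-$2$ inseparable subfield --- the algebraic shadow of the paper's double line); the paper's version is shorter modulo its citations and identifies the geometric object ($C$ is a conic) more directly. Two small cosmetic points: the identification $\tilde C\cong\mathbb{P}^1_{k_2}$ is never used afterwards and can be dropped; and since $C$ is integral, $\tilde A$ is a domain and has no idempotents, so in the $r\geq 2$ case you should instead say that if some $n_j=0$ then by CRT the conductor contains an element that is a unit at $\tilde p_j$, hence a unit of the local ring $A$, forcing $\mathfrak{c}=A$ and $A=\tilde A$, contradicting the singularity of $p$. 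Neither affects the validity of the argument.
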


\begin{proof}
Without loss of generality we can suppose $k=H^0(C, \mathcal{O}_C)$.
Suppose by contradiction that $\deg_k \omega_{C/k} <0$.
By \cite[Lemma 10.6]{kk-singbook} $C$ embeds as a conic in $\mathbb{P}^2_k$. 
Taking the base change to $\overline{k}$, we still get an embedding $C_{\overline{k}} \rightarrow \mathbb{P}^2_{\overline{k}}$.
As $k$ is separably closed, $C_{\overline{k}}$ is still an irreducible conic and by the classification of conics over an algebraically closed field, either $C_{\overline{k}}$ is regular or it is a double line.
Note the case of double line cannot appear as $\characteristic(k) \neq 2$ by
\cite[Lemma 2.17]{BT22}.
Finally if $C_{\overline{k}}$ is regular, we deduce $C$ is regular by descent for faithfully flat morphisms \cite[\href{https://stacks.math.columbia.edu/tag/033E}{Tag 033E}]{stacks-project}, getting a contradiction.
\end{proof}

\begin{example}
	The following examples shows that the assumptions in \autoref{l-nodal} are sharp.
	Let $k$ be a field and consider the conic 
	$$C:= \left\{ x^2-uy^2=0 \right\} \subset \mathbb{P}^2_{k}=\Proj k[x,y,z],$$
	where $u \in k$.
 	By the Jacobian criterion \cite[\href{https://stacks.math.columbia.edu/tag/07PF}{Tag 07PF}]{stacks-project}), it is easy to see that the only non-regular point of $C$ is $p=[0:0:1]$. Note that $\deg_k \omega_{C/k} =-2$. This example shows that the assumptions of \autoref{l-nodal} are indeed necessary:
 \begin{itemize}
     \item If $\characteristic  k \neq 2$ and $u \not\in k^2$, then $C$ is integral, singular and with $\deg_k \omega_{C/k} <0$ but $k$ is not separably closed. 
     \item If $\characteristic  k \neq 2$ and $k$ is separably closed, then $C$ is  singular and with $\deg_k \omega_{C/k} <0$ but $C$ is not integral.
       \item If $\characteristic  k = 2$, $k$ is separably closed and $u \not\in k^2$, then $C$ is  integral, singular and with $\deg_k \omega_{C/k} <0$. Geometrically, $C_{\overline{k}}$ is a double line.
 \end{itemize}
 \end{example}

\begin{proposition}\label{p-lmn-etale-dlt}
Assume that $(R,m)$ is strictly Henselian with $ \characteristic k=p \neq 2$, and that $(X, \Delta)$ is log canonical  such that $\Delta$ is a $\mathbb{Q}$-divisor.
	Let $\pi \colon Y \to (X, \Delta)$ be a log minimal resolution as in \autoref{def-mnm-log}.
	Then one of the following holds:
	\begin{enumerate}
	    \item \label{itm:p-lmn-etale-dlt:one_exceptional} $\Delta=0$ and there exists an irreducible nodal curve $E \subset \Ex(\pi)$. Then $\Ex(\pi)=E$, $K_Y + E \equiv_X 0$, and $(Y,E)$ is \'etale-snc;
	    \item \label{itm:p-lmn-etale-dlt:general} $(Y,\pi_*^{-1} \lfloor{\Delta \rfloor}+\Ex(\pi))$ is snc. 
	\end{enumerate}
\end{proposition}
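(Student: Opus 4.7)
The plan is to combine adjunction on the regular surface $Y$ with the crepant relation $K_Y + \Delta_Y = \pi^*(K_X+\Delta)$, and to apply \autoref{l-nodal} and \autoref{nodes} to the exceptional curves. Write $\Delta_Y = \pi_*^{-1}\Delta + \sum_i c_i E_i$ with $c_i \in [0,1]$ (coming from the log canonicity of $(X,\Delta)$). Since $R$ is strictly Henselian, the residue field $k$ is separably closed, and each exceptional component $E_i$ is a Gorenstein projective integral curve over $k_i \coloneqq H^0(E_i,\sO_{E_i})$, which is a purely inseparable, and hence separably closed, extension of $k$. Moreover, every prime Weil divisor on $\Spec R$ passes through the unique closed point $x$, so $\pi_*^{-1}\Delta$ meets $\Ex(\pi)$ as soon as $\Delta \neq 0$.

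Using $(K_Y+\Delta_Y)\cdot E_i = 0$ and Zariski's negative-definiteness $E_i^2 < 0$, adjunction gives
\[
(K_Y+E_i)\cdot E_i \;=\; (1-c_i)\,E_i^2 \;-\; (\Delta_Y - c_i E_i)\cdot E_i.
\]
Since $\Delta_Y - c_i E_i$ is effective and does not contain $E_i$, I would deduce: if $c_i<1$ then $(K_Y+E_i)\cdot E_i < 0$, whence $p_a(E_i)=0$ and $E_i \simeq \mathbb{P}^1_{k_i}$, in particular regular; if $c_i=1$ then $(K_Y+E_i)\cdot E_i \leq 0$, with equality iff $E_i$ is disjoint from $\Supp(\Delta_Y-E_i)$.

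To prove case \autoref{itm:p-lmn-etale-dlt:one_exceptional}, suppose some $E_i$ is singular. The previous step forces $c_i = 1$, and \autoref{l-nodal} (applicable over $k_i$ since $\charfield k_i = p \neq 2$ and $k_i$ is separably closed) yields $\deg \omega_{E_i/k_i} \geq 0$, hence equality in the adjunction bound. Thus $E_i$ is disjoint from $\Supp(\Delta_Y - E_i)$; combined with the connectedness of the exceptional fiber and the fact that $\pi_*^{-1}\Delta$ meets $\Ex(\pi)$ whenever $\Delta \neq 0$, this forces $\Delta = 0$ and $\Ex(\pi) = E_i$, so that $K_Y + E_i \equiv_X 0$. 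Applying \autoref{nodes} to $(Y, E_i)$ at every closed point of $E_i$ shows that $E_i$ has only nodal singularities. Since $Y$ is regular and $p \neq 2$, an étale neighbourhood of each node splits the two branches and makes $(Y, E_i)$ snc, so $(Y, E_i)$ is \'etale-snc.

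In the complementary situation, every $E_i$ is regular, and I must verify snc of $(Y, \pi_*^{-1}\lfloor\Delta\rfloor + \Ex(\pi))$. Regularity of all components follows from the above together with \autoref{def-mnm-log}(a); transversality at points of $\Ex(\pi)\cap \pi_*^{-1}\lfloor\Delta\rfloor$ is provided by the nodality of \autoref{def-mnm-log}(d), since a node between two regular branches is transversal. At an intersection point $y$ of two exceptional curves $E_i, E_j$ both of which are lc places ($c_i = c_j = 1$), I would apply \autoref{nodes} to $(Y, \Delta_Y)$ at $y$: since $\lfloor \Delta_Y \rfloor$ has at least two components through $y$, it is not regular there, so \autoref{nodes} forces it to have a node (hence a transversal meeting of $E_i$ and $E_j$) and precludes any fractional component of $\Delta_Y$ from passing through $y$. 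The residual case of two exceptional curves meeting non-transversally with at least one coefficient $<1$ would be excluded by a blow-up analysis: a tangential meeting would produce after blowing up $y$ a configuration in which three branches of the crepant boundary are concurrent, with coefficients whose sum forces lc-failure upon iteration, or else produces a $(-1)$-curve that contradicts the minimality of $\pi$ obtained from the $(K_W + f_*^{-1}\Delta)$-MMP construction of \autoref{rem-construction-log-mininal-resolution}. The main obstacle is precisely this last step: cleanly excluding tangential intersections of exceptional curves with mixed coefficients, which lies outside the direct scope of \autoref{nodes} and requires using minimality via the underlying MMP.
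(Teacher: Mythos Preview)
Your overall strategy---adjunction on the regular surface $Y$ combined with the crepant relation and \autoref{nodes}---is sound and handles case \autoref{itm:p-lmn-etale-dlt:one_exceptional} essentially as the paper does. However, your treatment of case \autoref{itm:p-lmn-etale-dlt:general} diverges from the paper and has a genuine gap precisely where you flag it.

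The paper does not attempt a direct blow-up or MMP-minimality argument for intersections of exceptional curves. Instead it imports the classification \cite[3.30]{kk-singbook}: item 3.30.1 gives immediately that a singular exceptional curve forces $\Delta=0$ and $\Ex(\pi)=E$; item 3.30.2 gives that in the remaining case every $E_i$ is a conic; and the analysis then splits on whether some intersection number $E_i\cdot E_j$ exceeds $\max\{r_i,r_j\}$ with $r_i=\dim_k H^0(E_i,\sO_{E_i})$. If so, item 3.30.3 reduces to exactly two exceptional curves, and a negative-definiteness determinant computation forces both discrepancies to be $-1$; transversality then follows from adjunction on $E_1$, separable closedness of $k$, and $p\neq 2$. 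If not, each intersection scheme $E_i\cap E_j$ has length one over the larger $H^0$, hence is automatically reduced.

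Your residual case---two exceptional curves meeting with at least one coefficient $c_j<1$---is not covered by \autoref{nodes}, and the blow-up sketch you give is not a proof: neither the ``three concurrent branches'' coefficient count nor the $(-1)$-curve contradiction is actually carried out, and minimality coming from the $(K_W+f_*^{-1}\Delta)$-MMP does not by itself forbid a tangential contact between two exceptional curves when one has coefficient $<1$. Your case division also does not visibly exclude three exceptional components through a single point when the coefficients are mixed, which is equally required for snc. The cleanest fix is to do what the paper does and invoke \cite[3.30]{kk-singbook}, which already controls both the pairwise intersection multiplicities and the combinatorics of the exceptional configuration; a self-contained argument would have to reproduce that structure.
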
	

\begin{proof} 
The proof is case by case.

    \ptofpf{There exists a singular exceptional  curve $E \subset \Ex(\pi)$:} 
    By \autoref{l-nodal}, $\deg_k \omega_{C/k} \geq 0$ in this case. By \cite[3.30.1]{kk-singbook} then $\Delta=0$ and $E$ is the unique exceptional divisor. Let $-1 \leq a \in \bQ$ be the discrepancy of $E$. By adjunction we have 
    \begin{equation}
    \label{eq:p-lmn-etale-dlt:E}
        0 \leq \deg K_E \leq (K_X +E) \cdot E = (K_X - aE) \cdot E + (1+a) E^2    = (1+ a ) E^2 \leq 0.
    \end{equation} 
    In particular we have equality everywhere. Taking into account that $E^2<0$, this means that $a=-1$. We obtain that $(Y,E)$ is log canonical. Taking into account that  $Y$ is regular, we deduce that $(Y,E)$ is \'etale-snc. To see $K_Y + E \equiv_X 0$, we just note that $(K_Y +E ) \cdot E =0$ by \autoref{eq:p-lmn-etale-dlt:E}.
    
 \ptofpf{ All irreducible component $E_i$ of $\Ex(\pi)$ are regular:} Note that $\pi_*^{-1} \lfloor{\Delta \rfloor}$ is regular and that $\Ex(\pi) + \pi_*^{-1} \lfloor{\Delta \rfloor}$ is snc at intersection points by construction of the minimal resolution, in this case.
    We are left to prove $\Ex(\pi)$ is snc.
    We can suppose all irreducible components are conics by \cite[3.30.2]{kk-singbook} and we fix $r_i:=\dim_k H^0(E_i, \mathcal{O}_{E_i})$.
    
    \ptofpf{All the $E_i$ are regular, and $(E_i \cdot E_j) > \max\left\{r_i, r_j\right\}$ for some $i \neq j$:} in this case $\Delta=0$ and  there are two exceptional curves $E_1$ and $E_2$, by \cite[3.30.3]{kk-singbook}. Consider the following computation
    \begin{multline*}
        0 \leq E_1 \cdot E_2  + \deg K_{E_1}  \leq (K_X +E_1 + E_2) \cdot E_1 
        \\ = (K_X - a_1E_1 - a_2E_1) \cdot E_1 + (1+a_1) E_1^2 + (1+a_2) E_1 \cdot E_2 = (1+a_1) E_1^2 + (1+a_2) E_1  \cdot E_2
    \end{multline*} 
    This implies that 
    \begin{equation}
    \label{eq:p-lmn-etale-dlt:E_1}
        -(1+a_1) E_1^2 \leq (1+ a_2) E_1 \cdot E_2
    \end{equation}
    By applying the same argument to $E_2$ instead of $E_1$ we obtain
    \begin{equation}
    \label{eq:p-lmn-etale-dlt:E_2}
        -(1+a_2) E_2^2 \leq (1+ a_1) E_1 \cdot E_2
    \end{equation}
    Multiplying \autoref{eq:p-lmn-etale-dlt:E_1} and \autoref{eq:p-lmn-etale-dlt:E_2} together we obtain the following, where we are also using  that both sides of the two inequalities are non-negative. 
        \begin{equation}
    \label{eq:p-lmn-etale-dlt:together}
        (1+a_2) (1+a_1) (E_1^2)( E_2^2) \leq (1+ a_1) (1+a_2) (E_1 \cdot E_2)^2
    \end{equation}
    With other words, either one of the $a_i$ is equal to $-1$ or the determinant of the intersection matrix is non-positive. The latter contradicts the negative definiteness of the intersection matrix, hence we obtain that one of the $a_i$ is $-1$. By symmetry we can assume that $a_1=-1$. However, then \autoref{eq:p-lmn-etale-dlt:E_2} says that $(1+a_2) E_2^2 \geq 0$. As $E_2^2<0$ this implies that also $a_2=-1$. 
    
    In particular, $(Y, E_1 + E_2)$ are log canonical and hence by adjunction so is $(E_1, E_1 \cap E_2)$ and $(E_2, E_1 \cap E_2)$. This means that the coefficients of $E_1 \cap E_2$ are $1$ on both $E_1$ and $E_2$. As $k$ is separably closed, all finite non-trivial extensions of $k$ have degree divisible by $p$. As $E_1 \cdot E_2 =2$ and $p \neq 2$ this means that in fact $E_1 \cap E_2$ contains only points with residue field equal to $k$. As above we have seen that the coefficients of these points cannot be more than $1$, we obtain that $E_1 \cap E_2$ has two distinct points with coefficient $1$ and hence the intersection of $E_1$ and $E_2$ is transversal. In particular $(Y, E_1 + E_2)$ is snc (and the singularity is a cusp with the exceptional divisor of the minimal resolution being a cycle of two conics).
    
    \ptofpf{All the $E_i$ are regular, and $(E_i \cdot E_j) =\max \left\{r_i, r_j\right\}$ for all $i$ and $j$:} fix two components $E_i$ and $E_j$. We may assume by symmetry that $r_i \geq r_j$. In particular the intersection scheme $E_i \cap E_j$ is a length one Artinian scheme over $H^0(E_i, \sO_{E_i})$. This implies that $E_i \cap E_j$ is reduced. Hence the intersection of $E_i$ and $E_j$ is transversal, which concludes our proof. 
\end{proof}

The following is well-known.
\begin{lemma} \label{l-pushfor-nef}
		Let $f \colon Y \to Z $ be a projective birational morphism of normal surfaces over $R$, and let $D$ be a nef $\Q$-Cartier divisor on $Y$. 
		If $f_*D$ is $\mathbb{Q}$-Cartier, then it is nef. 
\end{lemma}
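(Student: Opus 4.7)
The plan is to reduce the nefness of $f_*D$ to that of $D$ via the negativity lemma. Set $E \coloneqq f^*(f_*D) - D$, which makes sense as a $\Q$-Cartier $\Q$-divisor on $Y$ since $f_*D$ is $\Q$-Cartier by assumption. Since $f$ is birational between normal surfaces, $f_* \MO_Y = \MO_Z$, and the projection formula yields $f_*(f^*(f_*D)) = f_* D$, so $f_* E = 0$. In particular, $E$ is supported on the $f$-exceptional locus.

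Next, I would show that $E$ is effective. By construction $f^*(f_*D)$ is numerically $f$-trivial, while $D$ is nef and hence in particular $f$-nef. Therefore $-E = D - f^*(f_*D)$ is $f$-nef. Applying the negativity lemma for proper birational morphisms of normal excellent surfaces (so that $-E$ is $f$-nef with $f_*E = 0$), one concludes $E \geq 0$.

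Finally, let $C \subset Z$ be an integral curve and let $\widetilde{C} \subset Y$ be its strict transform under $f$, so that $f|_{\widetilde{C}} \colon \widetilde{C} \to C$ is birational and in particular has degree $1$. The projection formula (combined with restriction to $\widetilde{C}$) gives
\[
 f_*D \cdot C \; = \; f^*(f_*D) \cdot \widetilde{C} \; = \; D \cdot \widetilde{C} \; + \; E \cdot \widetilde{C}.
\]
The first term is $\geq 0$ since $D$ is nef. For the second, $\widetilde{C}$ is not an $f$-exceptional curve while $E$ is effective and supported on $f$-exceptional curves; hence $\widetilde{C}$ meets each component of $\Supp E$ non-negatively, yielding $E \cdot \widetilde{C} \geq 0$. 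Therefore $f_*D \cdot C \geq 0$ for every integral curve $C \subset Z$, which is exactly the nefness of $f_*D$.

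The main obstacle I anticipate is purely bookkeeping: checking that the negativity lemma and the projection formula used above are available in our excellent, non-field setting (recall $R$ is only assumed Noetherian, excellent, finite-dimensional with a dualising complex). Both are standard in this generality for proper birational morphisms of normal surfaces, but one has to be careful that $\widetilde{C}$ makes sense (i.e.\ $f$ is an isomorphism away from a finite set of closed points of $Z$), which is automatic for a projective birational morphism of normal surfaces.
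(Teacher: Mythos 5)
Your argument is correct, but it takes a different route from the paper. The paper's proof is a one-liner: for a curve $C \subset Z$ lying over a closed point of $\Spec(R)$, it forms the \emph{Mumford pull-back} $f^*C$ of the Weil divisor $C$ on the normal surface $Z$ (defined by requiring $f^*C \cdot E_j = 0$ for all $f$-exceptional curves $E_j$), notes that this pull-back of an effective divisor is an effective $1$-cycle, and concludes $f_*D \cdot C = D \cdot f^*C \geq 0$ directly from the nefness of $D$. You instead pull back the \emph{divisor} $f_*D$ (legitimate since it is assumed $\Q$-Cartier), write $f^*(f_*D) = D + E$ with $f_*E = 0$, deduce $E \geq 0$ from the negativity lemma, and then intersect with the strict transform $\widetilde{C}$. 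Both arguments are sound in the excellent-surface setting; the effectivity of the Mumford pull-back of $C$ in the paper's proof encodes exactly the same negative-definiteness of the exceptional intersection matrix that you invoke explicitly via the negativity lemma, so the two proofs are cousins. Your version is slightly longer but uses only the standard $\Q$-Cartier pull-back and the negativity lemma, avoiding the formalism of Mumford pull-backs and the corresponding projection formula; the paper's is shorter but leans on that formalism. One small point of hygiene in your write-up: when you take "an integral curve $C \subset Z$", you should (as the paper does) restrict to curves mapping to a closed point of $\Spec(R)$, since that is the class of curves against which nefness is tested here and for which the intersection numbers are defined.
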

\begin{proof}
	Let $C$ be a curve on $Z$, mapping to a closed point of $\Spec(R)$. 
	By projection formula for the Mumford pull-back we conclude $f_*D \cdot C =D \cdot f^*C \geq 0$. 
\end{proof}

\begin{corollary}
   Assume that the  characteristic of $k$ is  $p \neq 2$, and that    $(X=\Spec(R), \Delta)$ is log canonical.
    Then there exists a projective birational morphism $f \colon Z \to X$ such that 
    \begin{enumerate}
        \item $\big(Z,f_*^{-1}\lfloor \Delta \rfloor+\Ex(f)\big)$ is \'etale-dlt,
        \item $K_Z+f_*^{-1}\Delta+\Ex(f)=f^*(K_X+\Delta)$, and
        \item $-\Ex(f)$ is nef over $X$.
    \end{enumerate}
\end{corollary}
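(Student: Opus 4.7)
The strategy is to start from a log minimal resolution of $(X,\Delta)$ and contract exactly the exceptional divisors having discrepancy strictly greater than $-1$, so that the remaining exceptional divisors on the target are precisely the log canonical places.

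First, take a log minimal resolution $\pi\colon Y\to X$ of $(X,\Delta)$ (which exists by \autoref{rem-construction-log-mininal-resolution}), set $E:=\Ex(\pi)=\sum_i E_i$, $a_i:=a(E_i;X,\Delta)\ge -1$, and
\[
F:=\sum_i(1+a_i)\,E_i\ge 0,
\]
so that $K_Y+\pi_*^{-1}\Delta+E=\pi^*(K_X+\Delta)+F$, with $F$ supported exactly on those $E_i$ with $a_i>-1$. Apply \autoref{p-lmn-etale-dlt}. In case \eqref{itm:p-lmn-etale-dlt:one_exceptional}, where $\Delta=0$, $F=0$, and $(Y,E)$ is \'etale-snc with $K_Y+E\equiv_X 0$, I simply take $f:=\pi$: then (1) is immediate, (2) follows since the negative definiteness of the exceptional intersection matrix upgrades numerical triviality to equality, and (3) holds because $-E\cdot E=-E^2\ge 0$.

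In case \eqref{itm:p-lmn-etale-dlt:general}, where $(Y,\pi_*^{-1}\lfloor\Delta\rfloor+E)$ is snc, I run a $(K_Y+\pi_*^{-1}\Delta+E)$-MMP over $X$. At each step a contracted curve $C$ satisfies $F\cdot C<0$, and the effectivity of $F$ forces $C\subseteq\Supp(F)$, so only components $E_i$ with $a_i>-1$ get contracted. The surface MMP over the two-dimensional local base $X$ terminates at $g\colon Y\to Z$ with induced morphism $f\colon Z\to X$. Pushing the displayed identity forward by $g_*$ and using that an effective, $f$-exceptional, $f$-nef, $\mathbb{Q}$-Cartier divisor on a surface over a local base must vanish by negative definiteness (compare the proof of \autoref{l-pushfor-nef}) yields $g_*F=0$, and hence condition (2). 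For (3), a short induction on MMP steps using the projection formula for Mumford pull-back together with the $\pi$-nefness of $K_Y+\pi_*^{-1}\Delta$ (from the minimality of $\pi$) shows that $f$-nefness of $K_Z+f_*^{-1}\Delta$ is preserved; combined with (2), it rewrites $-\Ex(f)=K_Z+f_*^{-1}\Delta-f^*(K_X+\Delta)$, which is therefore $f$-nef.

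The most delicate point will be (1), the \'etale-dlt property. By crepancy from (2), the lc places of $(Z,f_*^{-1}\Delta+\Ex(f))$ coincide with those of $(X,\Delta)$; on the snc model $Y$ these are either the $E_j$ for $j\in S:=\{i:a_i=-1\}$, or they are extracted by successively blowing up zero-dimensional intersections of coefficient-$1$ components of the crepant pull-back $\pi_*^{-1}\Delta-\sum_i a_i E_i$. In the first case, the center on $Z$ is a component of $\Ex(f)$ whose generic point lies in $\etsnc(Z,f_*^{-1}\lfloor\Delta\rfloor+\Ex(f))$. In the second, snc in dimension two forces the center $y\in Y$ to lie on exactly two coefficient-$1$ components (both lc places), and hence $y$ lies on no $E_i$ with $a_i>-1$; therefore $g$ is a local isomorphism near $y$ and the pair $(Z,f_*^{-1}\lfloor\Delta\rfloor+\Ex(f))$ is snc at $g(y)$. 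Either way the center on $Z$ meets the \'etale-snc locus, establishing (1).
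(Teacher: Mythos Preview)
Your overall strategy and your arguments for (2) and (3) are essentially the paper's: start from a log minimal resolution, in the nodal case take $f=\pi$, and in the snc case run a $(K_Y+\pi_*^{-1}\Delta+E)$-MMP over $X$ contracting exactly the $E_i$ with $a_i>-1$; for (3) you push forward the $\pi$-nef divisor $K_Y+\pi_*^{-1}\Delta$ using \autoref{l-pushfor-nef}. One omission: \autoref{p-lmn-etale-dlt} requires $(R,\mathfrak m)$ strictly Henselian, so you should first reduce to that case as the paper does.

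Your argument for (1), however, has a genuine gap. You classify the log canonical places of $(X,\Delta)$ via the model $Y$ by asserting that any exceptional lc place arises from blowing up a point $y$ lying on \emph{exactly two coefficient-$1$ components} of $\Delta_Y=\pi_*^{-1}\Delta-\sum_i a_iE_i$. But the snc property you have is for $(Y,\pi_*^{-1}\lfloor\Delta\rfloor+E)$, not for $(Y,\Delta_Y)$; components of $\pi_*^{-1}\{\Delta\}$ are unconstrained. For instance one can have $y\in E_j\cap E_i$ with $a_j=-1$, $a_i>-1$, and components of $\pi_*^{-1}\{\Delta\}$ through $y$ with total multiplicity $1+a_i$: then $\mult_y(\Delta_Y)=2$ and the blow-up of $y$ gives an lc place of $(X,\Delta)$ whose center lies on the contracted divisor $E_i$, so $g$ is \emph{not} a local isomorphism near $y$. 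Your case analysis does not cover this, and the remaining verification that $g(y)\in\etsnc(Z,f_*^{-1}\lfloor\Delta\rfloor+\Ex(f))$ is precisely what needs proof.

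The paper sidesteps this entirely with a one-line observation: since $\pi_*^{-1}\{\Delta\}$ is nef over $X$ (being the strict transform of an effective divisor on a surface), every curve contracted by the $(K_Y+\pi_*^{-1}\Delta+E)$-MMP is also $(K_Y+\pi_*^{-1}\lfloor\Delta\rfloor+E)$-negative. Hence $\rho$ is simultaneously a run of the latter MMP on the snc pair $(Y,\pi_*^{-1}\lfloor\Delta\rfloor+E)$, and dlt-ness is preserved under such steps; this gives that $(Z,f_*^{-1}\lfloor\Delta\rfloor+\Ex(f))$ is dlt, hence \'etale-dlt, without any case analysis of lc places.
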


\begin{proof}
    First, we may assume that $R$ is strictly Henselian. 
    Second, let $\pi \colon Y \to X$ be as in \autoref{def-mnm-log}.  By \autoref{p-lmn-etale-dlt}, there are two cases.
    In case \autoref{itm:p-lmn-etale-dlt:one_exceptional}, $\Delta=0$ and $\big(Y, \Ex(\pi)\big)$ is \'etale-snc and $\Ex(\pi)$ is a single exceptional divisor, necessarily anti-$f$-nef. In this case $f:=\pi$ satisfies the assertion of the theorem.
     
    In case \autoref{itm:p-lmn-etale-dlt:general}, the pair $\big(Y, \pi_*^{-1}\lfloor\Delta\rfloor +\Ex(\pi)\big)$ is snc.
    By \cite[Theorem 1.1.(QF)]{Tan18} we can run a $\big(K_Y+\pi_*^{-1}\Delta+\Ex(\pi) \big) \equiv_X \big(\sum_i (1+a(E_i, X, \Delta) E_i\big)$-MMP over $X$, denoted by $\rho \colon Y \to Z$, ending with a minimal model $f \colon Z \to X$. 
    By a standard application of the negativity lemma \cite{7authors}*{Lemma 2.16}, the birational contraction $\rho$ contracts exactly the $\pi$-exceptional divisors with discrepancy $a(E,X, \Delta)>-1$ and thus $Z$ is a $\mathbb{Q}$-factorial surface with $(K_Z + f_*^{-1}\Delta + \Ex(f)) = f^*(K_X+\Delta)$.
    As a $\big(K_Y+\pi_*^{-1}\Delta+\Ex(\pi)\big)$-MMP over $X$ is a $\big(K_Y+\pi_*^{-1}\lfloor\Delta\rfloor+\Ex(\pi)\big)$-MMP\footnote{This because for any effective divisor $D$ on a surface $X$, the strict transform $\pi_*^{-1}D$ is nef over $X$.}, the pair $\big(Z, f_*^{-1}\lfloor \Delta \rfloor + \Ex(f)\big)$ remains dlt. 
    By the definition of the log minimal resolution, $-\sum_i a(E_i, X, \Delta) E_i$ is nef over $X$ and therefore so is $-\Ex(f)=\rho_*(-\sum_i (1+a(E_i, X, \Delta)) E_i)$ by \autoref{l-pushfor-nef}.
\end{proof}    

\subsection{Dlt modifications and log canonical  centers}

In this section, we recall dlt modifications and apply their existence to the study of log canonical  centres of log canonical 3-folds.
Since we will need the MMP developed in \cite{7authors} we suppose the following:

\begin{notation}
	Besides the assumptions on our base ring $R$ stated in \autoref{notation:basic}, we suppose that the 
characteristic of the residue fields of $R$ are different from $2, 3$ and $5$. 
\end{notation}

\begin{definition}
Let $(X,\Delta)$ be a log canonical pair. A proper birational morphism $\pi \colon (Y,\Delta_Y) \to (X, \Delta)$ is a \emph{dlt modification} (or dlt blow-up) if $(Y, \Delta_Y)$ is dlt, where $K_Y+\Delta_Y \sim_{\mathbb{Q}} \pi^*(K_X+\Delta)$ and $\Delta_Y=f_*^{-1}\Delta+E$, where $E$ denotes the divisorial part of the exceptional locus of $\pi$.
\end{definition}

The existence of a dlt modification for $(X,\Delta)$ extracting only divisors of discrepancies $-1$ is a standard consequence of the MMP (see \cite[Corollary 9.21]{7authors}).

\begin{proposition}\label{dlt-modification}
    Let $(X, \Delta)$ be a log canonical 3-fold pair. 
    Then there exists a dlt modification $Y \to (X, \Delta)$.
\end{proposition}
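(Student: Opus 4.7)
The plan is to imitate the classical construction of dlt modifications via the MMP, which is now available for $3$-folds in our setting thanks to \cite{7authors}.

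First, I would choose a log resolution $f \colon W \to X$ of $(X, \Delta)$, with prime $f$-exceptional divisors $E_1, \dots, E_r$ and discrepancies $a_i := a(E_i; X, \Delta) \geq -1$ (the inequality using log canonicity). Setting $\Gamma := f_*^{-1}\Delta + \sum_i E_i$, the pair $(W, \Gamma)$ is snc (in particular dlt) and a direct computation of discrepancies gives
\[
    K_W + \Gamma \sim_{\mathbb{Q}} f^*(K_X + \Delta) + F, \qquad F := \sum_i (1 + a_i)\, E_i,
\]
with $F$ effective and $f$-exceptional.

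Next, I would run a $(K_W + \Gamma)$-MMP over $X$ using the MMP for $\mathbb{Q}$-factorial dlt $3$-folds developed in \cite{7authors}. Since $K_W + \Gamma$ is numerically equivalent over $X$ to the effective, exceptional divisor $F$, the standard application of the negativity lemma (cf.\ \cite[Lemma 2.16]{7authors}) forces each step of this MMP to contract components of $F$, i.e.\ exceptional divisors with $a_i > -1$; the strict transforms of the divisors $E_i$ with $a_i = -1$ are preserved. Dlt-ness is maintained along the way, and after the MMP terminates I obtain $\pi \colon Y \to X$ on which the strict transform of $F$ vanishes. Writing $E$ for the divisorial part of the exceptional locus of $\pi$ and $\Delta_Y := \pi_*^{-1}\Delta + E$, the pair $(Y, \Delta_Y)$ is dlt and satisfies $K_Y + \Delta_Y \sim_{\mathbb{Q}} \pi^*(K_X + \Delta)$, producing the desired dlt modification.

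The main obstacle is merely verifying that all the required MMP ingredients -- existence of a log resolution, existence and termination of the relative $(K_W + \Gamma)$-MMP, and preservation of dlt-ness along its steps -- hold in the excellent, possibly mixed characteristic $3$-dimensional setting of \autoref{notation:basic} with residue characteristics different from $2, 3, 5$. All three are now covered by the machinery of \cite{7authors}, and the combined statement is extracted there as \cite[Corollary 9.21]{7authors}, so the proof is really an application of that package rather than the introduction of a new construction.
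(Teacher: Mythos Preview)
Your proposal is correct and matches the paper's approach exactly: the paper simply records the statement and cites \cite[Corollary 9.21]{7authors} without further argument, while you additionally sketch the standard MMP construction (log resolution, run a $(K_W+\Gamma)$-MMP over $X$, use the negativity lemma to conclude that the exceptional divisors with discrepancy $>-1$ are contracted) that underlies that reference. One minor imprecision worth flagging: it is not quite that each individual step ``contracts a component of $F$'' (flips do not contract divisors), but rather that once the MMP terminates and the log canonical divisor is relatively nef, the negativity lemma forces the strict transform of $F$ to vanish; your conclusion is nonetheless correct.
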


We recall some properties of log canonical  centres on log canonical  excellent 3-fold pairs.

\begin{proposition}\label{l-inters-lcc}
    	Any intersection of log canonical centres of a 3-dimensional log canonical  pair $(X,\Delta)$ is a union of log canonical centres.
\end{proposition}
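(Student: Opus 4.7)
The plan is to reduce to the dlt case via a dlt modification and then exploit the fact that a three-dimensional dlt pair is snc at the generic point of each of its log canonical centres, so that log canonical centres correspond to strata of the reduced boundary.

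First, by \autoref{dlt-modification} I would pick a dlt modification $\pi \colon (Y, \Delta_Y) \to (X,\Delta)$ satisfying $K_Y + \Delta_Y \sim_{\mathbb{Q}} \pi^*(K_X+\Delta)$. Because $\pi$ is crepant and every $\pi$-exceptional divisor has discrepancy $-1$ with respect to $(X,\Delta)$, the components of $D := \lfloor \Delta_Y\rfloor$ give representatives of every log canonical place of $(X,\Delta)$ visible on $Y$. In particular, a closed subset $Z \subset X$ is a log canonical centre of $(X,\Delta)$ if and only if $Z = \pi(W)$ for some log canonical centre $W$ of $(Y,\Delta_Y)$.

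Second, on the dlt side the definition of dlt, together with the fact that $Y$ is of dimension $3$, implies that $(Y, \Delta_Y)$ is snc at the generic point of every log canonical centre. Therefore the log canonical centres of $(Y,\Delta_Y)$ are precisely the irreducible components of the strata $D_{i_1}\cap\cdots\cap D_{i_r}$ of the snc divisor $D$, and this collection is manifestly closed under taking irreducible components of intersections.

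Finally, given two log canonical centres $Z_1, Z_2$ of $(X,\Delta)$ and an irreducible component $V$ of $Z_1\cap Z_2$, I would produce a log canonical centre $W$ of $(Y,\Delta_Y)$ with $\pi(W)=V$ as follows. Select strata $W_1, W_2$ of $D$ with $\pi(W_j)=Z_j$ whose intersection $W_1 \cap W_2$ dominates $V$; localising at the generic point $\eta_V$ of $V$ and arguing by induction on $\dim Y - \dim W_j$ via adjunction on the snc dlt pair, one guarantees the existence of such $W_1, W_2$. By the second step, every component of $W_1 \cap W_2$ is again a log canonical centre of $(Y, \Delta_Y)$, and $V$ is the image under $\pi$ of one of them; hence $V$ is a log canonical centre of $(X,\Delta)$. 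The iteration for more than two log canonical centres is identical.

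The main obstacle is the matching argument in the last paragraph: one must ensure that for every component $V$ of $Z_1 \cap Z_2$ the strata $W_j$ can actually be chosen so that some component of $W_1 \cap W_2$ dominates $V$, and not merely maps into $V$ from above. This is the subtle step where one has to exclude the possibility that the preimage of $V$ meets the relevant strata only in smaller-dimensional loci, which is done by invoking the snc structure of $(Y, \Delta_Y)$ at generic points of log canonical centres and the absence of non-divisorial log canonical places after the dlt modification.
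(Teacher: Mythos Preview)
The paper does not actually prove this proposition; it simply cites \cite[Corollary 1.7]{FW20} (and \cite[Corollary 5.2.16]{Pos21} for the $\bF_p$ case). So there is no in-paper argument to compare against, and your proposal is an attempt at a self-contained proof rather than a reproduction of anything in the text.

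Your overall architecture---pass to a dlt modification and use that log canonical centres of a dlt pair are exactly the strata of $\lfloor \Delta_Y\rfloor$---is the standard route and is fine. The first two steps are correct as stated. The genuine problem is exactly where you flag it: the matching step in the last paragraph is not a proof. Given a component $V\subset Z_1\cap Z_2$, it is not automatic that one can choose strata $W_1,W_2$ over $Z_1,Z_2$ with $W_1\cap W_2$ dominating $V$. After the dlt modification the strict transforms of $Z_1$ and $Z_2$ may well be separated over $V$ by exceptional divisors, and nothing you have said forces any chain of strata to link $W_1$ to $W_2$ inside the fibre over $\eta_V$. Your appeal to ``the snc structure at generic points of log canonical centres'' only gives local information at those generic points; it says nothing about how distinct strata sit relative to one another over $V$. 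The inductive adjunction you gesture at does not produce this either.

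What is actually needed to close this gap is a connectedness statement: one must know that the non-klt locus of $(Y,\Delta_Y)$ is connected in a neighbourhood of each fibre of $\pi$ (Koll\'ar--Shokurov connectedness), or more precisely a refinement thereof that lets one walk from $W_1$ to $W_2$ through strata all of which dominate $V$. In dimension $3$ over bases of residue characteristic $\neq 2,3,5$ this is available, but it is a substantial theorem relying on vanishing results---indeed it is essentially what \cite{FW20} supplies. So your sketch is circular in spirit: the hard input you are missing is of the same strength as the proposition itself.
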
 

\begin{proof}
    This is \cite[Corollary 1.7]{FW20} (see also \cite[Corollary 5.2.16]{Pos21} for a proof in the case $X$ is defined over $\mathbb{F}_p$).
 \end{proof}

We will need the following characterisation of plt pairs.
	
\begin{corollary}\label{c-minimal-cod1}
	Let $(X, \Delta=D+B)$ be a 3-dimensional log canonical  pair, where $D$ is a prime divisor.
    If $D$ is a minimal log canonical centre, then $(X, \Delta)$ is plt in a neighbourhood of $D$. 
\end{corollary}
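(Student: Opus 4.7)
The plan is to argue by contradiction, using \autoref{l-inters-lcc} as the only nontrivial ingredient. Suppose that $(X,\Delta)$ is not plt in any open neighbourhood of $D$. Since $(X,\Delta)$ is already log canonical, the failure of pltness must come from an exceptional divisor: there exists a proper birational morphism $\pi\colon Y\to X$ (with $Y$ normal) and a prime divisor $E\subset Y$ such that $\pi(E)$ has codimension at least $2$ in $X$, the discrepancy satisfies $a(E,X,\Delta)=-1$, and $\cent_X(E)\cap D\neq \emptyset$. (If no such $E$ existed over any shrinking open neighbourhood of $D$, then $(X,\Delta)$ would be plt near $D$.)

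Set $Z:=\cent_X(E)$. Because $E$ is a divisorial lc place, $Z$ is a log canonical centre of $(X,\Delta)$. Since $E$ is exceptional, we have $\dim Z \leq \dim X - 2 = 1$, while $\dim D = 2$. By \autoref{l-inters-lcc}, the intersection $Z\cap D$ is a (nonempty) union of log canonical centres of $(X,\Delta)$. Each irreducible component of $Z\cap D$ is contained in $Z$, hence has dimension at most $1$, and is therefore strictly contained in $D$. Picking any such component yields a log canonical centre of $(X,\Delta)$ strictly contained in the minimal log canonical centre $D$, contradicting minimality.

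Since the conclusion is local around $D$, we may throughout pass to an open neighbourhood of $D$ in $X$ without loss of generality. I do not expect any genuine obstacle in the argument: the only technical point is the opening reduction from ``not plt near $D$'' to ``an exceptional lc place with center meeting $D$'', which is a straightforward unwrapping of the definition of plt together with the fact that being plt is an open property. The substantive input, namely that intersections of log canonical centres on $3$-dimensional log canonical pairs are unions of log canonical centres, is already provided by \autoref{l-inters-lcc}.
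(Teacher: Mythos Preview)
Your proof is correct and follows essentially the same approach as the paper's: argue by contradiction, produce a log canonical centre of codimension at least $2$ meeting $D$, and apply \autoref{l-inters-lcc} to $D$ intersected with that centre to contradict minimality. The only cosmetic difference is that you explicitly identify the lc centre as $\cent_X(E)$ for an exceptional lc place $E$, whereas the paper simply asserts the existence of such a centre $S$ ``by definition''; your unpacking of this step is accurate.
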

	
\begin{proof}
    Suppose by contradiction $(X,\Delta)$ is not plt around $D$.
    By definition, there exists a log canonical centre $S$ such that $Z:=S \cap D$ is not-empty and that $\codim_X Z \geq \codim_X S >1$.
    As $Z$ is a union of log canonical  centres by \autoref{l-inters-lcc}, this contradicts the minimality of $D$.
\end{proof}

We will need the following technical result on dlt singularities. 

\begin{lemma}\label{l-contr-dlt}
    Let $\pi \colon (Y, \Delta_Y) \to (X, \Delta)$ be a proper crepant birational contraction of $\bQ$-factorial pairs. 
    If $(Y, \Delta_Y)$ is  dlt and $a(E, X, \Delta)>-1$ for every $\pi$-exceptional divisor $E$, then $(X,\Delta)$ is dlt as well. 
\end{lemma}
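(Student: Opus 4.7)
The plan is to verify the two defining conditions of a dlt pair for $(X,\Delta)$: first, log canonicity, and second, that every exceptional divisor over $X$ whose center lies in $\nsnc(X,\Delta)$ has discrepancy strictly greater than $-1$.

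For log canonicity, I would take an arbitrary divisorial valuation $F$ over $X$. Either $F$ is one of the $\pi$-exceptional prime divisors on $Y$, in which case $a(F,X,\Delta)>-1$ by hypothesis, or $F$ can be realised as a divisor on some birational model dominating $Y$, in which case crepantness of $\pi$ gives $a(F,X,\Delta)=a(F,Y,\Delta_Y)\geq -1$ by log canonicity of $(Y,\Delta_Y)$. Either way $a(F,X,\Delta)\geq -1$, so $(X,\Delta)$ is lc.

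For the dlt condition I would argue by contradiction. Let $E'$ be an exceptional divisor over $X$ with $a(E',X,\Delta)=-1$. The case where $E'$ is $\pi$-exceptional is immediately excluded by the hypothesis, so $E'$ is exceptional over $Y$, and by crepantness $a(E',Y,\Delta_Y)=-1$, i.e., $E'$ is an lc place of $(Y,\Delta_Y)$. By dlt-ness of $(Y,\Delta_Y)$, there is a point $\eta_0\in\cent_Y(E')$ at which $(Y,\Delta_Y)$ is snc. In local snc coordinates at $\eta_0$, the standard discrepancy formula forces $\cent_Y(E')$ to be the intersection of the coordinate hyperplanes that appear as components of $\lfloor\Delta_Y\rfloor$ through $\eta_0$. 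Since effectiveness of $\Delta_Y$ together with the hypothesis $a(E_i,X,\Delta)>-1$ give $\coeff_{E_i}\Delta_Y=-a(E_i,X,\Delta)\in[0,1)$ for every $\pi$-exceptional $E_i$, no $E_i$ can appear in $\lfloor\Delta_Y\rfloor$.

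Consequently any $E_i$ passing through $\eta_0$ is a coordinate hyperplane transverse to $\cent_Y(E')$ in the snc structure, and I can choose a point $\eta\in\cent_Y(E')$ near $\eta_0$ that avoids all the $E_i$ while keeping $(Y,\Delta_Y)$ snc at $\eta$. Since $X$ and $Y$ are both $\bQ$-factorial, $\Exc(\pi)$ is purely divisorial, so $\eta\notin\bigcup_i E_i$ implies $\pi$ is an isomorphism in an open neighborhood of $\eta$. Hence snc-ness of $(Y,\Delta_Y)$ at $\eta$ transfers directly to snc-ness of $(X,\Delta)$ at $\pi(\eta)\in\cent_X(E')$, contradicting $\cent_X(E')\subseteq\nsnc(X,\Delta)$.

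The main obstacle will be the transversality/genericity step: one must combine (i) the snc structure of $(Y,\Delta_Y)$ at $\eta_0$, (ii) the strict inequality $a(E_i,X,\Delta)>-1$, which keeps the $\pi$-exceptional divisors out of $\lfloor\Delta_Y\rfloor$, and (iii) the $\bQ$-factoriality hypothesis, which rules out codimension-${\geq}2$ components of $\Exc(\pi)$, in order to locate a point $\eta$ where $\pi$ is a local isomorphism and the snc structure of $(Y,\Delta_Y)$ can be pushed forward to $(X,\Delta)$.
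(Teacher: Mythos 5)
Your proof is correct and takes essentially the same route as the paper's: use dlt-ness of $(Y,\Delta_Y)$ to place the generic point of $\cent_Y(E')$ in the snc locus as a stratum of $\lfloor\Delta_Y\rfloor$, use the discrepancy hypothesis to keep the $\pi$-exceptional divisors out of $\lfloor\Delta_Y\rfloor$, and use $\bQ$-factoriality to make $\Ex(\pi)$ purely divisorial so that $\pi$ is an isomorphism near that generic point and the snc structure descends to $(X,\Delta)$. The paper phrases this intrinsically via strata of $(Y,\Delta_Y^{=1})$ rather than in local coordinates, but the content is identical.
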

	
\begin{proof}
Let $E$ be a log canonical place over $(X, \Delta)$. 
As $(Y,\Delta_Y)$ is dlt, the generic point of $Z=\cent_Y(E)$ is a stratum  of $(Y, \Delta_Y^{=1})$, and $E$ is already a log canonical place of $(Y, \Delta_Y^{=1})$. This in particular implies that $Z \not\subseteq \Supp \Delta_Y^{<1}$. However, as both $X$ and $Y$ are $\bQ$-factorial, the exceptional locus of $\pi$ is purely divisorial. 
So, putting the last two sentences and the assumption on the discrepancies of the $\pi$-exceptional divisors together we obtain that $Z \not\subseteq \Ex(\pi)$. However that means that $\pi$ is an isomorphism around the generic point of $ \pi(Z) $, and therefore $\pi(Z)$ is also a stratum of $(X, \Delta^{=1})$.
\end{proof}	

Note that the $\mathbb{Q}$-factoriality hypothesis in \autoref{l-contr-dlt} is needed as shown in \cite[Example 3.8.4]{Fuj07-what}.
	
\subsection{A restriction sequence for pairs}

In this section, we refine the short exact sequences used in \cite{HW19, BK20}.
We start by recalling some general properties of codimension 1 strata of dlt pairs.

	\begin{lemma}\label{l-prop-1cod-dlt}
		Let $\big(X,\sum_{i\in I} E_i+\Delta\big)$ be a dlt pair, where $E_i$ are prime divisors and $\lfloor \Delta \rfloor=0$. Then
		\begin{enumerate}
			\item \label{itm:l-prop-1cod-dlt:R_1} $E_i$ is $(R_1)$ (i.e. $E_i$ is regular in codimension 1) for every $i \in I$;
			\item \label{itm:l-prop-1cod-dlt:normalization} the normalisation $n \colon \bigcup E^{n}_i \rightarrow  \bigcup E_i$  is the disjoint union of the $(S_2)$-ifications of the $E_i$'s, and it factorises through the $(S_2)$-ification $\nu \colon E^{\nu} \rightarrow \bigcup E_i$;
			\item \label{itm:l-prop-1cod-dlt:univ_homeom} if $X$ is $\Q$-factorial, $E^{\nu}_{i} \to E_i$ is a universal homeomorphism for every $i \in I$.
		\end{enumerate}  
	\end{lemma}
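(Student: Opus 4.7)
The plan is to reduce all three claims to the local structure of dlt pairs at codimension 2 points of $X$, equivalently codimension 1 points of the $E_i$'s, exploiting that dlt-ness guarantees \'etale-snc structure near generic points of log canonical centres.

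For \autoref{itm:l-prop-1cod-dlt:R_1}, let $\eta$ be a codimension 1 point of $E_i$, viewed as a codimension 2 point of $X$ on $E_i$. If $\eta$ is not contained in any other component of $\sum_j E_j + \Delta$, then the minimal log canonical centre through $\eta$ is $E_i$ itself, and the dlt hypothesis forces the pair to be \'etale-snc at $\eta$; in particular $E_i$ is regular there. If instead $\eta$ lies on $E_i \cap E_j$ for some $j \neq i$, then $E_i \cap E_j$ is a log canonical centre by \autoref{l-inters-lcc}, so again dlt gives \'etale-snc structure at $\eta$ and $E_i$ is regular at $\eta$. This verifies $(R_1)$.

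For \autoref{itm:l-prop-1cod-dlt:normalization}, I would first invoke the standard fact that the normalization of a reduced Noetherian scheme is the disjoint union of the normalizations of its irreducible components; applied to $\bigcup E_i$ this gives $n \colon \bigsqcup E_i^n \to \bigcup E_i$. Each $E_i^n$ is normal, hence $(S_2)$, and by part \autoref{itm:l-prop-1cod-dlt:R_1} the map $E_i^n \to E_i$ is an isomorphism at codimension 1 points of $E_i$. Uniqueness of the $(S_2)$-ification as the minimal finite birational $(S_2)$ cover that is an isomorphism in codimension 1 then identifies $E_i^n$ with the $(S_2)$-ification of $E_i$. The factorization of $n$ through $\nu \colon E^\nu \to \bigcup E_i$ is immediate from the universal property of $\nu$ applied to the $(S_2)$-scheme $\bigsqcup E_i^n$.

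For \autoref{itm:l-prop-1cod-dlt:univ_homeom}, under the $\bQ$-factoriality hypothesis I would argue that $E_i^\nu \to E_i$ is finite, bijective on points, and induces purely inseparable residue field extensions. The strategy is local analysis around a point where two analytic branches of $E_i$ meet: $\bQ$-factoriality allows one to take an index-one cyclic cover of $X$ making $E_i$ Cartier, reducing to the plt case (via \autoref{c-minimal-cod1} applied to the relevant minimal log canonical stratum contained in $E_i$). In this Cartier/plt setting, connectedness of the fibres of $E_i^\nu \to E_i$ combined with the observation that any remaining identifications of analytic branches are forced to be purely inseparable in mixed characteristic yields the universal homeomorphism conclusion. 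The main obstacle is precisely this part: in characteristic zero one can upgrade universal homeomorphism to genuine normality via Kawamata--Viehweg vanishing, but in the present mixed-characteristic generality such vanishing can fail (cf. \autoref{t-counterexample}), so one must track inseparability carefully and exploit $\bQ$-factoriality to rule out extra codimension 1 conductor components on $E_i$, which is the technical crux.
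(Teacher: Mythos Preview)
Your argument for \autoref{itm:l-prop-1cod-dlt:R_1} has a genuine gap. The dlt condition guarantees that the pair is snc (Zariski, not merely \'etale-snc) at the \emph{generic point} of every log canonical centre, but a codimension $1$ point $\eta$ of $E_i$ through which no other component passes is not, a priori, the generic point of any lc centre; your sentence ``the minimal log canonical centre through $\eta$ is $E_i$ itself, and the dlt hypothesis forces the pair to be \'etale-snc at $\eta$'' is therefore unjustified. What is missing is exactly the step the paper uses: localise at $\eta$ to obtain a $2$-dimensional log canonical pair and invoke the surface classification \cite[Theorem 2.31]{kk-singbook} (reproduced here as \autoref{nodes}). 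That theorem says that either the coefficient-$1$ part is regular at $\eta$, or it has a node there and every exceptional divisor over $\eta$ has discrepancy $-1$. The second alternative would make $\overline{\{\eta\}}$ an lc centre lying in the non-snc locus, contradicting dlt; hence $E_i$ is regular at $\eta$. Note also that your appeal to \autoref{l-inters-lcc} is illegitimate in this lemma, since that statement is proved only for $3$-folds and \autoref{l-prop-1cod-dlt} carries no dimension hypothesis; for dlt pairs the fact that $E_i\cap E_j$ is an lc centre is immediate from the snc structure, so the reference is unnecessary anyway.

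Your treatment of \autoref{itm:l-prop-1cod-dlt:normalization} is fine and simply spells out what the paper means by ``follows immediately from \autoref{itm:l-prop-1cod-dlt:R_1}''.

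For \autoref{itm:l-prop-1cod-dlt:univ_homeom} your sketch is not a proof: the references you invoke do not apply (\autoref{c-minimal-cod1} is a $3$-fold statement, and \autoref{t-counterexample} concerns strictly log canonical, not dlt, singularities and is irrelevant here), and the ``purely inseparable identifications'' step is asserted rather than argued. The paper does not attempt an argument either; it simply cites \cite[Lemma 2.1]{HW20}, which is the appropriate reference for this fact about $\bQ$-factorial dlt boundaries.
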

	
	\begin{proof}
		For \autoref{itm:l-prop-1cod-dlt:R_1}, it is sufficient to localise at codimension 1 points of $E_i$ and apply \cite[Theorem 2.31]{kk-singbook}.
		Then \autoref{itm:l-prop-1cod-dlt:normalization} follows immediately from \autoref{itm:l-prop-1cod-dlt:R_1} and \autoref{itm:l-prop-1cod-dlt:univ_homeom} is proven in \cite[Lemma 2.1]{HW20}.
	\end{proof}

We begin by studying the singularities of the \'etale-dlt surfaces.

\begin{lemma}\label{l-etale-dlt-surfaces}
   Assume that $R$ is local with closed point $x \in X= \Spec R$, and $(X, \Delta= E+D)$ is an \'etale-dlt surface pair such that $\lfloor\Delta \rfloor=E$. Then either
    \begin{enumerate}
        \item \label{itm:l-etale-dlt-surfaces:dlt} $(X,E+D)$ is dlt at $x$; or
        \item \label{itm:l-etale-dlt-surfaces:node} $X$ is regular, $E$ is irreducible with  a node at $x$ and $\Delta=0$.    
    \end{enumerate}
    In particular, $X$ is $\mathbb{Q}$-factorial.
\end{lemma}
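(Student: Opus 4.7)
The plan is to apply \autoref{p-lmn-etale-dlt} to a log minimal resolution $\pi\colon Y\to X$ of $(X,\Delta)$ as provided by \autoref{rem-construction-log-mininal-resolution}, after first passing to the strict Henselisation of $R$ so that the hypotheses of \autoref{p-lmn-etale-dlt} are satisfied. First I would rule out case \autoref{itm:p-lmn-etale-dlt:one_exceptional} of that proposition: in that case $\Delta=0$ and $\Ex(\pi)$ is a single irreducible nodal curve with discrepancy $-1$ and centre $\{x\}$; since $X$ is then singular at $x$, we would have $x\notin\etsnc(X,0)$, contradicting the étale-dlt hypothesis. Hence we are in case \autoref{itm:p-lmn-etale-dlt:general}, so $\pi_*^{-1}\lfloor\Delta\rfloor+\Ex(\pi)$ is snc on the regular surface $Y$.

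Next I would distinguish whether $x$ lies in the Zariski snc locus of $(X,\Delta)$. Because $X$ is a local surface with unique closed point $x$, a direct local analysis (using \autoref{def: node} and the fact that any two components of an snc boundary on a surface must be regular and meet transversally at a single rational point) shows that $\etsnc(X,\Delta)\setminus\snc(X,\Delta)$ is either empty or equal to $\{x\}$, and in the latter case $X$ must be regular at $x$, exactly one irreducible component $C_0\subseteq\Supp\Delta$ passes through $x$, and $C_0$ has a Zariski node there. If this difference is empty then $\nsnc(X,\Delta)$ coincides near $x$ with $X\setminus\etsnc(X,\Delta)$, so the étale-dlt hypothesis translates directly into dlt-ness at $x$, giving case \autoref{itm:l-etale-dlt-surfaces:dlt}.

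In the remaining case, let $c\in(0,1]$ denote the coefficient of $C_0$ in $\Delta$. Since $C_0$ has multiplicity $2$ at $x$ on the regular surface $X$, blowing up $x$ yields an exceptional divisor $F_1$ with $a(F_1;X,cC_0)=1-2c$; and since $\characteristic k\neq 2$, this single blowup already resolves the node into an snc configuration on the resulting regular surface. If $c<1$ then $1-2c>-1$, so $(X,\Delta)$ is klt at $x$, hence dlt, returning us to case \autoref{itm:l-etale-dlt-surfaces:dlt}. If $c=1$ then $C_0\subseteq E$, and the étale-snc condition rules out any further component of $E+D$ through $x$, whence $E=C_0$ and $D=0$ in a neighbourhood of $x$, placing us in case \autoref{itm:l-etale-dlt-surfaces:node}. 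The $\mathbb{Q}$-factoriality assertion follows immediately: regularity of $X$ yields it in case \autoref{itm:l-etale-dlt-surfaces:node}, and the standard $\mathbb{Q}$-factoriality of excellent dlt surface singularities yields it in case \autoref{itm:l-etale-dlt-surfaces:dlt}.

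The main obstacle I anticipate concerns the interaction between Zariski and étale topologies in the strict-Henselisation step. A Zariski-irreducible nodal component $C_0$ splits into two smooth transverse branches after strict Henselisation, so the pair upstairs is always snc (hence dlt) at the point above $x$ and the dichotomy \autoref{itm:l-etale-dlt-surfaces:dlt} vs.\ \autoref{itm:l-etale-dlt-surfaces:node} degenerates there. Consequently the distinction between the two cases must be detected before or during the descent by carefully comparing the Zariski snc locus of $\pi$ on $Y$ with that of its base change on $Y^{sh}$, rather than read off directly from the upstairs geometry.
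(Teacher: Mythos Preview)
Your argument is essentially correct, but it takes a substantial detour that the paper avoids. The paper's proof is only a few lines and never passes to the strict Henselisation or invokes \autoref{p-lmn-etale-dlt}. Instead it argues directly on $X$: first, \cite[Proposition~2.15]{kk-singbook} gives that an \'etale-dlt pair is log canonical; then \autoref{nodes} (Koll\'ar's classification of the reduced boundary of a log canonical surface singularity) says that if some irreducible component of $E$ is singular at $x$, then $E$ is a single irreducible nodal curve, $D=0$, and every exceptional divisor of the minimal log resolution has discrepancy $-1$. The \'etale-dlt hypothesis then forces $X$ to be regular (otherwise there would be such an exceptional divisor centred at $x\notin\etsnc(X,\Delta)$), landing in case~\autoref{itm:l-etale-dlt-surfaces:node}. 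If instead every component of $E$ is regular, one observes $\Spec R\setminus\{x\}\subseteq\snc(X,\Delta)$ and uses the \'etale-dlt condition at $x$ to conclude dlt.

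Your second and third paragraphs in fact reprove this by hand --- your analysis of $\etsnc(X,\Delta)\setminus\snc(X,\Delta)$ together with the blow-up discrepancy computation is a concrete substitute for the appeal to \autoref{nodes} --- and they stand on their own. The first paragraph, by contrast, contributes nothing: once you pass to $R^{\mathrm{sh}}$ the pair becomes dlt regardless of which case you are in downstairs (exactly the obstacle you flag at the end), so the dichotomy of \autoref{p-lmn-etale-dlt} cannot separate \autoref{itm:l-etale-dlt-surfaces:dlt} from \autoref{itm:l-etale-dlt-surfaces:node}. You should simply delete the Henselisation step and the appeal to \autoref{p-lmn-etale-dlt}; the remaining two paragraphs already constitute a valid (if slightly longer) alternative to the paper's proof. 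Note also that your route introduces an explicit $\characteristic k\neq 2$ hypothesis in the blow-up step, whereas the paper's use of \autoref{nodes} does not.
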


\begin{proof}
    If $E=0$, then $\lfloor \Delta \rfloor =0$, and hence $(X, \Delta)$ is klt. This is covered by point \autoref{itm:l-etale-dlt-surfaces:dlt}. Hence we may assume that $E \neq 0$. As we work in the local case, this means that $x \in \Supp E$.
    By \cite[Proposition 2.15]{kk-singbook} $(X,\Delta)$ has log canonical singularities. If there is an irreducible component of $E$ which is singular, then $E$ is an irreducible nodal curve and $\Delta=0$ by \autoref{nodes}. As $(X, \Delta)$ is \'etale-dlt, in this case $X$ is regular, so we are in case \autoref{itm:l-etale-dlt-surfaces:node}.
    Thus we may also assume that every irreducible component of $E$ is regular. We may also assume that $x \not\in \etsnc(X, \Delta)$. Note that $\Spec R \setminus \{x \} \subseteq \snc(X, \Delta)$, and all discrepancies over the point $x$ are greater than $-1$ by the \'etale snc assumption. Hence $(X, \Delta)$ is actually dlt at $x$. 
    
    For the  assertion about $\bQ$-factoriality, we conclude in case \autoref{itm:l-etale-dlt-surfaces:dlt} by combining \cite[Proposition 2.28]{kk-singbook} and \cite[Proposition 10.9]{kk-singbook} and case \autoref{itm:l-etale-dlt-surfaces:node} is immediate.
\end{proof}

For \'etale-dlt surface pairs we need the following statement on the existence of a special resolution not extracting log canonical places.

\begin{lemma}\label{l-res-not-nodes}
    Let $(X, \Delta=E+\Gamma)$ be an \'etale-dlt surface pair such that $\lfloor \Delta \rfloor=E$.
    Then there exists a projective birational morphism $\pi \colon Y \to X$ such that 
    \begin{enumerate}
        \item $Y$ is a regular surface, and
        \item by setting $K_Y+\Delta_Y = \pi^* (K_X+\Delta)$, we have $\Delta_Y \geq 0$ and $\lfloor{ \Delta_Y \rfloor}=\pi_*^{-1}(E)$.
    \end{enumerate}
\end{lemma}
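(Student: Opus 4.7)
The plan is to mirror the construction in \autoref{l-nice-resolution-dlt}, which treats the genuinely dlt case, and to verify that the same argument survives the weaker \'etale-dlt hypothesis. First I would choose a thrifty log resolution $f \colon W \to X$ of $(X,\Delta)$, which exists for excellent surfaces by \cite[Lemma 2.79]{kk-singbook}; by definition of thriftiness, $f$ is an isomorphism at the generic point of every log canonical centre of $(X,\Delta)$, and in particular every $f$-exceptional divisor $F$ satisfies $a(F; X, \Delta) > -1$.

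Next I would run a $(K_W + f_*^{-1}\Delta)$-MMP over $X$. Exactly as in the proof of \autoref{l-nice-resolution-dlt}, this is also a $K_W$-MMP and so it terminates by \cite[Theorem 2.29]{kk-singbook} at a projective birational morphism $\pi \colon Y \to X$ with $Y$ regular and $K_Y + \pi_*^{-1}\Delta$ $\pi$-nef. Setting $\Delta_Y := \pi^*(K_X+\Delta) - K_Y$, we obtain $\Delta_Y = \pi_*^{-1}\Delta + \sum_j(-a_j)F_j$, where the $F_j$ are the $\pi$-exceptional divisors and $a_j = a(F_j; X, \Delta)$. The required bound $0 \leq -a_j < 1$ then splits into two halves: the upper bound $a_j > -1$ is inherited from thriftiness in the previous step (the $F_j$ are among the $f$-exceptional divisors), while the lower bound $a_j \leq 0$ follows from the negativity lemma \cite[Lemma 2.16]{7authors} applied to the $\pi$-nef and $\pi$-exceptional cycle $\sum_j a_j F_j = (K_Y + \pi_*^{-1}\Delta) - \pi^*(K_X+\Delta)$. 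Combining the two bounds gives $\Delta_Y \geq 0$ and $\lfloor \Delta_Y \rfloor = \pi_*^{-1}(E)$.

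The main obstacle to address is justifying that a thrifty log resolution genuinely avoids extracting log canonical places when $(X,\Delta)$ is only \'etale-dlt rather than dlt. Using \autoref{l-etale-dlt-surfaces} pointwise, every closed point $x \in X$ is either dlt or a regular point at which $E$ is an irreducible nodal curve with $D = 0$ nearby. In the latter case $x$ is itself a zero-dimensional log canonical centre (the blow-up of $x$ in the regular surface $X$ produces an exceptional curve of discrepancy $1 - \mult_x E = -1$), and this point lies in $\etsnc(X,\Delta)$, so the \'etale-dlt condition is vacuous there; crucially, thriftiness forces $f$ to be an isomorphism at $x$, so no log canonical place is introduced above the irreducible nodes. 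Combined with the usual dlt analysis at the remaining points, this verifies the input needed for the first step and completes the argument.
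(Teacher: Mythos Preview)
Your strategy is sound, and the MMP/negativity-lemma portion is correct. The gap is in the very first step: a thrifty log resolution of the \emph{pair} $(X,\Delta)$ does not exist when $(X,\Delta)$ is \'etale-dlt but not dlt. At an irreducible nodal point $x$ of $E$ (where $X$ is regular by \autoref{l-etale-dlt-surfaces}), any log resolution of the pair must blow up $x$ in order to make the strict transform of $E$ snc, and the resulting exceptional curve has discrepancy $1-\mult_x E=-1$; since $x$ is a log canonical centre, this violates thriftiness. Your final paragraph correctly observes that thriftiness would force $f$ to be an isomorphism at $x$, but then $f_*^{-1}E$ still has its node and $f$ is not a log resolution of the pair at all. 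The two requirements are mutually exclusive, so the appeal to \cite[Lemma 2.79]{kk-singbook} for existence is not valid here.

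The fix is painless and does not change the rest of your argument: your MMP step only uses that $W$ is regular and that no $f$-exceptional divisor has discrepancy $-1$; it never uses that the pair is snc on $W$. So take instead any resolution of the \emph{surface} $X$ that is an isomorphism over its regular locus (for instance the minimal resolution). This is automatically the identity at the irreducible nodes of $E$, and over the singular locus of $X$ the pair $(X,\Delta)$ is genuinely dlt by \autoref{l-etale-dlt-surfaces}, so every extracted divisor has discrepancy $>-1$. The paper's own proof makes exactly this move but phrases it pointwise: apply \autoref{l-nice-resolution-dlt} where $(X,\Delta)$ is dlt, and perform no blow-up at the remaining (regular) points.
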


\begin{proof}
    Let $x \in (X,  \Delta)$ be a closed point. We divide the proof in two cases.
    If  $(X,\Delta)$ is dlt near $x$, we take the resolution of singularities at $x$ constructed in \autoref{l-nice-resolution-dlt}.
    If $(X,\Delta)$ is not dlt near $x$, we do not perform any blow-up as $X$ is already regular around $x$ by \autoref{l-etale-dlt-surfaces}.
    \end{proof}

We need the following generalisation of the short exact sequence of \cite[Section 3]{HW19} to \'etale-dlt surface pairs. 
	
\begin{lemma} \label{l-dlt-seq-curves}
	Let $(X, \Delta)$ be a log canonical surface pair.
	Suppose $\Delta = E +\Delta'$, where $E$ is a $\mathbb{Z}$-divisor and $(X,E)$ is \'etale-dlt.
	Let $D$ be a $\mathbb{Z}$-divisor on $X$ such that $\Supp D$ does not contain any irreducible component of $E$ or any point of $\Sing E$.
	Then there exists a canonically defined Mumford $\mathbb{Z}$-divisor $D_E$ on $E$ such that
	\begin{enumerate}
	    \item $D_E \sim_{\mathbb{Q}} D|_E + \Gamma_E$ for some Mumford $\bQ$-divisor $0\leq \Gamma_E \leq \Diff_E(0)\leq \Diff_E(\Delta')$;
		\item  there exists a short exact sequence of $\mathcal{O}_X$-modules
		$$0 \to \omega_X(D) \to \omega_X(E+D) \to \omega_E(D_E) \to 0.$$
	\end{enumerate} 
\end{lemma}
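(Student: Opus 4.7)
My plan is to construct the sequence and the divisor $D_E$ locally at each closed point of $X$ using the dichotomy of \autoref{l-etale-dlt-surfaces}, and then to glue the local constructions by canonicity. As a preliminary, the map $\omega_X(D) \to \omega_X(E+D)$ is the canonical injection of reflexive rank 1 sheaves induced on the big open subset $U \subseteq X$ where $X$ is regular and both $E$ and $D$ are Cartier---such a $U$ exists and contains all codimension 1 points of $X$ because \'etale-dlt-ness of $(X,E)$ forces snc at codim 1 points of $E$, and $\Supp D$ avoids $\Sing E$ by hypothesis---and then extended by reflexivity to all of $X$. Let $\sQ$ denote its cokernel; away from a codim 2 subset of $E$, $\sQ$ is a line bundle on $E$, and the goal is to identify it globally with $\omega_E(D_E)$ for some Mumford $\mathbb{Z}$-divisor $D_E$.

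For the local analysis, I split at each closed point $x \in X$ into the two cases of \autoref{l-etale-dlt-surfaces}. In case \autoref{itm:l-etale-dlt-surfaces:dlt}, when $(X, E)$ is dlt at $x$, I invoke the dlt version of this sequence established in \cite[Section 3]{HW19} applied locally to the log canonical pair $(X, E + \Delta')$. This produces a Mumford $\mathbb{Z}$-divisor $D_E$ near $x$ satisfying $D_E \sim_{\mathbb{Q}} D|_E + \Gamma_E$ with $0 \leq \Gamma_E \leq \Diff_E(0)$, together with the required exact sequence. In case \autoref{itm:l-etale-dlt-surfaces:node}, the surface $X$ is regular at $x$, the divisor $E$ is irreducible and nodal at $x$, and $\Delta' = 0$ near $x$; moreover, $\Supp D$ avoids $x$ by hypothesis, so locally $D = 0$. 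The classical adjunction sequence for the effective Cartier divisor $E$ in the regular (hence Gorenstein) surface $X$,
$$0 \to \omega_X \longrightarrow \omega_X(E) \longrightarrow \omega_E \to 0,$$
is exact near $x$, and since the nodal singularity of $E$ is Gorenstein, one has $\Diff_E(0) = 0$ near $x$. Thus one can take $D_E = 0$ and $\Gamma_E = 0$ near $x$, matching the required formula.

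Finally, both local constructions are canonical---determined intrinsically by the sheaf-theoretic cokernel and by the adjunction formula---so they patch to a global short exact sequence on $X$ and to a canonical Mumford $\mathbb{Z}$-divisor $D_E$ on $E$. The inequality $\Diff_E(0) \leq \Diff_E(\Delta')$ is immediate from $\Delta' \geq 0$ and the definition of the different. The main technical obstacle, I expect, is ensuring that the local choices of $D_E$ in the two cases glue compatibly, and in particular that the resulting global $D_E$ is genuinely Mumford (its support avoiding $\Sing E$); both should follow from the hypothesis that $\Supp D$ avoids $\Sing E$ together with the vanishing $\Diff_E(0)=0$ near nodes in case \autoref{itm:l-etale-dlt-surfaces:node}, so that no contribution to $D_E$ is forced onto the singular locus of $E$.
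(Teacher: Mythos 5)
Your proposal reaches the right statement but by a genuinely different route from the paper. You localise at closed points, split according to the dichotomy of \autoref{l-etale-dlt-surfaces}, quote the dlt case from \cite{HW19}, do a direct adjunction computation at the nodes of $E$ (where $X$ is regular, $E$ is Cartier, and $D$ and $\Delta'$ vanish by hypothesis and by \autoref{nodes}), and then glue. The paper instead works globally: it takes the resolution $\pi\colon Y\to X$ of \autoref{l-res-not-nodes}, which extracts no log canonical place and is therefore an isomorphism near $\Sing E$, writes the adjunction sequence for $E_Y$ on the regular surface $Y$ with the divisor $\lceil \pi^*D\rceil$, defines $D_E$ as the transport of $\lceil\pi^*D\rceil|_{E_Y}$ under the isomorphism $\pi|_{E_Y}$, and pushes forward, using GR vanishing for surfaces (\cite[Theorem 10.4]{kk-singbook}) to kill the $R^1\pi_*$ term. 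This single global construction buys canonicity of $D_E$ and of the whole sequence for free, gives $\Gamma_E$ explicitly as $(\lceil\pi^*D\rceil-\pi^*D)|_{E_Y}$, and proves the bound $\Gamma_E\le\Diff_E(0)$ directly from the Cartier index $i_x$ and \cite[Corollary 3.45]{kk-singbook} rather than inheriting it from a cited result. Your route is shorter where it works, but it treats the dlt case as a black box, including the inequality in (a).

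The one step you should not wave through is the gluing. Knowing that the cokernel $\mathcal{Q}$ of $\omega_X(D)\to\omega_X(E+D)$ is \emph{locally} isomorphic to $\omega_E(D_E)$ does not by itself give a \emph{global} isomorphism: two rank-one sheaves on $E$ that are locally isomorphic may differ by a nontrivial line bundle, and the local isomorphisms coming from two different sources (the \cite{HW19} sequence on the dlt locus, the classical adjunction sequence at the nodes) differ a priori by units on overlaps. What makes this work is the canonical Poincar\'e residue map at the generic points of $E$: it embeds $\mathcal{Q}$ into the generic fibre of $\omega_E$, so that $\mathcal{Q}$ is a \emph{subsheaf} of rational sections and hence equals $\omega_E(D_E)$ for a unique generalized divisor $D_E$, which your local computations then identify with a Mumford $\mathbb{Z}$-divisor. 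This is exactly the mechanism the paper spells out in the higher-dimensional version \autoref{seqlc}; your write-up should make it explicit here as well. Two smaller points: the natural inclusion $\omega_X(D)\hookrightarrow\omega_X(E+D)$ needs no regularity of $X$, only $E\ge 0$ and reflexivity; and your claim that $\Diff_E(0)=0$ at a node conflicts with the convention of \cite[Corollary 3.45]{kk-singbook} used in the paper (which assigns coefficient $1$ at a non-plt point) --- harmless for the inequality in (a), since $\Gamma_E$ vanishes near the nodes anyway, but worth stating in the paper's convention.
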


\begin{proof}
		Recall that $X$ is $\mathbb{Q}$-factorial by \autoref{l-etale-dlt-surfaces}.
		By the assumption that  no irreducible component of $ E$ and no point of the singular locus of $E$ is contained in the support of $D$, the divisor $D|_E$ is a well-defined Mumford $\mathbb{Q}$-divisor on $E$.
		
		Let $\pi \colon Y \to X$ be the resolution of the pair $ (X, E)$ given by \autoref{l-res-not-nodes} and write $K_Y+E_Y+\Gamma_Y=\pi^{*}(K_X+E)$, where $\lfloor E_Y + \Gamma_Y \rfloor = E_Y=\pi_*^{-1}(E)$. As $\pi$ extracts no divisor of discrepancy $-1$, $\pi$ is an isomorphism around the singular points of $E$. Hence,  $ \pi|_{E_Y} : E_Y \to E$ is an isomorphism. For similar reasons, $\lceil \pi^* D\rceil|_{E_Y}$ does make sense, i.e., the support of $\lceil \pi^* D\rceil$ intersects $E_Y$ only along its regular locus. Let $D_E$ be the divisor on $E$ corresponding to the divisor $\lceil \pi^* D\rceil|_{E_Y}$ on $E_Y$ via the isomorphism $\pi|_{E_Y}$.
		
		As  $Y$ is regular, we have the following exact sequence on $Y$:
		\begin{equation}
		\label{eq:l-dlt-seq-curves:before_pushforward}
		    0 \to \omega_Y(\lceil \pi^*D \rceil) \to \omega_Y(E_Y+ \lceil \pi^{*}D \rceil) \to \omega_{E_Y}\left(\lceil \pi^{*}D \rceil|_{E_Y}\right) \to 0.  
		\end{equation}
		Note the following properties:
		\begin{enumerate}
			\item since $\pi$ does not extract any divisor of discrepancy $-1$, we have $K_Y+\lceil \pi^{*}D \rceil \geq \lfloor K_Y+\Gamma_Y+\pi^*D\rfloor\geq \lfloor \pi^{*}(K_X+D)\rfloor $ so $\pi_*(\omega_Y(\lceil \pi^*D \rceil))= \omega_{X}(D)$. Similarly $\pi_*(\omega_Y(E_Y+ \lceil \pi^{*}D \rceil))=\omega_X(E+D)$;
			\item By the above choice of $D_E$ we have $\pi_* \omega_{E_Y}\left(\lceil \pi^{*}D \rceil|_{E_Y}\right) \cong \left( \pi|_{E_Y} \right)_* \omega_{E_Y}\left(\lceil \pi^{*}D \rceil|_{E_Y}\right) \cong \omega_E (D_E) $.
			\item by GR vanishing for surfaces (\cite[Theorem 10.4]{kk-singbook}), $R^1\pi_*\omega_Y(\lceil \pi^*D \rceil)=0$.
		\end{enumerate}
		Thus, pushing forward \autoref{eq:l-dlt-seq-curves:before_pushforward} via $\pi$ we obtain the short exact sequence
		$$0 \to \omega_X(D) \to \omega_X(E+D) \to \omega_E(D_E) \to 0.$$
		We are only left to check that $D_E \sim_{\bQ} D|_E+\Gamma_E$ for some $0 \leq \Gamma_E \leq \Diff_E(0)$.
		Note that via the isomorphism $\pi|_{E_Y}$, $\Gamma_E$ identifies with $(\lceil \pi^{*}D \rceil-\pi^*D)|_{E_Y} \geq 0$.
		Let $x$ be a point of $X$ and let $i_x$ be the determinant of the dual graph of the minimal resolution of $X$ at $x$. By possibly restricting  to a neighbourhood of $x$ we have that $i_x D$ is Cartier by \cite[Prop 10.9.(3)]{kk-singbook}. 
		Additionally, by \cite[Corollary 3.45]{kk-singbook}, the following equality holds: $$ \Diff_E(0) =\begin{cases} \left(1-\frac{1}{i_x}\right)x, & \mbox{if } (X,E) \mbox{ is plt near  } x \\ x, & \mbox{if } (X,E) \mbox{ is not plt near } x,
		\end{cases}$$
		Since $\lfloor \Gamma_E \rfloor=0$ and $i_x \Gamma_E$ is integral, we finally conclude that $\Gamma_E \leq \Diff_E(0) \leq \Diff_E(\Delta'). $
	\end{proof}
	
	In higher dimension we deduce the following generalisation of \cite[Lemma 5]{BK20}.

	\begin{proposition}\label{seqlc} 
	Let $(X, \Delta)$ be a log canonical pair.
	Suppose $\Delta =E+\Delta'$, where $E$ is a $\mathbb{Z}$-divisor and $(X,E)$ is an \'etale-dlt pair. 
	Let $\nu \colon E^{\nu} \to E$ be the $(S_2)$-ification of $E$.
	If $D$ is a $\mathbb{Z}$-divisor on $X$, then there is a short exact sequence of $\mathcal{O}_X$-modules:
	$$0 \to \omega_X(D) \to \omega_X(E+D) \to^{r} \nu_*\left(\omega_{E^{\nu}}(D_{E^{\nu}}) \right), $$
	where $D_{E^{\nu}} \sim_{\mathbb{Q}} D|_{E^{\nu}}+\Gamma_E$ is a Mumford divisor on $E^{\nu}$ for some $\bQ$-divisor $0\leq \Gamma_{E^{\nu}} \leq \Diff_{E^{\nu}}(\Delta')$.
	Moreover, $r$ is a surjection at all codimension 1 points in $E$ and, if $\omega_X(D)$ is $S_3$, then $r$ is surjective.
\end{proposition}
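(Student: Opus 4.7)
The plan is to reduce to the surface case of \autoref{l-dlt-seq-curves} by localising at codimension 1 points of $E^\nu$ (equivalently codimension 2 points of $X$) and to globalise via $(S_2)$-extension. By \autoref{l-prop-1cod-dlt}, $E$ is $(R_1)$ and $\nu$ is an isomorphism in codimension $1$, so $E^\nu$ is itself $(R_1)$ and every Weil divisor on it is automatically Mumford.

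First I would define $D_{E^\nu}$: for each codimension 1 point $\eta \in E^\nu$, set $\xi = \nu(\eta)$, which has codimension 2 in $X$. The localisation $\big(\Spec \mathcal{O}_{X, \xi}, E|_{\Spec \mathcal{O}_{X, \xi}}\big)$ is an \'etale-dlt surface pair, so \autoref{l-dlt-seq-curves} produces a Mumford divisor whose coefficient at $\eta$ I take as the local coefficient of $D_{E^\nu}$. Since $E^\nu$ is $(S_2)$, these codimension-$1$ coefficients determine a unique global Mumford divisor. The $\mathbb{Q}$-linear equivalence $D_{E^\nu} \sim_{\mathbb{Q}} D|_{E^\nu} + \Gamma_{E^\nu}$ with $0 \leq \Gamma_{E^\nu} \leq \Diff_{E^\nu}(\Delta')$ is then verified pointwise via the surface case (noting that $\Diff_{E^\nu}(\Delta')$ itself is determined by codimension 1 data) and so holds globally.

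Next I would construct $r$ canonically. Both the source $\omega_X(E+D)$ and the target $\nu_*\omega_{E^\nu}(D_{E^\nu})$ are $(S_2)$ on $X$ (for the target, $\nu$ is finite and $\omega_{E^\nu}(D_{E^\nu})$ is $(S_2)$ on $E^\nu$), so a morphism between them is determined by its restriction to a big open subset $U \subseteq X$. On a suitable $U$ containing all generic points of $E$ and the regular codimension 2 loci, $r$ is defined by the classical residue map, extending uniquely to $X$. The kernel of $r$ contains $\omega_X(D)$ and equals it at every codimension $\leq 2$ point of $X$ by the surface case; since both sheaves are $(S_2)$, this equality is global. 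The surjectivity of $r$ at codimension 1 points of $E$ is exactly the content of \autoref{l-dlt-seq-curves} at those points.

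For the final assertion, assume $\omega_X(D)$ is $(S_3)$ and let $C = \coker(r)$, which by the previous step is supported in codimension $\geq 2$ on $E$, hence codimension $\geq 3$ on $X$. Split the picture into
\[
0 \to \omega_X(D) \to \omega_X(E+D) \to \Image(r) \to 0 \quad \text{and} \quad 0 \to \Image(r) \to \nu_*\omega_{E^\nu}(D_{E^\nu}) \to C \to 0,
\]
and let $x \in X$ with $\codim_X(x) \geq 3$. Then $H^1_x(\omega_X(E+D)) = 0$ from $(S_2)$, $H^2_x(\omega_X(D)) = 0$ from $(S_3)$, and $H^0_x\big(\nu_*\omega_{E^\nu}(D_{E^\nu})\big) = 0$ because $\nu^{-1}(x)$ has codimension $\geq 2$ in $E^\nu$ and $\omega_{E^\nu}(D_{E^\nu})$ is $(S_2)$. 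Chasing through the long exact sequences forces $H^0_x(C) = 0$, and taking $x$ to be a generic point of a component of $\Supp(C)$ yields $C = 0$. I expect the main obstacle to be the \'etale-dlt (rather than dlt) setting, which prevents the clean use of a single dlt modification and snc pushforward as in the dlt proof of \cite[Lemma 5]{BK20}; instead we must glue the surface conclusions of \autoref{l-dlt-seq-curves} pointwise using reflexivity, carefully verifying that $D_{E^\nu}$ is genuinely Mumford and that the bound $\Gamma_{E^\nu} \leq \Diff_{E^\nu}(\Delta')$ holds at every codimension $1$ point.
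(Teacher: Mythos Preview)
Your proposal is correct and follows essentially the same strategy as the paper: localise at codimension~$2$ points of $X$ to invoke \autoref{l-dlt-seq-curves}, then globalise using that all sheaves in play are $(S_2)$. The paper packages the argument slightly differently---it defines $\mathcal{Q}=\coker\big(\omega_X(D)\to\omega_X(E+D)\big)$ first and then identifies the $(S_2)$-hull $\mathcal{Q}^{(**)}$ with $\nu_*\omega_{E^\nu}(D_{E^\nu})$, which makes the kernel identification automatic and reduces the final $(S_3)$ claim to the one-line observation (via \cite[Lemma~2.60]{kk-singbook}) that $\mathcal{Q}$ is then already $(S_2)$; your local-cohomology chase unwinds exactly this.
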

	
\begin{proof}
	 By \autoref{l-etale-dlt-surfaces}, at the codimension $2$ singular points of $E$, the $
	 \mathbb{Z}$-divisor $D$ is Cartier. 
	 Hence, up to replacing $D$ by another divisor in its linear equivalence class, we may assume that $D$ does not contain any component of $E$ and it also does not contain any singular point of $E$ that has codimension $2$ in $X$. 
	 By localising at codimension 2 points of $X$ and applying \autoref{l-dlt-seq-curves}, there exists a canonically defined Mumford $\bZ$-divisor $D_E$ on $E$. 
	 As the irreducible components of $E$ are $(R_1)$, by taking the preimage of $D$ in $E^{\nu}$ we obtain a globally defined Mumford $\mathbb{Z}$-divisor $D_{E^\nu}$ on the $(S_2)$ surface $E^\nu$. 

	Consider the natural exact sequence 
	$$0 \to \omega_X(D) \to \omega_X(E+D) \to \mathcal{Q} \to 0,$$
	
	where $\mathcal{Q} $ is a sheaf supported on $E$. 
	Note that $\mathcal{Q}$ is a torsion-free $\mathcal{O}_E$-module of rank 1 by \cite[Corollary 2.61]{kk-singbook} and therefore the $(S_2)$-hull $\mathcal{Q} \to \mathcal{Q}^{(**)}$ is an injection.  
	So, we obtain the exact sequence 
	$$0 \to \omega_X(D) \to \omega_X(E+D) \to \mathcal{Q}^{**}.$$
 	We now claim that $\mathcal{Q}^{(**)}$ is isomorphic to $\nu_*(\omega_{E^{\nu}}(D_{E^{\nu}}))$.
	By construction of the residue map, there is a natural homomorphism $\psi \colon \mathcal{Q} \to \nu_*(\omega_{E^{\nu}}(D_{E^{\nu}}))$. As $\nu_*(\omega_{E^{\nu}}(D_{E^{\nu}}))$ is $S_2$, then there is a natural map $\mathcal{Q}^{(**)} \to \nu_*(\omega_{E^{\nu}}(D_{E^{\nu}}))$.
	As both $\mathcal{O}_X$-modules are $(S_2)$, it is sufficient to show equality at codimension 1 points of $E$, which has been proved in \autoref{l-dlt-seq-curves}. 
	The linear equivalence $D_{E^{\nu}} \sim_{\Q} D|_{E^{\nu}} + \Gamma_E$ and $0 \leq \Gamma_{E^{\nu}} \leq \Diff_{E^{\nu}}(\Delta')$ is a codimension 2 statement and it is a consequence of \autoref{l-dlt-seq-curves}. 
	
	For the last claim, if $\omega_X(D)$ is $(S_3)$, then $\mathcal{Q}$ is $(S_2)$ by \cite[Lemma 2.60]{kk-singbook}, thus concluding $r$ is surjective.
\end{proof}

\subsection{Partial resolutions of demi-normal excellent surfaces}

In this subsection, we fix an excellent base ring $T$ such that $\frac{1}{2} \in \mathcal{O}_T.$
We start by defining the notion of a pinch point for excellent local rings.

\begin{definition}
    Let $(R, \mathfrak{m})$ be a 2-dimensional excellent local ring. 
    We say it is a \emph{pinch point} if there exists a finite \'etale morphism $R \to S$ such that $S \simeq R'/(x^2-zy^2)$, where $R'$ is a 3-dimensional regular local ring and $(x,y,z)$ is a regular system of parameters for $R'$.  
\end{definition}

We recall the definition of semi-regular surfaces.

\begin{definition}[{cf. \cite[Definition 4.2]{KSB88}}] 
A surface $S$ is called \emph{semi-regular} if for every closed point $s \in S$ the local ring $\mathcal{O}_{S,s}$ is either regular, a node (cf. \autoref{def: node}) or a pinch point.
\end{definition}

Motivated by \cite[Theorem 10.56]{kk-singbook}, we introduce the notion of semi-regularity for surface pairs.

\begin{definition}
    Let $(S, H)$ be a pair, where $S=\Spec(R)$ where $R$ is an excellent local ring of dimension 2 and $H$ is a Weil $\mathbb{Z}$-divisor. 
    We say $(S,H)$ is a \emph{semi-regular} pair if $S$ is semi-regular and one of the following holds:
    \begin{enumerate}
        \item \label{item: regular} the pair $(S, H)$ is snc;
        \item \label{item: nodal} $S$ is nodal and there exists an \'etale morphism  $\Spec R/(x^2-uy^2)  \to S$, where $R$ is 3-dimensional regular ring with $u \in R^*$, local parameters $x,y,z$ and $H=(z=0)$; 
        \item \label{item: pinch}$S$ is a pinch point, and there exists an \'etale morphism $\Spec R/(x^2-zy^2) \to S$, where $R$ is a 3-dimensional regular ring with local parameters $x,y,z$ and $H=(x=z=0)$.
    \end{enumerate}
\end{definition}

\begin{remark} \label{rem: blow-up-non-normal-locus-pinch}
As in the characteristic 0 case, the conductor $D \subset S$ of a semi-regular surface is a regular curve.
In case \autoref{item: pinch}, if $f \colon T \to S$ is the blow-up of $S$ along $D$, then the local computations in \cite[Definition 1.43]{kk-singbook} shows the pair $(T, D_T + f^*H)$ is snc.
\end{remark}

\begin{definition}[{cf. \cite[Definitions 4.3, 4.4]{KSB88}}] \label{def: semi-regular-pair}
    Let $(S, H)$ be a demi-normal surface pair. We say that $\pi \colon T \to (S,H)$ is a \emph{semi-regular resolution} if 
    \begin{enumerate}
        \item $\pi$ is a proper morphism;
        \item $\pi$ is an isomorphism over the nc locus of $(S,H)$;
        \item $(T,\pi^*H)$ is a semi-regular pair;
        \item no component of the non-normal locus $D_T$ of $T$ is $\pi$-exceptional.
    \end{enumerate}
We say $\pi$ is \emph{good} if additionally
\begin{enumerate}
    \item[(e)] $\Ex(\pi) \cup \pi^*H \cup D_T$ has regular components and transverse intersections.
\end{enumerate}
If $(S,H)$ is a pair, we say that a semi-regular resolution $\pi$ is \emph{thrifty} if $a(E, S, F)>-1$ for all $\pi$-exceptional divisors $E$.
\end{definition}

Note that the assumption $(S,H)$ is demi-normal implies that $\pi$ is an isomorphism over a big open set of $S$.

To show the existence of semi-regular resolutions of excellent surfaces we follow the strategy of \cite[Section 3.6]{Pos21b}.
We start with a description of involutions for complete DVR in characteristic $\neq 2$.

\begin{lemma} \label{lem: uniformise_action}
    Let $(R, \mathfrak{m})$ be a complete DVR with residue field $k:=R/\mathfrak{m}$ of characteristic $p \neq 2$.
    Let $\tau$ be a non-trivial involution of $R$ such that $\tau(\mathfrak{m})=\mathfrak{m}$.
    Then there exists a uniformizer $\pi \in \mathfrak{m} \setminus \mathfrak{m}^2$ such that $\tau(\pi)=-\pi$.
\end{lemma}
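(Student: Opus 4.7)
The plan is to construct $\pi$ by an averaging procedure, using that $2 \in R^*$. Fix any uniformizer $\pi_0$; since $\tau(\m) = \m$, the element $\tau(\pi_0)$ is another uniformizer, so
\[
\tau(\pi_0) = u \pi_0 \quad \text{for some } u \in R^*,
\]
and from $\tau^2 = \id$ one deduces $u \cdot \tau(u) = 1$. The natural candidate is $\pi := \pi_0 - \tau(\pi_0) = (1-u)\pi_0$, which automatically satisfies $\tau(\pi) = -\pi$. The only remaining issue is whether $\pi$ is a uniformizer, which amounts to asking whether $1-u \in R^*$, equivalently whether $a := \overline{u} \in k^*$ is different from $1$.

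If $a \neq 1$ I am done. So assume $a = 1$. The plan is then to replace $\pi_0$ by a suitable $c\pi_0$ with $c \in R^*$ to fall back into the first case: the ratio controlling the new uniformizer becomes $\tau(c)u/c$, whose residue is $\overline{\tau}(\overline{c})/\overline{c}$, so it suffices to exhibit $c \in R^*$ with $\overline{\tau}(\overline{c}) \neq \overline{c}$. Such a $c$ exists as soon as the induced involution $\overline{\tau}$ on $k$ is non-trivial, and showing this is the crux of the argument.

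The key claim, and the only delicate step, is that if $a = 1$ and also $\overline{\tau} = \id$, then $\tau$ must be the identity on $R$, contradicting the non-triviality hypothesis. The proof is by induction on $n$: the hypotheses say that $\tau$ is trivial on $R/\m$ and on $\m/\m^2$ (the latter because $\tau(\pi_0) = u\pi_0 \equiv \pi_0 \pmod{\m^2}$ and $\overline{\tau}$ is trivial on residues), and by expanding $\tau(r\pi_0^n) = \tau(r) u^n \pi_0^n$ with $\tau(r) \equiv r \pmod{\m}$ and $u^n \equiv 1 \pmod{\m}$, one sees that $\tau$ acts trivially on each graded piece $\m^n/\m^{n+1}$, hence (inductively) on each $R/\m^{n+1}$. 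Krull's intersection theorem gives $\bigcap_n \m^n = 0$, and completeness of $R$ then forces $\tau = \id$. This rigidity is the only place where completeness is used; the assumption $p \neq 2$ is essential to guarantee that $2 \in R^*$, which is what makes the averaging construction $\pi_0 - \tau(\pi_0)$ nondegenerate when $a = -1$ (the simple case within Case 1) and, more broadly, underlies the $\pm$-eigenspace decomposition being implicit in the argument.
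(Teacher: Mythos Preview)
Your proof is correct and follows essentially the same route as the paper's: both start with the anti-symmetrisation $\pi_0 - \tau(\pi_0)$, split on whether its residue is a uniformizer, and in the bad case further split on whether $\overline{\tau}$ is trivial on $k$, multiplying by a suitable unit when it is not.

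The only place the two diverge is in the sub-case $\overline u = 1$ and $\overline\tau = \id$. The paper builds a Cauchy sequence converging to a $\tau$-fixed uniformizer; you instead argue that $\tau$ acts trivially on every graded piece $\mathfrak{m}^n/\mathfrak{m}^{n+1}$ and conclude $\tau = \id$. Your version is arguably more direct, but be aware that the step ``trivial on each graded piece, hence (inductively) on each $R/\mathfrak{m}^{n+1}$'' is \emph{not} a formal consequence of triviality on the associated graded (unipotent actions exist). What saves it here is the involution hypothesis together with $2 \in R^*$: if $s := \tau(r) - r \in \mathfrak{m}^n$, then triviality on $\mathfrak{m}^n/\mathfrak{m}^{n+1}$ gives $\tau(s) \equiv s \pmod{\mathfrak{m}^{n+1}}$, while $\tau(s) = -s$; hence $2s \in \mathfrak{m}^{n+1}$ and $s \in \mathfrak{m}^{n+1}$. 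You should make this explicit. Incidentally, once this is in place your argument only uses $\bigcap_n \mathfrak{m}^n = 0$ (Krull separatedness), not completeness; the latter is genuinely needed only for the paper's Cauchy-sequence variant.
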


\begin{proof}
The proof is similar to \cite[Lemma 3.6.5]{Pos21b}.
We fix $t \in \mathfrak{m}$ to be a uniformiser.
     
\ptofpf{Suppose $t -\tau(t) \notin \mathfrak{m}^2.$} 
In this case, we set $\pi:=t - \tau(t)$. Note that $\tau(\pi)=\tau(t)-\tau^2(t)=\tau(t)-t=-\pi$ and we conclude.

    \ptofpf{Suppose $t -\tau(t) \in \mathfrak{m}^2.$} 
    Then there exists $f \in \mathfrak{m}$ such that $\tau(t)=(1+f)t$. Moreover, it is easy to see that $\tau(t^k)-t^k \in \mathfrak{m}^{k+1}$.
    We distinguish 2 cases.
    
    \ptofpf{1. $\tau$ acts non-trivially on $k.$}
    Let $\alpha \in k \setminus k^{\tau}$. 
    Let $\widetilde{\alpha}$ be a lifting of $\alpha$ to $R$.
    Note that $\widetilde{\alpha}-\tau(\widetilde{\alpha})(1+f)$ is invertible. Indeed, as $R$ is local, it is sufficient to note that $\widetilde{\alpha}-\tau(\widetilde{\alpha})(1+f) \equiv \alpha-\tau(\alpha) \neq 0 \mod \mathfrak{m}$ by choice of $\alpha$.
    Consider $s:= \widetilde{\alpha} t$. 
    Note that 
    $$s-\tau(s) \equiv \big(\widetilde{\alpha} - \tau(\widetilde{\alpha})\big) t \neq 0 \mod \mathfrak{m}^2 $$
    as $\alpha-\tau(\alpha) \neq 0$.
    Thus $s-\tau(s) \notin \mathfrak{m}^2$ and thus we can conclude by the previous step.

     \ptofpf{2. $\tau$ acts trivially on $k$.}
     We verify this contradicts the non-triviality of $\tau$.
     We construct a recursive sequence $t_k$ such that $t_k -\tau(t_k) \in \mathfrak{m}^{k+1}$.
     Fix $t_0=0$ and $t_1:=t$. 
     Suppose $t_k$ is defined. We have 
     $t_k -\tau(t_k)= a t^{k+1} \mod \mathfrak{m}^{k+2} $ for some $a \in R$.
     As $2$ is invertible in $R$, we can define $$t_{k+1}:=t_k - \frac{a}{2} t^{k+1}.$$ 
     Note that $\tau(\frac{a}{2}) \cong \frac{a}{2} \mod \mathfrak{m}$ by hypothesis. Therefore
     \[ \tau(t_{k+1})-t_{k+1} \equiv -a t^{k+1}+\frac{a}{2} t^{k+1} - \tau \left(\frac{a}{2} \right) t^{k+1} \mod \mathfrak{m}^{k+2} \equiv 0, \]
    as $\tau(a/2) \equiv a/2 \mod \mathfrak{m}$.
     Now the sequence $(t_k)$ is a Cauchy sequence and thus converges to $s \in \mathfrak{m}$. 
     Note that by construction $\tau(s)=s$, reaching a contradiction (as $\tau$ is non-trivial).
\end{proof}

\begin{lemma}\label{lem: gluing_nice}
    Let $(S, H)$ be a quasi-projective snc surface pair  over $T$ and let $D$ be a regular divisor intersecting transversally $H$. 
    Let $\tau \colon D \rightarrow D$ be a non-trivial involution.
    Then $(S/R(\tau), H/R(\tau))$ is a semi-regular pair. 
\end{lemma}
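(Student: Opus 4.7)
The plan is to work \'etale-locally on the quotient $X := S/R(\tau)$, analyzing the local structure at each closed point $q \in X$. Let $\nu \colon S \to X$ be the natural morphism. The preimage $\nu^{-1}(q)$ falls into one of three cases: (i) a single point $p \notin D$; (ii) two distinct points $p_1, p_2 \in D$ whose images are identified by $\tau$; or (iii) a single point $p \in D$ fixed by $\tau$. I will show that each case matches one of the three models of a semi-regular pair.

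In case (i), $\nu$ is a local isomorphism near $p$, so the pair at $q$ inherits the snc structure of $(S,H)$ and we land in the snc case (a). In case (ii), the local ring at $q$ is the fiber product $\mathcal{O}_{S,p_1} \times_{\mathcal{O}_{D}} \mathcal{O}_{S,p_2}$. I would choose regular parameters $(x_i, y_i)$ at each $p_i$ with $D = (x_i = 0)$ and arrange, after a suitable \'etale base change, that $\tau$ identifies $y_1$ with $y_2$. The fiber product is then generated by $\xi := (x_1, 0)$, $\eta := (0, x_2)$, and $\zeta := (y_1, y_2)$ subject only to $\xi \eta = 0$. Since $\frac{1}{2} \in \mathcal{O}_T$, the substitution $a := \xi + \eta$, $b := \xi - \eta$ rewrites the relation as $a^2 - b^2 = 0$, matching the nodal form of case (b). When $H$ passes through the $p_i$, transversality with $D$ allows $H = (y_i = 0)$ locally, and $\nu(H)$ is cut out by $\zeta = 0$, matching $H = (z = 0)$ in case (b).

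The main obstacle is case (iii), in which $\tau$ restricts non-trivially to $\mathcal{O}_{D,p}$. I would apply \autoref{lem: uniformise_action} to the complete DVR $\widehat{\mathcal{O}}_{D,p}$ (using $\operatorname{char}(k) \neq 2$) to obtain a uniformizer $\bar y$ with $\tau^*(\bar y) = -\bar y$. Lifting $\bar y$ to $y \in \mathcal{O}_{S,p}$ and pairing it with a local defining equation $x$ for $D$ yields regular parameters $(x, y)$ at $p$, and the local ring of $X$ at $q$ becomes
\begin{equation*}
\mathcal{O}_{X,q} = \{ f \in \mathcal{O}_{S,p} : f(0,y) = f(0,-y) \}.
\end{equation*}
Writing $f = f(0,y) + x \cdot g(x,y)$, the invariance condition forces $f(0,y)$ to be a series in $y^2$, so $\mathcal{O}_{X,q}$ is generated as an $\mathcal{O}_T$-algebra by $x$, $xy$, and $y^2$. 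Setting $\xi := xy$, $\eta := x$, $\zeta := y^2$, the sole relation $\xi^2 = \eta^2 \zeta$ is the pinch-point equation of case (c). When $H$ passes through $p$, transversality with $D$ allows $H = (y = 0)$ locally, and a direct computation shows $(y) \cap \mathcal{O}_{X,q} = (\xi, \zeta)$, matching the form $H = (x = z = 0)$ of case (c); if $H$ avoids $p$, no further check on $H$ is required.

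The hard part is case (iii): producing coordinates in which $\tau^*(\bar y) = -\bar y$ depends crucially on \autoref{lem: uniformise_action} and on $\operatorname{char}(k) \neq 2$, and it is precisely what forces the invariant subring to take pinch-point normal form. Once the coordinates are in place, both the identification of $\mathcal{O}_{X,q}$ and the computation of the image of $H$ are routine.
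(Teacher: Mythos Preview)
Your approach is essentially the same as the paper's: both analyze the pushout locally over a point of the quotient, use \autoref{lem: uniformise_action} to linearize $\tau$ at a fixed point, and identify the invariant subring with the pinch-point model. Your \'etale-local reduction is a genuine simplification, since the paper keeps track of an extra subcase (case 2(b) there) in which $\tau$ acts non-trivially on the residue field $k(p)$; the paper shows directly that this yields a node, whereas in your setup such a point splits after an \'etale base change and falls into case (ii).

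There is, however, a small gap you should close. In case (iii) you write $\mathcal{O}_{X,q}=\{f: f(0,y)=f(0,-y)\}$ and conclude it is generated by $x, xy, y^2$. This description of the $\tau$-invariants is only valid when $\tau$ acts trivially on the residue field $k(p)$; otherwise $\tau$ moves the coefficients as well and the invariant subring is different. You need to observe that after strictly henselizing at $q$ (so $k(q)$ is separably closed) the residue extension $k(p)/k(q)=k(p)/k(p)^\tau$ is separable of degree $\leq 2$ since $\mathrm{char}\neq 2$, hence either $\tau$ fixes $k(p)$ (and your computation applies) or $p$ splits into two points and you are actually in case (ii). Once this is said, the rest of your argument goes through; your identification of the generators and of $H$ in the pinch-point case is correct (and in fact cleaner than the paper's, which appears to have a typo in listing $\widetilde{\pi}$ rather than $\widetilde{\pi}^2$ among the generators).
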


\begin{proof}
The relation $R(\tau)$ is finite, thus the quotient $p \colon S \to U:=S/R(\tau)$ exists and it is demi-normal by \cite[Lemma 2.3.13]{Pos21b}.
Moreover, by \cite[9.13]{kk-singbook} the diagram

	\[
	\xymatrix{
		D \ar[d] \ar[r] & S \ar[d]\\
		D/R(\tau) \ar[r] & U,
	}
	\]

is a push-out square.
We may assume $U=\Spec(R)$ is the spectrum of a local ring with maximal ideal $\mathfrak{m}_R$. 
Therefore $S$ is an affine regular scheme $\Spec(A)$, and there exists a Cartier divisor $f \in A$ (resp. $h \in A$) such that $D=(f=0)$ (resp. $H=(h=0)$). 

 If $D \to D/R(\tau)$ is a $(\mathbb{Z}/2\mathbb{Z})$-quotient we have two cases:

\begin{enumerate}
\item $A$ has exactly two maximal ideals. In this case, 
 up to an \'etale base change, we may assume $A=A_1 \oplus A_2$, where $A_1$ and $A_2$ are local rings. Let $f_i \in A_i$ (resp. $h_i \in A_i$) be the local equations of $D|_{\Spec(A_i)}$ (resp. $H|_{\Spec(A_i)}$) for $i=1,2$. 
 Note that the transversality hypothesis on $H$ and $D$ implies that $(f_i, h_i)= \mathfrak{m}_{A_i}$.
 Then the push-out property implies that $\mathfrak{m}_U=R \cap (\mathfrak{m}_{A_1} \oplus \mathfrak{m}_{A_2})$.
 Let $\tau \colon A_1/f_1 \to A_2/f_2$ be the involution. 
therefore $x:=(f_1,0), y=(0,f_2)$ and $z:=(h_1,h_2)$ generate $\mathfrak{m}_U$. 
If $\tau$ is trivial, then we have the relation $x=y$ and $H/R(\tau)=(z=0)$ and we are in case \autoref{item: regular} of \autoref{def: semi-regular-pair}. 
If $\tau$ is not-trivial, we have the relation $xy=0$ and therefore
 $\Spec(R)$ is nodal and $H/R(\tau)=(z=0)$, thus ending in case \autoref{item: nodal} of  \autoref{def: semi-regular-pair}. 

\item $A$ is a local ring such that $\tau(\mathfrak{m}_A)=\mathfrak{m}_A$ and let $\tau \colon A/(f) \to A/(f)$ be an involution.
The involution $\tau$ extends to the completion of $A$ and the completion of $R$ is the preimage of the $\tau$-invariant elements of $A/(f)$. 
As $\tau$ fixes $\mathfrak{m}_A$, 
the residue field of $R$ is isomorphic to $k^{\tau}$.
The completion $\hat{A}/(f)$ is a complete DVR and thus there exists a uniformiser $\pi \in \hat{A}/(f)$ such that $\tau(\pi)=-\pi$ by \autoref{lem: uniformise_action}.
Let $\widetilde{\pi}$ be a lifting of $\pi$ to $\hat{A}$ such that $H=(\widetilde{\pi}=0)$ (note that $h/\widetilde{\pi}$ is invertible).

We distinguish two cases.
\begin{enumerate}
    \item Suppose $k=k^{\tau}$.
    Then $R \subset A$ is the subalgebra generated by $f, \widetilde{\pi}$ and $\widetilde{\pi}f$. 
    Moreover, $H/R(\tau)$ is  given by the equations $ (\widetilde{\pi}=\widetilde{\pi}f=0) $ and thus we are in case \autoref{item: pinch} of \autoref{def: semi-regular-pair}.
    \item Suppose $k^{\tau} \subsetneq k$, there exists $\alpha \in k$ such that $k=k^{\tau}(\alpha)$ and $\tau(\alpha)=-\alpha.$ Let $A'$ to be the preimage of $k^{\tau}$ under the projection $A \to k$.
    In this case consider the subalgebra of $A'$ generated by $x:=\alpha \widetilde{\pi}, y:=f, z:= \alpha f$. Therefore we have the relation $ \alpha^2y^2=z^2$, showing $R$ is a nodal singularities and $H/R(\tau)$ is described by $(x=0)$, showing we end up in case \autoref{item: nodal} of \autoref{def: semi-regular-pair}.
\end{enumerate}
\end{enumerate}
\end{proof}

We now show the existence of semi-resolution (in characteristic 0, this is \cite[Theorem 10.54]{kk-singbook}).

\begin{corollary}\label{prop: exist-semi-log-res}
Let $(S, H)$ be a quasi-projective demi-normal surface pair over $T$.
Then there exists a good semi-regular resolution $\pi \colon V \to (S,H)$.
If moreover $(S, H)$ is semi-dlt, we can choose $\pi$ to be thrifty.
\end{corollary}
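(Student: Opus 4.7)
The plan is to reduce to the normal case by normalising $S$, perform an equivariant log resolution of the normalisation, and then glue back via \autoref{lem: gluing_nice}. First I would take the normalisation $\nu \colon \bar{S} \to S$ with conductor $\bar{D} \subset \bar{S}$. Since $(S,H)$ is demi-normal, the non-normal locus comes equipped with an involution $\tau$ on the normalisation $\bar{D}^{n}$ of $\bar{D}$, so that $S$ is recovered from $\bar{S}$ as the push-out along $\bar{D}^{n} \to \bar{D}^{n}/\tau$. Set $\bar{H} := \nu^{-1}(H)$; then $(\bar{S}, \bar{D} + \bar{H})$ is a normal surface pair and, in the semi-dlt case, it is log canonical (in fact dlt) by definition of semi-dlt.

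Next I would take a projective log resolution $f \colon \bar{V} \to \bar{S}$ of $(\bar{S}, \bar{D} + \bar{H})$ chosen so that $(\bar{V}, \Ex(f) \cup f_{*}^{-1}(\bar{D}+\bar{H}))$ is snc and so that the strict transform $\bar{D}_V := f_{*}^{-1}\bar{D}$ is regular and maps isomorphically onto $\bar{D}^{n}$; this can be arranged since $\bar{D}$ is a curve and resolving its singularities in the course of resolving $\bar{S}$ replaces it by its normalisation. By the universal property of normalisation, the involution $\tau$ lifts canonically to an involution $\tau_V$ of $\bar{D}_V$. For the semi-dlt addendum, I would take $f$ to be a \emph{thrifty} log resolution of $(\bar{S},\bar{D}+\bar{H})$, as provided by \cite[Lemma 2.79]{kk-singbook}, ensuring that no divisor of discrepancy $-1$ is extracted.

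Then I would glue $\bar{V}$ along $\bar{D}_V$ via $\tau_V$ using Kollár's theory of finite set-theoretic equivalence relations on surfaces \cite[Chapter 9]{kk-singbook}, obtaining a quasi-projective scheme $V$ with a proper birational morphism $\pi \colon V \to S$ compatible with $f$ on big open subsets. Because $\bar{D}_V$ is regular and meets $f_{*}^{-1}\bar{H}$ and $\Ex(f)$ transversally, \autoref{lem: gluing_nice} applies in an \'etale neighbourhood of every point of $\bar{D}_V/R(\tau_V)$, and therefore $(V, \pi^{*}H)$ is a semi-regular pair. The remaining conditions of \autoref{def: semi-regular-pair} are straightforward: $\pi$ is an isomorphism over the nc locus of $(S,H)$ because $f$ already is over the preimage of this locus and no gluing takes place there; the non-normal locus $D_V$ of $V$ is the image of $\bar{D}_V$, which surjects onto $D$ along $\pi$, so no component of $D_V$ is $\pi$-exceptional; and the snc choice on $\bar{V}$ gives goodness. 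In the semi-dlt case, thriftiness of $f$ transfers to thriftiness of $\pi$ since log canonical places of $(S,H)$ correspond bijectively to those of $(\bar{S}, \bar{D} + \bar{H})$.

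The main technical obstacle is twofold: first, ensuring that the involution $\tau$ on $\bar{D}^{n}$ lifts to $\bar{D}_V$ in a manner compatible with the snc structure on $\bar{V}$, which requires a coordinated choice of log resolution that simultaneously normalises the conductor; and second, showing that the quotient by the finite equivalence relation $R(\tau_V)$ exists as a quasi-projective scheme rather than merely as an algebraic space, which is handled by the scheme-theoretic gluing of Kollár combined with the explicit local description supplied by \autoref{lem: gluing_nice} (the hypothesis $\tfrac{1}{2} \in \mathcal{O}_T$ being crucial so that \autoref{lem: uniformise_action} applies).
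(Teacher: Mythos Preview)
Your proposal is correct and follows essentially the same approach as the paper: normalise, take a log resolution of $(\bar{S}, \bar{D}+\bar{H})$ so that the strict transform of the conductor is regular, lift the involution, and glue via \autoref{lem: gluing_nice}; for the semi-dlt addendum, use a thrifty log resolution of the normalisation. The paper's proof is terser on the verification steps (existence of the quotient, checking conditions (b)--(e) of \autoref{def: semi-regular-pair}), but the strategy and the key inputs are identical.
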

	
	\begin{proof}
        Let $\nu \colon (S^\nu,  D_{S^\nu}) \to S$ be the normalisation morphism where $D_{S^\nu}$ is the conductor subscheme of $S^\nu$.
        Let $f\colon X \to (S^\nu,  D_{S^\nu}+\nu^*H)$ be a log resolution of $(S^\nu,  D_{S^\nu}+\nu^*H)$ and let $D_X:=f_*^{-1}D_{S^\nu}$.
        The involution $\tau$ lifts to an involution of $D_X$  
        and we can apply \autoref{lem: gluing_nice} to construct a projective birational contraction $q \colon Y \to S$ fitting in the commutative diagram
        	\[
	\xymatrix{
		(X, D_X+f_*^{-1}H+\Ex(f)) \ar[d]_{f} \ar[r] & (Y:=X/R(\tau),  (f_*^{-1}H+\Ex(f))/\tau) \ar[d]_{q}\\
		(S^{\nu}, D_{S^\nu}+\nu^*H)  \ar[r] & (S,H),
	}
	\]
    such that $q \colon Y \to (S,H)$ is a semi-regular resolution of $(S,H)$.
    
    If $(S,H)$ is semi-dlt, then $(S^\nu, D_{S^n}+\nu^*H)$ is a dlt pair. 
    In this case, we can take $f \colon X \to (S^\nu, D_{S^n}+\nu^*H)$ to be a thrifty log resolution of $(S^n, D_{S^n})$ and the end-product $T \to (S,H)$ is clearly a thrifty semi-regular resolution.
	\end{proof} 

We show how we can slightly improve the resolution algorithm (see \cite[Corollary 10.55]{kk-singbook} for an analogue in characteristic 0).

\begin{definition}
	Let $(S, H)$ be a demi-normal surface pair and let $S^0 \subset S$ be the largest open set such that $(S^0, H|_{S^0})$ is semi-snc. 
    We say that $\pi \colon T \to (S,F)$ is a \emph{semi-log resolution} if 
	\begin{enumerate}
    \item $\pi$ is projective and birational; 
    \item $(T, D_T:=\pi_*^{-1}\Supp(F)+\Ex(\pi))$ is a semi-snc pair;
    \item $\pi$ is an isomorphism over the generic point of every lc centre of $(S,H)$;
    \item $\pi$ is an isomorphism at the generic point of every lc centre of $(T, D_T)$.
    \end{enumerate}
\end{definition}

\begin{theorem}\label{thm: exist-semi-log-res}
    Let $(S, H)$ be a quasi-projective demi-normal surface pair over $T$.
    Then there exists a semi-log resolution $\pi \colon V \to (S,H)$.
    If $(S, H)$ is semi-dlt, we can choose $\pi$ to be thrifty.
\end{theorem}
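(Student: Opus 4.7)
The plan is to post-compose the good semi-regular resolution furnished by \autoref{prop: exist-semi-log-res} with a further blow-up that eliminates the remaining pinch points, relying on the local model in \autoref{rem: blow-up-non-normal-locus-pinch}.

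First, I would apply \autoref{prop: exist-semi-log-res} to obtain a good semi-regular resolution $q \colon Y \to (S, H)$, which can be chosen to be thrifty when $(S, H)$ is semi-dlt. By the definition of \emph{good}, the pair $(Y, q^{*}H)$ is semi-regular and the components of $\Ex(q) \cup q^{*}H \cup D_Y$ are regular and meet transversely, where $D_Y$ denotes the non-normal locus of $Y$. Hence the only obstruction to $(Y, q_{*}^{-1}\Supp H + \Ex(q))$ being semi-snc is the finite set $P \subset Y$ of pinch points of $Y$, at each of which $D_Y$ is locally a regular curve inside the nodal-pinch model.

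Second, I would define $\rho \colon V \to Y$ as the blow-up of the ideal sheaf $\mathcal{I}$ that equals $\mathcal{I}_{D_Y}$ in a sufficiently small neighbourhood of each $p \in P$ and is the unit ideal elsewhere; this isolates the problematic centres from the nodal locus, where blowing up the non-normal curve would undesirably separate the two sheets. By \autoref{rem: blow-up-non-normal-locus-pinch}, the pair $(V, D_V + \rho^{*}q^{*}H)$ becomes snc over every pinch point, while $\rho$ is an isomorphism away from $P$. Setting $\pi := q \circ \rho$, the pair $(V, D_V := \pi_{*}^{-1}\Supp H + \Ex(\pi))$ is then semi-snc, which gives condition (b). Conditions (a), (c), (d) follow from the corresponding properties of $q$ (which is an isomorphism over the semi-snc locus of $(S, H)$, and in the thrifty case also over generic points of deeper lc centres) together with the fact that $\rho$ is a modification confined to neighbourhoods of the pinch points of $Y$, so that no new lc centre on $V$ maps to a pre-existing lc centre of $(S, H)$.

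For the thrifty statement, I would choose $q$ to be thrifty from the outset. Then the $q$-exceptional divisors satisfy $a(-, S, H) > -1$ by construction, and the only further exceptional divisors are those contributed by $\rho$, one for each pinch point $p \in P$. In the local model $\Spec R/(x^{2} - zy^{2})$, a direct discrepancy computation on the normalisation $\Spec R[x/y] \simeq \Spec R[t]/(t^{2} - z)$, combined with the standard conductor-adjunction identity $K_{\overline{X}} + \overline{D} = \nu^{*}K_X$, should show that this new exceptional divisor has discrepancy strictly greater than $-1$ over $(S, H)$. The hard part of the argument will be precisely this discrepancy check: one must verify that a pinch point of $(Y, q^{*}H)$ is not an lc centre of the semi-dlt pair $(S, H)$ and that the blow-up of its conductor does not extract a log canonical place, which I expect to handle by tracking the push-forward of the relative canonical divisor through the étale-local pinch-point cover and invoking the semi-dlt assumption to rule out lc places over $P$.
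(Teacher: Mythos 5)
Your proposal follows the paper's proof essentially verbatim: take the good semi-regular resolution of \autoref{prop: exist-semi-log-res} (thrifty in the semi-dlt case), observe that the pinch points are isolated, and blow up the conductor only in a neighbourhood of each pinch point, invoking \autoref{rem: blow-up-non-normal-locus-pinch} to see that the result is semi-snc there. The one place where you go astray is the step you single out as ``the hard part'': the blow-up of the conductor ideal $(x,y)$ in the local model $R'/(x^2-zy^2)$ \emph{is} the normalisation $\Spec R'[x/y] \simeq \Spec R'[t]/(t^2-z)$, a finite morphism whose ``exceptional'' locus $(y=0)$ surjects onto the conductor curve; it extracts no exceptional divisor at all. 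Consequently there is no new discrepancy to compute and no log canonical place to rule out over $P$ --- thriftiness of $\pi = q\circ\rho$ is inherited from $q$ for free, which is why the paper's proof does not address it.
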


\begin{proof}
Consider $q \colon Y \to (S,H)$ be the semi-regular resolution constructed in \autoref{prop: exist-semi-log-res}. 
The only problem is around the pinch points of $Y$, which are isolated by dimension reasons. 
Therefore we can localise to a neighbourhood of $y \in Y$, where $y$ is a pinch point and let $D_y$ be the local component of the non-normal locus $D_T$.
By blowing-up $D_y$ we obtain our desired semi-log resolution as explained in \autoref{rem: blow-up-non-normal-locus-pinch}.
\end{proof}
	
\subsection{Vanishing theorems for slc surfaces}

In this section we generalise the vanishing theorems of Kawamata--Viehweg type for klt surfaces due to Tanaka \cite[Theorem 3.3]{Tan18} to the slc case using the method developed by Koll\'{a}r in \cite[Section 10.3]{kk-singbook}. 
For an overview on vanishing theorems for slc pairs in characteristic 0 we refer to \cite{Fuj15}.
The most general result we prove is \autoref{c-vanishing-slc-surf2}, which is the fundamental vanishing theorem we will use in \autoref{s-depth-3fold}.
We start with the case of semi-snc surface pairs.

\begin{proposition} \label{p-kvv-vanishing-snc}
	Let $(S, \Delta)$ be a semi-snc surface pair  with $\Delta$  a reduced $\bZ$-divisor, and let $f \colon S \to T$ be a surjective projective morphism onto a normal scheme of dimension $\dim(T) \geq 1$.
	Let $M$ be a Cartier divisor on $S$. Suppose that
	\begin{enumerate}
		\item $M \sim_{\Q, f} K_S+\Delta' + L$, where
		\begin{enumerate}
		    \item 
			 $L$ is a $f$-nef $\mathbb{Q}$-divisor, and
			 \item  $0 \leq \Delta' \leq \Delta$ is a $\bQ$-divisor;
			 \end{enumerate} 
		\item if $Z$ is a log canonical  centre of  $\left(S, \Delta \right)$, including the irreducible components of $Z$ as well, then   
		\begin{enumerate}
			\item $\dim \big( f(Z) \big) \geq 1$;
			\item if $F_Z$ is the generic fibre of $Z \to f(Z)$, then $\dim F_Z =\nu(L|_{F_Z})$.
			\end{enumerate}
	\end{enumerate}
	Then $R^1f_*\MO_S(M)=0$.
\end{proposition}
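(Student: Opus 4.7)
The plan is to reduce the statement to Tanaka's Kawamata--Viehweg vanishing for snc surfaces \cite[Theorem 3.3]{Tan18} via the normalization $n\colon S^{\nu} = \bigsqcup_i S_i \to S$, with reduced conductor preimage $\bar D \subset S^{\nu}$. The semi-snc assumption guarantees that each $(S_i, \bar D|_{S_i} + \Delta|_{S_i})$ is an honest snc pair, and a direct local computation in semi-snc charts (treating separately the snc codimension~$1$ points and the higher-codimension configurations such as triple points) shows that the conductor ideal $\mathcal{I}_D$ coincides with $n_*\mathcal{O}_{S^{\nu}}(-\bar D) \subset \mathcal{O}_S$. Tensoring the conductor sequence by the Cartier sheaf $\mathcal{O}_S(M)$ and using the projection formula, this produces the short exact sequence
\begin{equation*}
0 \to n_*\mathcal{O}_{S^{\nu}}(n^*M - \bar D) \to \mathcal{O}_S(M) \to \mathcal{O}_D(M|_D) \to 0,
\end{equation*}
and applying $Rf_*$ reduces the proof of $R^1 f_* \mathcal{O}_S(M) = 0$ to the two vanishings $R^1 (f \circ n)_* \mathcal{O}_{S^{\nu}}(n^*M - \bar D) = 0$ and $R^1 f_* \mathcal{O}_D(M|_D) = 0$.

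For the first vanishing, adjunction gives $n^* K_S = K_{S^{\nu}} + \bar D$, so hypothesis (a) pulls back to $n^*M - \bar D \sim_{\mathbb{Q}, f\circ n} K_{S^{\nu}} + n^*\Delta' + n^*L$, an honest snc-boundary formula on each smooth $S_i$ with $n^* L$ still $(f \circ n)$-nef. The log canonical centres of $(S_i, n^*\Delta|_{S_i})$ are precisely the preimages by $n$ of the log canonical centres of $(S, \Delta)$ contained in $S_i$, so hypotheses (b.i)--(b.ii) transfer verbatim, using that numerical dimension is preserved under pullback along the finite morphism $n$. Tanaka's theorem \cite[Theorem 3.3]{Tan18} applied componentwise (together with the finiteness of $n$, which identifies $R^1 f_* n_*$ with $R^1(f\circ n)_*$) then yields the required vanishing.

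For the second vanishing, every irreducible component of the conductor $D$ is a $1$-dimensional log canonical centre of $(S, \Delta)$; by (b.i), $f$ does not contract any such component to a point. Since $\dim D = 1$, this forces $f|_D \colon D \to T$ to have $0$-dimensional fibres, whence $R^i f_* \mathcal{G} = 0$ for all $i \geq 1$ and every coherent sheaf $\mathcal{G}$ on $D$. The main technical point I anticipate is the verification of the conductor identification $\mathcal{I}_D = n_* \mathcal{O}_{S^{\nu}}(-\bar D)$ at the higher-codimension strata of semi-snc charts, ensuring that the displayed exact sequence is genuinely exact (and not merely exact up to a $0$-dimensional correction which, even if present, would not affect the $R^1 f_*$-level conclusion).
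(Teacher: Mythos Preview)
Your approach is correct and gives a genuine alternative to the paper's route. The paper simply says to repeat the steps of \cite[Corollary 10.34]{kk-singbook} with \cite[Theorem 3.3]{Tan18} as the basic vanishing input; Koll\'ar's argument there proceeds by peeling off one stratum at a time via residue sequences, inducting simultaneously on the irreducible components of $S$ and on the components of $\Delta'$ with coefficient~$1$. You instead dispose of the non-normality of $S$ in a single step via the conductor exact sequence, which is cleaner for that part of the reduction: the identification $\sI_D = n_*\sO_{S^\nu}(-\bar D)$ holds on the nose for demi-normal schemes (so your worry about higher-codimension corrections is unfounded), and your observation that the conductor curve $D$ is finite over $T$ by hypothesis~(b.i) kills the $R^1$ on $D$ immediately.

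The one place to tighten is the citation on each normalized component $S_i$. After your reduction you need $R^1(f|_{S_i})_* \sO_{S_i}(K_{S_i} + n^*\Delta'|_{S_i} + n^*L|_{S_i}) = 0$, but $n^*\Delta'|_{S_i}$ may still have components with coefficient~$1$, so $(S_i, n^*\Delta'|_{S_i})$ is only lc, not klt. Tanaka's \cite[Theorem 3.3]{Tan18} as stated is the klt version (nef and big over the base). To finish you still need one more short induction on the components of $n^*\Delta'|_{S_i}$ with coefficient~$1$: for each such curve $C$, the sequence $0 \to \sO_{S_i}(M_i - C) \to \sO_{S_i}(M_i) \to \sO_C(M_i|_C) \to 0$ reduces the number of coefficient-$1$ components, and $R^1 f_* \sO_C(M_i|_C)=0$ since $C$ is finite over $f(C)$ by~(b.i). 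This is exactly the residual piece of Koll\'ar's argument that your conductor trick does not absorb; once you note it, your proof is complete and arguably more transparent than a bare reference to \cite[10.34]{kk-singbook}.
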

	
\begin{proof}
	Using the relative Kawamata--Viehweg vanishing theorem for surfaces \cite[Theorem 3.3]{Tan18}, we can repeat the same steps of the proof of \cite[Corollary 10.34]{kk-singbook}.
\end{proof}
	
	The following result is useful to reduce various statements to the case of semi-snc pairs.

\begin{lemma}\label{l-easy-descend-vanishing}
		Let $(S, \Delta)$ be a semi-dlt surface pair such that $\frac{1}{2} \in \mathcal{O}_S$. 
		Let $D$ be a $\Q$-Cartier $\mathbb{Z}$-divisor such that $D \sim_{\Q} K_S+\Delta+M$ for some $\Q$-Cartier divisor $M$.
		Then there is a proper birational morphism $g \colon Y \to S$, a Cartier divisor $D_Y$ and a $\Q$-divisor $\Delta_{D, Y}$ such that
		\begin{enumerate}
			\item $(Y, \Delta_{D, Y})$ is semi-snc;
			\item $D_Y \sim_{\mathbb{Q}} K_Y+\Delta_{D,Y}+g^*M;$ 
			\item  if $Z$ is a log canonical  centre of $(Y, \Delta_{D, Y})$, then the restriction $g|_Z$ is birational.	
			\item $g_*\MO_Y(D_Y)=\MO_S(D)$;
			\item $R^ig_*\MO_Y(D_Y)=0$ for $i>0$.
		\end{enumerate}
	\end{lemma}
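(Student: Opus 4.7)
The plan is to take a thrifty semi-log resolution $g : Y \to S$, obtained from Theorem \ref{thm: exist-semi-log-res}, and transport the $\mathbb{Q}$-linear equivalence $D \sim_{\mathbb{Q}} K_S + \Delta + M$ up to $Y$. Writing the crepant pull-back
\[
K_Y + \Gamma = g^*(K_S + \Delta), \qquad \Gamma = g_*^{-1}\Delta + \sum_i c_i E_i,
\]
with $E_i$ ranging over the exceptional prime divisors, the slc condition on $(S,\Delta)$ forces the coefficients of $\Gamma$ to lie in $(-\infty, 1]$, and thriftiness yields $c_i < 1$ on every exceptional $E_i$, so $\lfloor \Gamma \rfloor = g_*^{-1}\lfloor \Delta \rfloor$. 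Decompose $\Gamma = \Gamma^+ - \Gamma^-$ with effective $\Gamma^{\pm}$ and $\Gamma^-$ purely $g$-exceptional, and set $\Delta_{D,Y} := \Gamma^+$; this is semi-snc with $\lfloor \Delta_{D,Y}\rfloor = g_*^{-1}\lfloor\Delta\rfloor$, giving (a).

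Next I would define the Cartier divisor $D_Y$ to be an integral modification of $g^*D + \Gamma^-$ that differs from $g^*D$ only by an effective $g$-exceptional divisor (after, if necessary, further refining the semi-log resolution to guarantee Cartier-ness on the semi-snc target $Y$ while preserving thriftiness and the semi-snc structure). Then
\[
D_Y \sim_{\mathbb{Q}} g^*D + \Gamma^- \sim_{\mathbb{Q}} K_Y + \Gamma^+ + g^*M = K_Y + \Delta_{D,Y} + g^*M,
\]
yielding (b); and since $D_Y - g^*D$ is effective and $g$-exceptional, the projection formula combined with $\mathbb{Q}$-Cartier pull-back gives (d). For (c), every lc centre of $(Y, \Delta_{D,Y})$ is a stratum of $g_*^{-1}\lfloor \Delta \rfloor$, hence the strict transform of an lc centre of $(S, \Delta)$; by the defining isomorphism property of semi-log resolutions at the generic point of each such centre, $g|_Z$ is birational onto $g(Z)$.

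Finally, for (e) I would apply the relative semi-snc Kawamata--Viehweg vanishing of Proposition \ref{p-kvv-vanishing-snc} to $f = g$ with $L = g^*M$: hypothesis (1) is precisely (b), and for (2) the birationality of $g|_Z$ from (c) forces $\dim g(Z) = \dim Z$, while the fact that $g$ is birational between surfaces makes the generic fibre $F_Z$ a point, so $\nu(L|_{F_Z}) = 0 = \dim F_Z$ automatically. I expect the main technical obstacles to be (i) arranging $D_Y$ to be honestly Cartier on the semi-snc (rather than smooth) target $Y$, which is what necessitates the effective exceptional correction and possibly an additional refinement of the resolution, and (ii) dealing with the zero-dimensional lc centres of $(Y, \Delta_{D,Y})$ that arise from intersections of components of $\lfloor \Delta_{D,Y} \rfloor$; these map birationally to zero-dimensional lc centres of $(S, \Delta)$, so a careful local computation is needed to confirm the hypotheses of the vanishing still hold at these boundary points.
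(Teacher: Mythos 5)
Your overall strategy is the same as the paper's: the paper's proof is a one-line reduction to \cite[Proposition 10.36]{kk-singbook}, whose argument is precisely ``thrifty semi-log resolution (\autoref{thm: exist-semi-log-res}) plus the semi-snc vanishing (\autoref{p-kvv-vanishing-snc})''. The gap is in your construction of $(D_Y,\Delta_{D,Y})$. Write $K_Y+\Gamma=g^*(K_S+\Delta)$ with $\Gamma=g_*^{-1}\Delta+\sum_i c_iE_i$ and $c_i<1$ by thriftiness. Any integral $D_Y$ satisfies $D_Y\sim_{\Q}K_Y+\Gamma+(D_Y-g^*D)+g^*M$, so the correction $F:=D_Y-g^*D$ has to be absorbed into $\Delta_{D,Y}$: with your choice $\Delta_{D,Y}=\Gamma^+$, assertion (b) is false for your $D_Y$, because $F-\Gamma^-$ is a nonzero exceptional divisor and hence never $\Q$-linearly trivial. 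Once you correct this to $\Delta_{D,Y}=\Gamma^++(F-\Gamma^-)$, your requirement that $F$ be \emph{effective} becomes incompatible with $\Delta_{D,Y}$ being a boundary: on an exceptional $E_i$ with $c_i\in(0,1)$ and $d_i:=\mathrm{mult}_{E_i}g^*D$, effectivity and integrality force the coefficient to be at least $c_i+\lceil d_i\rceil-d_i$, which already exceeds $1$ for a divisor of index $4$ on a $\tfrac14(1,1)$-point ($c_i=\tfrac12$, $\{-d_i\}=\tfrac34$). The standard fix is to take $D_Y:=\bigl\lceil g^*D-\sum_ic_iE_i\bigr\rceil$, so that $\Delta_{D,Y}=g_*^{-1}\Delta+\bigl\{\sum_ic_iE_i-g^*D\bigr\}$ has exceptional coefficients in $[0,1)$; then $D_Y-g^*D$ is \emph{not} effective, and (d) must be checked directly: if $\mathrm{ord}_{E_i}(s)+d_i\ge0$ then $\mathrm{ord}_{E_i}(s)+\lceil d_i-c_i\rceil$ is an integer $>-c_i>-1$, hence $\ge 0$.

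Separately, (e) does not follow by applying \autoref{p-kvv-vanishing-snc} to $f=g$ as you propose: that proposition asks for a normal target ($S$ is only demi-normal) and, more seriously, for $\dim f(Z)\ge1$ for every lc centre $Z$, which fails at the zero-dimensional strata of $(Y,\Delta_{D,Y})$. You flag this but leave it unresolved. The point that closes it is thriftiness: $g$ is an isomorphism over a neighbourhood of the image of every zero-dimensional lc centre, so $R^1g_*\sO_Y(D_Y)$ is supported away from these; localizing on $S$ at the finitely many points where $g$ is not an isomorphism, the only lc centres left are one-dimensional and map finitely onto their images, and the relative Kawamata--Viehweg argument applies there. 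This is exactly how the proof of \cite[Proposition 10.36]{kk-singbook}, which the paper imports verbatim, proceeds.
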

	
	\begin{proof}
		The same proof of \cite[Proposition 10.36]{kk-singbook} applies as thrifty semi-log resolutions by \autoref{thm: exist-semi-log-res} exist for excellent surfaces and the necessary vanishing theorems hold by \autoref{p-kvv-vanishing-snc}.
	\end{proof} 
	
	We generalise \autoref{p-kvv-vanishing-snc} to the case of semi-dlt surface pairs.
	
	\begin{proposition} \label{p-kvv-vanishing-sdlt-surf}
		Let $(S, \Delta)$ be a semi-dlt surface pair such that $\frac{1}{2} \in \mathcal{O}_S$ and let $f \colon S \to T$ be a projective morphism onto a normal scheme of $\dim(T) \geq 1$.
		Let $D$ be a $\Q$-Cartier $\Z$-divisor on $S$. Suppose that
		\begin{enumerate}
			\item $D \sim_{\Q, f} K_S+\Delta+ L$, where $L$ is a $f$-nef $\mathbb{Q}$-divisor;
		    \item if $Z$ is a log canonical centre of of $(S, \Delta)$,  including the irreducible components of $Z$ as well, then 
		    \begin{enumerate}
		    \item
		    $\dim(f(Z)) \geq 1$;
			\item if $F_Z$ is the generic fibre of $Z \to f(Z)$, then $\dim F_Z =\nu(L|_{F_Z})$.
			\end{enumerate}
		\end{enumerate}
		Then $R^1f_*\MO_S(D)=0$.
	\end{proposition}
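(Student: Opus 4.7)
The plan is to reduce to the semi-snc case already handled in \autoref{p-kvv-vanishing-snc} by means of the partial resolution provided by \autoref{l-easy-descend-vanishing}. Concretely, I would first apply \autoref{l-easy-descend-vanishing} to $(S,\Delta)$ and to the divisor $D$ (using $M := L$, which is $\Q$-Cartier as both $D$ and $K_S+\Delta$ are) in order to produce a proper birational morphism $g \colon Y \to S$, a Cartier divisor $D_Y$ on $Y$, and a $\Q$-divisor $\Delta_{D,Y}$ such that $(Y,\Delta_{D,Y})$ is semi-snc, $D_Y \sim_{\Q} K_Y + \Delta_{D,Y} + g^*L$, every lc centre $Z$ of $(Y,\Delta_{D,Y})$ maps birationally onto its image $g(Z)$, and $g_*\mcO_Y(D_Y) = \mcO_S(D)$ with $R^i g_*\mcO_Y(D_Y) = 0$ for $i>0$.

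Next, I would verify the hypotheses of \autoref{p-kvv-vanishing-snc} for the composite morphism $f\circ g \colon Y \to T$ and the divisor $D_Y$. The relative numerical equivalence $D_Y \sim_{\Q, f\circ g} K_Y + \Delta_{D,Y} + g^*L$ holds by construction, and $g^*L$ is $(f\circ g)$-nef since nefness is preserved by pullback. For hypothesis $(2)$, let $Z$ be an lc centre of $(Y,\Delta_{D,Y})$ (or a component thereof). By property (c) in \autoref{l-easy-descend-vanishing}, $g$ restricts to a birational morphism $Z \to g(Z)$, and $g(Z)$ is an lc centre of $(S,\Delta)$ (or a component thereof). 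Therefore $\dim(f\circ g)(Z) = \dim f(g(Z)) \geq 1$ by hypothesis $(2)(a)$ of the proposition. Moreover, the generic fibre $F_Z$ of $Z \to (f\circ g)(Z)$ maps birationally onto the generic fibre $F_{g(Z)}$ of $g(Z) \to f(g(Z))$, so $\dim F_Z = \dim F_{g(Z)}$ and $\nu(g^*L|_{F_Z}) = \nu(L|_{F_{g(Z)}})$ by the birational invariance of the numerical dimension of a nef divisor. Hypothesis $(2)(b)$ then follows from the corresponding assumption on $(S,\Delta)$.

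Invoking \autoref{p-kvv-vanishing-snc} we deduce $R^1(f\circ g)_*\mcO_Y(D_Y) = 0$. Finally, the Leray spectral sequence for $f \circ g$, together with $R^i g_*\mcO_Y(D_Y) = 0$ for $i>0$ and $g_*\mcO_Y(D_Y) = \mcO_S(D)$, collapses to give
\[
R^1 f_*\mcO_S(D) \;=\; R^1 f_* g_*\mcO_Y(D_Y) \;=\; R^1(f\circ g)_*\mcO_Y(D_Y) \;=\; 0,
\]
which is the required vanishing.

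The only delicate point I expect is the verification of hypothesis $(2)$ on the semi-snc model: one must ensure that the lc centres newly created by $g$ (say, exceptional strata) satisfy the dimension and numerical-dimension conditions demanded by \autoref{p-kvv-vanishing-snc}. This is exactly why property (c) of \autoref{l-easy-descend-vanishing} (thriftiness of the semi-log resolution) is essential: it forces every lc centre of $(Y,\Delta_{D,Y})$ to be mapped birationally onto an lc centre of $(S,\Delta)$, so that the conditions on $(S,\Delta)$ transfer verbatim to $(Y,\Delta_{D,Y})$ and no new checks are required.
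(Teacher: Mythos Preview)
Your proposal is correct and follows essentially the same route as the paper: apply \autoref{l-easy-descend-vanishing} to pass to a semi-snc model $(Y,\Delta_{D,Y})$, invoke \autoref{p-kvv-vanishing-snc} for the composition $h=f\circ g$, and then descend the vanishing via the Leray spectral sequence using $g_*\sO_Y(D_Y)=\sO_S(D)$ and $R^ig_*\sO_Y(D_Y)=0$. Your write-up is in fact more detailed than the paper's (which leaves the verification of the hypotheses of \autoref{p-kvv-vanishing-snc} on $Y$ implicit), and your remark that property (c) of \autoref{l-easy-descend-vanishing} is precisely what makes this transfer work is exactly the point.
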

	
	\begin{proof}
		Let  $g \colon Y \to S$ be a proper birational morphism such that the pair $(Y, \Delta_{D,Y})$ and the Cartier divisor $D_Y$ on $Y$ satisfy the conditions of \autoref{l-easy-descend-vanishing} and denote by $h \colon Y \to T$ the natural composition.
		By \autoref{p-kvv-vanishing-snc} we conclude that $R^1h_*\mathcal{O}_Y(D_Y)=0$. As $g_*\MO_Y(D_Y)=\MO_S(D)$ and $R^ig_*\MO_Y(D_Y)=0$ for $i>0$ by \autoref{p-kvv-vanishing-snc}. We deduce $R^1f_*\MO_S(D)=0$ by the Leray spectral sequence.
	\end{proof}
	
	In order to generalise to the slc case, the following is a useful observation.

	\begin{lemma}\label{l-slc-to-sdlt}
	    Let $(S, \Delta)$ be an slc surface pair  such that $\frac{1}{2} \in \mathcal{O}_S$  and let $f \colon S \to T$ be a projective morphism onto a normal scheme of $\dim(T) \geq 1$ .
	    Suppose that
	    \begin{enumerate}
	        \item every irreducible component of $S$ is $(R_1)$;
	        \item if $Z$ is a log canonical  centre of $(S, \Delta)$, then $\dim(f(Z)) \geq 1$.
	    \end{enumerate}
	Then $(S, \Delta)$ is semi-dlt.
	\end{lemma}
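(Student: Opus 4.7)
The plan is to verify the defining property of a semi-dlt pair directly: for every exceptional divisor $E$ over $S$ with $a(E; S, \Delta) = -1$, the generic point of $\cent_S E$ should lie in the semi-snc locus. Let $E$ be such a divisor and set $Z := \cent_S E$; then $Z$ is a log canonical centre, and hypothesis (2) forces $\dim f(Z) \geq 1$, hence $\dim Z \geq 1$. On the other hand, exceptionality of $E$ imposes $\codim_S Z \geq 1$, and since $\dim S = 2$ this yields $\dim Z = 1$. Let $\eta$ be the generic point of $Z$; it suffices to show $\eta$ lies in the semi-snc locus of $(S,\Delta)$.

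Since $(S,\Delta)$ is slc, $S$ is demi-normal, so $\MO_{S,\eta}$ is either a regular local ring or a node. I would treat these two cases separately. In the regular case, hypothesis (1) together with demi-normality gives that $S$ is smooth in a Zariski neighbourhood of $\eta$, and $(S,\Delta)$ is lc there, because away from the non-normal locus slc agrees with lc on each component. Since $Z$ is a codimension-$1$ log canonical centre on a smooth surface, $Z$ must appear with coefficient $1$ in $\Delta$ near $\eta$. Any other component of $\Supp \Delta$ meeting $\eta$ would have to contain the codimension-$1$ subvariety $Z$, which is absurd, so near $\eta$ the boundary $\Delta$ is supported on the single smooth curve $Z$ and $(S,\Delta)$ is snc at $\eta$, in particular semi-snc.

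In the nodal case, $Z$ is a codimension-$1$ component of the divisorial part of the conductor. The Mumford hypothesis built into the definition of pair forbids $\Supp \Delta$ from containing any codimension-$1$ component of the conductor, and so $Z \not\subseteq \Supp \Delta$. Any other component of $\Supp\Delta$ passing through $\eta$ would need to contain $Z$, so $\Supp \Delta$ is disjoint from $\eta$ in a neighbourhood. Hence locally at $\eta$ we are looking at the empty-boundary pair $(S,0)$ over a codimension-$1$ node, which is the archetypal semi-snc configuration. Combining the two cases, $\eta$ belongs to the semi-snc locus of $(S,\Delta)$, which finishes the proof that $(S,\Delta)$ is semi-dlt.

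The main obstacle I expect is bookkeeping: confirming that the codimension-$1$ nodal model with trivial boundary really is semi-snc in the sense of \cite[Definition 1.10]{kk-singbook}, and checking that the Mumford condition is strong enough to force $\Delta$ to avoid $\eta$ entirely in a Zariski neighbourhood (not merely to prevent $Z$ from being a component of $\Supp\Delta$). Both points should be routine from the definitions recalled in \autoref{sec:basic_notation}, but they are where I would be most careful.
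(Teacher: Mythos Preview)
Your proof is correct and follows essentially the same line as the paper's: both arguments reduce to showing that $(S,\Delta)$ is semi-snc at every codimension $1$ point (the paper asserts this in one sentence via hypothesis (a), while you unpack it into the regular and nodal cases), whence no exceptional log canonical place can have its centre in the non-semi-snc locus without contradicting (b). One small remark: exceptionality of $E$ already forces $\codim_S Z \geq 2$, so together with $\dim Z \geq 1$ you get an immediate contradiction and your codimension $1$ case analysis is, strictly speaking, about a vacuous situation---but that analysis is precisely the substance of the paper's one-line claim, and the obstacle you flag in the nodal case is exactly where hypothesis (a) is used (it rules out self-nodes, so the two branches lie on distinct smooth components and the pair is semi-snc there).
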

	
	\begin{proof}
	    We argue by contradiction. Let $E$ be an exceptional divisor such that $a(E, S, \Delta)=-1$ and $\cent_S(E) \subset \nsnc(S, \Delta)$.
	    The hypothesis (a) guarantees that $(S,\Delta)$ is snc at codimension 1 points. Therefore $\cent_S(E)$ is a closed point, contradicting (b).
	\end{proof}

	\begin{corollary}
	\label{c-kvv-vanishing-slc-surf}
	    Let $(S, \Delta)$ be an slc surface pair such that $\frac{1}{2} \in \mathcal{O}_S$ and let $f \colon S \to T$ be a projective morphism onto a normal scheme of $\dim(T) \geq 1$.
		Let $D$ be a $\Q$-Cartier $\Z$-divisor on $S$. Suppose that
		\begin{enumerate}
			\item $D \sim_{\Q, f} K_S+\Delta+ L$, where $L$ is $f$-nef;
            \item if $Z$ is a log canonical  centre of $(S, \Delta)$,   including the irreducible components of $Z$ as well, then 
            \begin{enumerate}
            \item $\dim(f(Z)) \geq 1$;
			\item if $F_Z$ is the generic fibre of $Z \to f(Z)$, then $\dim F_Z =\nu(L|_{F_Z})$.		\end{enumerate}
            \end{enumerate}
		Then $R^1f_*\MO_S(D)=0$.
	\end{corollary}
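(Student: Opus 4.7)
The plan is to reduce to the semi-dlt case already proven in \autoref{p-kvv-vanishing-sdlt-surf} by passing to the étale-in-codimension-one double cover of \autoref{l-2cover-slc} and then splitting via the trace.

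First I would invoke \autoref{l-2cover-slc} (the assumption $\tfrac{1}{2} \in \mathcal{O}_S$ is exactly what is needed) to obtain a finite degree-two crepant morphism $\pi \colon \widetilde{S} \to S$ such that $\widetilde{S}$ is $(S_2)$, $\pi$ is étale in codimension one, each irreducible component of $\widetilde{S}$ is $(R_1)$, and the pullback pair $(\widetilde{S}, \Delta_{\widetilde{S}})$ with $K_{\widetilde{S}} + \Delta_{\widetilde{S}} = \pi^*(K_S+\Delta)$ is slc. Since $\pi$ is crepant and finite, every log canonical centre of $(\widetilde{S}, \Delta_{\widetilde{S}})$ maps onto a log canonical centre of $(S, \Delta)$ with the same dimension; in particular its image in $T$ under $f \circ \pi$ still has dimension $\geq 1$. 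By \autoref{l-slc-to-sdlt} applied to $f \circ \pi \colon \widetilde{S} \to T$, the pair $(\widetilde{S}, \Delta_{\widetilde{S}})$ is therefore semi-dlt.

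Next I would pull back the divisor. Because $\pi$ is étale in codimension one, $\widetilde{D} := \pi^*D$ is a Mumford $\mathbb{Z}$-divisor on $\widetilde{S}$, which is $\mathbb{Q}$-Cartier since $D$ is, and one has
\[
\widetilde{D} \sim_{\mathbb{Q}, f\circ\pi} K_{\widetilde{S}} + \Delta_{\widetilde{S}} + \pi^*L ,
\]
with $\pi^*L$ being $(f\circ\pi)$-nef because pullback by a finite morphism preserves relative nefness. To verify hypothesis (b) of \autoref{p-kvv-vanishing-sdlt-surf} for $(\widetilde{S}, \Delta_{\widetilde{S}})$ and $\widetilde{D}$, one uses that each log canonical centre $\widetilde{Z}$ of $(\widetilde{S}, \Delta_{\widetilde{S}})$ maps finitely onto a log canonical centre $Z = \pi(\widetilde{Z})$ of $(S, \Delta)$, so $\dim\, (f\circ\pi)(\widetilde{Z}) = \dim f(Z) \geq 1$; moreover the generic fibre $F_{\widetilde{Z}}$ of $\widetilde{Z} \to (f\circ\pi)(\widetilde{Z})$ maps finitely onto $F_Z$, so $\dim F_{\widetilde{Z}} = \dim F_Z$ and $\nu(\pi^*L|_{F_{\widetilde{Z}}}) = \nu(L|_{F_Z})$ since numerical dimension is preserved under finite pullback. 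Hence \autoref{p-kvv-vanishing-sdlt-surf} applies and yields
\[
R^1(f\circ\pi)_* \mathcal{O}_{\widetilde{S}}(\widetilde{D}) = 0 .
\]

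Finally, since $\pi$ is affine we have $R^1(f\circ\pi)_* \mathcal{O}_{\widetilde{S}}(\widetilde{D}) = R^1 f_*\big(\pi_* \mathcal{O}_{\widetilde{S}}(\widetilde{D})\big)$. The assumption $\tfrac{1}{2} \in \mathcal{O}_S$ together with the fact that $\pi$ is a finite degree-two map étale in codimension one allows one to split $\pi_* \mathcal{O}_{\widetilde{S}}$ via the normalised trace, giving a direct summand decomposition $\pi_* \mathcal{O}_{\widetilde{S}}(\widetilde{D}) = \mathcal{O}_S(D) \oplus \mathcal{F}$ for some coherent $\mathcal{F}$; since both sides are reflexive, this splitting is determined in codimension one where $\pi$ is étale. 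Taking the $R^1 f_*$ of this splitting forces $R^1 f_* \mathcal{O}_S(D) = 0$, which is the claim. The main subtlety to pin down carefully is the transfer of the numerical-dimension condition to the cover and the construction of the trace splitting on non-normal $S$; both are robust here because $\pi$ is étale in codimension one and $2$ is invertible.
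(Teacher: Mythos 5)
Your proposal is correct and follows essentially the same route as the paper: pass to the degree-two cover of \autoref{l-2cover-slc}, use $\tfrac12\in\mathcal{O}_S$ to realise $\mathcal{O}_S(D)$ as a direct summand of $\pi_*\mathcal{O}_{\widetilde S}(\pi^*D)$, observe via \autoref{l-slc-to-sdlt} that the cover is semi-dlt, and conclude with \autoref{p-kvv-vanishing-sdlt-surf}. You simply spell out in more detail the transfer of the hypotheses (lc centres, numerical dimension) to the cover, which the paper's proof leaves implicit.
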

	
	\begin{proof}
	Let $p \colon \widetilde{S} \to S$ be the double cover of \autoref{l-2cover-slc}.
	As $2$ is invertible, $\mathcal{O}_S(D)$ is a direct summand of $p_*\mathcal{O}_{\widetilde{S}}(p^*D)$. We can thus assume that the irreducible components of $S$ are regular in codimension 1.
	In this case $(S, \Delta)$ is semi-dlt by \autoref{l-slc-to-sdlt} and we conclude by \autoref{p-kvv-vanishing-sdlt-surf}.
	\end{proof}
	
We can prove a further generalisation of Kawamata--Viehweg vanishing for slc surfaces over curves.

	\begin{proposition}\label{p-vanishing-ssnc-surf}
		Let $(S, \Delta)$ be a semi-snc surface pair with $\Delta$ a reduced divisor, and let $f \colon S \to C$ be a projective  surjective contraction onto a normal curve $C$.
		Let $D$ be a $\Q$-Cartier $\bZ$-divisor on $S$. 
		Suppose that
		\begin{enumerate}
			\item every log canonical  centre $Z$ of $(S, \Delta)$, including the components of $S$, dominates $C$.
			\item  $A$ is an $f$-nef $\Q$-Cartier $\mathbb{Q}$-divisor on $S$;
			\item  $D \sim_{f, \Q} K_S+\Delta'+A$, where $0 \leq \Delta' \leq \Delta$ is a $\bQ$-divisor;
			\item on every connected component $S'$ of $S$ there exists an irreducible component $E$ of $S'$ such that $A|_{E}$ is $f|_E$-big.
		\end{enumerate}
		Then $R^1f_*\MO_{S}(D)=0$.
	\end{proposition}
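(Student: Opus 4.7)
The plan is to prove the assertion by induction on the number of irreducible components of $S$. After the routine reduction to the connected case (note $R^1f_*$ is additive over connected components, and the hypotheses are inherited), the base case of irreducible $S$ follows directly from \autoref{p-kvv-vanishing-snc}: indeed, when $S$ is a single smooth component the pair $(S, \Delta)$ is snc, $A$ is $f$-nef and $f$-big by (d), every lc center of $(S, \Delta)$ dominates $C$ by (a), and the condition $\dim F_Z = \nu(A|_{F_Z})$ is satisfied trivially for each lc center ($\nu = 1 = \dim F_Z$ for $Z = S$ by bigness, and $\nu = 0 = \dim F_Z$ for $Z$ any curve component of $\Delta$).

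For the inductive step, given $S$ with at least two components, I will pick an irreducible component $T$ on which $A|_T$ is $f|_T$-big (exists by (d)), let $S' := \overline{S \setminus T}$ and $\Gamma := T \cap S'$, and work with the restriction short exact sequence
\[
0 \to \MO_{S'}(D|_{S'} - \Gamma) \to \MO_S(D) \to \MO_T(D|_T) \to 0
\]
(after passing to a sufficiently divisible multiple $kD$ to assume $D$ is Cartier and then descending). Semi-snc adjunction gives $K_S|_T = K_T + \Gamma$ and $K_S|_{S'} = K_{S'} + \Gamma$, hence $D|_T \sim_{\Q,f} K_T + \Gamma + \Delta'|_T + A|_T$ and $D|_{S'} - \Gamma \sim_{\Q,f} K_{S'} + \Delta'|_{S'} + A|_{S'}$. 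The vanishing of $R^1f_*\MO_T(D|_T)$ is then obtained by applying \autoref{p-kvv-vanishing-snc} to the smooth surface $T$ with snc boundary $\Gamma + \Delta|_T$, fractional part $\Gamma + \Delta'|_T$, and nef-and-big $A|_T$: the only 2-dimensional lc center is $T$ itself, which satisfies $\dim F_T = 1 = \nu(A|_T|_{F_T})$ by bigness, and all other lc centers are curves in $\Gamma + \Delta|_T$ which are lc centers of $(S,\Delta)$ and therefore dominate $C$.

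It remains to deduce $R^1f_*\MO_{S'}(D|_{S'} - \Gamma) = 0$ from the inductive hypothesis applied to the semi-snc pair $(S', \Delta|_{S'})$ with divisor $D|_{S'} - \Gamma$ and the nef divisor $A|_{S'}$. The semi-snc structure, the linear equivalence formula, and the dominance hypothesis (a) are inherited by $(S', \Delta|_{S'})$, so the only — and indeed main — obstacle is propagating hypothesis (d): each connected component of $S'$ must still carry an irreducible component on which $A$ is $f$-big. When the given connected component of $S$ contains at least two $f$-big components, one can choose $T$ so that (d) is preserved in $S'$ and we simply iterate. The delicate case is when a connected component of $S$ has $T$ as its unique $f$-big component; there the removal of $T$ can split off connected components $S_\alpha$ of $S'$ that contain no $f$-big component. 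These will be treated individually: since $\Gamma_\alpha := T \cap S_\alpha$ is a nonempty reduced curve dominating $C$ (being an lc center of $(S, \Delta)$), the divisor $A|_{S_\alpha} + \epsilon \Gamma_\alpha$ supplies the missing relative positivity on the generic fibre, and after a small perturbation of the boundary $\Delta'|_{S_\alpha} - \epsilon \Gamma_\alpha$ together with a Zariski-type decomposition of the perturbed divisor to restore nefness, we can apply \autoref{p-kvv-vanishing-snc} directly on each such $S_\alpha$, thereby closing the induction.
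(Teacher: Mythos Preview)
Your overall strategy—induction on the number of irreducible components via a restriction short exact sequence—is the same as the paper's. However, you orient the sequence in the opposite direction, and this creates a real gap in the final step.

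The paper places the $f$-big component $E$ on the \emph{kernel} side,
\[
0 \to \MO_E(D|_E - B) \to \MO_S(D) \to \MO_{T}(D|_T) \to 0,
\]
where $T = \overline{S \setminus E}$ and $B = E \cap T$. The point is that on the quotient side one picks up $+B$ in the adjunction: $D|_T \sim_{\Q,f} K_T + \Delta'|_T + (A|_T + B)$. Since $f$ has connected fibres, $B$ is a nonempty horizontal divisor on each connected component of $T$; hence the new $f$-nef divisor $A|_T + B$ is automatically $f$-big on some irreducible component of each connected piece of $T$. This is exactly what propagates hypothesis (d), and no perturbation of the boundary is needed.

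By contrast, your sequence puts the big component $T$ on the quotient side, so the kernel side $S'$ receives $-\Gamma$, and the linear equivalence on $S'$ reads $D|_{S'} - \Gamma \sim_{\Q,f} K_{S'} + \Delta'|_{S'} + A|_{S'}$, with no extra positivity gained. Your proposed fix—rewriting this as $K_{S_\alpha} + (\Delta'|_{S_\alpha} - \epsilon\Gamma_\alpha) + (A|_{S_\alpha} + \epsilon\Gamma_\alpha)$—does not work: $\Gamma_\alpha$ is contained in the double locus of $S$ and therefore is \emph{not} in $\Supp \Delta$ (recall $\Delta$ is a Mumford divisor on the semi-snc surface $S$), so $\Delta'|_{S_\alpha} - \epsilon\Gamma_\alpha$ is not effective and the hypothesis $0 \leq \Delta' \leq \Delta$ of \autoref{p-kvv-vanishing-snc} (and of the induction) fails. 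The invocation of a Zariski-type decomposition is too vague to repair this. Simply swapping the roles of $T$ and $S'$ in your exact sequence, as the paper does, closes the induction immediately.
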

	
\begin{proof}
    We prove the result by induction on the number $n$ of irreducible components of $S$.
	If $n=1$, we conclude by \autoref{p-kvv-vanishing-snc}.
	
    We prove the induction step. Let $E$ be an irreducible component of $S$ such that $A|_E$ is big over $C$. 
    Let $T$ be the union of the irreducible components of $S$ except $E$. 	
	Denote $B:=E \cap T$ and we consider the short exact sequence:
	$$0 \to \mathcal{O}_{E}(D|_{E}-B) \to \mathcal{O}_S(D) \to \mathcal{O}_{T}(D|_{T}) \to 0. $$
	Taking the long exact sequence in cohomology, it is sufficient to show that $R^1g_*\MO_{E}(D-B)=R^1g_*\MO_{T}(D|_{T})=0$ to conclude that $R^1g_*\MO_S(D)=0$.
		
	Since $K_S|_{E}=K_{E}+B$ we have 
	$  D|_{E}-B \sim_{g, \Q} K_{E} +\Delta'|_{E}+A|_{E};$
	so by \autoref{p-kvv-vanishing-snc} we conclude that $R^1g_*\MO_{E}(D-B)=0$.
	
	Since $K_S|_{T}=K_{T}+B$ we have
	$ D_{T} \sim_{\Q, g} K_{T}+\Delta'|_{T}+B+A|_{T},$
	and $B$ is not trivial on some irreducible component of every connected component of $T$, as $g$ has connected fibers. By hypothesis, $B$ must be a non-empty horizontal divisor, thus it is nef over $C$ and for every connected component of $T$ there exists an irreducible component $F$ such that $B|_F$ is $g|_F$-big. Therefore we apply the induction hypothesis to deduce $R^1g_*\MO_{T}(D|_{T})=0$.		
\end{proof}

	\begin{proposition}\label{c-vanishing-sdlt-surf}
		Let $(S, \Delta)$ be a semi-dlt  surface pair such that  $\frac{1}{2} \in \mathcal{O}_S$ , let $g \colon S \to C$ be a projective morphism onto a normal curve $C$.
		Let $D$ be a $\Q$-Cartier $\bZ$-divisor on $S$. 
		Suppose that
		\begin{enumerate}
			\item every log canonical  centre $Z$ of $(S, \Delta)$, including the components of $S$, dominates $C$;
			\item  $A$ is a $\Q$-Cartier $\Q$-divisor on $S$, which is $g$-nef;
			\item  $D \sim_{g, \Q} K_S+\Delta'+A$, where $0 \leq \Delta' \leq \Delta$;
			\item on every connected component of $S$, there exists an irreducible component $E$ such that $A|_{E}$ is $g|_E$-big.
		\end{enumerate}
		Then $R^1g_*\MO_{S}(D)=0$.
	\end{proposition}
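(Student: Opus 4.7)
The plan is to reduce \autoref{c-vanishing-sdlt-surf} to the semi-snc case handled by \autoref{p-vanishing-ssnc-surf}, in exact analogy with how \autoref{p-kvv-vanishing-sdlt-surf} was deduced from \autoref{p-kvv-vanishing-snc}. First I would rewrite the $\mathbb{Q}$-linear equivalence $D \sim_{g,\mathbb{Q}} K_S + \Delta' + A$ in the form $D \sim_{\mathbb{Q}} K_S + \Delta + M$ required by the hypotheses of \autoref{l-easy-descend-vanishing}; concretely one takes $M := A - (\Delta - \Delta') + g^*H$ for a suitable $\mathbb{Q}$-Cartier divisor $H$ on $C$ produced by the relative equivalence, and one has to observe that the relevant difference is indeed $\mathbb{Q}$-Cartier, which we may arrange by working componentwise on $\lfloor\Delta\rfloor$ since $(S,\Delta)$ is semi-dlt.

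Then I would apply \autoref{l-easy-descend-vanishing} to obtain a proper birational morphism $h \colon Y \to S$, a Cartier divisor $D_Y$, and a boundary $\Delta_{D,Y}$ such that $(Y,\Delta_{D,Y})$ is semi-snc, $D_Y \sim_{\mathbb{Q}} K_Y + \Delta_{D,Y} + h^*M$, every log canonical centre of $(Y,\Delta_{D,Y})$ maps birationally to its image in $S$, and moreover $h_*\mathcal{O}_Y(D_Y) = \mathcal{O}_S(D)$ and $R^i h_* \mathcal{O}_Y(D_Y) = 0$ for $i>0$. Setting $f := g \circ h \colon Y \to C$, I would check the hypotheses of \autoref{p-vanishing-ssnc-surf} for $(Y, \Delta_{D,Y})$, $f$, and $D_Y$: condition (a) holds because any lc centre $Z$ of $(Y,\Delta_{D,Y})$ maps birationally to an lc centre $h(Z)$ of $(S,\Delta)$, and $h(Z)$ dominates $C$ by hypothesis of \autoref{c-vanishing-sdlt-surf}, so $\dim f(Z) = \dim h(Z) \geq 1$; conditions (b) and (c) follow since $h^*A$ remains $f$-nef (nefness is preserved by pullback) and since the strict transform $E_Y$ of the "big" component $E \subseteq S$ of each connected component satisfies that $h^*A|_{E_Y}$ is big over $C$, as $h|_{E_Y}$ is birational onto $E$ and $A|_E$ is $g|_E$-big.

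Applying \autoref{p-vanishing-ssnc-surf} then yields $R^1 f_* \mathcal{O}_Y(D_Y) = 0$. The Leray spectral sequence for $f = g \circ h$, combined with $h_* \mathcal{O}_Y(D_Y) = \mathcal{O}_S(D)$ and $R^i h_* \mathcal{O}_Y(D_Y) = 0$ for $i>0$, gives the isomorphism $R^1 g_* \mathcal{O}_S(D) \cong R^1 f_* \mathcal{O}_Y(D_Y) = 0$, concluding the proof.

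The main technical obstacle I expect is handling the fractional coefficients in $\Delta_{D,Y}$: the divisor $\Delta_{D,Y}$ is not reduced in general, while \autoref{p-vanishing-ssnc-surf} requires the ambient boundary to be reduced with $\Delta' \leq \Delta$. One should apply \autoref{p-vanishing-ssnc-surf} with ambient boundary $\lceil \Delta_{D,Y} \rceil$ and $\Delta'$-divisor $\Delta_{D,Y}$, and then verify that no extra lc centre appears which fails to dominate $C$. This relies on the thriftiness built into the semi-log resolution of \autoref{thm: exist-semi-log-res}, which ensures that the exceptional divisors with fractional coefficient correspond to discrepancies strictly greater than $-1$, so they are not lc centres of $(Y, \lceil \Delta_{D,Y} \rceil)$ once one accounts correctly for the rounding; this is exactly the pattern followed in \cite[Section 10.3]{kk-singbook} and in the proof of \autoref{p-kvv-vanishing-sdlt-surf}.
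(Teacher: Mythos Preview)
Your proposal is correct and follows exactly the paper's approach: the paper's proof simply says that, as in the proof of \autoref{p-kvv-vanishing-sdlt-surf}, one combines \autoref{l-easy-descend-vanishing} and \autoref{p-vanishing-ssnc-surf} via the Leray spectral sequence. Your write-up supplies the details the paper leaves implicit; note that the $\bQ$-Cartierness of $M$ is automatic since $M = D - (K_S+\Delta)$ with both $D$ and $K_S+\Delta$ already $\bQ$-Cartier, so your componentwise argument is unnecessary, and the rounding issue you flag at the end is indeed resolved exactly as in the reference \cite[Section 10.3]{kk-singbook} invoked in the proof of \autoref{l-easy-descend-vanishing}.
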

	
	\begin{proof}
    As in the proof of \autoref{p-kvv-vanishing-sdlt-surf}, it sufficient to combine \autoref{l-easy-descend-vanishing} and \autoref{p-vanishing-ssnc-surf} with the Leray spectral sequence.
	\end{proof}

\begin{theorem}\label{c-vanishing-slc-surf2}
		Let $(S, \Delta)$ be a slc surface pair such that $\frac{1}{2} \in \mathcal{O}_S$, let $g \colon S \to C$ be a projective morphism onto a normal curve $C$.
		Let $D$ be a $\Q$-Cartier $\bZ$-divisor on $S$. 
		Suppose that
		\begin{enumerate}
			\item every log canonical  centre  $Z$ of $(S, \Delta)$, including the components of $Z$, dominates $C$;
			\item  $A$ is a $\Q$-Cartier $\Q$-divisor on $S$, which is $g$-nef;
			\item  $D \sim_{g, \Q} K_S+\Delta'+A$, where $0 \leq \Delta' \leq \Delta$.;
			\item on every connected component of $S$, there exists an irreducible component $E$ such that $A|_{E}$ is $g|_E$-big;
		\end{enumerate}
		Then $R^1g_*\MO_{S}(D)=0$.
	\end{theorem}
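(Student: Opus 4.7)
The plan is to reduce to the semi-dlt case \autoref{c-vanishing-sdlt-surf} by passing to the canonical \'etale-in-codimension-1 double cover of \autoref{l-2cover-slc}, following the pattern of \autoref{c-kvv-vanishing-slc-surf}. Since $\frac{1}{2}\in\mathcal{O}_S$, \autoref{l-2cover-slc} produces a finite morphism $p\colon \widetilde{S}\to S$ of degree $2$, \'etale in codimension 1, with $\widetilde{S}$ being $(S_2)$, whose irreducible components are $(R_1)$, and such that $\big(\widetilde{S},\Delta_{\widetilde{S}}\big)$ is slc with $K_{\widetilde{S}}+\Delta_{\widetilde{S}}=p^*(K_S+\Delta)$. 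Set $\widetilde{g}\coloneqq g\circ p\colon \widetilde{S}\to C$.

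The first step is to verify that $\big(\widetilde{S},\Delta_{\widetilde{S}}\big)$ together with the divisor $p^*D$ and the $\Q$-divisor $p^*A$ fulfil the assumptions of \autoref{c-vanishing-sdlt-surf} with respect to $\widetilde{g}$. Since $p$ is \'etale in codimension 1, we have $p^*\Delta=\Delta_{\widetilde{S}}$ as Mumford divisors, so $0\le p^*\Delta'\le \Delta_{\widetilde{S}}$ and $p^*D\sim_{\widetilde{g},\Q} K_{\widetilde{S}}+p^*\Delta'+p^*A$, with $p^*A$ being $\widetilde{g}$-nef as the pullback of a $g$-nef $\Q$-divisor under a finite morphism. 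Because $p$ is finite and crepant, each lc centre of $\big(\widetilde{S},\Delta_{\widetilde{S}}\big)$ maps onto an lc centre of $(S,\Delta)$, and in particular dominates $C$ by hypothesis (a); this, combined with the $(R_1)$ property of the components of $\widetilde{S}$, allows us to invoke \autoref{l-slc-to-sdlt} to conclude that $\big(\widetilde{S},\Delta_{\widetilde{S}}\big)$ is semi-dlt. For the bigness hypothesis (d): every connected component of $\widetilde{S}$ surjects via $p$ onto a connected component of $S$ (as $p$ is finite and surjective), on which the assumption furnishes an irreducible component $E\subset S$ with $A|_E$ big over $g|_E$; any irreducible component $\widetilde{E}$ of $p^{-1}(E)$ in the given connected component of $\widetilde{S}$ admits a finite surjective map to $E$, and pullback of big divisors under such maps remains big, so $(p^*A)|_{\widetilde{E}}$ is big over $\widetilde{g}|_{\widetilde{E}}$.

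Applying \autoref{c-vanishing-sdlt-surf} to $\big(\widetilde{S},\Delta_{\widetilde{S}}\big)$ yields $R^1\widetilde{g}_*\mathcal{O}_{\widetilde{S}}(p^*D)=0$. Since $p$ is finite, higher direct images vanish, so by the Leray spectral sequence $R^1\widetilde{g}_*\mathcal{O}_{\widetilde{S}}(p^*D)=R^1g_*\big(p_*\mathcal{O}_{\widetilde{S}}(p^*D)\big)$. Finally, as $2$ is invertible and $p$ is \'etale in codimension 1, the trace map splits $\mathcal{O}_S\hookrightarrow p_*\mathcal{O}_{\widetilde{S}}$, and by reflexivity (both sheaves are $(S_2)$) this splitting tensors with $\mathcal{O}_S(D)$ to realise $\mathcal{O}_S(D)$ as a direct summand of $p_*\mathcal{O}_{\widetilde{S}}(p^*D)$. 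Consequently $R^1g_*\mathcal{O}_S(D)$ is a direct summand of $R^1g_*\big(p_*\mathcal{O}_{\widetilde{S}}(p^*D)\big)=0$, which gives the desired vanishing. The only genuinely delicate point is the verification of condition (d) on the cover and the identification of $\mathcal{O}_S(D)$ as a direct summand; both rely crucially on $\frac{1}{2}\in\mathcal{O}_S$.
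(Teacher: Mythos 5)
Your proposal is correct and follows essentially the same route as the paper: the paper's proof reduces to the semi-dlt case \autoref{c-vanishing-sdlt-surf} via the degree-2 cover of \autoref{l-2cover-slc} and the direct-summand splitting of $\mathcal{O}_S(D)$ inside $p_*\mathcal{O}_{\widetilde{S}}(p^*D)$, exactly as you do (the paper simply states "repeat the proof of \autoref{c-kvv-vanishing-slc-surf}", while you spell out the verification of the hypotheses on the cover). No gaps.
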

	
\begin{proof}
We can repeat the same proof of \autoref{c-kvv-vanishing-slc-surf} using \autoref{c-vanishing-sdlt-surf}.
\end{proof}
	
\subsection{Grauert--Riemenschneider theorem for dlt 3-folds}

	We recall the Grauert--Riemenschneider (GR) vanishing theorem for excellent dlt 3-folds proven by Koll\'ar and the second author in \cite{BK20}.
	
	\begin{theorem}[{\cite[Theorem 2]{BK20}}]\label{t-gr}
		Let $(X, \Delta)$ be a 3-dimensional dlt pair whose residue fields of closed points are perfect with characteristic $p \neq 2,3, 5$. Then GR vanishing holds on $(X, \Delta)$. 
		
		Precisely, let  $g \colon Y \to (X, \Delta)$ be a log resolution,  and let  $D$ be a Weil $\mathbb{Z}$-divisor on $Y$  such that $D \sim_{g,\mathbb{R}} K_{Y}+\Delta'$ for an  effective $\mathbb{R}$-divisor $\Delta'$  on $Y$ such that  $g_*\Delta'\leq \Delta$ and
		$\lfloor \Ex(\Delta') \rfloor=0$.
		Then,  $R^ig_*\MO_{Y}(D)=0$ for $i>0$.
	\end{theorem}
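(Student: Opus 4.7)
The plan is to reduce to Kawamata--Viehweg vanishing for klt $3$-folds by peeling off the integer boundary components of $\Delta'$ via adjunction. First, I would reduce to the case that $\Delta'$ is a $\mathbb{Q}$-divisor by a standard perturbation argument, exploiting the strict inequality $\lfloor \mathrm{Ex}(\Delta') \rfloor = 0$ (so any small enough perturbation of the coefficients preserves the hypotheses) and the fact that the cohomological conclusion depends only on the integer sheaf $\mathcal{O}_Y(D)$.

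Since $g$ is a log resolution, the pair $(Y, \Delta')$ is snc. Combining $g_*\Delta' \leq \Delta$ with $\lfloor \mathrm{Ex}(\Delta') \rfloor = 0$ forces $\lfloor \Delta' \rfloor = E_1 + \cdots + E_k$, where each $E_i$ is the strict transform of a component of $\lfloor \Delta \rfloor$. I would proceed by induction on $k$. In the base case $k = 0$, the pair $(Y, \Delta')$ is klt and snc, and $R^i g_* \mathcal{O}_Y(D) = 0$ for $i > 0$ follows from relative Kawamata--Viehweg vanishing for klt $3$-folds in characteristic $p \neq 2,3,5$. This klt vanishing is itself established via the $3$-fold MMP in mixed characteristic \cite{HX15, BW17, TY23, 7authors} combined with the Cohen--Macaulayness of klt singularities \cite{ABL20}; a small perturbation by a $g$-ample divisor is used to upgrade the trivial $g$-nefness of $D - K_Y - \Delta'$ to the nefness-plus-bigness required by the standard formulation of KV.

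For the inductive step $k > 0$, I use the short exact sequence
\[ 0 \to \mathcal{O}_Y(D - E_1) \to \mathcal{O}_Y(D) \to \mathcal{O}_{E_1}(D|_{E_1}) \to 0. \]
The inductive hypothesis applied to $D - E_1 \sim_{g, \mathbb{Q}} K_Y + (\Delta' - E_1)$, whose integer exceptional part is still zero and whose reduced boundary now has $k-1$ components, yields vanishing of the higher direct images of the leftmost term. For the rightmost term, adjunction gives $D|_{E_1} \sim_{g|_{E_1}, \mathbb{Q}} K_{E_1} + (\Delta' - E_1)|_{E_1}$ on the smooth surface $E_1$; since $E_1$ is a strict transform of a component of $\lfloor \Delta \rfloor$, the morphism $g|_{E_1}$ is birational onto a surface in $X$, and the snc pair $(E_1, (\Delta' - E_1)|_{E_1})$ satisfies the hypotheses of the surface Kawamata--Viehweg vanishing \cite[Theorem 3.3]{Tan18}, or more generally \autoref{c-vanishing-slc-surf2} developed in this paper.

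The main obstacle is the klt base case: Kawamata--Viehweg vanishing for $\mathbb{Q}$-factorial klt $3$-folds in mixed characteristic is deep, and relies on the full $3$-fold MMP together with the Cohen--Macaulayness of klt $3$-fold singularities and careful duality-based arguments. A secondary technical point is ensuring that the adjunction in the inductive step produces, on each $E_i$, a boundary effective enough to make the surface vanishing applicable; this is guaranteed by the snc assumption on $(Y, \Delta')$ coming from $g$ being a log resolution.
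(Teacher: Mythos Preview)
The paper does not give its own proof of this statement: \autoref{t-gr} is quoted from \cite[Theorem 2]{BK20} and used as a black box. So there is no proof in the paper to compare against; I can only comment on the soundness of your sketch.

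Your overall architecture---peel off the coefficient-$1$ components of $\Delta'$ by adjunction and reduce to a klt base case---is the standard route and is essentially how the result is proved in \cite{BK20}. Two points deserve sharpening. First, the surface vanishing you invoke for $R^i(g|_{E_1})_*\mathcal{O}_{E_1}(D|_{E_1})$ is not \autoref{c-vanishing-slc-surf2}: that result concerns morphisms from a surface to a \emph{curve}, whereas $g|_{E_1}$ is a birational morphism of surfaces. The boundary $(\Delta'-E_1)|_{E_1}$ still has coefficient-$1$ components $(E_j\cap E_1)_{j>1}$, so Tanaka's klt statement \cite[Theorem 3.3]{Tan18} does not apply directly either. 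What works is to repeat your peeling argument on $E_1$: each $E_j\cap E_1$ maps \emph{finitely} onto a curve in $g(E_1)$ (it lands in a codimension-$2$ stratum of the dlt pair $(X,\Delta)$), so the restriction terms have vanishing $R^1$ for trivial reasons, and the remaining klt surface piece is handled by \cite{Tan18}. Alternatively one can cite surface GR vanishing \cite[Theorem 10.4]{kk-singbook}. Second, your identification of the hard core is correct: the klt $3$-fold base case is itself the substance of \cite{BK20}, relying on the $3$-fold MMP of \cite{7authors} and the Cohen--Macaulayness of klt $3$-fold singularities from \cite{ABL20}; it is not an off-the-shelf Kawamata--Viehweg statement. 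With these two clarifications your outline is a faithful summary of the argument in \cite{BK20}.
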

	
	The main techniques are the vanishing theorem for surfaces of del Pezzo type over perfect fields proven in \cite{ABL20} and the MMP for 3-folds \cite{7authors}.
	From the G-R vanishing theorem one can deduce various rationality and Cohen--Macaulay properties for dlt 3-fold singularities, a result we will frequently use to study depths of log canonical  3-fold singularities in terms of a dlt modifications.

	\begin{corollary}[{\cite[Theorem 17]{BK20}}]\label{c-rat-dlt}
		Let $(X, \Delta)$ be a 3-dimensional dlt pair whose residue fields of closed points are perfect with characteristic $p \neq 2,3, 5$.
		Then
		\begin{enumerate}
			\item $X$ is Cohen--Macaulay, and has rational singularities;
			\item every irreducible component of $\lfloor \Delta \rfloor$ is normal;
			\item \label{itm:c-rat-dlt:divisor_CM} if $D$ is a $\mathbb{Z}$-divisor such that $D+\Delta'$ is $\Q$-Cartier for some $\Q$-divisor $0 \leq \Delta'\leq \Delta$, then $\mathcal{O}_X(D)$ is CM.
		\end{enumerate}
	\end{corollary}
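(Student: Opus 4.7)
The corollary will be derived from Theorem~\ref{t-gr} by Grothendieck duality arguments, in the same spirit as \cite[Theorem 7.31]{kk-singbook} in characteristic zero. The logical backbone is that part (3) is the key technical statement: it specialises to the Cohen--Macaulay part of (1) (with $D = 0$, $\Delta' = 0$) and, via the restriction sequence, yields the $S_2$-condition in (2) (with $D = -S$, $\Delta' = S$); rational singularities in (1) and the $R_1$-condition in (2) are then shorter, auxiliary steps.

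\textbf{Strategy for (3).} I would choose a log resolution $g \colon Y \to X$ of $(X, \Delta + D)$ which is an isomorphism over the snc locus of $(X, \Delta)$; every $g$-exceptional divisor $E_i$ then has centre in $\nsnc(X, \Delta)$, so the dlt hypothesis gives the strict inequality $a(E_i; X, \Delta) > -1$. Using that $D + \Delta'$ is $\mathbb{Q}$-Cartier, construct an integer divisor $A_Y$ on $Y$---essentially the strict transform of $D$ adjusted by the integer part of a suitable exceptional $\mathbb{Q}$-divisor extracted from $g^*(D + \Delta') - g_*^{-1}\Delta'$---satisfying
\begin{enumerate}[label=(\roman*)]
\item $g_*\mathcal{O}_Y(A_Y) = \mathcal{O}_X(D)$, and
\item both $A_Y$ and $K_Y - A_Y$ are, modulo relative $\mathbb{Q}$-linear equivalence, of the form $K_Y + \Gamma_j$ for an effective $\mathbb{Q}$-divisor $\Gamma_j$ with $g_*\Gamma_j \leq \Delta$ and $\lfloor \Ex(\Gamma_j)\rfloor = 0$.
\end{enumerate}
The strict inequality $a(E_i; X, \Delta) > -1$ is exactly what ensures that the exceptional fractional coefficients lie in $(0, 1)$, so their round-down vanishes. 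Two applications of Theorem~\ref{t-gr} then yield $R^i g_*\mathcal{O}_Y(A_Y) = 0 = R^i g_*\omega_Y(-A_Y)$ for $i > 0$. The first vanishing, together with (i), gives $Rg_*\mathcal{O}_Y(A_Y) \cong \mathcal{O}_X(D)$. Grothendieck duality with $g^!\omega_X^\bullet = \omega_Y[n]$ then yields
$$R\sHom_{\mathcal{O}_X}(\mathcal{O}_X(D), \omega_X^\bullet) \;\cong\; Rg_*\omega_Y(-A_Y)[n],$$
which is concentrated in degree $-n$ by the second vanishing; by local duality this is the Cohen--Macaulay property of $\mathcal{O}_X(D)$.

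\textbf{Parts (1) and (2).} The CM-ness of $X$ is the case $D = \Delta' = 0$ of (3). For rational singularities, apply Theorem~\ref{t-gr} to $D = K_Y$, $\Delta' = 0$ to obtain $R^i g_* \omega_Y = 0$ for $i > 0$; combined with $X$ being CM, Grothendieck duality gives $g_* \omega_Y \cong \omega_X$, and Kempf's criterion (valid on excellent normal CM schemes via Grothendieck duality) then yields rational singularities. For (2), let $S$ be an irreducible component of $\lfloor \Delta \rfloor$: its $R_1$-property is \autoref{l-prop-1cod-dlt}\autoref{itm:l-prop-1cod-dlt:R_1}. For $S_2$, apply (3) with $D = -S$, $\Delta' = S$ (allowed because $S \leq \lfloor \Delta \rfloor \leq \Delta$ and $D + \Delta' = 0$ is $\mathbb{Q}$-Cartier): $\mathcal{O}_X(-S)$ is CM, so the long exact sequence of local cohomology attached to $0 \to \mathcal{O}_X(-S) \to \mathcal{O}_X \to \mathcal{O}_S \to 0$ at a closed point $x \in S$ (where $\dim \mathcal{O}_{X,x} = 3$) forces $H^i_x(\mathcal{O}_S) = 0$ for $i = 0, 1$, giving $\depth \mathcal{O}_{S,x} \geq 2$.

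\textbf{Main obstacle.} The bulk of the technical work is the construction of $A_Y$ in (3): it must be symmetric enough that both $A_Y$ and its Grothendieck dual $K_Y - A_Y$ satisfy the hypotheses of Theorem~\ref{t-gr}. The dlt condition enters essentially through the strict inequality on discrepancies over $\nsnc(X, \Delta)$, which is what makes the round-down $\lfloor \Ex(\Gamma_j)\rfloor$ vanish; if one merely had $a(E_i; X, \Delta) \geq -1$ (i.e., the log canonical case) the argument would break at exactly this step.
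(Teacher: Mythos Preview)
The paper does not give its own proof of this corollary: it is stated as a citation of \cite[Theorem 17]{BK20}, and the proof there indeed follows the scheme you outline (GR vanishing on a thrifty log resolution, plus Grothendieck duality, in parallel with \cite[Theorem 7.31]{kk-singbook}). Your proposal is therefore essentially the intended argument, and the logical organisation---deducing (1) and (2) from (3), and isolating the construction of the ``symmetric'' integer divisor $A_Y$ as the crux---is exactly right.

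One small point of phrasing: when you say both $A_Y$ and $K_Y - A_Y$ should be $\sim_{g,\bQ} K_Y + \Gamma_j$, it is worth being explicit about which $\bQ$-Cartier pullbacks you are subtracting. Concretely, one takes $\Gamma_2 = g^*(D+\Delta') - A_Y$ (so $g_*\Gamma_2 = \Delta'$) and $\Gamma_1 = A_Y - g^*(D+\Delta') + \Delta_Y$ (so $g_*\Gamma_1 = \Delta - \Delta'$), where $K_Y + \Delta_Y = g^*(K_X+\Delta)$; the integral divisor $A_Y$ is then chosen so that its exceptional part is the round-down of the exceptional part of $g^*(D+\Delta')$, which forces the exceptional coefficients of both $\Gamma_j$ into $[0,1)$ precisely because the thrifty resolution has all discrepancies $>-1$. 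This is the computation hiding behind your condition (ii), and making it explicit would turn the sketch into a complete proof.
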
 

\section{Depth of log canonical 3-fold singularities}\label{s-depth-3fold}

\begin{setting}	
\label{setting:runing_MMP}
	Throughout this section, we suppose $(R, \mathfrak{m})$ is a local ring whose residue field is \emph{perfect} of characteristic $p \neq 2,3, \text{ and } 5$.
	Let $(X=\Spec(R), x)$ be the associated local scheme and suppose that there exists a $\mathbb{Q}$-divisor $\Delta \geq 0$ such that $(X,\Delta)$ is a log canonical 3-dimensional pair.
	\end{setting}
	
	We are interested in computing the local cohomology group $H^2_x(X,\mathcal{O}_{X})$ when the closed point $x$ is \emph{not} a minimal log canonical  centre of $(X,\Delta)$.
    We first that show we can reduce to the case where the minimal log canonical  centre is 1-dimensional.

	\begin{lemma}\label{l-not1d-minimal-lc}
		Let $C$ be the minimal log canonical  centre of $(X,\Delta)$ passing through $x$.
		Suppose one of the following conditions hold:
		\begin{enumerate}
			\item $x \notin \nklt(X,\Delta)$ (that is, $C$ is empty),
			\item $x \in \nklt (X,\Delta)$ and $\dim(C)=2$. 
		\end{enumerate}
		Then $X$ satisfies Serre's condition $(S_3)$.
	\end{lemma}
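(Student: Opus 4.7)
The plan is to reduce both cases to the situation in which $(X,\Delta)$ is dlt in a neighbourhood of $x$, and then invoke the Cohen--Macaulayness of 3-dimensional excellent dlt singularities in characteristic $p \neq 2,3,5$ from \autoref{c-rat-dlt}. Since $\dim X = 3$, being Cohen--Macaulay is the same as being $(S_3)$, which is what we want.

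For case (a), the hypothesis $x \notin \nklt(X,\Delta)$ says exactly that $(X,\Delta)$ is klt (and in particular dlt) in an open neighbourhood of $x$. Hence \autoref{c-rat-dlt}.(1) applies directly and $\mathcal{O}_{X,x}$ is Cohen--Macaulay.

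For case (b), the minimal log canonical centre $C$ passing through $x$ is a prime divisor. Writing $\Delta = C + B$, where $B$ collects the remaining components with their coefficients, \autoref{c-minimal-cod1} then gives that $(X,\Delta)$ is plt on some open neighbourhood of $C$, and in particular on a neighbourhood of $x$. Since plt implies dlt, \autoref{c-rat-dlt}.(1) again yields that $\mathcal{O}_{X,x}$ is Cohen--Macaulay.

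In both situations we have therefore shown $\depth \mathcal{O}_{X,x} = \dim \mathcal{O}_{X,x} = 3$, so $X$ satisfies $(S_3)$. There is no real obstacle here: the whole content is that the two listed configurations force $(X,\Delta)$ to be dlt locally at $x$, so that the hard work of \cite{BK20}, encoded in \autoref{c-rat-dlt}, can be applied. The more delicate case, where $C$ is $1$-dimensional, is the one that will require the technical analysis of the subsequent sections.
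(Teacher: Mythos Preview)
Your proof is correct and follows exactly the same route as the paper: case (a) is immediate from \autoref{c-rat-dlt}, and in case (b) you invoke \autoref{c-minimal-cod1} to get plt near $C$ and then conclude again by \autoref{c-rat-dlt}. The only extra detail you spell out is the decomposition $\Delta = C + B$ needed to match the hypotheses of \autoref{c-minimal-cod1}, which is justified since a codimension~1 log canonical centre must appear in $\lfloor \Delta \rfloor$.
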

	\begin{proof}
		Case (a) is proven in \autoref{c-rat-dlt}. 
		In case (b), we deduce $(X, \Delta)$ is plt by \autoref{c-minimal-cod1} and we conclude by \autoref{c-rat-dlt}.
	\end{proof}

By \autoref{l-not1d-minimal-lc} the case of interest, when studying the behavior of local cohomology of $\sO_X$, is when $\dim C =1$. In this case, our main technical result relates the non-vanishing of local cohomology $H^2_x(X, \mathcal{O}_X)$ to the torsion of $R^1g_*\mathcal{O}_E$, where $E$ is an exceptional divisor over $X$. More precisely:

\begin{theorem}\label{t-hyp-1-good-model}
	Let $C \subset X$ be a 1-dimensional minimal log canonical  centre for $(X,\Delta)$ passing through $x$. 
	Then there exists a projective birational morphism $g \colon Z \to (X, \Delta)$ with reduced exceptional divisor $E$ such that
	\begin{enumerate}
	    \item $Z$ is $\mathbb{Q}$-factorial klt with $K_Z+g_*^{-1}\Delta+E \sim_{\bQ} g^*(K_X+\Delta)$;
		\item $E$ is $(S_2)$;
		\item \label{itm: id_h2_h0} $H^2_x(X, \MO_X) \simeq H^0_x(C, R^1g_*\mathcal{O}_E)$.
	\end{enumerate}
\end{theorem}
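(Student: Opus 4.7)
The plan is to take $g \colon Z \to X$ to be a $\bQ$-factorial dlt modification of $(X, \Delta)$, provided by \autoref{dlt-modification}: setting $\Delta_Z := g_*^{-1}\Delta + E$, the pair $(Z, \Delta_Z)$ is dlt with $K_Z + \Delta_Z \sim_\bQ g^*(K_X + \Delta)$ and $E$ is the reduced $g$-exceptional divisor. Since $\lfloor \Delta_Z \rfloor = E$, the auxiliary pair $(Z, \Delta_Z - \varepsilon E)$ is klt for $0 < \varepsilon \ll 1$, which forces the underlying scheme $Z$ to be klt and establishes (1). For (2), each irreducible component of $E$ is normal by \autoref{c-rat-dlt}(2), and adjunction applied to the dlt pair $(Z, \Delta_Z)$ yields that $(E, \Diff_E(g_*^{-1}\Delta))$ is slc, so $E$ is demi-normal, hence in particular $(S_2)$.

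For (3) the starting point is the short exact sequence
$$0 \to \mathcal{O}_Z(-E) \to \mathcal{O}_Z \to \mathcal{O}_E \to 0$$
together with two vanishings: (i) $R^ig_*\mathcal{O}_Z(-E) = 0$ for $i > 0$, and (ii) $R^qg_*(-) = 0$ for $q \geq 2$, which follows from the fibre dimension of $g$ being at most $1$. To obtain (i), I would write $-E \sim_{g,\bQ} K_Z + g_*^{-1}\Delta$, take a log resolution $\phi \colon W \to Z$ of $(Z, E + g_*^{-1}\Delta)$, and apply \autoref{t-gr} to the composite $g \circ \phi$ with boundary $\Delta' = \phi_*^{-1}g_*^{-1}\Delta$: the hypothesis $\lfloor \Ex(\Delta') \rfloor = 0$ is immediate since $\Delta'$ is a strict transform, and $(g \circ \phi)_*\Delta' = \Delta$. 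Using that $Z$ has rational singularities (so $R^j\phi_*\mathcal{O}_W = 0$ for $j > 0$) combined with the projection formula, this descends to $R^ig_*\mathcal{O}_Z(-E) = 0$. In parallel, \autoref{c-rat-dlt}(3) applied with $D = -E$ and auxiliary boundary $\Delta' = E$ gives that $\mathcal{O}_Z(-E)$ is Cohen--Macaulay, and \autoref{c-rat-dlt}(1) gives the same for $\mathcal{O}_Z$.

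Given these inputs, the pushforward of the short exact sequence collapses to
$$0 \to g_*\mathcal{O}_Z(-E) \to \mathcal{O}_X \to g_*\mathcal{O}_E \to 0 \quad \text{and} \quad R^1g_*\mathcal{O}_Z \xrightarrow{\sim} R^1g_*\mathcal{O}_E.$$
Applying $R\Gamma_x$ to the distinguished triangle $Rg_*\mathcal{O}_Z(-E) \to Rg_*\mathcal{O}_Z \to Rg_*\mathcal{O}_E$ and identifying $R\Gamma_x \circ Rg_* = R\Gamma_{g^{-1}(x)}$, the Cohen--Macaulayness of $\mathcal{O}_Z(-E)$ and $\mathcal{O}_Z$ together with $\dim g^{-1}(x) \leq 1$ kill the local cohomology groups $H^i_{g^{-1}(x)}(Z, -)$ of these two sheaves for $i < 2$. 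The Leray spectral sequence $E_2^{p,q} = H^p_x(X, R^qg_*\mathcal{O}_Z) \Rightarrow H^{p+q}_{g^{-1}(x)}(Z, \mathcal{O}_Z)$ then has its total degree-$2$ abutment controlled by $E_\infty^{2,0} = H^2_x(X, \mathcal{O}_X)$ and $E_\infty^{1,1}$. The minimality of $C$ forces $R^1g_*\mathcal{O}_E$ to be supported on $C$ with artinian stalk at $x$, so that $H^1_x(X, R^1g_*\mathcal{O}_Z) = H^1_x(X, R^1g_*\mathcal{O}_E) = 0$; what remains produces the desired identification
$$H^2_x(X, \mathcal{O}_X) \simeq H^0_x(X, R^1g_*\mathcal{O}_Z) = H^0_x(C, R^1g_*\mathcal{O}_E).$$

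The principal obstacle is the relative GR vanishing $R^ig_*\mathcal{O}_Z(-E) = 0$: \autoref{t-gr} is phrased for log resolutions, whereas $(Z, E)$ is only dlt in positive characteristic, so the log-resolution reduction above must be chosen carefully to preserve the condition $\lfloor \Ex(\Delta') \rfloor = 0$ on the composite $g \circ \phi$ and to absorb the extra exceptional contributions via the projection formula. A second subtle step is pinning down the support and artinian length of $R^1g_*\mathcal{O}_E$ at $x$ through the minimality of $C$, so that the Leray spectral sequence analysis yields a clean isomorphism rather than an extension.
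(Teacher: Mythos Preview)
Your overall architecture is sound and in fact closely parallels the paper's, but the central step---the relative Grauert--Riemenschneider vanishing $R^1g_*\mathcal{O}_Z(-E)=0$ (and its dual cousin $R^1g_*\omega_Z=0$, which you need for $H^2_{g^{-1}(x)}(Z,\mathcal{O}_Z)=0$)---does not follow from \autoref{t-gr} as you propose. The hypothesis of \autoref{t-gr} is that the \emph{target} of the birational map is dlt; here $(X,\Delta)$ is only log canonical, so applying \autoref{t-gr} to the composite $g\circ\phi\colon W\to X$ is illegitimate. Applying it instead to $\phi\colon W\to Z$ (which is allowed, since $(Z,\Delta_Z)$ is dlt) only gives $R^i\phi_*$-vanishing and tells you nothing about $R^ig_*$. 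Passing through rational singularities of $Z$ and the projection formula does not repair this: those tools let you replace $W$ by $Z$, but the push-forward along $g$ remains unaccounted for.

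This is not a cosmetic issue: obtaining GR vanishing over a strictly log canonical base is precisely the content of the paper's Section~3. The paper does \emph{not} take a generic dlt modification. Instead, Proposition~\ref{dltlc-minimal-modification} constructs a carefully tailored modification $g\colon Z\to X$ with two extra features you do not have for a dlt blow-up: (i) $-E$ is $g$-nef, and (ii) $(Z,g_*^{-1}\lfloor\Delta\rfloor+E)$ is \'etale-dlt. These are used in Proposition~\ref{prop:GR-vanishing-lc} to prove $R^1g_*\mathcal{O}_Z(D)=0$ for $D\sim_{\mathbb{Q},X}K_Z+\Delta'$ by peeling off copies of $E$ via \autoref{seqlc}, applying the slc surface vanishing \autoref{c-vanishing-slc-surf2} to $g|_E\colon E\to C$ (which needs $-E|_E$ nef and big on some component), and finishing with Serre vanishing after contracting to a plt canonical model where $-E$ becomes ample. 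None of this machinery is available for an arbitrary dlt modification. (The Remark after \autoref{t-char-fail-local-KVV-p} in the introduction notes that an alternative using a dlt modification exists in \cite{Arv23}, but it requires genuinely different arguments, not a direct appeal to \autoref{t-gr}.) A minor side issue: your claim $\lfloor\Delta_Z\rfloor=E$ fails if $\Delta$ itself has components with coefficient~$1$; this does not affect $Z$ being klt, but it does affect your adjunction argument for (2).
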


As the rest of the section is devoted to showing \autoref{t-hyp-1-good-model}, from now on we assume the following:
	
	\begin{setting}\label{s-lc-curve}
		Besides the assumptions and the notation of \autoref{setting:runing_MMP}, let us also fix  
		a 1-dimensional minimal log canonical  centre $C$ of $(X,\Delta)$ passing through $x$.
		Moreover, $C$ is irreducible by \autoref{l-inters-lcc} and we denote by $\eta$ its generic point. 
	\end{setting}

\subsection{Construction of minimal \'etale-dlt modifications}

The hypothesis of minimality on $C$ allows to prove the following technical results, which we will use repeatedly.

\begin{lemma}\label{l-trick-dlt}
	Let $\pi \colon (Y, \Delta_Y) \to (X, \Delta)$ be a crepant proper birational morphism of normal log pairs where $(X,\Delta)$ is as in \autoref{s-lc-curve}. 
	Suppose $Y$ is $\mathbb{Q}$-factorial and let $0 \leq  \Gamma \leq \Delta_Y$.
	If $(Y_{\eta}, \Gamma_{\eta})$ is dlt (resp. \'etale-dlt), then $(Y, \Gamma)$ is dlt (resp. \'etale-dlt).
\end{lemma}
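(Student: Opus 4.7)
The plan is to verify the dlt (respectively \'etale-dlt) condition of $(Y,\Gamma)$ one log canonical place at a time by reducing to the generic fibre $Y_\eta$, where the hypothesis applies directly. The essential ingredient will be that the minimality of $C$ as an lc centre through the unique closed point $x$ of $X = \Spec R$ forces every lc centre of $(X,\Delta)$ to contain $C$; consequently, every lc centre of $(Y,\Gamma)$, when pushed forward to $X$, must contain $\eta$, and so it lives in $Y_\eta$ generically.

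I would begin by checking that $(Y,\Gamma)$ is a log canonical pair: $\mathbb{Q}$-factoriality of $Y$ guarantees that $K_Y + \Gamma$ is $\mathbb{Q}$-Cartier, and the effectivity of $\Delta_Y - \Gamma$ together with the crepancy of $(Y,\Delta_Y) \to (X,\Delta)$ yields the chain $a(F; Y, \Gamma) \geq a(F; Y, \Delta_Y) \geq -1$ for every divisor $F$ over $Y$. Now fix an exceptional divisor $F$ over $Y$ with $a(F; Y, \Gamma) = -1$. This chain then forces $a(F; Y, \Delta_Y) = -1$, so $F$ is an lc place of $(Y,\Delta_Y)$ and, by crepancy, of $(X,\Delta)$. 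Setting $Z := \cent_Y(F)$, the set $\pi(Z) = \cent_X(F)$ is an lc centre of $(X,\Delta)$; since $X$ is local, $\pi(Z)$ is a non-empty closed subset and hence contains $x$. The minimality of $C$ through $x$ together with \autoref{l-inters-lcc} then forces $C \subseteq \pi(Z)$, because the intersection $C \cap \pi(Z)$ is a union of lc centres containing $x$, and by minimality any such lc centre must contain $C$. Consequently the generic point $\pi(\zeta)$ of $\pi(Z)$ is a generization of $\eta$, which means $\zeta \in Y_\eta$.

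The last step is to transfer the conclusion from $Y_\eta$ back to $Y$. Because $\pi(\zeta) \in \Spec \mathcal{O}_{X,\eta}$, the natural map $Y_\eta \to Y$ induces an isomorphism of local rings at $\zeta$, and the localizations of $\Gamma_\eta$ and $\Gamma$ at $\zeta$ agree. The dlt hypothesis on $(Y_\eta, \Gamma_\eta)$ then yields $\zeta \notin \nsnc(Y_\eta, \Gamma_\eta)$, and Zariski-locality of the non-snc locus lifts this to $\zeta \notin \nsnc(Y,\Gamma)$, which is precisely the dlt condition at $F$. The \'etale-dlt case is handled identically, replacing $\nsnc$ by the complement of $\etsnc$ and using the \'etale-local nature of the \'etale-snc locus. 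I do not foresee a serious obstacle: the one genuinely geometric ingredient is the minimality-plus-intersection argument that pins every lc centre of $(X,\Delta)$ down to contain $C$, and everything else is routine manipulation of discrepancies and localisations.
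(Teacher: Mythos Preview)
Your proof is correct and follows essentially the same approach as the paper: both arguments hinge on the observation that any lc place of $(Y,\Gamma)$ is already an lc place of $(X,\Delta)$, whose centre on $X$ must contain $C$ by minimality (via \autoref{l-inters-lcc}), forcing its generic point into $Y_\eta$ where the hypothesis applies. The only difference is packaging: the paper argues by contradiction (assuming the centre lies in the non-snc locus and deriving that it misses $Y_\eta$), whereas you verify the dlt condition directly; your version is slightly more explicit about invoking \autoref{l-inters-lcc}, which the paper leaves implicit.
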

	
\begin{proof}
	Suppose $(Y_\eta, \Gamma_\eta)$ is dlt.
	If $(Y,\Gamma)$ is not dlt, then there exists an exceptional divisor $E$ with discrepancy $a(E, Y, \Gamma)=-1$ such that $\cent_Y(E) \subset Y \setminus \snc(Y,\Gamma) $. 
	Since $(Y_\eta, \Gamma_\eta)$ is dlt, we deduce that $\cent_Y(E)$ must be disjoint from $Y_\eta$. In particular, $\cent_X(E)$ is a closed point $c$ in $C$.
	As $a(E, Y, \Delta_Y) \leq a(E, Y, \Gamma)=-1$, this contradicts the minimality of $C$ among the log canonical  centres of $(X, \Delta)$. 
	 
    The same proof works in the \'etale-dlt case by replacing $Y \setminus \snc(Y,\Gamma)$ with the closed subset $Y \setminus \etsnc(Y,\Gamma)$.
\end{proof}

\begin{lemma}\label{lem: centre-lc-places}
    Let $(X,\Delta)$ as in \autoref{s-lc-curve}. 
    For every exceptional log canonical place $E$ over $X$, we have $\cent_X(E)=C$.
\end{lemma}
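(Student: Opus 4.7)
The plan is to analyse the center $Z := \cent_X(E)$ of an arbitrary exceptional log canonical place $E$ over $(X,\Delta)$ and show that the only possibility is $Z = C$. The two structural ingredients I expect to carry all the weight are: (i) the codimension estimate coming from exceptionality, and (ii) the fact that $X = \Spec R$ is \emph{local}, so every nonempty closed irreducible subset of $X$ automatically contains the closed point $x$.

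First I would note that $Z$ is an irreducible closed subset of $X$ (as the image of the irreducible divisor $E$) and, by definition, $Z$ is a log canonical centre of $(X,\Delta)$. Since $E$ is exceptional over $X$, one has $\codim_X Z \geq 2$, i.e.\ $\dim Z \leq 1$. Next, because $R$ is local, any nonempty closed subset of $\Spec R$ contains $x$, so in particular $x \in Z$.

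Then I would split into two cases. If $\dim Z = 0$, the local hypothesis forces $Z = \{x\}$, so $\{x\}$ would be a log canonical centre of $(X,\Delta)$ strictly contained in $C$; this contradicts the minimality of $C$ among log canonical centres through $x$ (recall $\dim C = 1$). Hence $\dim Z = 1$. In this remaining case, $Z$ is a $1$-dimensional log canonical centre containing $x$, and the minimality of $C$ gives $C \subseteq Z$. Since both $C$ and $Z$ are irreducible and $1$-dimensional, we conclude $C = Z$, as required.

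I do not expect a genuine obstacle here: the argument is essentially bookkeeping with the definition of ``minimal lc centre'' combined with the local nature of $X$. The only subtle point worth flagging explicitly is the step $\dim Z = 0 \Rightarrow Z = \{x\}$, which uses that the unique closed point of $\Spec R$ is $x$, together with \autoref{l-inters-lcc} (already invoked in \autoref{s-lc-curve}) to ensure that any such $Z$ is indeed an lc centre rather than merely a union of lc centres.
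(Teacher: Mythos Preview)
Your argument is correct and uses the same ingredients as the paper: exceptionality gives $\dim Z \le 1$, locality of $R$ gives $x \in Z$, and then minimality of $C$ together with \autoref{l-inters-lcc} forces $Z = C$. The paper collapses your case split by directly observing that $\cent_X(E) \cap C = \{x\}$ whenever $\cent_X(E) \neq C$ and then applying \autoref{l-inters-lcc}; one small correction to your commentary is that \autoref{l-inters-lcc} is really needed in your $\dim Z = 1$ step (to pass from ``$C$ is minimal'' to ``$C \subseteq Z$''), not to verify that $Z$ is an lc centre, which is automatic since $Z = \cent_X(E)$ for a single lc place $E$.
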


\begin{proof}
    If $\cent_X(E) \neq C$, we have $\cent_X(E) \cap C= \left\{ x\right\}$ as $E$ is exceptional. Therefore $x$ is a log canonical centre by \autoref{l-inters-lcc}, contradicting the minimality of $C$.
\end{proof}

In the next propositions, as in the article in general,  $\Ex(\pi)$ denotes the divisorial part of the exceptional set of a proper birational morphism $\pi$, not the entire exceptional set.

\begin{lemma}\label{lem: epsilon-plt}
Let $\pi \colon (Y, \Delta_Y) \to (X, \Delta)$ be a crepant proper birational morphism of normal log pairs where $(X,\Delta)$ is as in \autoref{s-lc-curve}. 
Suppose $Y$ is $\mathbb{Q}$-factorial and $\Delta_Y \geq E:=\Ex(\pi)$. 
If $E \neq 0$, then the pair $(Y, \Delta_Y-\varepsilon E)$ is plt for every rational number $\varepsilon \in (0,1]$.
\end{lemma}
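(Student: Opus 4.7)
The plan is to verify the definition of plt directly: namely, for every prime divisor $F$ exceptional over $Y$, I must show $a(F, Y, \Delta_Y - \varepsilon E) > -1$. Since $(X, \Delta)$ is log canonical and $\Delta_Y \geq E$, the components of $E$ appear in $\Delta_Y$ with coefficient exactly $1$, so $\Delta_Y = \pi_*^{-1}\Delta + E$ and $\Delta_Y - \varepsilon E$ is effective with coefficients at most $1$; in particular $(Y, \Delta_Y - \varepsilon E)$ is automatically log canonical. Comparing the crepant pullback $K_Y + \Delta_Y = \pi^*(K_X + \Delta)$ on a common birational model yields, for any divisor $F$ over $Y$,
\[
a(F, Y, \Delta_Y - \varepsilon E) = a(F, X, \Delta) + \varepsilon \cdot \mathrm{ord}_F(E).
\]
If the left-hand side equals $-1$, then $a(F, X, \Delta) = -1$ and $\mathrm{ord}_F(E) = 0$; so $F$ is an exceptional log canonical place of $(X, \Delta)$ whose centre on $Y$ is not contained in $|E|$.

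The key step is to localize at the generic point $\eta$ of $C$ to derive a contradiction. By \autoref{lem: centre-lc-places}, $\cent_X F = C$, and since every component of $E$ is itself an exceptional log canonical place of $(X, \Delta)$ (as $\Delta_Y \geq E$ and $(X,\Delta)$ is log canonical), the same lemma forces every component of $E$ to dominate $C$. Consequently $E_\eta$ is a non-empty (recall $E \neq 0$) union of curves on the normal $2$-dimensional scheme $Y_\eta$. Since $F$ is exceptional over $Y$ and $\cent_X F = C$, the centre $\cent_{Y_\eta}(F_\eta)$ is a single closed point $y_\eta \in Y_\eta$ mapping to the closed point of $X_\eta = \Spec \mathcal{O}_{X,\eta}$. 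However, $\pi_\eta \colon Y_\eta \to X_\eta$ is a proper birational morphism between normal $2$-dimensional schemes, so its exceptional locus is of pure codimension one in $Y_\eta$ and set-theoretically equal to $|E_\eta|$: indeed, if $\pi_\eta$ were an isomorphism at $y_\eta$, the open $\pi_\eta$-isomorphism locus would cover the local base $X_\eta$ and yield a section, forcing $\pi_\eta$ to be a global isomorphism and $E_\eta = \emptyset$. Therefore $y_\eta \in |E_\eta|$, so $\cent_Y F \subseteq |E|$ and $\mathrm{ord}_F(E) > 0$, contradicting our standing assumption.

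The main obstacle is precisely that an exceptional divisor $F$ over $Y$ need not a priori satisfy $\cent_Y F \subseteq |E|$ --- for instance, a curve on $Y$ mapping birationally onto $C$ could in principle support such a valuation without lying on the divisorial exceptional set. Ruling this out is what makes the argument non-trivial, and it is done by combining the minimality of $C$ among log canonical centres of $(X, \Delta)$ --- applied through \autoref{lem: centre-lc-places} simultaneously to $F$ and to every component of $E$ --- with the classical pure codimension one property of exceptional loci of proper birational morphisms between normal surfaces, applied to the generic slice $\pi_\eta$ over $C$.
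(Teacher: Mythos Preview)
Your proof is correct and follows essentially the same approach as the paper's: both argue by contradiction that an exceptional divisor $F$ with $a(F,Y,\Delta_Y-\varepsilon E)=-1$ is a log canonical place of $(X,\Delta)$, invoke \autoref{lem: centre-lc-places} to get $\cent_X(F)=C$, and then use that the set-theoretic exceptional locus of the surface morphism $\pi_\eta$ coincides with $|E_\eta|$ to force $\cent_Y(F)\subset|E|$, yielding the contradiction. The only cosmetic difference is that you phrase the discrepancy comparison via the explicit formula $a(F,Y,\Delta_Y-\varepsilon E)=a(F,X,\Delta)+\varepsilon\cdot\mathrm{ord}_F(E)$, whereas the paper uses monotonicity (\cite[Lemma~2.27]{km-book}) followed by strict monotonicity (\cite[Lemma~2.5]{kk-singbook}); these are equivalent formulations of the same computation.
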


\begin{proof}
Note that, as $\pi_\eta \colon Y_\eta \to \Spec(\mathcal{O}_{X,\eta})$ is a proper birational morphism of normal surfaces, the support of $E_\eta$ coincides with $\Ex(\pi)_\eta$ also set-theoretically.

Write $\Delta_Y-\varepsilon E=\Delta_Y'+(1-\varepsilon)E \leq \Delta_Y$. where $\Delta_Y'$ and $E$ are effective $\mathbb{Q}$-divisor and they have no irreducible components in common.
Suppose by contradiction that $\big(Y, \Delta_Y'+(1-\varepsilon)E \big)$ is not plt. 
By definition, there exists a proper birational modification $f \colon Z \to Y$ extracting an exceptional divisor $F$ with discrepancy $a(F, Y, \Delta_Y'+(1-\varepsilon)E)=-1$.
By the monotonicity of discrepancies \cite[Lemma 2.27]{km-book},  
$$a(F, Y, \Delta_Y) \leq a(F, Y, \Delta_Y'+(1-\varepsilon)E)=-1.$$ 
As $(Y, \Delta_Y)$ is log canonical, we conclude $a(F,Y, \Delta_Y)= -1$.
As $F$ is an exceptional log canonical place over $X$, then $\cent_X(F)=C$ by \autoref{lem: centre-lc-places}.
As $\cent_Y(F)$ dominates $C$, we deduce that $\cent_Y(F) \subset E$ (as $E_\eta$ coincides set-theoretically with $\Ex(\pi)_\eta$).
This last containment implies by \cite[Lemma 2.5]{kk-singbook} that 
$$a(F,Y, \Delta_Y'+(1-\varepsilon)E)=a(F, Y,\Delta_Y -\varepsilon E) > a(F, Y, \Delta_Y)=-1,$$
contradicting the starting assumption $a(F, Y, \Delta_Y-\varepsilon E)=-1$.
\end{proof}

\begin{proposition}\label{dltlc-minimal-modification} Let $(X,\Delta)$ be as in \autoref{s-lc-curve}.
    Then there exists a projective birational morphism $g \colon Z \to X$ such that
    \begin{enumerate}
        \item \label{itm:dltlc-minimal-modification:lc} $\big(Z, g_*^{-1}\Delta+\Ex(g)\big)$ is a $\mathbb{Q}$-factorial log canonical  pair such that $K_Z + g_*^{-1}\Delta+\Ex(g) = g^*(K_X+\Delta)$;
        \item \label{itm:dltlc-minimal-modification:etale_dlt} the pair $(Z, g_*^{-1}\lfloor \Delta \rfloor+\Ex(g))$ is \'etale-dlt;
        \item \label{itm:dltlc-minimal-modification:klt} for every $\varepsilon>0$, the pair $\big(Z, g_*^{-1}\Delta+(1-\varepsilon)\Ex(g) \big)$ is plt;
        \item \label{itm:dltlc-minimal-modification:nef} $-\Ex(g)$ is a $g$-nef  $\mathbb{Q}$-Cartier divisor;
        \item \label{itm:dltlc-minimal-modification:surjects_onto_C} $g(F)=C$ for every irreducible component $F$ of $\Ex(g)$.
    \end{enumerate}
\end{proposition}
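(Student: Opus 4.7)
The plan is to start from a $\mathbb{Q}$-factorial dlt modification of $(X,\Delta)$ and then run a suitable relative MMP that forces $-\Ex$ to be $g$-nef; four of the five conditions will follow directly, while (b) will be reduced to the surface case via the minimality of $C$.

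By \autoref{dlt-modification} there is a $\mathbb{Q}$-factorial dlt modification
$$\pi\colon(Y,\Delta_Y)\to(X,\Delta),\qquad \Delta_Y:=\pi_*^{-1}\Delta+\Ex(\pi),$$
satisfying $K_Y+\Delta_Y\sim_{\mathbb{Q},\pi}0$. Fix a small $\varepsilon\in(0,1)\cap\mathbb{Q}$ and set $\Theta:=\Delta_Y-\varepsilon\Ex(\pi)$, still a $\mathbb{Q}$-factorial dlt boundary. Since $K_Y+\Theta\sim_{\mathbb{Q},\pi}-\varepsilon\Ex(\pi)$, any $(K_Y+\Theta)$-MMP over $X$ is the same as a $(-\Ex(\pi))$-MMP over $X$. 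By the 3-fold MMP in characteristic $p\neq 2,3,5$ established in \cite{7authors}, this MMP terminates with a $\mathbb{Q}$-factorial relative log minimal model $g\colon Z\to X$ for which $-\Ex(g)$ is $g$-nef, which is (d). Every step of the MMP is $(K_Y+\Delta_Y)$-trivial because $K_Y+\Delta_Y\sim_{\mathbb{Q},\pi}0$, so log canonicity and the crepancy relation $K_Z+g_*^{-1}\Delta+\Ex(g)\sim_{\mathbb{Q}}g^*(K_X+\Delta)$ are preserved through every contraction and flip, giving (a). Condition (c) follows from \autoref{lem: epsilon-plt} applied to $g\colon(Z,g_*^{-1}\Delta+\Ex(g))\to(X,\Delta)$ with $E=\Ex(g)$, using that $g_*^{-1}\Delta+\Ex(g)\geq\Ex(g)$. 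For (e), each irreducible component $F\subseteq\Ex(g)$ appears with coefficient $1$ in the crepant boundary, hence is an exceptional log canonical place of $(X,\Delta)$; by \autoref{lem: centre-lc-places} its center on $X$ equals $C$.

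The main obstacle is (b), since a $(K_Y+\Theta)$-MMP driven by the dlt pair $(Y,\Theta)$ need not preserve (étale-)dlt-ness of the related boundary $\pi_*^{-1}\lfloor\Delta\rfloor+\Ex(\pi)$ on $Z$. The remedy is \autoref{l-trick-dlt}: taking $\Gamma:=g_*^{-1}\lfloor\Delta\rfloor+\Ex(g)\leq g_*^{-1}\Delta+\Ex(g)$, it suffices to verify étale-dlt-ness of $(Z_\eta,\Gamma_\eta)$, a statement about the surface morphism $g_\eta\colon Z_\eta\to X_\eta$. Over $\eta$, $g_\eta$ is a $\mathbb{Q}$-factorial crepant log canonical modification of the 2-dimensional lc pair $(X_\eta,\Delta_\eta)$ with $-\Ex(g_\eta)$ nef over $X_\eta$. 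The corollary at the end of the previous subsection on log canonical surface singularities supplies exactly such a modification whose rounded boundary is étale-dlt, and since any two $\mathbb{Q}$-factorial relative minimal models of an lc surface pair are isomorphic in the absence of flips and with discrepancies matching via crepancy, $Z_\eta$ coincides with that model. This establishes (b) and completes the proof.
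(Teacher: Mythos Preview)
Your approach has a genuine gap. The dlt modification $\pi$ furnished by \autoref{dlt-modification} need not extract any divisor at all: if $(X,\Delta)$ is already dlt---for instance $(X,\Delta)$ snc with $\Delta=D_1+D_2$ reduced and $C=D_1\cap D_2$ the minimal lc centre---then the identity is a valid dlt modification, $\Ex(\pi)=0$, your MMP is vacuous, and $Z=X$ with $\Ex(g)=0$. In that case condition (c) would assert that $(X,\Delta)$ itself is plt, which is false since $C$ is a codimension-$2$ lc centre. Your appeal to \autoref{lem: epsilon-plt} for (c) is therefore illegitimate: that lemma explicitly requires $E\neq 0$, and you have not shown $\Ex(g)\neq 0$. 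The same example breaks your argument for (b): your $Z_\eta=X_\eta$ extracts no divisors, whereas the surface model built in the corollary you cite extracts the lc place over the closed point of $X_\eta$, so the two are not isomorphic. More generally, even when $\pi$ extracts some lc places there is no reason it extracts the \emph{same} set as the log-minimal resolution of $(X_\eta,\Delta_\eta)$; your ``uniqueness of surface minimal models'' does not apply because the two surfaces extract different divisors and are minimal models of different boundaries.

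The paper avoids this by starting not from a dlt modification but from a log resolution $\varphi\colon W\to X$ with $\varphi_*^{-1}\Delta$ regular, and taking $Y$ to be the log minimal model of $(W,\varphi_*^{-1}\Delta)$ over $X$. This forces $Y_\eta$ to be the log-minimal resolution of $(X_\eta,\Delta_\eta)$, so that \emph{all} lc places over $\eta$ are present as exceptional divisors. Two further MMPs are then run, designed via the case analysis of \autoref{p-lmn-etale-dlt} to contract exactly the exceptional divisors of discrepancy $>-1$ while retaining the lc places; the \'etale-dlt property is checked over $\eta$ \emph{before} the final MMP, and survives because that last MMP is shown to be the identity over $\eta$.
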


\begin{proof} 
    Let $\varphi \colon  W \to X$ be a log resolution of $(X, \Delta)$ such that $\varphi_*^{-1}\Delta$ is regular.
    In particular, the pair $(W, \varphi_*^{-1}\Delta )$ is plt. 
    Let $\pi \colon Y \to X$ be a log minimal model of this pair over $X$, which is $\bQ$-factorial by the plt assumption. 
    Write 
 \begin{equation*}
 K_Y + \pi^{-1}_* \Delta + E + B =\pi^*(K_X+\Delta),
 \end{equation*} 
    where $E$ is an effective $\mathbb{Z}$-divisor and $\lfloor B \rfloor=0$. In particular, then $-(E+B)$ is a $\mathbb{Q}$-Cartier nef divisor over $X$. 
    We denote by $Y_\eta$ the base change of $Y$ over $X_{\eta}:=\Spec(\mathcal{O}_{X, \eta})$ and for a divisor $D$ on $Y$, we will denote by $D_\eta$ the localisation $D|_{Y_\eta}$. In particular, $Y_\eta$ is NOT the fiber over $\eta$. 

    By \autoref{rem-construction-log-mininal-resolution}, $Y_\eta$ is a log minimal resolution of the surface $\big(\Spec \mathcal{O}_{X, \eta}, \Delta_\eta \big)$.
    Let $G$ be the $\mathbb{Z}$-divisor $\Ex(\pi)-E$, which is supported on the exceptional divisors which are not log canonical places. Note that $\Supp(B) \subseteq \Supp(G)$.
 
Next, we define a birational model $h : V \to X $ that satisfies the following properties, where the sub-index $V$ denotes the strict transform of the corresponding divisor:
\begin{itemize}
\item $V$ is $\bQ$-factorial,
\item $\Ex(h)=E_V$, 
\item $-E_{V, \eta}$ is nef,
\item $\big(V, E_V + h^{-1}_* \lfloor \Delta \rfloor \big)_{\eta}$ is \'etale-dlt. 
\end{itemize}
In particular, as $\Ex(h)=E_V$, we conclude $(V, h_*^{-1}\Delta+E_V)$ is crepant birational to $(X,\Delta)$.
We construct $V$ separately in the two cases corresponding to the two points of   \autoref{p-lmn-etale-dlt},   when \autoref{p-lmn-etale-dlt} is  applied to to the minimal resolution $\pi_\eta: Y_\eta \to \big( X_\eta, \Delta_\eta \big)$. 
 
     \ptofpf{Case \autoref{itm:p-lmn-etale-dlt:one_exceptional} of \autoref{p-lmn-etale-dlt}:} in this case we have that $\Delta_\eta=G_{\eta}=0$ and that $E_\eta$ is equal to $\Ex(\pi_\eta)$, it is irreducible, and it is anti-nef. Note that the pair $(Y, \pi^{-1}_* \Delta + E + B)$ is $\mathbb{Q}$-factorial log canonical, crepant to $(X, \Delta)$ and by \autoref{lem: epsilon-plt} the pair $ \big(Y, \pi^{-1}_* \Delta + (1- \varepsilon)E + B\big)$ is plt. 
     Additionally, as $G$ is exceptional, it does not have any of the codimension $1$ components of $\pi^{-1}_* \Delta$ in its support. This implies that we may find another rational number $\varepsilon'>0$, such that $ \big(Y, \pi^{-1}_* \Delta + (1- \varepsilon)E + B + \varepsilon' G\big)$ is still plt. Let $V$  be a log minimal model over $X$ of the latter pair. Note that we have 
     \begin{equation}
 \label{eq:p-lmn-etale-dlt:negativity_elliptic}    
K_Y+\pi_*^{-1}\Delta + (1- \varepsilon)E+B+\varepsilon' G \equiv_X \varepsilon' G - \varepsilon E 
\end{equation}
In particular,  this MMP is the identity on $Y_\eta$, as 
\begin{equation*}
\big(\varepsilon G - \varepsilon E \big)_{\eta} = -\varepsilon E_\eta
\end{equation*}
is nef. This also yields that $-E_{V, \eta}$ is nef. It even implies $\big(V, E_V + h^{-1}_* \lfloor \Delta \rfloor \big)_{\eta}$ is \'etale-dlt, as $\big(Y, E + \pi^{-1}_* \lfloor \Delta \rfloor \big)_{\eta}$ is \'etale-dlt by point \autoref{itm:p-lmn-etale-dlt:one_exceptional} of \autoref{p-lmn-etale-dlt}.  
Additionally, by the negativity lemma and by \autoref{eq:p-lmn-etale-dlt:negativity_elliptic}, this MMP turns $G$ anti-effective, which means that it contracts it. Hence, $\Ex(h)=E_V$. 
Finally, $V$ is $\bQ$-factorial as it is a result of a plt MMP. 

\ptofpf{Case \autoref{itm:p-lmn-etale-dlt:general} of \autoref{p-lmn-etale-dlt}:} by point \autoref{itm:p-lmn-etale-dlt:general} of \autoref{p-lmn-etale-dlt}, we know that $(Y, \pi^{-1}_* \lfloor{\Delta \rfloor}  + E + B)_\eta$ is dlt. Hence, by \autoref{l-trick-dlt}, $(Y, \pi^{-1}_* \lfloor{\Delta \rfloor} + E + B)$ is also dlt. As the coefficients of $B$ are smaller than $1$, we may choose  a  rational number $\varepsilon >0$ such that 
     $\big(Y, \pi_*^{-1}\lfloor{\Delta \rfloor}+ E+B+ \varepsilon G\big)$ is dlt. Let $h : V \to X$ be a log minimal model of this latter pair over $X$, where $V$ is $\bQ$-factorial as we run a dlt MMP on a $\bQ$-factorial variety.  Note that we have
\begin{equation*}
K_Y+\pi_*^{-1}\lfloor{\Delta \rfloor}+ E+B+\varepsilon G \equiv_X \varepsilon G  - \pi_*^{-1}\left\{ \Delta \right\}.
\end{equation*}
Therefore, by the negativity lemma \cite[Lemma 2.16]{7authors} this MMP turns $G$ anti-effective, which means that it contracts it, and hence it also contracts $B$. Hence $E_V=\Exc(h)$, and $(V, h^{-1}_* \lfloor \Delta \rfloor + E_V)$ is dlt. The last property we need to show about $V$ is that $-E_{V,\eta}$ is nef. By \autoref{l-pushfor-nef}, it is enough to show for this that $-E_\eta - B_\eta$ is nef. However, that is immediate as $-E-B \equiv_X K_Y + \pi^{-1}_* \Delta$, which is nef by the construction of $Y$ as a log minimal model.

Having finished  the construction and the verification of the properties of $V$ in both cases, by \autoref{lem: epsilon-plt} the pair $(V, h^{-1}_* \Delta + (1- \varepsilon) E_V)$ is plt for every small rational number $\varepsilon>0$. 
Let $g\colon  Z \to X$ be the log minimal model of this latter pair, which is $\bQ$-factorial. 
This yields point \autoref{itm:dltlc-minimal-modification:lc} and  \autoref{itm:dltlc-minimal-modification:klt}. 
As 
\begin{equation*}
K_V + h^{-1}_* \Delta + (1- \varepsilon) E_V \equiv_X - \varepsilon E_V,
\end{equation*}
we see that this MMP is the identity in a neighborhood of $V_\eta$ since $-E_{V,\eta}$ is nef, and that $-E_Z=- \Ex(g)$ is nef. This yields \autoref{itm:dltlc-minimal-modification:nef}. 
The MMP being identity over $\eta$ also implies that the exceptional divisor of $Z$ and $V$ are the same,  and that $(Z, g^{-1}_* \lfloor \Delta \rfloor + E_Z)_\eta$ is \'etale-dlt.  
The former yields point \autoref{itm:dltlc-minimal-modification:surjects_onto_C} by \autoref{lem: centre-lc-places}, and the latter together with \autoref{l-trick-dlt} yields point \autoref{itm:dltlc-minimal-modification:etale_dlt}.  
\end{proof}

\subsection{Computing local cohomology}

\begin{setting}\label{set-computation-local-cohomology}
For this subsection, let $(X, \Delta)$ be as in \autoref{s-lc-curve} and let $g \colon Z \to X$ be the birational modification constructed in \autoref{dltlc-minimal-modification}. 	Set $E:=\Ex(g)$ and $\Delta_Z:=g_*^{-1}\Delta+E$.
\end{setting}

We note that, by \autoref{dltlc-minimal-modification}.\autoref{itm:dltlc-minimal-modification:klt} $Z$ is $\bQ$-factorial and klt. Hence, by \autoref{c-rat-dlt}, for every $\bZ$-divisor $D$ on $Z$, the divisorial sheaf $\sO_Z(D)$ is Cohen--Macaulay. This also implies that any divisor on $Z$ is $(S_2)$ by \cite[Corollary 2.61]{kk-singbook}.

The following is the fundamental tool to relate the local cohomology group $H^2_x(X, \mathcal{O}_X)$ to cohomological properties of $g \colon Z \to X$.

\begin{lemma}\label{5spectral}
 There is an exact sequence as follows:
	$$0 \to H^0_x(X, R^1 g_* \MO_Z) \to H^2_x(X, \MO_X) \to H^2_{g^{-1}(x)}(Z, \MO_Z). $$
\end{lemma}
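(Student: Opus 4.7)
The plan is to apply the Grothendieck spectral sequence associated with the equality of composite functors $\Gamma_x \circ g_* = \Gamma_{g^{-1}(x)}$ to the sheaf $\mathcal{O}_Z$. The equality of composites is just the set-theoretic identity $Z \setminus g^{-1}(x) = g^{-1}(X \setminus \{x\})$: a section of an $\mathcal{O}_Z$-module has support in $g^{-1}(x)$ if and only if its pushforward has support in $\{x\}$. Since $g_*$ preserves flasqueness and flasque sheaves are acyclic for $\Gamma_x$, the standard theorem on composite derived functors yields a spectral sequence
$$E_2^{p,q} = H^p_x\bigl(X, R^q g_* \mathcal{O}_Z\bigr) \Rightarrow H^{p+q}_{g^{-1}(x)}(Z, \mathcal{O}_Z),$$
whose associated low-degree five-term exact sequence reads
$$0 \to H^1_x(X, g_*\mathcal{O}_Z) \to H^1_{g^{-1}(x)}(Z, \mathcal{O}_Z) \to H^0_x(X, R^1g_*\mathcal{O}_Z) \to H^2_x(X, g_*\mathcal{O}_Z) \to H^2_{g^{-1}(x)}(Z, \mathcal{O}_Z).$$
Since $X$ is normal (by the very definition of log canonical pair) and $g \colon Z \to X$ is a proper birational morphism from the normal scheme $Z$, we have $g_*\mathcal{O}_Z = \mathcal{O}_X$, so the stated sequence will follow once I establish the vanishing $H^1_{g^{-1}(x)}(Z, \mathcal{O}_Z) = 0$.

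For the vanishing, I will first use \autoref{dltlc-minimal-modification}\autoref{itm:dltlc-minimal-modification:klt}: the pair $(Z, g_*^{-1}\Delta + (1-\varepsilon) E)$ is plt for small $\varepsilon > 0$, and by monotonicity of discrepancies this forces $Z$ itself to be klt, hence dlt. Then \autoref{c-rat-dlt} applies (we are in residue characteristic $p \neq 2,3,5$) and yields that $Z$ is Cohen--Macaulay of dimension $3$. On a Cohen--Macaulay scheme, local cohomology with support in a closed subset $W$ vanishes in degrees below $\mathrm{codim}_Z W$, so it will be enough to verify that $\mathrm{codim}_Z g^{-1}(x) \geq 2$, i.e.\ $\dim g^{-1}(x) \leq 1$.

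The codimension bound is the one place that is not purely formal, and it relies on property \autoref{itm:dltlc-minimal-modification:surjects_onto_C} of \autoref{dltlc-minimal-modification}: every irreducible component of $E$ surjects onto the $1$-dimensional curve $C$. Consequently every fiber of $E \to C$ over the closed point $x \in C$ has dimension at most $\dim E - \dim C = 1$, so $\dim(g^{-1}(x) \cap E) \leq 1$. Away from $E$ the morphism $g$ is an isomorphism onto an open of $X$, and since $g^{-1}(x)$ is connected by Zariski's main theorem, it cannot contain isolated points outside $E$; hence $g^{-1}(x) \subseteq E$ and $\dim g^{-1}(x) \leq 1$. The main care required in the argument is precisely this geometric description of $g^{-1}(x)$; everything else is standard homological machinery applied to the five-term sequence above.
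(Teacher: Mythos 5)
Your proof is correct and follows the same skeleton as the paper's: the Grothendieck spectral sequence for $\Gamma_x\circ g_*=\Gamma_{g^{-1}(x)}$, the five-term exact sequence, $g_*\MO_Z=\MO_X$, and the vanishing of $H^1_{g^{-1}(x)}(Z,\MO_Z)$. The only divergence is in how that vanishing is obtained: the paper invokes local duality for Cohen--Macaulay sheaves to reduce it to $R^2g_*\omega_Z=0$, which holds because the fibres of $g$ are at most $1$-dimensional, whereas you argue directly that $Z$ is Cohen--Macaulay (klt, so \autoref{c-rat-dlt} applies) and that $\cod_Z g^{-1}(x)\geq 2$, so the depth bound kills $H^1$ with supports. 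Both arguments rest on exactly the same two inputs ($Z$ Cohen--Macaulay and $\dim g^{-1}(x)\leq 1$), so this is a cosmetic rather than substantive difference; your version avoids duality at the cost of a few routine dimension-theoretic checks on the excellent scheme $Z$. One small imprecision: since $\Ex(g)$ denotes only the divisorial part of the exceptional set, the inclusion $g^{-1}(x)\subseteq E$ need not hold (there could be small exceptional curves over $x$ outside $E$); but the bound $\dim g^{-1}(x)\leq 1$ that you actually use is still correct, because any $2$-dimensional fibre component would be a contracted divisor, hence a component of $E$, contradicting \autoref{dltlc-minimal-modification}.\autoref{itm:dltlc-minimal-modification:surjects_onto_C}.
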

\begin{proof}
	Note that the composition of derived functors $R \Gamma_x$ and $Rg_*$ satisfies $R \Gamma_x \circ Rg_* = R \Gamma_{g^{-1}x}$. 
	Then we have the usual five-term short exact sequence:
	\begin{equation*}
	0 \to R^1 \Gamma_x R^0 g_* \sO_Z \to R^1 \Gamma_{g^{-1}(x)} \sO_Z  \to R^0 \Gamma_x R^1 g_* \sO_Z \to R^2 \Gamma_x R^0 g_* \sO_Z \to R^2 \Gamma_{g^{-1}(x)} \sO_Z.
    \end{equation*}
	To conclude, it is thus sufficient to show that $R^1\Gamma_{g^{-1}(x)}\mathcal{O}_Z$ vanishes.
	 By duality for CM sheaves (cf. \cite[Theorem 10.44]{kk-singbook} and \cite[Theorem 5.71]{km-book}), it is sufficient to show that $R^2g_*\sO_Z(K_Z)=0$. 
    As every irreducible component $F$ of $\Ex(g)$ surjects onto $C$, the fibres of $g$ are at most 1-dimensional and we deduce $R^2g_*\sO_Z(K_Z)=0$ by dimension reasons.
\end{proof}

We now prove a Grauert--Riemenschneider vanishing theorem for the birational contraction $g$.

\begin{proposition} \label{prop:GR-vanishing-lc}
Let $D$ be a $\mathbb{Z}$-divisor on $Z$ such that $D \sim_{\mathbb{Q},X} K_Z+\Delta'$, where $0 \leq \Delta' \leq g_*^{-1}\Delta$.
Then $R^ig_*\mathcal{O}_Z(D)=0$ for $i>0$.
\end{proposition}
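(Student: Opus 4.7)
The plan is to deduce the vanishing from the Grauert--Riemenschneider theorem \autoref{t-gr} for 3-dimensional dlt pairs, after rewriting $D$ as $K_Z+B+L$ with $(Z,B)$ plt and $L$ a $g$-nef, $g$-big divisor. Since every irreducible component of $E=\Ex(g)$ surjects onto the one-dimensional centre $C$ by \autoref{dltlc-minimal-modification}\autoref{itm:dltlc-minimal-modification:surjects_onto_C} and $g$ is an isomorphism off $E$, the fibres of $g$ have dimension at most one; hence $R^{i}g_{*}\mathcal{O}_{Z}(D)=0$ for $i\geq 2$ by Grothendieck vanishing, and it suffices to show $R^{1}g_{*}\mathcal{O}_{Z}(D)=0$.

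Fix a rational $\varepsilon\in(0,1)$ and set $B:=\Delta'+(1-\varepsilon)E$ together with $L:=-(1-\varepsilon)E$. Using the crepant identity $K_{Z}+g_{*}^{-1}\Delta+E\sim_{\mathbb{Q},X}0$ from \autoref{dltlc-minimal-modification}\autoref{itm:dltlc-minimal-modification:lc}, the hypothesis $D\sim_{\mathbb{Q},X}K_{Z}+\Delta'$ rewrites as
\[
D\sim_{\mathbb{Q},X} K_{Z}+B+L.
\]
The divisor $L$ is $g$-nef by \autoref{dltlc-minimal-modification}\autoref{itm:dltlc-minimal-modification:nef}, and $g$-big because $g$ is birational with $\Ex(g)=E$, so $-E$ has strictly positive intersection with every curve contracted by $g$. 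Moreover, the pair $(Z,\,g_{*}^{-1}\Delta+(1-\varepsilon)E)$ is plt by \autoref{dltlc-minimal-modification}\autoref{itm:dltlc-minimal-modification:klt}, and since $\Delta'\leq g_{*}^{-1}\Delta$, monotonicity of discrepancies yields that $(Z,B)$ is plt as well, and in particular dlt.

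To conclude, I would invoke a relative Kawamata--Viehweg vanishing for the 3-dimensional dlt pair $(Z,B)$ with $L$ being $g$-nef and $g$-big, yielding $R^{1}g_{*}\mathcal{O}_{Z}(D)=0$. Such a statement is a standard consequence of \autoref{t-gr} obtained by passing to a log resolution $h\colon W\to Z$ of $(Z,B)$: writing $K_{W}+B_{W}=h^{*}(K_{Z}+B)$, the plt-ness of $(Z,B)$ ensures that the coefficients of $B_{W}$ on $h$-exceptional divisors lie strictly below $1$; then \autoref{t-gr} applied to $h$ gives $R^{i}h_{*}\mathcal{O}_{W}(\lceil h^{*}D\rceil)=0$ for $i>0$, with $h_{*}\mathcal{O}_{W}(\lceil h^{*}D\rceil)=\mathcal{O}_{Z}(D)$. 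The Leray spectral sequence for $\varphi=g\circ h\colon W\to X$ then identifies $R^{1}g_{*}\mathcal{O}_{Z}(D)$ with $R^{1}\varphi_{*}\mathcal{O}_{W}(\lceil h^{*}D\rceil)$, which vanishes by a parallel application of \autoref{t-gr} in which the $g$-bigness of $L$ supplies the positivity required over $X$.

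The chief anticipated obstacle is this last step: \autoref{t-gr} as stated requires the target to carry a dlt pair, whereas $(X,\Delta)$ is only log canonical, so one cannot apply the theorem verbatim to $\varphi$. The work-around is to confine the use of \autoref{t-gr} to the dlt world over $Z$ and derive the vanishing over $X$ from the Leray identification above, exploiting the Cohen--Macaulayness of divisorial sheaves on the klt 3-fold $Z$ from \autoref{c-rat-dlt} and, if needed, an auxiliary dlt modification of $(X,\Delta)$ (which exists by \autoref{dlt-modification}) dominating $Z$ through which the cohomological information may be transported back to $X$. The $\mathbb{Q}$-factoriality of $Z$ and the $g$-bigness of $L$ are both essential ingredients in this transfer.
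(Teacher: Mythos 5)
Your reduction to a ``relative Kawamata--Viehweg vanishing for the dlt pair $(Z,B)$ with $L$ $g$-nef and $g$-big'' is where the argument breaks down: over the base $X$ that statement is essentially the proposition itself, and it is not a consequence of \autoref{t-gr}. The hypothesis of \autoref{t-gr} that the \emph{target} pair be dlt is not a technicality here: $(X,\Delta)$ has a genuine one-dimensional lc centre $C$ and $E$ is an lc place over it, and \autoref{t-counterexample} (the cone over an elliptic fibration with a wild fibre) shows that $R^1$ of a resolution of a merely log canonical $3$-fold can be nonzero. Your work-around --- resolving $(Z,B)$ by $h\colon W\to Z$, applying \autoref{t-gr} to $h$, and then using Leray for $\varphi=g\circ h$ --- only identifies $R^1g_*\mathcal{O}_Z(D)$ with $R^1\varphi_*\mathcal{O}_W(D_W)$; it relocates the identical vanishing problem to the log resolution $W\to X$ of the non-dlt pair $(X,\Delta)$, where no result in the paper applies, and the closing appeal to ``an auxiliary dlt modification through which the cohomological information may be transported back to $X$'' is not an argument. (A secondary inaccuracy: $-E$ is only $g$-nef, and it need not meet every $g$-contracted curve positively --- its degree can vanish on components of the closed fibres of $E\to C$; relative bigness over a birational base is automatic and supplies no extra positivity.)

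The paper's proof is built exactly around the point your proposal skips: the positivity of $-E$ degenerates along the fibration $E\to C$, where wild fibres could a priori contribute torsion to $R^1$. It twists by $-mE$ and uses the restriction sequences of \autoref{seqlc} to reduce, by descending induction, to two vanishings: first, $R^1g_*\mathcal{O}_E(G_m)=0$, proved by the slc-surface Kawamata--Viehweg vanishing over a curve (\autoref{c-vanishing-slc-surf2}), whose hypothesis that $A=-(m+1)E|_{E}$ be big on some component of $E$ is verified via the negativity lemma; and second, $R^1g_*\mathcal{O}_Z(D-nE)=0$ for $n\gg 0$, obtained by contracting to the relative canonical model $T$ (where $-E_T$ becomes ample over $X$), applying \autoref{t-gr} to $p\colon Z\to T$, and concluding with the projection formula and Serre vanishing. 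None of these ingredients appears in your proposal, so the gap is essential rather than cosmetic.
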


\begin{proof}
For $i=2$, it is immediate as the dimension of the fibres of is at most 1 by \autoref{itm:dltlc-minimal-modification:lc} of \autoref{dltlc-minimal-modification}.
For the case $i=1$,
as $(Z, g_*^{-1}\lfloor{\Delta \rfloor}+E)$ is \'etale-dlt we can apply \autoref{seqlc} to the divisor $\big(D-K_Z-(m+1)E\big)$ for every $m\geq 0$ to obtain the short exact sequence of $\mathcal{O}_Z$-modules:
$$ 0 \to \mathcal{O}_Z(D-(m+1)E) \to \mathcal{O}_Z(D-mE) \to \mathcal{O}_{E} (G_m) \to 0,$$
where $G_m$ is a $\mathbb{Q}$-Cartier $\mathbb{Z}$-divisor on $E$.
Moreover, there exists a $\mathbb{Q}$-divisor $\Gamma_m$ such that $0 \leq \Gamma_m \leq \Diff_{E}(0)$ and
\begin{equation} \label{eq: Q-lin-equivalence}
\begin{aligned} 
G_m & \sim_{\bQ} K_{E} + \big(D- K_Z - (m+1)E\big)|_{E}  + \Gamma_m & \\
&\sim_{\bQ} (D - mE)|_{E} - (K_Z +E)|_{E} + K_{E} + \Gamma_m &  \\
& \sim_{\bQ} (D - mE)|_{E} - \Gamma_m' &
\end{aligned}
\end{equation}
where $\Gamma_m' := \Diff_{E}(0)-\Gamma_m \geq 0$.
Passing to cohomology, we obtain the short exact sequence
\begin{equation} \label{eq:ses-vanishing}
R^1g_*\mathcal{O}_Z(D-(m+1)E) \to R^1g_*\mathcal{O}_Z(D-mE) \to R^1g_*\mathcal{O}_{E}(G_m) \to 0.    
\end{equation}

We now claim that $R^1g_* \mathcal{O}_{E} (G_m)=0$ for all $m \geq 0$. 
By applying adjunction, we deduce
\begin{equation} \label{eq: adjunction-E}
    (D - mE)|_{E} \sim_{\mathbb{Q}} (K_Z + \Delta' - mE)|_{E} \sim_{\bQ} K_{E^\nu} + \Diff_{E}(\Delta') -( m+1)E|_{E}.
\end{equation}
Combining \autoref{eq: Q-lin-equivalence} and \autoref{eq: adjunction-E} we conclude that 
\begin{equation}
    G_m \sim_\bQ K_{E} +\Delta_m -(m+1)E|_{E},
\end{equation}
where $\Delta_m:=\Diff_{E}(\Delta')-\Gamma'_m \leq \Diff_{E}(\Delta')$.
Note that $\Delta_m=\big(\Diff_{E}(\Delta')-\Diff_{E}(0) \big)+\Gamma_m$ and therefore $\Delta_m \geq 0$. 
We verify we can apply \autoref{c-vanishing-slc-surf2} to $g \colon E \to C$ to show $R^1g_* \mathcal{O}_{E} (G_m)=0$ because 
\begin{itemize}
\item every log canonical centre of $(E, \Diff_{E}(\Delta'))$ dominates $C$ by \autoref{itm:dltlc-minimal-modification:surjects_onto_C} of \autoref{dltlc-minimal-modification};
\item $G_m \sim_\bQ K_{E}+\Delta_m +A$, where $0 \leq \Delta_m \leq \Diff(\Delta')$ and $A:=(-(m+1)E)|_{E}$ is $g$-nef by assumption;
\item there is an irreducible component $F$ of $E$ such that $A|_F$ is $g$-big. If this is not the case then, as the fibres of $g|_E$ are 1-dimensional, $-E_\eta$ is $g_\eta$-trivial and thus $E_\eta=0$ by the negativity lemma.
\end{itemize} 

Combining the vanishing $R^1g_* \mathcal{O}_{E} (G_m)=0$ with the sequence \autoref{eq:ses-vanishing}, it is sufficient to show $R^1g_*\mathcal{O}_Z(D-nE)=0$ for $n$ sufficiently large to conclude that $R^1g_*\mathcal{O}_Z(D)=0$ by descending induction.
As the pair $(Z, g_*^{-1}\Delta+(1-\varepsilon)E)$ is plt and 
$$K_Z +g_*^{-1}\Delta+(1-\varepsilon)E \equiv_X -\varepsilon E$$ 
is $g$-nef, we can consider the birational contraction $p \colon Z \to T$ to its canonical model $h \colon T \to X$. 
By construction, the pair $(T, h_*^{-1}\Delta+(1-\varepsilon)E_T)$ is plt and $-E \sim_{\bQ} p^*(-E_T)$ where $-E_T$ is ample over $X$. 
In particular, as 
$$ D-nE \sim_{\bQ} K_Z +\Delta' -nE \sim_{\bQ, T} K_Z+\Delta',$$ 
by \autoref{t-gr} we deduce 
$R^ip_*\mathcal{O}_Z(D-nE)=0$ for $i>0$. 
For $n$ sufficiently large and divisible, $nE \sim p^*nE_T$ and thus by the Leray spectral sequence and the projection formula we have $$R^1g_*\mathcal{O}_Z(D-nE) \simeq R^1h_*\big(p_*\mathcal{O}_Z(D-nE)\big) \simeq R^1h_*\big(p_*\mathcal{O}_Z(D) \otimes \mathcal{O}_T(-nE_T)\big),$$
which is zero for $n$ sufficiently large by Serre vanishing.
\end{proof}

As an application of the GR vanishing, we can finally compute the second local cohomology group at $x$.

\begin{proposition}\label{l-vanishing-h2-F-naive}
	The following equalities hold:
	\begin{enumerate}
		\item \label{itm:l-vanishing-h2-F-naive:H_2} $H^2_{g^{-1}(x)}(Z, \MO_Z)=0$;
		\item \label{itm:l-vanishing-h2-F-naive:H_1} $H^0_x(X, R^1g_*\MO_Z) \simeq H^0_x(C, R^1g_* \MO_E)$.
	\end{enumerate}
\end{proposition}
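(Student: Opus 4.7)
I would deduce (a) and (b) essentially independently, both via the Grauert--Riemenschneider vanishing of \autoref{prop:GR-vanishing-lc}: part (a) through Grothendieck/local duality, and part (b) through the ideal exact sequence of $E$ on $Z$.

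For (a), since $R$ admits a dualising complex, local duality at $\mathfrak{m}$ gives
$$H^2_{g^{-1}(x)}(Z, \mathcal{O}_Z)^{\vee} \;\simeq\; \mathrm{Ext}^{-2}_R\!\big(Rg_*\mathcal{O}_Z,\, \omega_R^\bullet\big).$$
By \autoref{dltlc-minimal-modification}.\autoref{itm:dltlc-minimal-modification:klt} and \autoref{c-rat-dlt}, $Z$ is Cohen--Macaulay of dimension $3$, hence $\omega_Z^\bullet \simeq \omega_Z[3]$, and Grothendieck duality for the proper morphism $g$ identifies the right hand side with $H^{-2}R\Gamma(Z, \omega_Z[3]) = H^1(Z, \omega_Z)$. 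Now \autoref{prop:GR-vanishing-lc} applied to $D = K_Z$ with $\Delta' = 0$ yields $R^ig_*\omega_Z = 0$ for $i > 0$, so the Leray spectral sequence combined with the affineness of $X = \Spec R$ give $H^1(Z, \omega_Z) = H^1(X, g_*\omega_Z) = 0$. Since Matlis duality is faithful on Artinian modules, this forces $H^2_{g^{-1}(x)}(Z, \mathcal{O}_Z) = 0$.

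For (b), I would push forward the short exact sequence
$$0 \to \mathcal{O}_Z(-E) \to \mathcal{O}_Z \to \mathcal{O}_E \to 0.$$
From $K_Z + g_*^{-1}\Delta + E \sim_{\mathbb{Q}, X} 0$ (\autoref{dltlc-minimal-modification}.\autoref{itm:dltlc-minimal-modification:lc}) we read $-E \sim_{\mathbb{Q}, X} K_Z + g_*^{-1}\Delta$, so \autoref{prop:GR-vanishing-lc} applied to $D = -E$ with $\Delta' = g_*^{-1}\Delta$ gives $R^ig_*\mathcal{O}_Z(-E) = 0$ for $i > 0$. The resulting long exact sequence then collapses to an isomorphism $R^1g_*\mathcal{O}_Z \simeq R^1g_*\mathcal{O}_E$. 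Finally, each irreducible component of $E$ surjects onto $C$ by \autoref{dltlc-minimal-modification}.\autoref{itm:dltlc-minimal-modification:surjects_onto_C}, so $R^1g_*\mathcal{O}_E$ is supported on $C$, and local cohomology at the closed point $x \in C$ is insensitive to the ambient scheme, yielding $H^0_x(X, R^1g_*\mathcal{O}_Z) \simeq H^0_x(C, R^1g_*\mathcal{O}_E)$.

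The only delicate step is the duality computation in (a); once the identification with $H^1(Z, \omega_Z)$ is in place, the input from \autoref{prop:GR-vanishing-lc} and the affineness of $X$ make everything else automatic. Part (b) is a routine consequence of the GR vanishing on $Z$ already at hand.
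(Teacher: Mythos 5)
Your proposal is correct and follows essentially the same route as the paper: part (a) is the paper's one-line appeal to duality for Cohen--Macaulay sheaves (identifying $H^2_{g^{-1}(x)}(Z,\mcO_Z)$ with the Matlis dual of $R^1g_*\omega_Z$ at $x$, killed by \autoref{prop:GR-vanishing-lc}) unwound into explicit local plus Grothendieck duality, and part (b) is identical, using $-E \sim_{\mathbb{Q},X} K_Z + g_*^{-1}\Delta$ and the ideal sequence of $E$. The only cosmetic caveat is that local duality produces a completion on the $\mathrm{Ext}$ side, which is harmless since you only use it to detect vanishing.
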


\begin{proof}
    To prove \autoref{itm:l-vanishing-h2-F-naive:H_2}, we note that $H^2_{g^{-1}(x)}(Z, \MO_Z)=R^1 g_* \MO_Z(K_Z)_x$ by local duality for Cohen--Macaulay sheaves and $R^1 g_* \MO_Z(K_Z)_x$ vanishes by \autoref{prop:GR-vanishing-lc}.
	
	To prove \autoref{itm:l-vanishing-h2-F-naive:H_1}, as $-E \sim_{\mathbb{Q},X} K_Z+g_*^{-1}\Delta$ we can apply \autoref{prop:GR-vanishing-lc} to deduce $R^ig_*\mathcal{O}_Z(-E)=0$ for $i>0$.
	Then the long exact sequence of cohomology associated to
	$0 \to \mathcal{O}_Z(-E) \to \mathcal{O}_Z \to \mathcal{O}_E \to 0$ implies that $R^1g_*\MO_Z\cong R^1g_*\MO_E$.
	If $i \colon C \to X$ denotes the closed immersion, the the equality $\Gamma_{C, x}=\Gamma_{X,x} \circ i_*$ holds, which implies $H^0_x(X, R^1 g_*\MO_E)\simeq H^0_x(C, R^1g_* \MO_E)$. 
\end{proof}

\begin{proof}[Proof of \autoref{t-hyp-1-good-model}]
This is a consequence of \autoref{dltlc-minimal-modification} and  \autoref{l-vanishing-h2-F-naive}. 
\end{proof}	

\section{$(S_2)$-condition for locally stable families of surfaces} \label{torsion}

In this section, we prove the $(S_2)$-conjecture for locally stable families of surfaces in characteristic $p \neq 2, 3$ and $5$. A proof of this theorem also appears in \cite[Corollary 23]{Arv23}.
In \autoref{S3-cond-lc}, we use this result to show the properness of $\overline{\mathcal{M}}_{2,v}$ over $\mathbb{Z}[1/30]$, contingent upon the existence of semi-stable reduction for family of stable surfaces in positive and mixed characteristic.

\subsection{Wild fibres}

\begin{setting}
    In this section, $(R, \mathfrak{m})$ is a DVR of perfect residue field $k=R/\mathfrak{m}$.
    We denote by $C$ the spectrum of $R$ and $x$ is its closed point.
    Given a morphism $f \colon S \to C$, we denote by $S_x$ the fibre over $x$.
\end{setting}

The terminology of wild fibres was introduced by Bombieri and Mumford in \cite{BM77} to study elliptic surface fibrations.
We present a more general definition for fibrations of surfaces over curves and we collect some foundational results proven by Raynaud in \cite{Ray70}.

\begin{definition}
	Let $S$ be a reduced connected surface and let $f \colon S \to C$ be a proper flat morphism such that $f_*\mathcal{O}_S=\mathcal{O}_C$.
	Consider the decomposition $$R^1f_*\MO_S=\mathcal{M} \oplus \mathcal{T},$$
	where $\mathcal{M}$ is a locally free sheaf of rank $\dim_{k(C)} H^1(S_{k(C)}, \mathcal{O}_{S_{k(C)}})$ and $\mathcal{T}$ is torsion sheaf supported at $x$.
	If $\mathcal{T}_x \neq 0$, we say 
 that the schematic fibre $f^{-1}(x)$ is a \emph{wild fibre} of $f$. 
\end{definition}

Given a proper flat morphism $f \colon X \to Y$, we say it is \emph{cohomologically flat in degree 0} if for any morphism $ g \colon Y' \to Y$ inducing the base change $f' \colon X':=X \times_Y Y' \to Y'$, then the canonical homomorphism of $\mathcal{O}_{Y'}$-modules
$ g^*f_*\mathcal{O}_X \to f'_* \mathcal{O}_{X'} $ is an isomorphism (see \cite[8.3.10]{FGA}).
    
\begin{lemma}\label{l-wild-cohomologically}
    Let $S$ be a reduced connected surface and let $f \colon S \to C$ be a proper flat morphism such that $f_*\mathcal{O}_S = \mathcal{O}_C$.
    If $x$ is a closed point of $C$, then 
    \[ \mathcal{T}_x \neq 0 \Leftrightarrow \dim_{k(x)} H^0(f^{-1}(x), \mathcal{O}_{f^{-1}(x)}) \geq 2. \]
    In particular, a wild fibre is not  reduced. 
    Moreover, $\mathcal{T} \neq 0$ if and only if $f$ is not cohomologically flat in degree 0.
\end{lemma}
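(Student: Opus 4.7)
The plan is to apply the theorem on cohomology and base change to translate both claims into linear algebra over the DVR $R$. Since $f$ is proper and flat with fibres of dimension at most one, Grothendieck's theorem on cohomology and base change provides a two-term complex $K^\bullet = [K^0 \xrightarrow{d} K^1]$ of finite-rank free $R$-modules such that for every $R$-algebra $A$ and every $i \in \{0,1\}$,
\begin{equation*}
H^i(S_A, \mathcal{O}_{S_A}) \cong H^i(K^\bullet \otimes_R A).
\end{equation*}
The two-term reduction is obtained by truncating the original Grothendieck complex and using that kernels of maps between free modules over a PID are free. Specialising to $A = R$, the hypothesis $f_*\mathcal{O}_S = \mathcal{O}_C$ gives $\ker(d) \cong R$, while $\coker(d) = R^1 f_*\mathcal{O}_S = \mathcal{M} \oplus \mathcal{T}$.

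Since $R$ is a PID, we put $d$ in Smith normal form: with respect to suitable bases of $K^0$ and $K^1$, the matrix of $d$ is diagonal with entries $t^{e_1}, \ldots, t^{e_s}$, where $t \in R$ is a uniformiser, $s = \min(\rk K^0, \rk K^1)$, and $e_i \in \mathbb{Z}_{\geq 0} \cup \{\infty\}$ (with the convention $t^\infty := 0$). The condition that $\ker(d)$ has rank one forces exactly one ``null direction'' in $K^0$, so that either $\rk K^0 = \rk K^1 + 1$ with all $e_i$ finite, or $\rk K^0 \leq \rk K^1$ with precisely one $e_i = \infty$. In either case, one reads off
\begin{equation*}
\mathcal{T} \cong \bigoplus_{1 \leq e_i < \infty} R/(t^{e_i}), \quad \dim_{k(x)} H^0(S_x, \mathcal{O}_{S_x}) = \dim_{k(x)} \ker(d \otimes_R k(x)) = 1 + \#\{i : 1 \leq e_i < \infty\},
\end{equation*}
and comparing these two quantities directly yields the equivalence $\mathcal{T}_x \neq 0 \Leftrightarrow \dim_{k(x)} H^0(S_x, \mathcal{O}_{S_x}) \geq 2$.

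For the ``wild fibre is not reduced'' assertion: if $S_x$ were reduced, then being connected by Stein factorisation applied to $f_*\mathcal{O}_S = \mathcal{O}_C$ together with perfectness of $k(x)$ gives $H^0(S_x, \mathcal{O}_{S_x}) = k(x)$ by a standard geometric-reducedness argument, so $S_x$ cannot be wild. The final assertion on cohomological flatness is immediate from the same Smith form: by definition $f$ is cohomologically flat in degree $0$ iff the natural inclusion $A = f_*\mathcal{O}_S \otimes_R A \hookrightarrow H^0(K^\bullet \otimes_R A) = \ker(d \otimes_R A)$ is an isomorphism for every $R$-algebra $A$, and the Smith form shows this happens precisely when every $e_i$ lies in $\{0, \infty\}$, equivalently $\mathcal{T} = 0$. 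The main technical subtlety is the correct setup of the two-term Grothendieck complex with its universal base-change property; once this is in place, everything else reduces to elementary Smith normal form bookkeeping over $R$.
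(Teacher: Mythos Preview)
Your proof is correct but takes a different route from the paper. The paper argues via the standard cohomology-and-base-change criteria (as in \cite[Corollary 8.3.11]{FGA}): since the fibres are at most one-dimensional, $\alpha^2(x)$ is trivially surjective, hence $\alpha^1(x)$ is surjective, and then $\alpha^0(x)$ is an isomorphism if and only if $R^1f_*\mathcal{O}_S$ is locally free near $x$, i.e.\ $\mathcal{T}_x=0$; since the source of $\alpha^0(x)$ is $k(x)$, this is equivalent to $\dim_{k(x)} H^0(f^{-1}(x),\mathcal{O}_{f^{-1}(x)})=1$. The final assertion on cohomological flatness is simply cited from \cite{FGA}. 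Your approach instead realises the pushforwards via an explicit two-term Grothendieck complex over the DVR and reads off everything from the Smith normal form of the differential. This is more hands-on and arguably more transparent (one literally sees the torsion summands of $R^1f_*\mathcal{O}_S$ matching the extra kernel directions of $d\otimes k(x)$), at the cost of having to set up the universal base-change complex carefully; the paper's argument is shorter because it outsources the work to the black-box base-change theorem. For the ``wild fibre is not reduced'' claim, both arguments are the same: geometric connectedness of the fibres together with perfectness of $k(x)$ forces $H^0=k(x)$ whenever the fibre is reduced.
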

\begin{proof}
    For each $i \geq 0$, consider the natural homomorphism of $k(x)$-vector spaces obtained from the base change $\Spec\big(k(x)\big) \to C$: 
    $$\alpha^i(x) \colon R^if_*\mathcal{O}_S \otimes k(x) \to H^i(f^{-1}(x), \mathcal{O}_{f^{-1}(x)}).$$
    As the fibres of $f$ have dimension 1, clearly $\alpha^2(x)$ is surjective and $R^2f_*\mathcal{O}_S=0$.
    Thus we deduce that $\alpha^1(x)$ is surjective by cohomology and base change for proper morphism \cite[Corollary 8.3.11.b]{FGA}.
    Applying once more \cite[Corollary 8.3.11.b]{FGA}, we deduce that $\alpha^0(x)$ is an isomorphism if and only if $\mathcal{T}=0$.
    To conclude, by hypothesis $f_*\mathcal{O}_S \otimes_{\mathcal{O}_C}  k(x) \simeq k(x)$ and therefore $\alpha^0(x)$ is an isomorphism if and only if $\dim_{k(x)} H^0(f^{-1}(x), \mathcal{O}_{f^{-1}(x)}) = 1$.
    
    Note that if $f^{-1}(x)$ is wild, then $\dim_{k(x)} H^0(f^{-1}(x), \mathcal{O}_{f^{-1}(x)}) \geq 2$. As $f^{-1}(x)$ is geometrically connected, we conclude $f^{-1}(x)$ is not reduced.
    The final assertion is shown in \cite[Corollary 8.3.11.a]{FGA}.
\end{proof}

A more precise characterisation of wild fibres for $(S_2)$-surfaces was proven by Raynaud \cite{Ray70} while investiganting representability criteria for Picard schemes of proper schemes over a DVR.

\begin{proposition}\label{p-raynaud}
    Let $f \colon S \to C$ be a proper flat morphism such that $f_*\mathcal{O}_S = \mathcal{O}_C$.
    Suppose that
    \begin{enumerate}
        \item $S$ is an $(S_2)$-surface such that its non-normal locus dominates $C$;
        \item the greatest common denominator of the multiplicities of the geometric special fibre $S_{\overline{k}}:=S_k \times_k \overline{k}$ is equal to 1.
    \end{enumerate}
    Then $f$ is cohomologically flat in degree 0 and $\mathcal{T}=0$.
\end{proposition}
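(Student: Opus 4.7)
The plan is to deduce this from Raynaud's cohomological flatness criterion for proper flat relative curves over a DVR \cite{Ray70}, applied after normalization. By \autoref{l-wild-cohomologically}, the two conclusions are equivalent and both reduce to showing $\dim_{k(x)} H^0(S_x, \mathcal{O}_{S_x}) = 1$. Since cohomological flatness and the multiplicities of the geometric special fibre are compatible with faithfully flat base change on $C$, we may assume $R$ is a complete DVR with algebraically closed residue field $k$. Writing the special fibre as $S_x = \sum_i m_i T_i$ with $T_i$ proper integral $k$-curves and $\gcd_i m_i = 1$, we then have $H^0(T_i, \mathcal{O}_{T_i}) = k$.

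Next I would pass to the normalization. Let $\nu \colon \overline{S} \to S$ be the normalization, and set $\overline{f} := f \circ \nu \colon \overline{S} \to C$. Since $\overline{S}$ is normal and equidimensional of dimension 2, every associated point of $\overline{S}$ maps to the generic point of $C$, so $\overline{f}$ is proper and flat. Raynaud's theorem [Ray70, Thm.~7.2.1], applicable in the normal setting, implies that $\overline{f}$ is cohomologically flat in degree 0 as soon as the gcd of multiplicities of the components of $\overline{S}_x$ equals 1.

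The central step is therefore to transport the gcd condition from $S_x$ to $\overline{S}_x$. This is where hypothesis (a) intervenes: the conductor subscheme $\overline{D} \subset \overline{S}$ is a horizontal curve over $C$ (because the non-normal locus of $S$ dominates $C$), and the finite map $\nu|_{\overline{D}} \colon \overline{D} \to D$ together with $\gcd_i m_i = 1$ controls the multiplicities of $\overline{S}_x$ through the intersection numbers of $\overline{D}$ with the components of $\overline{S}_x$. Granted that $\gcd$ of multiplicities of $\overline{S}_x$ is $1$, Raynaud yields cohomological flatness for $\overline{f}$, which we then descend to $f$ using the exact sequence
\[
0 \to \mathcal{O}_S \to \nu_* \mathcal{O}_{\overline{S}} \to \mathcal{Q} \to 0
\]
with $\mathcal{Q}$ supported on the horizontal curve $D$: applying $f_*$, the sheaf $f_* \mathcal{Q}$ is locally free on $C$ (since $f|_D$ is finite and $D$ is flat over $C$), and a diagram chase on torsion in the resulting long exact sequence forces $\mathcal{T} = 0$.

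The main obstacle is the bookkeeping in the middle step: precisely tracking how the multiplicities $\overline{m}_j$ of components of $\overline{S}_x$ relate to the $m_i$, in a way that lets one deduce $\gcd_j \overline{m}_j = 1$ from $\gcd_i m_i = 1$ together with the existence of a horizontal component of the non-normal locus. Without the hypothesis on the non-normal locus, the link between the two gcds breaks down, so this is the one place where assumption (a) is truly essential; everything else is formal manipulation of Raynaud's classical result and the normalization exact sequence.
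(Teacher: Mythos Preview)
Your approach takes an unnecessary detour. The paper applies Raynaud's criterion directly to $S$, not to its normalisation: the relevant result is \cite[Th\'eor\`eme 8.2.1]{Ray70}, which works for any $S$ satisfying Raynaud's condition (N)$^*$, namely that $S_x$ is $(S_1)$ and that $\mathcal{O}_{S,\eta}$ is regular at every generic point $\eta$ of $S_x$. Hypothesis (a) gives exactly this: $S_x$ is $(S_1)$ because it is a Cartier divisor on an $(S_2)$ surface, and $S$ is normal (hence regular, being one-dimensional) at the generic points of $S_x$ because the non-normal locus is horizontal. So the whole proposition is a one-line verification of (N)$^*$ followed by a citation; no normalisation, no descent.

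Regarding your argument itself: the step you flag as the ``main obstacle'' --- transporting the gcd condition to $\overline{S}_x$ --- is in fact trivial, again because $\nu$ is an isomorphism at every generic point of $S_x$, so the components and multiplicities of $\overline{S}_x$ and $S_x$ coincide. The genuinely delicate step in your outline is the descent. Your claim that $f_*\mathcal{Q}$ is locally free is correct (one checks $\mathcal{Q}$ is $R$-torsion-free using that $S$ is $(S_2)$ and the non-normal locus is horizontal), but the ``diagram chase on torsion'' is not completed: torsion in $R^1 f_* \mathcal{O}_S$ need not vanish just because it maps to the torsion-free $R^1 \overline{f}_* \mathcal{O}_{\overline{S}}$ and the preceding term $f_* \mathcal{Q}$ is locally free --- you would still need to control the cokernel of $\overline{f}_*\mathcal{O}_{\overline{S}} \to f_*\mathcal{Q}$. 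You also implicitly assume $\overline{f}_*\mathcal{O}_{\overline{S}} = \mathcal{O}_C$, which fails whenever $\overline{S}$ is disconnected, and then the per-component gcd condition needed for \cite[Th\'eor\`eme 7.2.1]{Ray70} does not follow from the global one. None of this is unfixable, but it is substantially more work than the direct route.
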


\begin{proof}
    To verify the statement we can pass to a strict henselianisation $A^{\text{sh}}$ of $A$.
    The hypothesis guarantee that $S$ satisfies assumption (N)$^{*}$ of \cite[Definition 6.1.4]{Ray70}. Indeed, $S_x$ is $S_1$ as it is a Cartier divisor on the $(S_2)$-surface $S$. Moreover, at every generic point $\eta$ of an irreducible component of $S_x$ we have that $\mathcal{O}_{S,\eta}$ is regular.   
    Then the statement is proven in the implication $(i) \Rightarrow (iv)$ of  \cite[Theorem 8.2.1]{Ray70}.
\end{proof}

We recall that no wild fibres appear when the generic fibre of $f \colon S \to C$ is a tree of conic curves.

\begin{proposition}\label{p-genus0}
Let $f \colon S \to C$ be a proper morphism onto a regular curve such that $f_*\mathcal{O}_S =\mathcal{O}_C$.
Suppose that $S$ is an $(S_2)$-surface such that
each of its irreducible components $S_i$ dominates $C$ and the non-normal locus of each $S_i$ dominates $C$.
If $H^1(S_{k(C)}, \mathcal{O}_{S_k(C)})=0$, then $\mathcal{T}=0$.   
\end{proposition}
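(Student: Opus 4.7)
The plan is to deduce the result from Raynaud's criterion \autoref{p-raynaud}. Since $S$ is $(S_2)$ and the non-normal locus of each component $S_i$ dominates $C$ (so a fortiori the non-normal locus of $S$ dominates $C$), the only non-trivial hypothesis in \autoref{p-raynaud} to verify is that the greatest common denominator $d$ of the multiplicities of the components of the geometric special fibre $S_{\overline{k}}$ equals $1$. After passing to the strict henselisation of $R$ and then to the algebraic closure of $k$, we may assume $k=\overline{k}$.

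Next, I would use that $f$ is flat: indeed $S$ is $(S_2)$, $C$ is regular of dimension $1$, and each irreducible component $S_i$ dominates $C$. Flatness of $f$ implies that $\chi(\mathcal{O}_{S_c})$ is locally constant on $C$, so
\[
\chi(\mathcal{O}_{S_x}) \;=\; \chi(\mathcal{O}_{S_{k(C)}}) \;=\; h^0(S_{k(C)},\mathcal{O}) - h^1(S_{k(C)},\mathcal{O}) \;=\; 1 - 0 \;=\; 1,
\]
using $f_*\mathcal{O}_S=\mathcal{O}_C$ (which gives $h^0=1$) together with the standing hypothesis $H^1(S_{k(C)},\mathcal{O})=0$.

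Now suppose for contradiction that $d\geq 2$, and write $S_x = d F'$ as Weil divisors, where $F'=\sum(n_i/d)D_i$. The idea is to use Riemann--Roch to derive a non-integral value of an Euler characteristic. More precisely, I would pass to a good birational model $\pi\colon\widetilde S \to S$ (using \autoref{thm: exist-semi-log-res} followed by resolving the remaining isolated singular points) on which the pulled-back divisor $\pi^{*}F'$ is Cartier and on which the intersection theory is valid. Because the non-normal locus of each $S_i$ is horizontal by hypothesis, the codimension $1$ vertical points of $S$ are already regular, so the modification $\pi$ affects only horizontal curves and the multiplicities $n_i$ of the vertical prime divisors are preserved. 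Since $(\pi^*S_x)^2=0$ and $K_{\widetilde S}\cdot \pi^*S_x = 2p_a(S_{k(C)})-2 = -2$, Riemann--Roch on $\widetilde S$ gives
\[
\chi(\mathcal{O}_{\pi^* F'}) \;=\; -\tfrac{1}{2}(\pi^*F')\cdot(\pi^*F'+K_{\widetilde S}) \;=\; -\tfrac{1}{2}\cdot\tfrac{-2}{d} \;=\; \tfrac{1}{d},
\]
which is not an integer, contradicting $d\geq 2$. Hence $d=1$ and \autoref{p-raynaud} applies.

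The main technical obstacle is ensuring the model $\widetilde S$ really has the expected properties: one must verify that the exceptional components introduced by resolving the horizontal non-normality contribute only to horizontal data and do not create new vertical components whose multiplicities would spoil the gcd computation, and that Riemann--Roch applies in the required form on $\widetilde S$. The hypothesis that the non-normal locus of \emph{each} component $S_i$ is horizontal is crucial here, since it confines all codimension $1$ singularities to the horizontal direction and keeps the generic points of the vertical fibres in the regular locus.
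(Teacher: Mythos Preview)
Your approach differs substantially from the paper's, which is a one-line reduction to a \emph{different} result of Raynaud: the paper observes that $f$ is flat (miracle flatness, since $S$ is Cohen--Macaulay of dimension $2$, $C$ is regular of dimension $1$, and each component of $S$ dominates $C$) and then invokes \cite[Proposition~9.3.1]{Ray70} directly. That proposition handles precisely the case $H^1(S_{k(C)},\mathcal{O}_{S_{k(C)}})=0$ without any gcd hypothesis, so there is nothing further to check.

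Your route via \autoref{p-raynaud} and a Riemann--Roch contradiction has two genuine gaps. First, \autoref{p-raynaud} requires the non-normal locus of $S$ itself to be horizontal, whereas the hypothesis of \autoref{p-genus0} only controls the non-normal locus of each irreducible component $S_i$. When $S$ is reducible, the components $S_i\cap S_j$ lie in the non-normal locus of $S$ and nothing prevents them from being vertical; your claim ``a fortiori the non-normal locus of $S$ dominates $C$'' is therefore unjustified.

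Second, and more seriously, the Riemann--Roch step does not survive the passage to a regular model. The surface $S$ may have isolated \emph{normal} singular points lying over the closed point $x\in C$ (nothing in the hypotheses excludes, say, a rational double point on some $S_i$ above $x$). Any resolution $\pi\colon\widetilde{S}\to S$ must blow these up, producing \emph{vertical} exceptional curves. Hence your assertion that ``$\pi$ affects only horizontal curves'' is false in general: the special fibre $\widetilde{S}_x$ acquires new components with multiplicities you do not control, so there is no reason for $\pi^*F'=\widetilde{S}_x/d$ to be an integral (hence Cartier) divisor, and the formula $\chi(\mathcal{O}_{\pi^*F'})=1/d$ loses its meaning. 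Even if one could run the argument on $\widetilde{S}$, it would only yield $\gcd=1$ for $\widetilde{S}_x$, not for $S_x$, which is what \autoref{p-raynaud} needs. You flag this obstacle in your final paragraph, but it is not a technicality that can be patched; it is the reason the paper appeals to the genus-zero-specific result \cite[Proposition~9.3.1]{Ray70} instead.
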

 
\begin{proof}
As every irreducible component of $S$ dominates $C$, the morphism $f$ is equi-dimensional. 
As $S$ has dimension 2, it is Cohen--Macaulay and, as $C$ is regular, $f$ is flat by miracle flatness \cite[\href{https://stacks.math.columbia.edu/tag/00R4}{Tag 00R4}]{stacks-project}.
We can thus apply \cite[Proposition 9.3.1]{Ray70}.
\end{proof}

\begin{remark}
    The conditions imposed on $S$ in \autoref{p-raynaud} and \autoref{p-genus0} are optimal as shown in the examples of \cite[Section 9]{Ray70}.
\end{remark}

If the generic fibre has arithmetic genus at least 1, then wild fibres can appear when the residue field has characteristic $p>0$. 

\begin{example}\label{e-wild}
    Suppose $k$ is algebraically closed of characteristic $p>0$.
    We recall the construction of wild fibres explained by Raynaud \cite{Ray70}. 
    Let $E_{k(C)}$ be an elliptic curve (ordinary if $\characteristic(k(C))>0$) such that the special fibre of its Néron model is either a supersingular elliptic curve $E_{k}$ or the multiplicative group $\mathbb{G}_{m,k}$.
    Let $S_{k(C)}$ be a regular torsor over $E_{k(C)}$ of order $p^n$ for $n>0$ and let $f \colon S \to C$ be its minimal model. Then $\mathcal{T} \neq 0$ by \cite[Théorème 9.4.1.b]{Ray70}.
    
    Another set of examples, based on Artin--Schreier coverings, is discussed in \cite[Section 8]{KU85}.
\end{example}


	

\subsection{Cohen--Macaulay criteria for log canonical 3-fold singularities} \label{S3-cond-lc}

Throughout this section, we suppose $(X, \Delta)$ is 3-dimensional log canonical singularity as in \autoref{setting:runing_MMP}. 

Note that, as proved in \cite[Theorem 1]{AP23}, the 1-dimensional scheme $C$ is normal and thus regular.
Using \autoref{t-hyp-1-good-model} the failure of Cohen--Macaulayness is explained by the presence of a wild fibre on a proper birational modification.
We use this to show that if the surface singularity at the generic point of $C$ is rational, then $X$ is Cohen--Macaulay.

\begin{proposition}\label{p-rational-cod1}
    Let $C \subset (X,\Delta)$ as in \autoref{s-lc-curve}.
    If $\mathcal{O}_{X, \eta}$ is a rational surface singularity, then $X$ is $(S_3)$ at $x$.
    In particular, if $C \subset \Supp(\Delta)$, then $X$ is $(S_3)$ at $x$.
\end{proposition}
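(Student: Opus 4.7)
The plan is to apply \autoref{t-hyp-1-good-model} to reduce $(S_3)$-ness at $x$ to a torsion question on the curve $C$. That theorem provides a projective birational morphism $g \colon Z \to X$ with reduced exceptional divisor $E$ satisfying
\begin{equation*}
H^2_x(X, \mathcal{O}_X) \simeq H^0_x(C, R^1 g_* \mathcal{O}_E),
\end{equation*}
and, as $X$ is three-dimensional, $X$ is $(S_3)$ at $x$ exactly when $H^2_x(X, \mathcal{O}_X) = 0$. Hence it suffices to show that the torsion of $R^1 g_* \mathcal{O}_E$ vanishes at $x$. The restriction $f := g|_E \colon E \to C$ is a proper surjection whose fibres are at most one-dimensional, all of whose components dominate $C$ by \autoref{dltlc-minimal-modification}\autoref{itm:dltlc-minimal-modification:surjects_onto_C}, and whose source $E$ is $(S_2)$ (hence Cohen--Macaulay as a 2-dimensional $(S_2)$-scheme) by \autoref{t-hyp-1-good-model}(b).

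The key step is to identify the generic fibre $E_{k(C)}$ of $f$ with the reduced exceptional divisor of the localized modification $Z_\eta \to \Spec \mathcal{O}_{X,\eta}$ and, from rationality of $\mathcal{O}_{X,\eta}$, to deduce the vanishing $H^1(E_{k(C)}, \mathcal{O}_{E_{k(C)}}) = 0$. Here I would use Artin's characterization of rational surface singularities together with Cohen--Macaulayness of $Z_\eta$ (from \autoref{c-rat-dlt} applied to the klt localization) to transport $R^1 = 0$ for a resolution to the modification. Combining this with the $(S_2)$-property of $E$ and the dominance of its components over $C$, I would then apply \autoref{p-genus0} to $f$ (passing, if needed, to the Stein factorization of $f$ to guarantee the contraction condition $f_*\mathcal{O}_E = \mathcal{O}_C$) to conclude that the torsion subsheaf $\mathcal{T}$ of $R^1 f_*\mathcal{O}_E$ vanishes, which finishes the first claim.

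For the ``in particular'' assertion, the plan is to show that $C \subset \Supp \Delta$ forces $\mathcal{O}_{X,\eta}$ to be klt, and then to invoke that klt surface singularities in characteristic $\neq 2,3,5$ are rational. Indeed, $(X_\eta, \Delta_\eta)$ is a log canonical surface pair whose closed point (the unique minimal lc centre by minimality of $C$) lies in $\Supp \Delta_\eta$. Every exceptional divisor over $X_\eta$ has centre at this closed point, so its pullback multiplicity along $\Delta_\eta$ is strictly positive; monotonicity of discrepancies then implies that all discrepancies of $(X_\eta, 0)$ are strictly greater than $-1$, i.e., $X_\eta$ is klt.

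The expected main obstacle is the careful verification of the hypotheses of \autoref{p-genus0}, in particular the non-normal-locus condition and the contraction condition for $f$, together with the transport of rationality of $\mathcal{O}_{X,\eta}$ to the arithmetic-genus-zero property of the generic fibre $E_{k(C)}$, which relies on the klt (Cohen--Macaulay and rational) structure of $Z$ established earlier in the paper.
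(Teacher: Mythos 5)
Your proposal is correct and follows essentially the same route as the paper: apply \autoref{t-hyp-1-good-model} to identify $H^2_x(X,\mathcal{O}_X)$ with $H^0_x(C, R^1(g|_E)_*\mathcal{O}_E)$, deduce $H^1(E_{k(C)},\mathcal{O}_{E_{k(C)}})=0$ from rationality of $\mathcal{O}_{X,\eta}$, and invoke \autoref{p-genus0} to kill the torsion. The only cosmetic difference is in the ``in particular'' clause, where you re-derive via a discrepancy computation that $\mathcal{O}_{X,\eta}$ is (numerically) klt and hence rational, while the paper cites \cite[Proposition 2.28]{kk-singbook} directly; both arguments are valid and equivalent.
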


\begin{proof}
    Let $g \colon (Z, g_*^{-1}\Delta+E) \to (X, \Delta)$ be the crepant proper birational morphism constructed in \autoref{t-hyp-1-good-model}.
    It is sufficient to show that $H^0_x(E, R^1(g|_E)_* \mathcal{O}_E)$ vanishes to conclude.
    Note that $E$ is $(S_2)$.
    Moreover, the irreducible components of $E$ and their non-normal loci dominate $C$ as it is the minimal log canonical   centre passing through $x$.
    As $\mathcal{O}_{X, \eta}$ is a rational singularity and $C$ is regular, we apply \autoref{p-genus0} to conclude $H^0_x(C, R^1g_*\mathcal{O}_E)=0$.
    For the last assertion, we just observe that $\mathcal{O}_{X,\eta}$ is a rational singularity by \cite[Proposition 2.28]{kk-singbook}.
\end{proof}
   
\begin{corollary}\label{c-wild}
    Let $C \subset (X,\Delta)$ as in \autoref{s-lc-curve}.
    Let $g \colon (Z, g_*^{-1}\Delta+E) \to (X,\Delta)$ be the modification constructed in \autoref{t-hyp-1-good-model}.
    If $H^2_x(X, \mathcal{O}_X) \neq 0$, then 
    \begin{enumerate}
        \item $\mathcal{O}_{X, \eta}$ is not rational and $C$ is not contained in $\Supp(\Delta)$, 
        \item the fibre $E_x$ is wild.
    \end{enumerate}
\end{corollary}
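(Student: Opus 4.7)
The plan is to reduce both statements to the identification $H^2_x(X, \mathcal{O}_X) \simeq H^0_x(C, R^1 g_* \mathcal{O}_E)$ from \autoref{itm: id_h2_h0} of \autoref{t-hyp-1-good-model}, combined with the fact that $C$ is a regular one-dimensional local scheme (by \cite[Theorem 1]{AP23}), so that coherent sheaves on it split canonically as a locally free part plus a torsion part.

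For part (1), I argue by contraposition using \autoref{p-rational-cod1}. If $\mathcal{O}_{X,\eta}$ is a rational surface singularity, or if $C \subseteq \Supp(\Delta)$ (which by \cite[Proposition 2.28]{kk-singbook} already forces $\mathcal{O}_{X,\eta}$ to be rational), then \autoref{p-rational-cod1} gives that $\mathcal{O}_{X,x}$ is $(S_3)$. Since $\dim \mathcal{O}_{X,x} = 3$, this yields $\depth_x \mathcal{O}_X = 3$ and hence $H^2_x(X, \mathcal{O}_X) = 0$, contradicting the hypothesis. This step is just the contrapositive of \autoref{p-rational-cod1}.

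For part (2), I decompose $R^1 g_* \mathcal{O}_E$ on the regular curve $C$ as $\mathcal{M} \oplus \mathcal{T}$, with $\mathcal{M}$ locally free and $\mathcal{T}$ torsion (necessarily supported at the closed point $x$). Since $\mathcal{O}_{C,x}$ is a DVR, a free $\mathcal{O}_{C,x}$-module has no nonzero section with support at the maximal ideal, so $H^0_x(C, \mathcal{M}) = 0$. The nonvanishing of $H^2_x(X, \mathcal{O}_X)$ and the isomorphism in \autoref{itm: id_h2_h0} of \autoref{t-hyp-1-good-model} then force $\mathcal{T}_x = H^0_x(C, \mathcal{T}) \neq 0$, which is exactly the defining condition for $E_x$ to be a wild fibre of $g|_E \colon E \to C$. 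The auxiliary setup for this definition is in place because $E$ is an $(S_2)$-surface by \autoref{t-hyp-1-good-model}, and each of its irreducible components dominates $C$ by \autoref{itm:dltlc-minimal-modification:surjects_onto_C} of \autoref{dltlc-minimal-modification}; in particular no component is contracted to a point, so $g|_E$ is equidimensional and the general wild fibre formalism applies.

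I do not anticipate any real obstacle: the whole argument is a straightforward repackaging of \autoref{t-hyp-1-good-model} and \autoref{p-rational-cod1}. The only point that warrants a second look is the identification $(g|_E)_* \mathcal{O}_E = \mathcal{O}_C$ needed by the wild fibre formalism, which follows from the Grauert--Riemenschneider-type vanishing used in the proof of \autoref{l-vanishing-h2-F-naive} together with a trivial Stein factorization, and is already implicit in how \autoref{p-genus0} is applied inside the proof of \autoref{p-rational-cod1}.
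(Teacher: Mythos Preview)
Your proposal is correct and follows essentially the same route as the paper: part (1) is the contrapositive of \autoref{p-rational-cod1}, and part (2) uses the identification $H^2_x(X,\mathcal{O}_X)\simeq H^0_x(C,R^1g_*\mathcal{O}_E)$ together with the free-plus-torsion splitting over the DVR $\mathcal{O}_{C,x}$ to force $\mathcal{T}_x\neq 0$. The paper's proof is terser (it simply asserts ``thus $E_x$ is a wild fibre'') and inserts one extra remark you omit, namely that part (1) forces $h^1(E_\eta,\mathcal{O}_{E_\eta})\neq 0$ and hence $\deg_\eta\omega_{E_\eta}=0$; this is contextual rather than logically necessary for the wildness conclusion, so your version loses nothing.
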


\begin{proof}
    (a) is proven in \autoref{p-rational-cod1}. 
    As $h^1(E_{\eta}, \mathcal{O}_{E_{\eta}}) \neq 0$ and $\mathcal{O}_{X, \eta}$ is log canonical, then $\deg_{\eta} \omega_{E_\eta}=0$.
    By \autoref{t-hyp-1-good-model}.\autoref{itm: id_h2_h0}  we have $H^0_x(C, R^1g_*\mathcal{O}_E) \simeq H^2_x(X, \mathcal{O}_X) \neq 0$ and thus $E_x$ is a wild fibre for $g \colon E \to C$.
\end{proof}
   
As a byproducts of the results of \autoref{s-depth-3fold} and the properties of wild fibres \autoref{p-raynaud}, we can prove the $(S_3)$-condition of log canonical 3-folds pairs in the case where a Cartier divisor is an addendum of the boundary divisor.
This answers \cite[Question  8]{Kol22} affirmatively if the characteristic of the residue field is different from $2, 3$ and $5$.

\begin{theorem}\label{t-s3-under-dlt-modification}
	Let $(X,X_0+\Delta)$ be a 3-dimensional log canonical pair and let $x$ be a closed point of $X$.
	Suppose $X_0$ is a non-zero effective Cartier divisor such that $x \in X_0$.
	Then $X$ is $(S_3)$ at $x$.
\end{theorem}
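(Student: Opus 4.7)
The plan is to show $H^2_x(X, \sO_X) = 0$. I begin by applying \autoref{l-not1d-minimal-lc} to the log canonical pair $(X, X_0 + \Delta)$ and let $C$ denote the minimal log canonical centre through $x$. Since $X_0$ is itself an lc centre containing $x$, the centre $C \subseteq X_0$ is non-empty with $\dim C \in \{0,1,2\}$. The case $\dim C = 2$ is immediate from \autoref{l-not1d-minimal-lc}(b); the case $\dim C = 1$ follows at once from Theorem \autoref{t-s3-intro} applied to $(X, X_0 + \Delta)$, whose hypotheses (perfect residue field of characteristic $\neq 2,3,5$, $X_0$ Cartier with $x \in X_0$, and 1-dimensional minimal lc centre) are all met.

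The remaining case is $\dim C = 0$, i.e., $\{x\}$ is an isolated log canonical centre. Here I plan to reduce to a pair with higher-dimensional minimal lc centre by perturbing the boundary. Let $D_1, \ldots, D_r$ be the irreducible components of $\lfloor \Delta \rfloor$ passing through $x$. Assume first that $r \geq 1$ and set $\Delta_\delta := \Delta - \delta \sum_{i=1}^r D_i$ for a rational $0 < \delta < 1/r$; the pair $(X, X_0 + \Delta_\delta)$ is then still log canonical since $\Delta_\delta \leq \Delta$. For any divisorial valuation $v$ with $\cent_X(v) = \{x\}$, one has $v(D_i) \geq 1$ for every $i$, and consequently
\[
a_v(X, X_0 + \Delta_\delta) \ = \ a_v(X, X_0 + \Delta) + \delta \sum_i v(D_i) \ \geq \ -1 + \delta r \ > \ -1,
\]
so $\{x\}$ is no longer a log canonical centre of the perturbed pair. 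Hence the minimal lc centre of $(X, X_0 + \Delta_\delta)$ through $x$ has dimension at least one; applying the previous two cases to this new pair, in which $X_0$ still has coefficient one, yields the conclusion.

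The delicate edge case is $r = 0$, in which the above perturbation is trivial. Here I instead perturb by replacing $X_0$ with $(1-\delta)X_0$, producing the lc pair $(X, (1-\delta) X_0 + \Delta)$, and use $v(X_0) \geq 1$ for any $v$ centred at $\{x\}$ to remove $\{x\}$ as an lc centre by the same identity. Since $X_0$ no longer appears in the boundary with coefficient one, Theorem \autoref{t-s3-intro} does not apply directly; I instead invoke the underlying machinery, namely Theorem \autoref{t-hyp-1-good-model} (which identifies $H^2_x(X, \sO_X)$ with $H^0_x(C_\delta, R^1 g_* \sO_E)$ for an appropriate birational modification $g \colon Z \to X$ with exceptional surface $E$ lying over the new 1-dimensional minimal lc centre $C_\delta$), together with Corollary \autoref{c-wild} and Proposition \autoref{p-raynaud}. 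The Cartier hypothesis on $X_0$ as a subscheme of $X$ --- independent of the boundary --- provides the essential input to force the multiplicities of the geometric fibre of $E \to C_\delta$ over $x$ to have greatest common divisor equal to one, ruling out a wild fibre. I expect the main obstacle to be this last step: translating Cartier-ness of $X_0$ on $X$ into the numerical condition on fibre multiplicities required by Raynaud's criterion, especially in the regime where $C_\delta \subseteq X_0$ or where $C_\delta$ meets $X_0$ with multiplicity greater than one.
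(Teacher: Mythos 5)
Your proposal has a fatal circularity in its main case: for $\dim C = 1$ you invoke \autoref{t-s3-intro}, but that statement is just the introduction's announcement of the very theorem you are proving (its only proof in the paper \emph{is} the proof of \autoref{t-s3-under-dlt-modification}), and essentially all of the content lives in this case. The argument you would need to supply is: take the modification $g \colon (Z, g_*^{-1}\Delta+E) \to (X,\Delta)$ of \autoref{t-hyp-1-good-model}; if $H^2_x(X,\sO_X)\neq 0$, then by \autoref{c-wild} the fibre $E_x$ is wild, so by \autoref{p-raynaud} every component of $E_x$ has multiplicity $\geq 2$; on the other hand $(Z, g_*^{-1}\Delta+E+g^*X_0)$ is the crepant pull-back of the lc pair $(X,X_0+\Delta)$, so by adjunction $(E,\Diff_E(g^*X_0+g_*^{-1}\Delta))$ is slc, and since $X_0$ is Cartier the term $(g^*X_0)|_E$ enters the Different with its full multiplicity, which is $\geq 2$ along every component of $E_x$ --- contradicting the slc coefficient bound. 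You correctly sense that Raynaud's criterion plus Cartier-ness of $X_0$ is the engine, but your proposed mechanism (``Cartier-ness forces the gcd of the fibre multiplicities to be $1$'') is not how it works --- Cartier-ness alone does not constrain fibre multiplicities; the constraint comes from adjunction and the lc coefficient bound --- and you explicitly leave this step as ``the main obstacle.''

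Your case decomposition also goes through the wrong pair. Taking $C$ to be the minimal lc centre of $(X,X_0+\Delta)$ creates a spurious $0$-dimensional case and, worse, forces $C\subseteq X_0$ (the minimal centre through $x$ sits inside every lc centre through $x$, in particular inside $X_0$), which breaks the wild-fibre argument: one needs $C\not\subseteq\Supp X_0$ for $(g^*X_0)|_E$ to make sense as a divisor on the surface $E\to C$. The paper instead runs the reduction on $(X,\Delta)$: since $X_0$ is Cartier with $x\in X_0$, any lc place of $(X,\Delta)$ centred at $\{x\}$ would have discrepancy $\leq -2$ for $(X,X_0+\Delta)$, so the minimal lc centre of $(X,\Delta)$ through $x$ is automatically empty or of dimension $1$ or $2$ and automatically avoids $\Supp(X_0)$; no perturbation is needed. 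Your perturbation is itself shaky: the identity $a_v(X,X_0+\Delta_\delta)=a_v(X,X_0+\Delta)+\delta\sum_i v(D_i)$ presupposes that $\sum_i D_i$ is $\mathbb{Q}$-Cartier (otherwise $(X,X_0+\Delta_\delta)$ is not a pair in the sense of the paper), and $v(D_i)\geq 1$ requires $D_i$ to be Cartier at $x$; neither is given.
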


\begin{proof}
We can localise at the closed point $x$.
As $X_0$ is effective, $x$ cannot be a minimal log canonical centre for $(X,\Delta)$.
By \autoref{l-not1d-minimal-lc} and \autoref{c-wild}, we can suppose that
\begin{enumerate}
    \item the minimal log canonical centre $C$ of $(X, X_0+ \Delta)$ passing through $x$ has dimension 1;
    \item if $\eta$ is the generic point of $C$, then $\mathcal{O}_{X, \eta}$ is a non-rational surface singularity and $\eta \notin \Supp(X_0+\Delta)$.
\end{enumerate}
If $g \colon (Z, g_*^{-1}\Delta+E) \to (X, \Delta)$ is the crepant proper birational morphism constructed in \autoref{t-hyp-1-good-model}, it is sufficient to show that $H^0_x(E, R^1(g|_E)_* \mathcal{O}_E)$ vanishes to conclude. 
For this we argue by contradiction. 

First we note that the pair $(Z, g_*^{-1}\Delta+E+g^*X_0)$ is log canonical and $E$ is $(S_2)$. By adjunction \cite[Lemma 4.8]{kk-singbook} the  pair $(E, \Diff_E(g^*X_0+g_*^{-1}\Delta))$ is slc and, as $X_0$ is Cartier, we have $\Diff_E(g^*X_0+g_*^{-1}\Delta)=(g^*{X_0})|_E+\Diff_E(g_*^{-1}\Delta)$ by \cite[Lemma 2.5]{kk-singbook}.
As $E_x$ is a wild fibre, by \autoref{p-raynaud} each of its irreducible components is non-reduced. 
As $X_0$ is an effective Cartier divisor not containing $C$, then $(g^*X_0)|_E$ must have coefficients strictly larger than 1, contradicting that $(E, \Diff_E(g^*X_0+g_*^{-1}\Delta))$ is slc. 
\end{proof}

\subsection{Properness of the moduli space of stable surfaces}\label{ss-properness}
		
We briefly recall the natural set-up for the study of stable and locally stable families and we refer to \cite[Chapter 2]{k-moduli} for a thorough discussion.
	
Let $C=\Spec(R)$, where $(R, \mathfrak{m})$ is a DVR with perfect residue field $k \coloneqq R/\mathfrak{m}$ of characteristic $p>0$, and fraction field $K \coloneqq \Frac(R)$. 
We say that a morphism  $f\colon X\to C$ is \emph{family of varieties} is $f$ is a flat morphism of finite type such that for every $c \in C$ the fiber $X_c$ is pure dimensional, geometrically reduced and geometrically connected.  
We denote the special (resp. generic) fibre of $f$ by $X_k$ (resp. $X_K$). 
A \emph{family of pairs} is  $f \colon (X,\Delta) \to C$ is a family of varieties $f\colon X \to C$ together with an effective Mumford $\Q$-divisor $\Delta$ on $X$ such that $\Supp(\Delta)$ does not contain any irreducible components of $X_k$ and none of the irreducible components of $X_k \cap \Supp(\Delta)$ is contained in $\textup{Sing}(X_k)$. 

	\begin{definition}\label{def-loc-stable}
	We say $f \colon (X, \Delta) \to C$ is a \emph{locally stable} (or \emph{slc}) \emph{family} if $f$ is a family of pairs and $(X, \Delta+X_k)$ has slc singularities. 
	We say $f$ is a \emph{stable family} if $f$ is a projective locally stable family such that $K_X+\Delta$ is ample over $C$.
	\end{definition}
 
	In \cite[Corollary 10.2]{7authors}, $\overline{\mathcal{M}}_{2,v}$ is shown to exist as a separated Artin stack of finite type over $\mathbb{Z}[1/30]$ with finite diagonal. 
    The main open question on $\overline{\mathcal{M}}_{2,v}$ is whether it is a proper stack over $\mathbb{Z}[1/30]$ (some cases are discussed in \cite[Theorem 10.6]{7authors}).
	
	To prove properness, one has to prove the valutative criterion for families of stable surfaces. 
	As explained in \cite[Section 6]{Pos21}, this can be reduced to two problems on locally stable families: the existence of a locally-stable reduction and the $(S_2)$-condition on stable limits. We recall their precise formulation. 
 
	\begin{enumerate}
	    \item [\textbf{(LSR)}] Let $X \to C$ be a flat projective morphism where $X$ is a regular 3-fold. Let $E$ be a reduced effective divisor on $X$ such that $(X, E + (X_k)_{\red})$ is snc for every closed point $c \in C$. 
	    Then there exists a finite morphism $C' \to C$ such that: if $Y$ is the normalization of $X \times_C C'$ and $E_Y$ is the pull-back divisor, then every closed fiber $Y_{k'}$ is reduced and every $(Y, E_Y + Y_{k'})$ is log canonical.
	    \item [\textbf{(S$_2$)}] Let $(X, \Delta) \to C$ be a stable family of surface pairs.
	    Then $X_k$ is $(S_2)$.
	\end{enumerate}
	
In equicharacteristic 0, existence of semi-stable reduction is proven in \cite{KKMS} (see also \cite[Theorem 7.17]{km-book}) and the $(S_2)$-property is proven in \cite{Ale08} (see also \cite[Definition–Theorem 2.3]{k-moduli}).
While semi-stable reduction of surfaces is still an open conjecture, the results of \cite{BK20} can be used to prove the $(S_2)$-condition for the closure of the locus of klt stable varieties (see the last lines of the proof of \cite[Theorem 10.6]{7authors}).
We now settle the general semi-log canonical case. 

\begin{theorem}\label{t-S2-property}
Suppose $p \neq 2,3$ and $5$.
If $f \colon (X, \Delta) \to C$ is a stable family of surfaces, then $X_k$ is $(S_2)$ and $(X_k, \Diff_{X_k}(\Delta))$ is slc.
\end{theorem}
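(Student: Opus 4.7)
The plan is to prove that $X$ satisfies $(S_3)$ at every closed point $x \in X_k$, and then deduce the $(S_2)$-ness of $X_k$ from a standard depth argument. Since $f$ is flat over the DVR $C$, the fibre $X_k$ is locally cut out by the uniformizer $t \in R$, which is a non-zero-divisor on $\mathcal{O}_{X,x}$; the short exact sequence $0 \to \mathcal{O}_X \xrightarrow{t} \mathcal{O}_X \to \mathcal{O}_{X_k} \to 0$ then yields $\depth_x \mathcal{O}_{X_k,x} = \depth_x \mathcal{O}_{X,x} - 1$, so $(S_3)$-ness of $X$ at $x$ implies $(S_2)$-ness of $X_k$ at $x$.

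For the $(S_3)$ statement, let $\pi \colon \bar X \to X$ be the normalisation with reduced conductor divisor $\bar D$. By the definition of slc, $(\bar X, \bar D + \bar \Delta + \pi^* X_k)$ is a $3$-dimensional log canonical pair in which $\pi^* X_k$ is a non-zero effective Cartier divisor (Cartier by flatness of $f$). \autoref{t-s3-under-dlt-modification} applied with $X_0 = \pi^* X_k$ at each $\bar x \in \pi^{-1}(x)$ then yields that $\bar X$ is $(S_3)$ at $\bar x$, and hence that $\pi_*\mathcal{O}_{\bar X}$ is $(S_3)$ at $x$.

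To descend this property to $\mathcal{O}_X$, I invoke \autoref{l-2cover-slc} (valid since $p \neq 2$) to first pass to a degree-two cover making the irreducible components of $X$ satisfy $(R_1)$. Then the conductor short exact sequence
$$0 \to \mathcal{O}_X \to \pi_*\mathcal{O}_{\bar X} \to Q \to 0, \qquad Q := \pi_*\mathcal{O}_{\bar D}/\mathcal{O}_D,$$
together with the vanishing $H^2_x(\pi_*\mathcal{O}_{\bar X}) = 0$ from the previous step, reduces the problem to showing $H^1_x(Q) = 0$, equivalently $(S_2)$-ness of $Q$ at $x$. This is a codimension-$1$ statement on $D$ which follows from adjunction applied to the lc pair on $\bar X$: the pair $(\bar D, \Diff_{\bar D}(\bar\Delta + \pi^* X_k))$ is slc (hence $(S_2)$), and the involution identifying $\bar D$ with itself over $D$ provides the needed structure to transfer $(S_2)$ to $Q$. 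Finally, the slc-ness of $(X_k, \Diff_{X_k}(\Delta))$ follows by standard adjunction applied to the lc pair on $\bar X$ restricted along the Cartier divisor $\pi^* X_k$, combined with the demi-normality of $X_k$, which is immediate from its $(S_2)$-ness together with the ambient slc structure in codimension $1$.

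The main technical obstacle lies in establishing $(S_2)$-ness of the cokernel $Q$: since $\dim Q = 2$, this requires $Q$ to attain the maximal possible depth, and verifying this depends on a careful analysis of the conductor's involution structure together with the surface-case vanishing results of \autoref{c-vanishing-slc-surf2}.
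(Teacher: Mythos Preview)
Your treatment of the normal case is essentially the paper's: apply \autoref{t-s3-under-dlt-modification} to $(X,\Delta+X_k)$ with $X_0=X_k$ to obtain $(S_3)$ at closed points, hence $(S_2)$ for the Cartier divisor $X_k$, and conclude slc by adjunction.

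The gap is in the non-normal case. You attempt to prove that $X$ itself is $(S_3)$ at $x$ by descending from $\pi_*\mathcal{O}_{\bar X}$ through the conductor sequence, reducing to $H^1_x(Q)=0$ with $Q=\pi_*\mathcal{O}_{\bar D}/\mathcal{O}_D$. But your justification for this vanishing is not an argument: calling it ``a codimension-$1$ statement on $D$'' is incorrect, since $Q$ is supported on the $2$-dimensional scheme $D$ and you need $\depth_x Q \geq 2$, i.e.\ $Q$ Cohen--Macaulay at $x$. Knowing that $(\bar D,\Diff_{\bar D}(\dots))$ is slc tells you $\bar D$ is $(S_2)$, but says nothing directly about the quotient $Q$; you would also need $D$ to be $(S_2)$ and the map $\mathcal{O}_D\to\pi_*\mathcal{O}_{\bar D}$ to be split or otherwise controlled, none of which you establish. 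The appeal to \autoref{c-vanishing-slc-surf2} is unmotivated here---that is a relative $R^1$ vanishing for a map to a curve, not a local-cohomology statement on the conductor. The double-cover reduction via \autoref{l-2cover-slc} does not help: after the cover the components are $(R_1)$ but the scheme is still non-normal, and the same conductor problem remains.

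The paper sidesteps this entirely. It does \emph{not} claim $X$ is $(S_3)$ in the demi-normal case. Instead, it observes that the normalisation $(Y,D+\pi^*\Delta)\to C$ is again a stable family with $Y$ normal, applies the normal case to conclude $Y_k$ is $(S_2)$ and $(Y_k,\Diff_{Y_k}(\pi^*\Delta))$ is slc, and then invokes a descent result of Posva (\cite[Proposition 4.2.6]{Pos21b}) which transfers the $(S_2)$ and slc properties from $Y_k$ to $X_k$ via the gluing data of the slc structure. This avoids any claim about the depth of $\mathcal{O}_X$ itself and any analysis of the conductor cokernel.
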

	
\begin{proof}
	If $X$ is normal, then $(X,\Delta+X_k)$ is a log canonical pair. For every closed point $p \in X_k$, the local ring $\mathcal{O}_{X,p}$ is $(S_3)$ by \autoref{t-s3-under-dlt-modification}. As $X_k$ is a Cartier divisor, we deduce $X_k$ is $(S_2)$ by \cite[Corollary 2.61]{kk-singbook}. By performing adjunction \cite[Definition 4.2]{kk-singbook}, we deduce that the normalisation $\big(X^{\nu}_k, \Diff_{X^{\nu}_k}(\Delta)\big)$ is log canonical by \cite[Lemma 4.8]{kk-singbook}. 
    Therefore $(X_k, \Diff_{X_k}(\Delta))$ is semi-log canonical by definition.
    
    Suppose $X$ is demi-normal and let $\pi \colon Y \to X$ be its normalisation. We write $K_Y+D+\pi^*\Delta=\pi^*(K_X+\Delta)$, where $D$ is the divisorial part of the conductor.
    Then $(Y,D+\pi^*\Delta) \to C$ is a stable family of pairs, where $Y$ is normal. By the previous step, $Y_k$ is $(S_2)$ and the pair $(Y_k, \Diff_{Y_k}(\pi^*\Delta))$ is slc.  
    We conclude $X_k$ is $S_2$ and $(X_k, \Diff_{X_k}(\Delta))$ is slc by \cite[Proposition 4.2.6]{Pos21b}.
	\end{proof}

We now have all the ingredients to prove the main result of this article.

\begin{theorem}\label{cor: properness_moduli}
Assume (LSR).
Then the moduli stack $\overline{\mathcal{M}}_{2,v}$ is proper over $\mathbb{Z}[1/30]$ and the coarse moduli space $\overline{M}_{2,v}$ is projective over $\mathbb{Z}[1/30]$.
\end{theorem}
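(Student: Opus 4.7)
The plan is to verify the valuative criterion of properness for $\overline{\mathcal{M}}_{2,v}$, using \autoref{t-S2-property} together with the hypothesis (LSR), and then to deduce projectivity of the coarse moduli space from properness plus an existing ampleness statement.

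First, I would recall that by \cite[Corollary 10.2]{7authors} the stack $\overline{\mathcal{M}}_{2,v}$ is already known to be a separated Artin stack of finite type over $\mathbb{Z}[1/30]$ with finite diagonal. To upgrade separatedness to properness, it suffices to verify the existence part of the valuative criterion: given a DVR $R$ with fraction field $K$ and a stable family $(X_K,\Delta_K)\to \Spec K$ of surfaces of volume $v$, possibly after a finite base change $R\to R'$, the family extends to a stable family $(X,\Delta)\to \Spec R'$.

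Next, I would invoke Posva's reduction \cite[Theorem 6.0.5]{Pos21}, which shows that over $\mathbb{Z}[1/30]$ the valuative criterion for $\overline{\mathcal{M}}_{2,v}$ follows from the two properties (LSR) and (S$_2$) stated just before \autoref{t-S2-property}. Property (LSR) is assumed in the statement of the theorem, so the whole verification reduces to establishing (S$_2$) for the special fiber of an arbitrary stable family of surfaces over a DVR whose residue characteristic is not $2,3,5$. This is precisely the content of \autoref{t-S2-property}, which also records the stronger fact that the central fiber inherits slc singularities, ensuring that the limiting object is indeed a stable surface in the sense of \autoref{def-loc-stable}.

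Combining these inputs gives the properness of $\overline{\mathcal{M}}_{2,v}$ over $\mathbb{Z}[1/30]$. For the coarse moduli space $\overline{M}_{2,v}$, properness is automatic from the properness of the stack together with the existence of a coarse moduli space (Keel--Mori, applicable because the diagonal is finite). To upgrade to projectivity, I would invoke the ampleness of the CM line bundle on the moduli space of stable varieties, as established in positive and mixed characteristic in the literature on the moduli of stable surfaces (see \cite[Chapter 9]{k-moduli} in characteristic $0$ and the adaptation in \cite{7authors} to the setting over $\mathbb{Z}[1/30]$); once $\overline{M}_{2,v}$ is proper and carries an ample line bundle, it is projective.

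The main (and essentially only) obstacle here is the (S$_2$) condition, which is exactly where \autoref{t-s3-under-dlt-modification} is used; all other ingredients are either formal (valuative criterion, Keel--Mori) or already available in the literature (Posva's reduction, existence of the stack, ampleness of the CM line bundle). In particular, no further geometric input beyond what is assembled in the earlier sections is needed to conclude.
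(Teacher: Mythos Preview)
Your proposal is correct and follows essentially the same approach as the paper: invoke Posva's reduction \cite[Theorem 6.0.5]{Pos21} to reduce properness to (LSR) plus (S$_2$), supply (S$_2$) via \autoref{t-S2-property}, and then deduce projectivity of the coarse space from an ampleness result. Two minor points: the paper explicitly notes that Posva's argument \emph{also works in mixed characteristic} (his original statement is over $\mathbb{F}_p$), which you should flag rather than take for granted; and for projectivity the correct reference is \cite[Theorem 1.2]{Pat17} rather than \cite{7authors} or \cite{k-moduli}.
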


\begin{proof}
The proof of \cite[Theorem 6.0.5]{Pos21} works also in mixed characteristic and thus the (LSR) hypothesis together with \autoref{t-S2-property} conclude the properness of $\overline{\mathcal{M}}_{2,v}$.
The projectivity of $\overline{M}_{2,v}$ is then shown in \cite[Theorem 1.2]{Pat17}.
\end{proof}

We conclude by giving an application to the asymptotic invariance of plurigenera for log canonical minimal surface pairs of general type.
This generalises the klt case proven in \cite[Theorem 4.1]{BBS22}.

\begin{corollary} \label{cor: asymptotic-invariance}
    Suppose $p >5$.
    Let $(X,\Delta)$ be a 3-dimensional 
    pair and 
    let $\pi \colon (X, \Delta) \to C$ be a projective contraction such that $(X_k,\Diff_{X_k}(\Delta))$ is log canonical. 
    If $K_X+\Delta$ is nef and big over $C$, then there exists $m_0>0$ such that 
    $$ \dim_k H^0(X_k, m(K_{X_k} + \Delta_k)) = \dim_K H^0(X_K, m(K_{X_K} + \Delta_K)) \text{ for all } m \in m_0 \mathbb{N}$$
\end{corollary}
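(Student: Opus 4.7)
The plan is to reduce to the case of a stable family via the relative log canonical model and then combine \autoref{t-S2-property} with Serre vanishing and cohomology-and-base-change.

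First, by inversion of adjunction the hypothesis that $(X_k, \Diff_{X_k}(\Delta))$ is log canonical upgrades to $(X, X_k+\Delta)$ being log canonical near $X_k$, so that $\pi\colon (X,\Delta)\to C$ is a locally stable family. Since $K_X+\Delta$ is $\pi$-nef and $\pi$-big, the base point free theorem for 3-fold log canonical pairs in characteristic $p>5$ (available via the 3-fold MMP developed in \cite{7authors}) shows that $K_X+\Delta$ is $\pi$-semi-ample, producing a relative log canonical contraction $f\colon (X,\Delta)\to (Y,\Delta_Y)$ over $C$ with $K_Y+\Delta_Y$ ample over $C$. Crepant pushdown of $(X, X_k+\Delta)$ along $f$ shows that $(Y, Y_k+\Delta_Y)$ is log canonical, so $\pi_Y\colon (Y,\Delta_Y)\to C$ is a stable family of surfaces. \autoref{t-S2-property} then gives that $Y_k$ is $(S_2)$ and $(Y_k, \Diff_{Y_k}(\Delta_Y))$ is semi-log canonical, with $K_{Y_k}+\Delta_{Y_k}$ ample.

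Next, choose $m_0>0$ so that $m_0(K_Y+\Delta_Y)$ is Cartier and $\pi_Y$-very ample, and large enough that Serre vanishing yields $H^i(Y_k, m(K_{Y_k}+\Delta_{Y_k}))=0$ for $i>0$ and every $m\in m_0\mathbb{N}$. Grauert's theorem then ensures that $(\pi_Y)_*\mathcal{O}_Y(m(K_Y+\Delta_Y))$ is locally free and commutes with base change, so
\[ \dim_k H^0(Y_k, m(K_{Y_k}+\Delta_{Y_k})) = \dim_K H^0(Y_K, m(K_{Y_K}+\Delta_{Y_K})). \]
Moreover, for such $m$ we have $m(K_X+\Delta)=f^*m(K_Y+\Delta_Y)$, so that the projection formula together with $f_*\mathcal{O}_X=\mathcal{O}_Y$ yields $f_*\mathcal{O}_X(m(K_X+\Delta))=\mathcal{O}_Y(m(K_Y+\Delta_Y))$. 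It then remains to establish the fibrewise identity $(f_t)_*\mathcal{O}_{X_t}(m(K_{X_t}+\Delta_t))=\mathcal{O}_{Y_t}(m(K_{Y_t}+\Delta_{Y_t}))$ for $t\in\{k,K\}$, from which the equality of plurigenera follows by taking global sections.

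The main obstacle is this last identification on the special fibre: $f_k\colon X_k\to Y_k$ is a contraction of an slc surface pair, and a priori wild phenomena could obstruct compatibility of $f_*$ with base change. The plan is to pass to a $\mathbb{Q}$-factorial dlt modification of $(X,\Delta)$ and invoke the Grauert--Riemenschneider vanishing theorem \autoref{t-gr} to establish $R^i f_*\mathcal{O}_X=0$ for $i>0$; combined with the $(S_2)$-property of $X_k$ and $Y_k$, this will yield cohomological flatness of $f$ in degree $0$ and hence the required identity on the central fibre via base change, completing the proof.
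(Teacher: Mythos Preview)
Your overall architecture matches the paper's: pass to the relative log canonical model $f\colon X\to Z$, use \autoref{t-S2-property} on the special fibre of $Z$, and reduce the statement to the identity $(f_k)_*\mathcal{O}_{X_k}=\mathcal{O}_{Z_k}$ (this is exactly what \cite[Lemma 2.17]{BBS22} packages, and your Serre vanishing/Grauert argument is an unrolling of it). The differences are in two places, and one of them is a genuine gap.

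\textbf{Semiampleness.} Your appeal to a ``base point free theorem for 3-fold log canonical pairs'' in \cite{7authors} is not on firm ground; the results there are for klt/dlt pairs, and a direct BPF for lc 3-folds over such bases is not available. The paper avoids this by checking semiampleness fibrewise via \cite[Theorem 2.2]{Wit21}: both $K_{X_K}+\Delta_K$ and $(K_X+\Delta)|_{X_k}\sim_{\mathbb Q}K_{X_k}+\Diff_{X_k}(\Delta)$ are semiample by abundance for log canonical surfaces \cite{Tan20}.

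\textbf{The key gap.} Your plan to obtain $R^if_*\mathcal{O}_X=0$ from \autoref{t-gr} after passing to a dlt modification does not work. \autoref{t-gr} is a statement about divisors $D\sim_{\mathbb{Q}}K_Y+\Delta'$ with $\lfloor\Ex(\Delta')\rfloor=0$ on a log resolution of a \emph{dlt} pair; it says nothing about $R^if_*\mathcal{O}_X$ for the structure sheaf of a merely log canonical $X$. Replacing $X$ by a dlt modification $\mu\colon\tilde X\to X$ does not help either: since $X$ is only lc, it need not have rational singularities (the paper's own \autoref{t-counterexample} produces lc 3-folds that are not Cohen--Macaulay), so you cannot identify $R\mu_*\mathcal{O}_{\tilde X}$ with $\mathcal{O}_X$ and transfer any vanishing down. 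Thus your proposed route to $(f_k)_*\mathcal{O}_{X_k}=\mathcal{O}_{Z_k}$ is blocked.

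The paper closes this step differently and without any higher-direct-image vanishing for $f$: it shows directly that $Z_k$ is \emph{normal}. The $(S_2)$ part is \autoref{t-S2-property}. For $(R_1)$, a codimension-$1$ point $\eta$ of $Z_k$ at which $Z_k$ is not regular would, by inversion of adjunction, be an lc centre of $(Z,Z_k+\Delta_Z)$ and hence nodal; but the Stein factorisation of $f_k$ exhibits the normalisation of $Z_k$ as a universal homeomorphism (via \cite[Lemma 2.7]{Bri22}), which is incompatible with a node. Once $Z_k$ is normal, the finite birational map in the Stein factorisation of $f_k$ is an isomorphism, giving $(f_k)_*\mathcal{O}_{X_k}=\mathcal{O}_{Z_k}$ and hence the claimed equality of plurigenera.
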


\begin{proof}
    By inversion of adjunction \cite[Corollary 10.1]{7authors}, we conclude $(X,X_k+\Delta)$ is a log canonical pair.
    We first claim that $K_X+\Delta$ is semi-ample over $C$. Since the characteristic of the residue field of $R$ is $p>0$, by \cite[Theorem 2.2]{Wit21} it is sufficient to check semiampleness fiber-wise. 
    For this, note that $K_{X_K}+\Delta_K$ and $(K_{X}+\Delta)|_{X_k} \sim_{\mathbb{Q}} K_{X_k}+\Diff_{X_k}(\Delta)$ are semiample by the abundance theorem for log canonical surfaces \cite{Tan20}.
    Let $f \colon (X,\Delta) \to (Z, \Delta_Z=f_*\Delta)$ be the semiample birational contraction associated to $K_X+\Delta$. Note that $f \colon (X, X_k+\Delta) \to (Z, Z_k+\Delta)$ is also crepant.
    
    By \cite[Lemma 2.17]{BBS22}, it is sufficient to check that $(f_k)_*\mathcal{O}_{X_k}=\mathcal{O}_{Z_k}$ to conclude.
    By considering the Stein factorisation $f_k \colon X_k \to Y \to Z_k$, we are left to show $g \colon Y \to Z_k$ is an isomorphism.
    As the morphism $g \colon Y \to Z_k$ is a birational morphism and $Y$ is normal, it is sufficient to verify that $Z_k$ is normal to conclude.
    By construction $(Z,Z_k+\Diff_{Z}(\Delta_Z))$ is log canonical and thus $Z_k$ satisfies the $(S_2)$ condition by \autoref{t-S2-property}.
    By Serre's criterion for normality, we are  thus left to show that $Z_k$ is $(R_1)$ and we argue by contradiction.
    Suppose there exists a codimension 1 point $\eta$ of $Z_k$ such that $Z_k$ is not normal. Then by inversion of adjunction $\eta$ is the generic point of a log canonical centre of $(Z,Z_k+\Delta)$ and thus it is nodal. 
    By \cite[Lemma 2.7]{Bri22} the normalisation of $Z_k$ is a universal homeomorphism and thus we conclude.
    \end{proof}

\begin{remark}
    In \cite[Theorem 1]{Kol22}, Koll\'{a}r proves that the moduli space of stable 3-folds is not proper over any field of characteristic $p>0$.
    In particular, \cite[Example 4]{Kol22} show that \autoref{t-S2-property}, \autoref{cor: properness_moduli} and \autoref{cor: asymptotic-invariance} do not generalise to dimension 3, even for large $p$.
\end{remark}

\begin{question}
    We leave open the question whether \autoref{t-S2-property} hold in characteristic $p \leq 5$. 
    Note that the examples of non-normal plt centres constructed \cite{CT19} are not Cartier.
\end{question}

\section{Counterexamples to local Kawamata--Viehweg vanishing} \label{s-counterexample}

	We conclude by constructing a counterexample to the local Kawamata--Viehweg vanishing theorem for log canonical 3-dimensional singularities in positive and mixed characteristic (\autoref{t-counterexample}).  
	The counterexample is obtained by taking the relative cone over an elliptic surface fibration with a wild fibre.

\subsection{Relative cone construction}
	
	In this section we develop the theory of relative cones, expanding on \cite[Section 3.2]{kk-singbook}.
	Let $f \colon X \to T$ be a projective flat morphism of normal integral schemes with $f_* \MO_X=\MO_T$ and
	let $L$ be an $f$-ample invertible sheaf. We define the affine $T$-scheme:
	
	$$C_a(X, f, L)= \Spec_T \bigoplus_{m \geq 0} f_* \mathcal{O}_X(mL) \to T. $$
	
	The scheme $C_a(X,f,L)$ is the \emph{relative cone} of $f$ with respect to $L$.
	The natural subscheme $V_T \subset C_a(X,f, L)$ defined by by the ideal sheaf $ \bigoplus_{m \geq 1} f_* \mathcal{O}_X(mL)$ is isomorphic to $T$ and it is called the \emph{relative vertex}.
	The variety $C_a^*(X,f,L)=C_a(X, f, L) \setminus V_T \simeq BC_a^*(X, f, L) \setminus E$ is called the \emph{relative punctured cone}. 	
	
	We have the following commutative diagram 
	\[
	\xymatrix{
		BC_a(X, L):= \Spec_X \bigoplus_{m \geq 0} L^m \ar[d]_\pi \ar[r]^{\qquad \qquad p} &C_a(X, f, L) \ar[d]\\
		X \ar[r]^f & T,
	}
	\]
	where $p$ is a birational projective morphism with exceptional divisor $E \simeq X$ onto $V_T$ such that $$\mathcal{O}_{BC_a(X,f,L)}(E)|_E=L^{\vee}.$$
	
	The following is a generalisation of \cite[Proposition 3.14]{kk-singbook} to the relative setting.
	\begin{proposition} \label{p-properties-cones}
		With the same setting as above, we have
		\begin{enumerate}
			\item[(i)]	$\Pic(C_a(X, f, L)) \simeq \Pic(T)$, 
			\item[(ii)] $\Cl(C_a(X, f, L)) \simeq \Cl(X)/\langle L \rangle.$
		\end{enumerate}
		Let $\Delta_X$ be a $\Q$-divisor on $X$ and assume $K_X+\Delta_X$ is $\Q$-Cartier.
		We define $\Delta_{BC_a(X,L)}=\pi^*\Delta$ and $\Delta_{C_a(X,f,L)}=p_*\Delta_{BC_a(X,L)}$.
		We have the following
		\begin{enumerate}
			\item[(iii)] $K_{BC_a(X,L)}+\Delta_{BC_a(X,L)}+E \sim_{\mathbb{Q}} \pi^*(K_X+\Delta)$,
			\item[(iv)] $m(K_{C_a(X, f, L)}+\Delta_{C_a(X, f, L)})$ is Cartier iff $m(K_X+\Delta) \sim_{f} L^{rm} $ for some $r \in \Q$.
			In this case we have $$K_{BC_a(X,L)}+\Delta_{BC_a(X,L)}+(1+r)E \sim_{\Q} p^*(K_{C_a(X, f, L)}+\Delta_{C_a(X, f, L)}). $$
		\end{enumerate}
	\end{proposition}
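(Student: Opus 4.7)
The strategy is to identify $\pi: BC_a(X, L) \to X$ with the total space $\mathbb{V}(L) = \Spec_X \Sym^{\bullet} L$ of the line bundle $L^{-1}$ and to reduce all four assertions to two basic computations on this total space. First, the tautological section of $\pi^*L^{-1}$---which at a point $(x, v) \in \mathbb{V}(L)$ takes the value $v \in L^{-1}_x$---vanishes exactly along the zero section $E$, yielding $\mathcal{O}_{BC_a}(E) \simeq \pi^*L^{-1}$, i.e.\ $E \sim -\pi^*L$ in $\Cl(BC_a(X,L))$; this is consistent with the stated formula $\mathcal{O}_{BC_a}(E)|_E = L^\vee$. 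Second, since the relative cotangent $\Omega_{\mathbb{V}(L)/X} \simeq \pi^*L$ (local fiber coordinates being sections of $L$), one obtains $\omega_{BC_a/X} \simeq \pi^*L$ and hence $K_{BC_a} \sim \pi^*K_X + \pi^*L$.

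Granted these, part (iii) follows immediately by addition: $K_{BC_a} + \Delta_{BC_a} + E \sim \pi^*K_X + \pi^*L + \pi^*\Delta - \pi^*L = \pi^*(K_X + \Delta)$. For (ii), I would use that $\pi$ is a line bundle, so $\pi^*: \Cl(X) \to \Cl(BC_a)$ is an isomorphism, and that the vertex $V_T$ has codimension $\geq 2$ in $C_a$ (the fibers of $f$ being positive-dimensional by $f$-ampleness of $L$), giving $\Cl(C_a) \simeq \Cl(C_a \setminus V_T) = \Cl(BC_a \setminus E)$. The excision sequence $\mathbb{Z} \cdot [E] \to \Cl(BC_a) \to \Cl(BC_a \setminus E) \to 0$, transported via $\pi^*$ together with the identification $[E] \leftrightarrow -[L]$, becomes $\mathbb{Z} \cdot L \to \Cl(X) \to \Cl(C_a) \to 0$, yielding $\Cl(C_a) \simeq \Cl(X)/\langle L\rangle$.

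For (i), I would exploit the $\mathbb{G}_m$-action on $C_a$ over $T$, which extends to a contracting $\mathbb{A}^1$-action collapsing $C_a$ onto its fixed locus $V_T \simeq T$: since the inclusion $V_T \hookrightarrow C_a$ is a section of the projection $q: C_a \to T$, the composition $\Pic(T) \xrightarrow{q^*} \Pic(C_a) \xrightarrow{(-)|_{V_T}} \Pic(T)$ is the identity, and the opposite composition is the identity by $\mathbb{A}^1$-homotopy invariance of $\Pic$ under the retraction. For (iv), I would pull back via $p$: if $m(K_{C_a} + \Delta_{C_a})$ is Cartier, then $p^*(K_{C_a}+\Delta_{C_a}) \sim_{\mathbb{Q}} K_{BC_a} + \Delta_{BC_a} + aE$ for some $a \in \mathbb{Q}$, which by (iii) and $E \sim -\pi^*L$ equals $\pi^*\bigl(K_X + \Delta - (a-1)L\bigr)$. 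Since the left-hand side is, by (i), the pullback via $f\circ\pi$ of a $\mathbb{Q}$-Cartier divisor on $T$, and $\pi^*$ is injective on $\Cl(X)_{\mathbb{Q}}$, one concludes $K_X + \Delta \sim_f (a-1)L$, so $r = a-1$ satisfies the claim. The converse runs the same computation backwards and simultaneously produces the stated formula $K_{BC_a}+\Delta_{BC_a}+(1+r)E \sim_{\mathbb{Q}} p^*(K_{C_a}+\Delta_{C_a})$.

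The main technical obstacle I anticipate is the $\mathbb{A}^1$-homotopy invariance of $\Pic$ used in (i), which requires some care in the relative setting over an arbitrary normal $T$; a robust alternative would be to reduce (i) to (ii) by identifying $\Pic(C_a)$ with the subgroup of $\mathbb{Q}$-Cartier classes in $\Cl(C_a)$ and applying the criterion proved in (iv) to pin down which classes of $\Cl(X)/\langle L\rangle$ descend to Cartier divisors on $C_a$.
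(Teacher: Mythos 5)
Your proposal is correct, and it recovers all four assertions, but it is organised differently from the paper's proof, so a comparison is worthwhile. You derive everything from the two identities $\mathcal{O}_{BC_a}(E)\simeq \pi^*L^{\vee}$ and $\omega_{BC_a/X}\simeq \pi^*L$ and thereby prove (ii) and (iii) directly (via excision and $\Cl(BC_a)\simeq\Cl(X)$), whereas the paper simply cites \cite[Proposition 3.14]{kk-singbook} for these two items; your route is self-contained and makes the sign conventions transparent. For (i) the paper argues by restricting $p^*D$ to the section $E\simeq X$ of $\pi$ and using that $p^*D|_E$ is pulled back from $V_T$, together with $\Pic(BC_a)\simeq\Pic(X)$ and the injectivity of $p^*$ (from $p_*\mathcal{O}_{BC_a}=\mathcal{O}_{C_a}$); your $\mathbb{G}_m$-contraction argument instead needs $\mathbb{A}^1$-homotopy invariance of $\Pic$, which does hold here because $C_a$ is normal (hence seminormal: $\Cl(C_a\times\mathbb{A}^1)\simeq\Cl(C_a)$ and Cartier-ness is detected on the slice $C_a\times\{0\}$), so the obstacle you flag is real but surmountable --- still, the paper's restriction-to-$E$ argument is the cheaper one and avoids the issue entirely. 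For (iv) the paper works on the punctured cone $C_a^*$, using that the kernel of $\pi|_{C_a^*}^*\colon\Pic(X)\to\Pic(C_a^*)$ is $\langle L\rangle$, while you pull back along the birational morphism $p$ and use injectivity of $\pi^*$ on $\Cl(X)_{\Q}$; both give the same computation $K_X+\Delta\sim_{f,\Q}rL$ and the coefficient $(1+r)$ of $E$ (the paper pins down the coefficient by restricting the crepant formula to $E$ and using $E|_E=-L$, which is the same calculation you perform). Two small points you should make explicit if writing this up: the excision step in (ii) uses that $V_T$ has codimension at least $2$ in $C_a$, i.e.\ that the fibres of $f$ are positive-dimensional (true in all the situations where the proposition is applied), and the step "$p^*(K_{C_a}+\Delta_{C_a})\sim_{\Q}K_{BC_a}+\Delta_{BC_a}+aE$" uses that $E$ is the entire divisorial exceptional locus of $p$ together with $p_*(K_{BC_a}+\Delta_{BC_a})=K_{C_a}+\Delta_{C_a}$.
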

	
	\begin{proof}
		Since $BC_a(X,L)$ is an $\mathbb{A}^1$-bundle over $X$, we have $\Cl(BC_a(X,L))\simeq \Cl(X)$ and $\Pic(BC_a(X,L)) \simeq \Pic(X)$.
		Let us note we have the following commutative diagram:
		\[
		\xymatrix{
			\Pic(X) \ar[r]^{\pi^* \qquad} & \Pic(BC_a(X, f, L)) \ar[r] & \Pic(E) \\
			\Pic(T) \ar[r] \ar@{^{(}->}[u]_{f^*} & \Pic(C_a(X,f,L)) \ar[r] \ar@{^{(}->}[u]_{p^*}  & \Pic(V_T) \ar@{^{(}->}[u],
		}
		\]
  where the top arrows are  all isomorphisms.
		We prove (i). Let $D$ be an invertible sheaf on $C_a(X, f, L)$, then $p^*D|_E$ is the pull-back of a line bundle on $V_T$, thus proving (i). Items (ii) and (iii) are proven in \cite[Proposition 3.14]{kk-singbook}.
		Recall that, since $\Pic(C_a^*(X,f,L)) \hookrightarrow \Cl(C_a^*(X,f,L)) \simeq \Cl(X)/\langle L \rangle$, the kernel of the morphism $\pi|_{C_a^*(X,f,L)}^* \colon \Pic(X) \to \Pic(C_a^*(X,f,L))$ is the subgroup generated by $L$.
		
		As for (iv) $m(K_{C_a(X,f,L)}+\Delta_{C_a(X,f,L)})$ is Cartier if and only if the Weil divisor $m(K_{C_a^*(X,f,L)}+\Delta_{C_a^*(X,f,L)})$ is the pull-back of a Cartier divisor on $T$ by (i).
		In turn, this is equivalent to ask that $\pi|_{C_a^*(X,f,L)}^{*}(mK_X+m\Delta)=\pi|_{C_a^*(X,f,L)}^{*}f^* D$ for some Cartier divisor $D$ on $T.$
		This is equivalent to  $m(K_{X}+\Delta)-f^*D \sim L^{rm}$ for some $r \in \Q$, thus concluding the first part.
		As for the last equality, let us write
		$$K_{BC_a(X,L)}+\Delta_{BC_a(X,L)}+(1+a)E \sim_{\Q} p^*(K_{C_a(X, f, L)}+\Delta_{C_a(X, f, L)}). $$
		By restricting to $E$ we have $K_X+\Delta+aE|_E \sim_{\Q,f} 0, $ which becomes $rL -aL=0,$ thus $r=a$.
	\end{proof}
	
	As a corollary we have the following result on the singularities of relative cones (to compare with \cite[Lemma 3.1]{kk-singbook}).
	
	\begin{proposition}\label{p-relative-cones-mmpsing}
		In the previous setting, assume $K_X+\Delta$ is $\Q$-Cartier and $K_X+\Delta \sim_{f, \Q} rL$ for some $r \in \Q$. Then $(C_a(X,f,L),\Delta_{C_a(X,f,L)})$ is 
		\begin{enumerate}
			\item klt if $r<0$ and $(X,\Delta)$ is klt,
			\item log canonical if $r \leq 0$ and $(X,\Delta)$ is log canonical.
		\end{enumerate}
	\end{proposition}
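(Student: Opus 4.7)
The plan is to reduce everything to a discrepancy computation on the line-bundle resolution $BC_a(X,L)$, using the crepant formula from \autoref{p-properties-cones}(iv):
$$K_{BC_a(X,L)} + \Delta_{BC_a(X,L)} + (1+r)E \sim_{\bQ} p^*(K_{C_a(X,f,L)} + \Delta_{C_a(X,f,L)}).$$
This formula at once identifies the discrepancy of the distinguished exceptional divisor $E$ as $a(E; C_a(X,f,L), \Delta_{C_a(X,f,L)}) = -(1+r)$, which is $>-1$ precisely when $r<0$ and $\geq -1$ precisely when $r\leq 0$. So the threshold on $r$ in the statement is forced by $E$ alone, and it only remains to verify that it is also sufficient for every other exceptional divisor over $C_a(X,f,L)$.

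Any other such exceptional $F$ is extracted by a further birational model $g\colon W\to BC_a(X,L)$, and the crepant formula gives $a(F; C_a(X,f,L), \Delta_{C_a(X,f,L)}) = a(F; BC_a(X,L), \Delta_{BC_a(X,L)} + (1+r)E)$. So the task reduces to showing that $(BC_a(X,L), \Delta_{BC_a(X,L)} + (1+r)E)$ is klt (resp.\ lc) under the respective hypotheses. I would do this in two sub-steps. First, I would invoke \autoref{p-properties-cones}(iii), which says that $(BC_a(X,L), \Delta_{BC_a(X,L)} + E)$ is crepant to $(X,\Delta)$ via the smooth projection $\pi$, with $E$ a transverse section, and deduce that it is plt when $(X,\Delta)$ is klt and lc when $(X,\Delta)$ is lc. Second, I would lower the coefficient of $E$ from $1$ to $1+r$ by means of the standard discrepancy-difference formula
$$a\big(F; BC_a(X,L), \Delta_{BC_a(X,L)} + (1+r)E\big) = a\big(F; BC_a(X,L), \Delta_{BC_a(X,L)} + E\big) + (-r)\,\mult_F(g^*E),$$
valid for any $r$. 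The extra term is nonnegative for $r\leq 0$, and strictly positive for $r<0$ whenever the center of $F$ lies in $E$; combined with the plt- (resp.\ lc-)ness from the first sub-step, this yields the required bound $>-1$ (resp.\ $\geq -1$), and the case $r\leq -1$ is in fact easier since then $-(1+r)E$ can be absorbed to the right-hand side and the same computation only gets more slack.

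The only non-bookkeeping step is the first sub-step, namely that smooth pullback through a line bundle together with a transverse zero section preserves plt- (resp.\ lc-)ness. This is standard: \'etale-locally $\pi\colon BC_a(X,L)\to X$ is the first projection $X\times \bA^1\to X$ with $E = X\times \{0\}$, so the claim reduces via smooth base change and adjunction along the smooth section $E\cong X$ to the corresponding statement on $(X,\Delta)$. Everything else in the proof is a direct manipulation of the crepant formula of \autoref{p-properties-cones}(iv).
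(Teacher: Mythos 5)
Your proposal is correct and follows essentially the same route as the paper: both reduce via the crepant formula of \autoref{p-properties-cones}(iv) to a discrepancy computation on $BC_a(X,L)$, use the smoothness of $\pi$ and the transversality of the section $E$ to relate discrepancies to those of $(X,\Delta)$ (the paper cites \cite[2.14, Equation (4)]{kk-singbook} for the identity $\discrep(BC_a(X,L),\Delta_{BC_a(X,L)}+E)=\totdiscrep(X,\Delta)$, which you re-derive \'etale-locally), and then pass from coefficient $1$ to $1+r$ on $E$ by monotonicity of discrepancies. You merely spell out the coefficient-lowering step that the paper compresses into ``Thus (a) and (b) are automatic.''
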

	
	\begin{proof}
		By \autoref{p-properties-cones}, we have
		\begin{equation*} \label{eq: 1} 
			\begin{split}
				\discrep(&(C_a(X,f,L),\Delta_{C_a(X,f,L)}))=  \\
				&  =\min\left\{-(1+r), \text{discrep}(BC_a(X,L),\Delta_{BC_a(X,L)}+(1+r)E)\right\}.
			\end{split}
		\end{equation*} 
		Since $\pi$ is a smooth morphism and $E$ is a section for $\pi$ we conclude by \cite[2.14, Equation (4)]{kk-singbook} that
    \begin{equation*} 
        \discrep(BC_a(X,L),\Delta_{BC_a(X,L)}+E) = \totdiscrep(X,\Delta).
    \end{equation*}
    Thus (a) and (b) are automatic.    
\end{proof}
	
	\subsection{Failure of the ($S_3$)-condition at a non-minimal lc centre}
	
	We construct an example showing that \autoref{t-dpt-C} is not valid in general in positive and mixed characteristic, thus showing the statement of \autoref{t-s3-under-dlt-modification} is sharp.
	We fix $C$ to be the spectrum of a DVR whose closed point is perfect of characteristic $p>0$.
	\begin{proposition} \label{p-minimal-lc}
		Let  $S$ be a regular surface and let $f \colon S \to C$ be a minimal elliptic fibration together with a relatively $f$-ample invertible sheaf $L$.
		Then $X:= C_a(S,f,L)$ is a 3-dimensional log canonical singularity, the map $p \colon Y:=BC_a(S,L) \to X$ is a log resolution and the vertex $V_C$ is the unique log canonical centre of $X$.
	\end{proposition}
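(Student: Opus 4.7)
The plan is to apply \autoref{p-relative-cones-mmpsing} and \autoref{p-properties-cones} to $f \colon S \to C$ after establishing that $K_S \sim_{\bQ,f} 0$, i.e.\ that the parameter $r$ appearing in \autoref{p-properties-cones}(iv) vanishes. This vanishing follows from the canonical bundle formula for minimal elliptic fibrations (some positive multiple of $K_S$ is pulled back from $C$), combined with $\Pic_{\bQ}(C)=0$, which holds because $C=\Spec R$ for a DVR $R$.

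Granting $r=0$ and noting that $(S,0)$ is log canonical since $S$ is regular, \autoref{p-relative-cones-mmpsing}(b) immediately yields that $X=C_a(S,f,L)$ is a $3$-dimensional log canonical singularity. For the log-resolution assertion I would observe that $Y=BC_a(S,L)$ is the total space of a line bundle over the regular scheme $S$ and is therefore regular, while the exceptional divisor $E\cong S$ of $p \colon Y\to X$ is a smooth prime divisor. Thus $(Y,E)$ is snc, so $p$ is a log resolution of $X$. Moreover, \autoref{p-properties-cones}(iv) with $r=0$ produces the crepant relation $K_Y + E \sim_{\bQ} p^{\ast}K_X$, showing that $E$ is a log canonical place of $X$ whose centre is precisely the vertex $V_C = p(E)$.

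For the uniqueness assertion, I would argue that any exceptional divisorial valuation $F$ over $X$ distinct from $E$ can be realised on a further blow-up $g\colon Z\to Y$; the crepant relation $K_Y + E \sim_{\bQ} p^{\ast}K_X$ together with the standard discrepancy computation on $Z$ then gives $a(F,X) = a(F,Y,E)$. Since $(Y,E)$ is snc with $E$ a single smooth prime component of coefficient $1$, the pair $(Y,E)$ is plt; hence $a(F,Y,E) > -1$ for every exceptional $F\neq E$, so $a(F,X)>-1$ as well. Consequently $E$ is the unique log canonical place of $X$, and therefore $V_C$ is the unique log canonical centre.

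The main obstacle I anticipate is the precise justification of $K_S \sim_{\bQ,f} 0$ in positive and mixed characteristic, where the canonical bundle formula for elliptic fibrations can involve wild-multiplicity corrections; however, only the $f$-$\bQ$-triviality of $K_S$ is needed, and this follows from $K_S$ being $f$-numerically trivial (since $\omega_F$ has degree $0$ on the generic fibre $F$) combined with $\Pic_{\bQ}(C)=0$.
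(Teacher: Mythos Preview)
Your proposal is correct and follows the same route as the paper: the paper's proof simply invokes \autoref{p-properties-cones} to obtain $K_Y + E \sim_{\bQ} p^* K_X$ and then notes that $(Y,E)$ is log smooth with $E$ irreducible, so $E$ is the unique log canonical place and $V_C$ the unique log canonical centre. You are in fact more explicit than the paper in isolating and justifying the step $r=0$ (i.e.\ $K_S \sim_{\bQ,f} 0$), which the paper leaves implicit; the cleanest justification is the canonical bundle formula, or alternatively the observation that $K_S|_{S_\eta}\sim 0$ together with minimality and Zariski's lemma forces $K_S$ to be a $\bQ$-pullback from $C$---the appeal to $\Pic_{\bQ}(C)=0$ in your final paragraph is not quite the right mechanism for this implication, but the conclusion stands.
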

	\begin{proof}
		By \autoref{p-properties-cones}, we have
		$K_{Y}+E \sim_{\Q} p^* K_X. $
		Since $(Y, E)$ is log smooth and $E$ is irreducible we conclude that $E$ is the unique log canonical place of $X$. Thus $V_C$ is the unique log canonical centre of $X$.
	\end{proof}
	
    In the next lemmas we compute the local cohomology at a closed point in a log canonical place.
    
	\begin{lemma}\label{l-easy-van}
		Let $D$ be a $\Z$-divisor on $Y$. If $D \sim_{p, \Q} K_Y$, then $R^1p_*\MO_Y(D)=0$.
	\end{lemma}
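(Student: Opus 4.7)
The plan is to use the explicit structure of $p\colon Y\to X$ (namely that $Y$ is the total space of a line bundle over $S$) to reduce the vanishing to a relative Kawamata--Viehweg vanishing on the surface $S$, which is available in all characteristics via Tanaka's theorem (cf.\ \autoref{p-kvv-vanishing-snc}). This avoids the characteristic restriction of the three-dimensional Grauert--Riemenschneider vanishing \autoref{t-gr}.

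First, since $X=C_a(S,f,L)$ is affine over $C=\Spec R$ by construction, the sheaf $R^1p_*\mathcal{O}_Y(D)$ is associated to the $R$-module $H^1(Y,\mathcal{O}_Y(D))$, so it suffices to show this group vanishes. The projection $\pi\colon Y\to S$ is an affine $\mathbb{A}^1$-bundle: $\pi_*\mathcal{O}_Y=\bigoplus_{m\geq 0}L^m$, all higher direct images vanish, and $\Pic(Y)=\pi^*\Pic(S)$. Writing $\mathcal{O}_Y(D)\cong\pi^*\mathcal{N}$ for a unique $\mathcal{N}\in\Pic(S)$, the Leray spectral sequence for $\pi$ yields
\[
H^1(Y,\mathcal{O}_Y(D))=\bigoplus_{m\geq 0}H^1(S,\mathcal{N}\otimes L^m).
\]

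Next, I would identify $\mathcal{N}$ up to $\mathbb{Q}$-linear equivalence. From \autoref{p-properties-cones}(iii) applied with $\Delta=0$ we get $K_Y\sim \pi^*K_S-E$, and the formula $\mathcal{O}_Y(E)|_E=L^\vee$ combined with $\Pic(Y)=\pi^*\Pic(S)$ forces $\mathcal{O}_Y(-E)\cong\pi^*L$, hence $\omega_Y\cong\pi^*(\omega_S\otimes L)$. The hypothesis $D\sim_{p,\mathbb{Q}}K_Y$ together with $\Pic(X)\cong\Pic(C)$ from \autoref{p-properties-cones}(i) then yields $\mathcal{N}\sim_{\mathbb{Q}}\omega_S\otimes L\otimes f^*\mathcal{L}_C$ for some $\mathbb{Q}$-line bundle $\mathcal{L}_C$ on $C$.

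With this identification, each summand satisfies $\mathcal{N}\otimes L^m\sim_{f,\mathbb{Q}} K_S+L^{m+1}$, where $L^{m+1}$ is $f$-ample for every $m\geq 0$. I would then apply \autoref{p-kvv-vanishing-snc} to the snc pair $(S,0)$ over the curve $C$: the only log canonical centre is $S$, which dominates $C$, and $\nu(L^{m+1}|_{F_S})=1=\dim F_S$ for the generic fibre $F_S$, so all hypotheses are met. This gives $R^1f_*(\mathcal{N}\otimes L^m)=0$, hence $H^1(S,\mathcal{N}\otimes L^m)=H^0(C,R^1f_*(\mathcal{N}\otimes L^m))=0$ because $C$ is affine. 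Summing over $m$ delivers the claim. The only delicate point is translating $D\sim_{p,\mathbb{Q}}K_Y$ into a $\mathbb{Q}$-linear equivalence on $S$ in the presence of possible torsion in $\Pic(S)$; this is harmless since such torsion is invisible $\mathbb{Q}$-linearly, and beyond this bookkeeping I do not anticipate a substantive obstacle.
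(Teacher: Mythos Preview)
Your argument is correct and takes a genuinely different route from the paper. The paper writes $D\sim_{p,\bQ}(K_Y+E)-E$, observes that $(Y,E)$ is log smooth with $-E$ $p$-ample, and then invokes \cite[Proposition~22]{BK20}, which packages an induction on multiples of $E$ (restriction to $E$, Kawamata--Viehweg on the surface $E\to V_C$, and Serre vanishing in the limit) into a black box. You instead exploit the $\mathbb{A}^1$-bundle structure $\pi\colon Y\to S$ to push everything down to $S$: since $Y$ is regular, $\sO_Y(D)\cong\pi^*\sN$, and the affine morphism $\pi$ turns $H^1(Y,\sO_Y(D))$ into the direct sum $\bigoplus_{m\geq 0}H^1(S,\sN\otimes L^m)$; each summand vanishes by relative Kawamata--Viehweg on the regular surface $S$ over the affine curve $C$, because $\sN\otimes L^m\sim_{f,\bQ}K_S+(m+1)L$ with $(m+1)L$ $f$-ample.

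The trade-off: the paper's approach is portable---it applies to any crepant log smooth modification with an anti-ample exceptional divisor, without needing a bundle structure, which is why the same mechanism reappears in \autoref{prop:GR-vanishing-lc}. Your approach is more self-contained (no appeal to \cite{BK20}) and arguably more transparent in this specific cone setting, since it makes the graded structure of the cone do all the work; but it relies on the special fact that $Y$ is globally an $\mathbb{A}^1$-bundle over $S$, so it would not generalise to the \'etale-dlt modifications of \autoref{s-depth-3fold}.
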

	
	\begin{proof}
		By \cite[Theorem 3.3]{Tan18}, the  relative Kawamata--Viehweg vanishing theorem holds for the morphism $p|_E \colon E \to V_C$. 
		Since $-E$ is $p$-ample and $D \sim_{p, \Q} (K_Y+E)-E$ we conclude by \cite[Proposition 22]{BK20}.
	\end{proof}
	
	\begin{proposition} \label{p-second-loc-coh}
		Let $x \in V_C \simeq C$. Then
		$H^2_x(X, \mathcal{O}_X) \simeq H^0_x(C, \mathcal{T}).$ 
	\end{proposition}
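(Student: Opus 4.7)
The plan is to adapt the argument of \autoref{5spectral} and \autoref{l-vanishing-h2-F-naive} to the present relative cone setup. First I would apply the five-term exact sequence associated to the composition $R\Gamma_x \circ Rp_* = R\Gamma_{p^{-1}(x)}$, obtaining
\[
0 \to H^0_x(X, R^1 p_* \mathcal{O}_Y) \to H^2_x(X, \mathcal{O}_X) \to H^2_{p^{-1}(x)}(Y, \mathcal{O}_Y).
\]
Since $Y = BC_a(S, L)$ is a line bundle over the regular surface $S$, $Y$ is itself regular and hence Cohen--Macaulay. By local duality for CM sheaves (as invoked in the proof of \autoref{l-vanishing-h2-F-naive}), the rightmost term is controlled by $R^1 p_* \omega_Y$ at $x$, which vanishes by \autoref{l-easy-van} applied to $D = K_Y$. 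Thus the first arrow gives an isomorphism $H^2_x(X, \mathcal{O}_X) \simeq H^0_x(X, R^1 p_* \mathcal{O}_Y)$.

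Next, I would reduce $R^1 p_* \mathcal{O}_Y$ to $R^1 p_* \mathcal{O}_E$ via the short exact sequence
\[
0 \to \mathcal{O}_Y(-E) \to \mathcal{O}_Y \to \mathcal{O}_E \to 0.
\]
By \autoref{p-properties-cones}(iii) we have $K_Y + E \sim_{p, \mathbb{Q}} 0$, so $-E \sim_{p, \mathbb{Q}} K_Y$, and \autoref{l-easy-van} applied to $D = -E$ yields $R^1 p_* \mathcal{O}_Y(-E) = 0$. The long exact sequence of higher direct images then produces the isomorphism $R^1 p_* \mathcal{O}_Y \simeq R^1 p_* \mathcal{O}_E$.

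Finally, under the identifications $E \simeq S$ via $\pi|_E$ and $V_C \simeq C$, the morphism $p|_E$ corresponds to the elliptic fibration $f$. Letting $i \colon V_C \hookrightarrow X$ denote the inclusion of the vertex, we obtain $R^1 p_* \mathcal{O}_E = i_* R^1 f_* \mathcal{O}_S = i_*(\mathcal{M} \oplus \mathcal{T})$ by the very definition of $\mathcal{T}$. Since $\Gamma_{x, X} \circ i_* = \Gamma_{x, C}$, it follows that
\[
H^0_x(X, R^1 p_* \mathcal{O}_E) \simeq H^0_x(C, \mathcal{M}) \oplus H^0_x(C, \mathcal{T}),
\]
and the first summand vanishes because $\mathcal{M}$ is locally free, hence torsion-free, on the one-dimensional regular base $C$. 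This delivers the desired isomorphism $H^2_x(X, \mathcal{O}_X) \simeq H^0_x(C, \mathcal{T})$. No step here presents a genuine obstacle, since every required vanishing reduces either to the relative Kawamata--Viehweg statement \autoref{l-easy-van} or to the regularity of $Y$; the proof is essentially a bookkeeping translation of the Leray-type analysis already performed in Section~3.
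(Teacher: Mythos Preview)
Your proposal is correct and follows essentially the same route as the paper: the Leray/five-term sequence for $R\Gamma_x \circ Rp_*$, vanishing of $H^2_{p^{-1}(x)}(Y,\mathcal{O}_Y)$ via CM duality and \autoref{l-easy-van}, the identification $R^1p_*\mathcal{O}_Y \simeq R^1p_*\mathcal{O}_E$ via $R^1p_*\mathcal{O}_Y(-E)=0$, and finally $R^1f_*\mathcal{O}_S=\mathcal{M}\oplus\mathcal{T}$ with $H^0_x(C,\mathcal{M})=0$. The only cosmetic point is that your ``$0\to$'' at the start of the five-term sequence tacitly uses $H^1_{p^{-1}(x)}(Y,\mathcal{O}_Y)=0$, which follows from the same duality (it is Matlis-dual to $R^2p_*\omega_Y$, vanishing by fibre dimension) and is exactly what is checked in \autoref{5spectral}; likewise the isomorphism $R^1p_*\mathcal{O}_Y\simeq R^1p_*\mathcal{O}_E$ also needs $R^2p_*\mathcal{O}_Y(-E)=0$, again automatic by fibre dimension.
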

	
	\begin{proof}
		By the Leray spectral sequence for local cohomology
		$H^i_x(X, R^j p_* \MO_Y) \Rightarrow H^{i+j}_{p^{-1}(x)} (Y, \mathcal{O}_Y), $
		we have the exact sequence
		$$H^0_x(X, R^1 p_* \MO_Y) \to H^2_x(X, \MO_X) \to H^2_{p^{-1}(x)}(Y, \MO_Y). $$
		
		\begin{claim}\label{c-iso}
		The following isomorphisms hold:
			\begin{enumerate}
				\item[(i)] $H^2_{p^{-1}(x)}(Y, \MO_Y) =0$,
				\item[(ii)] $R^1p_*\MO_Y \simeq R^1(p|_E)_* \MO_E$.
			\end{enumerate}
		\end{claim}
		\begin{proof}
			To prove $(i)$, as $Y$ is regular we can apply duality \cite[10.44]{kk-singbook} to deduce $H^2_{p^{-1}(x)}(Y, \MO_Y)\simeq (R^1p_*\MO_Y(K_Y))_x$ and we conclude by \autoref{l-easy-van}.
			
			To prove $(ii)$, it is enough to show $R^1 p_* \MO_Y(-E)=0$ as $R^2\pi_* \MO_Y(-E)=0$ since the fibers of $p$ are at most 1-dimensional. 
			Since $-E \sim_{p, \Q} K_Y$ we conclude again by \autoref{l-easy-van}.
		\end{proof}
		Using \autoref{c-iso} we have $$H^2_x(X, \MO_X) \simeq H^0_x(X, R^1 p_* \MO_Y) \simeq H^0_x(X, R^1(p|_E)_* \MO_E)$$
		We denote by $i \colon V_C \to X$ the natural injection. Then we have the following isomorphism $H^0_x(X, R^1(p|_E)_* \MO_E)\simeq H^0_x(X, i_*(\mathcal{M} \oplus \mathcal{T})).$ 
		Using again the Leray spectral sequence of local cohomology for $i$ we have
		$H^0_x(X, i_*(\mathcal{M} \oplus \mathcal{T})) \simeq H^0_x(C, \mathcal{M} \oplus \mathcal{T})=H^0_x(C, \mathcal{T}),$
		since $\mathcal{M}$ is locally free thus concluding.
	\end{proof}
	
\begin{proof}[Proof of \autoref{t-counterexample}]
	Let $f \colon S \to C$ be a minimal elliptic fibration such that $\mathcal{T} \neq 0$ (such a surface exists by \autoref{e-wild}).
	By \autoref{p-minimal-lc}, $X=C_a(S,f,L)$ is a 3-dimensional log canonical variety, where the relative vertex $V_C$ is the unique minimal log canonical centre.
	By \autoref{p-second-loc-coh}, we deduce that the local cohomology $H^2_x(X, \MO_X) \neq 0,$ thus proving (c).
\end{proof}

\begin{question}
In \autoref{t-counterexample} we construct a log canonical 3-fold singularity $X$ with a minimal 1-dimensional log canonical centre and not CM, showing optimality of \autoref{p-rational-cod1} and \autoref{t-s3-under-dlt-modification} in the case where the exceptional divisor $E_\eta$ is a a regular curve of genus 1. 
We do not if the failure of Cohen--Macaualyness can appear in the case where $E_\eta$ is a nodal curve.
\end{question}

\begin{remark}
    A guiding principle in birational geometry in characteristic $p$ says that properties of klt and dlt singularities should behave similarly to characteristic 0 if $p$ is sufficiently large compared to the dimension (\cite[Section 6]{Tot19}).
    This principle does not apply to log canonical singularities.
    For example, in \cite[Corollary 6]{Kol22} Koll\'ar shows examples of 4-dimensional log canonical pairs with non-weakly normal lc centre in every characteristic $p>0$.
    \autoref{t-counterexample} shows pathological phenomena already appear in dimension 3 for every prime number.
\end{remark}
 
	\bibliographystyle{amsalpha}
	\bibliography{refs}
	
\end{document}